\documentclass[11pt]{article}
\usepackage{amsmath}
\usepackage{graphicx} 
\usepackage{mathrsfs}
\usepackage{indentfirst}
\usepackage{enumerate}
\usepackage{cite}
\usepackage{comment}
\usepackage{color}
\usepackage{amsmath}
\usepackage{cases}
\usepackage{amsthm}
\usepackage{amssymb}
\numberwithin{equation}{section}
\usepackage{mathtools}
\usepackage{stmaryrd}
\theoremstyle{definition}
\newtheorem*{thm*}{Theorem}
\newtheorem{thm}{Theorem}[section]
\newtheorem{prop}[thm]{Proposition}
\newtheorem{lem}[thm]{Lemma}

\newtheorem{remark}[thm]{Remark}
\theoremstyle{plain}
\usepackage{tikz}
\usetikzlibrary{intersections, calc, angles, arrows.meta}
\usetikzlibrary{patterns}
\newcommand{\hquad}{\hspace{2mm}}
\newcommand{\al}{\alpha}
\newcommand{\gam}{\gamma}
\newcommand{\del}{\delta}
\newcommand{\ep}{\varepsilon}
\newcommand{\e}{\varepsilon}
\newcommand{\kap}{\kappa}
\newcommand{\lam}{\lambda}
\newcommand{\sig}{\sigma}
\newcommand{\om}{\omega}
\renewcommand{\phi}{\varphi}
\newcommand{\Del}{\Delta}
\newcommand{\Gam}{\Gamma}

\newcommand{\N}{\mathbb{N}}

\newcommand{\R}{\mathbb{R}}

\newcommand{\upsi}{\underline{\psi}}
\newcommand{\uPsi}{\underline{\Psi}}

\newcommand{\cL}{\mathcal{L}}

\renewcommand{\hat}{\widehat}
\renewcommand{\Tilde}{\widetilde}

\newcommand{\pl}{\partial}
\newcommand{\norm}[1]{\left\|#1\right\|}
\begin{document}
\title{Large-time behavior and grow-up rates of\\inhomogeneous semilinear heat equations}
\author{Kenta Kumagai and Yusuke Oka}
\date{\today}

\maketitle

\begin{abstract}
We consider the semilinear heat equation in the unit ball with the exponential nonlinearity and an inhomogeneous term $f$. When $f=0$, it is known that the bifurcation structure of the stationary problem undergoes a qualitative change at the critical dimension $N=10$. This change affects the large-time behavior of solutions to the heat equation, and in particular, the grow-up phenomenon occurs for $N\ge 10$.

In this paper, we show that once $f$ exceeds a threshold, the bifurcation structure changes to a type that does not appear in the case $f=0$. 
The change in the bifurcation structure leads to the disappearance of the grow-up phenomenon beyond the threshold. Moreover, we provide a quantitative characterization of this transition
by determining the sharp grow-up rates for $N\ge 11$. In particular, we identify a new dimension-specific phenomenon in the threshold case: a log-log type correction term emerges in the grow-up rate only for $N=11$.
\end{abstract}
\noindent Addresses:

\smallskip
\noindent
K.~K.:  Graduate School of Mathematical Sciences, The University of Tokyo,\\
3-8-1 Komaba, Meguro-ku, Tokyo 153-8914, Japan.

\smallskip
\noindent
E-mail: {\tt kumagai-kenta@g.ecc.u-tokyo.ac.jp}\\

\smallskip
\noindent
Y.~O.:  Mathematical Institute and Graduate School of Science, Tohoku University,\\ 6-3, Aramaki Aza-Aoba, Aoba-ku, Sendai 980-8578, Japan.

\smallskip
\noindent
E-mail: {\tt yusuke.oka.b6@tohoku.ac.jp}\\


\noindent
{\it 2020 Mathematics Subject Classification.}
35K58, 35B40, 35B32, 35B35

\vspace{3pt}

\noindent
{\it Keywords.} Semilinear heat equation, grow-up rate, stability, singular solution
\vspace{3pt}

\maketitle

\section{Introduction}
We consider the following semilinear heat equation in the unit ball $B_1\subset\R^N$:
\begin{align}\label{eq-intro-1}
  \begin{cases}
    \partial_t u-\Delta u=\lambda e^u-f(r)
    &\text{in } B_{1}\times (0,T),
    \quad u=0 \, \text{ on }\pl B_{1}\times (0,T),
    \\
    u(x,0)=u_0(x) &\text{in } B_1
  \end{cases}
\end{align}
and its stationary counterpart
\begin{align}\label{eq-intro-2}
-\Delta v = \lambda e^v - f(r) \quad \text{in $B_1$,}\quad v=0 \quad \text{on }\partial B_1,
\end{align}
where $N\geq 3$, $\lambda>0$, $r:=|x|$, $f\in \mathrm{Lip}[0,1]$ and
$T>0$ denotes the maximal existence time.
Throughout this section,
we assume that 
\begin{equation}
\label{asu0}
  u_{0}\in C^{0}(\overline{B_1})\quad \text{and}\quad 
\phi_0 \le u_{0}\quad \text{in $B_1$},
\end{equation}
where $\phi_0$ is the unique solution of
\begin{equation}
\label{Defphi0}
    -\Delta \phi_0 = -f(r) \quad \text{in $B_1$,}\quad \phi_0=0 \quad \text{on $\partial B_1$}.
\end{equation}
We remark that $\phi_0=0$ when $f=0$. The condition \eqref{asu0} is not restrictive, since every classical stationary solution satisfies \eqref{asu0}.

The aim of this paper is to clarify the bifurcation structure of \eqref{eq-intro-2} and
thereby deriving the large-time behavior of solutions to \eqref{eq-intro-1}.
More precisely, we investigate whether solutions blow up
in finite time, remain globally bounded, or exhibit grow-up behavior, which means that $\limsup_{t\to \infty}\norm{u}_{L^{\infty}(B_1)}=\infty$.
Moreover, in the grow-up case, we obtain the \textit{sharp}
grow-up rate. We first recall known results for the classical case.
\subsection{Classical case ($\boldsymbol{f=0}$)
}
For the problem \eqref{eq-intro-2}, every solution is radially symmetric by the symmetry result of \cite{Gidas}.
Moreover, 
the following properties are well-known (see \cite{BCMR, Kor, JL, Dup}).
\begin{enumerate}
    \item[(i)] \emph{Global solution curve.}  
    The set of classical solutions forms an unbounded curve emanating from $(0,0)$ and 
    described by $\{(\lambda(\alpha),v(r,\alpha)); \alpha>0\}$, where $v(r,\alpha)$ is a solution satisfying $\alpha=v(0)$. Moreover, no (even weak) solution exists if $\lambda>\lambda^{*}$, where  $\lambda^{*}:=\sup_{\alpha>0} \lambda(\alpha)\in (0,\infty)$.
    \item[(ii)] \emph{Stable branch.} There exists a unique solution \(v^*\in H^1_0(B_1)\) for \(\lambda=\lambda^*\). The set of stable solutions consists of the branch of classical solutions extending from \((0,0)\) to the limiting solution 
\((\lambda^*,v^*)\), together with the limiting solution \((\lambda^*,v^*)\). This branch is monotone in $\alpha$. In particular, the stable solution is unique for each $\lambda\in (0,\lambda^*]$. We denote it by $v_{\lambda}$.
\end{enumerate}
We call the set $\{(\lambda(\alpha),\alpha);\alpha>0\}$ the \textit{bifurcation curve}
(see the figure on page \pageref{fig:bifur})
and we say that a solution $v$ to \eqref{eq-intro-2} is \textit{stable} if
\begin{equation*}
Q_{v}(\xi):= \int_{B_1}|\nabla \xi|^2 - \lambda e^{v}\xi^2\,dx\ge 0 \quad \text{for all $\xi \in C^{1}_{0}(B_1)$.}
\end{equation*}
Joseph and Lundgren \cite{JL} showed that the bifurcation curve converges to the radial singular solution $(\lambda_{*}, V_{*})=(2N-4, -2\log r)$ as $\alpha\to\infty$. Here, a radial singular solution $(\lambda_*,V_*)$ means a solution
$V_*\in C^2(0,1]$ of \eqref{eq-intro-2} satisfying $\lim_{r\to 0}V_{*}(r)=\infty$.
In addition, the authors \cite{JL} showed that the bifurcation curve exhibits the following two types depending on $N$.

\medskip
\noindent\textbf{Type I:}\quad
The curve bends back at $\lambda=\lambda^{*}$,
then turns infinitely many times around $\lambda=\lambda_{*}$ and eventually approaches $\lambda=\lambda_{*}$.
In particular, $v^{*}\in C^2(B_1)$.

\noindent\textbf{Type II:}\quad
The curve coincides with the stable branch and thus $\lambda(\alpha) $ monotonically converges to $\lambda^{*}$ as $\alpha\to \infty$.
In particular, $(\lambda_{*}, V_{*})=(\lambda^{*}, v^{*})$.

\begin{figure}[ht]
    \centering
    \includegraphics[width=0.8\linewidth]{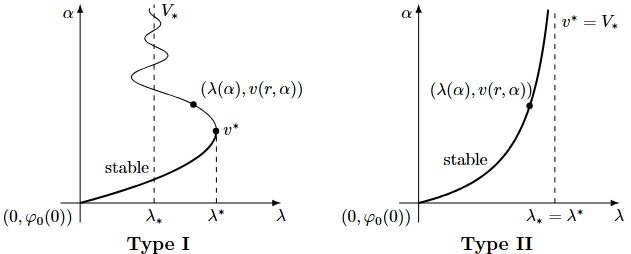}
    \label{fig:bifur}
\end{figure}

\noindent
More precisely, they showed that the bifurcation diagram is of Type I if
$3\leq N\le 9$ and of Type II if $N\ge 10$.

The stability of singular solutions plays a key role in the bifurcation structure.
Indeed,
Brezis and V\'{a}zquez \cite{BV} showed that the singular stable solution $V\in H^{1}_{0}(B_1)$ exists if and only if the bifurcation is of Type II,
for all non-negative non-decreasing and convex nonlinearities. For the exponential nonlinearity, they further showed that $V_*$ is stable if and only if $N\ge 10$, which gives an alternative explanation of the result of \cite{JL}.
Motivated by this connection, Miyamoto \cite{M14} conjectured that the number of turning points is equal to $m(V_{*})$ for general supercritical nonlinearities. Here, $m(V_{*})$ is defined as the maximal dimension of a subspace $X\subset H^{1}_{0,\mathrm{rad}}(B_1)$ such that $Q_{V_{*}}(\xi)<0$ for all
$\xi\in X\setminus \{0\}$. Note that $m(v)=0$ when $v$ is stable.
Following these results,
many studies have investigated the bifurcation structure, asymptotic behavior, and uniqueness of singular solutions for more general nonlinearities. We refer to \cite{BN87,DF07, GhGo, GW11, K25, KM26, KiWe, Lin, MP91, M14, M15, M18, MN20, MN23, MN24}.

\medskip

The bifurcation structure is deeply connected to the large-time behavior of solutions $u$ to \eqref{eq-intro-1}.
Indeed, it is known \cite{Fu1969, BCMR, PV1995} that the stable solution of \eqref{eq-intro-2}
is the large-time limit of the solution to \eqref{eq-intro-1} whenever $u_0$ is below the stable solution. In particular, in the case $\lambda=\lambda^*$, the large-time behavior of $u$ changes depending on $N$ as follows. 
\begin{enumerate}
    \item[{(i)}] If $\lambda<\lambda^{*}$ and $u_{0}\le v_{\lambda}$ in $B_1$, the solution $u$ of \eqref{eq-intro-1} exists globally and satisfies $u\to v_{\lambda}$ in $C^2(\overline{B_1})$ as $t\to\infty$.
    \item [{(ii)}] If $\lambda^{*}<\lambda$ and $u_{0}\in C^{0}(\overline{B_1})$, the solution blows up in finite time.
    \item[{(iii)}] If $\lambda=\lambda^{*}$ and $u_{0}\le v^{*}$ in $B_1$, then $u$ is global and
    \begin{enumerate}
        \item $u\to v^{*}$ in $C^{2}(\overline{B_1})$ as $t\to\infty$
        when $N\le 9$;
        \item $u\to v^{*}=V_{*}$ in $L^2(B_1)\cap C^{2}_{\mathrm{loc}}(\overline{B_1}\setminus \{0\})$ when $N\ge 10$. In particular, $\lVert u\rVert_{L^{\infty}(B_1)}\to\infty$ as $t\to\infty$.
    \end{enumerate}
\end{enumerate}
The case $u_0\not\le v_{\lambda}$ (or $u_0\not\le v^{*}$) is more delicate.
For $\lambda<\lambda^{*}$, the stable solution $v_\lambda$ does not separate globally bounded from finite-time blow-up. Instead, when they are smooth, unstable solutions and $v^{*}$ serve as thresholds separating globally bounded from finite-time blow-up. Moreover, $V_{*}$ is a threshold of the existence and non-existence of solutions to \eqref{eq-intro-1} in some sense. For a precise statement,
see Theorem \ref{p-thm-1}. For related works in this direction, we refer to \cite{HM25, LT1987, Wan, Tel, FK25, I11, GV97, Mizo05, V99, Martel98, QS25} and the references therein.

\medskip
In the grow-up case, Dold, Galaktionov, Lacey and V\'{a}zquez \cite{DGLV1998} determined the sharp 
grow-up rate $\lVert u\rVert_{L^{\infty}(B_1)}=H \gamma^{-1}t + O(1)$ for $N\ge 11$. Here, $H$ and $\gamma$ are defined later. For $N=10$, Galaktionov and King \cite{GK} formally derived the grow-up rate.
We refer to the related works on the grow-up rate \cite{CZ2022, Mizo06, PY03, FKWY06, FKWY07-ADE, FKWY07, FWY}.

The classical results motivate us to investigate the effect of the inhomogeneous term on the bifurcation structure of \eqref{eq-intro-2} and the large-time behavior of solutions to \eqref{eq-intro-1}. In the following subsection, we first clarify the bifurcation structure of \eqref{eq-intro-2}.
\subsection{Inhomogeneous stationary problem}
For the inhomogeneous case, radial symmetry of solutions is no longer guaranteed in general. Therefore, we restrict our attention to radial solutions. By using a standard change of variables (see \cite{KumagaiJDE, Korman14}), we obtain the following parameterization result
\begin{prop}
\label{intro-prop-1}
Assume that $N\ge 3$ and $f\in \mathrm{Lip}[0,1]$.
Then, the following hold.
\begin{enumerate}
    \item[(i)] The set of radial solutions of \eqref{eq-intro-2} forms an unbounded analytic curve emanating from $(0,\phi_0)$ and is described by 
    \begin{equation}
    \label{intro-parameter}
    \text{$\{(\lambda(\beta), v(r,\alpha(\beta))); \beta\in \mathbb{R}\}$ \quad with $\alpha(\beta):=v(0,\beta)=\beta-\log \lambda(\beta)$.}    
    \end{equation}
    Moreover, \eqref{eq-intro-2} has no (even non-radial or weak) solution for $\lambda>\lambda^{*}$, where 
\begin{equation*}
\lambda^{*}:=\sup\{\lambda>0; \text{ \eqref{eq-intro-2} admits a radial solution} \}\in (0,\infty).
\end{equation*}
\item[(ii)] There exists a unique solution $v^{*}\in H^{1}_{0}(B_1)$ with $\lambda=\lambda^{*}$. The set of stable solutions of \eqref{eq-intro-2} consists of the branch of radial classical solutions extending from \((0,\phi_0)\)  
to the limiting solution 
\((\lambda^*,v^*)\), together with the limiting solution \((\lambda^*,v^*)\). In particular, the stable solution is unique for each $\lambda\in (0,\lambda^*]$, which is denoted by $v_{\lambda}$. Moreover, $v_{\lambda}(r)<v_{\lambda'}(r)$ in $B_1$
for $\lambda<\lambda'<\lambda^{*}$.
\end{enumerate}
\end{prop}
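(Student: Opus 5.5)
The plan is to reduce the radial problem to a one-parameter family of initial value problems through a scaling change of variables, and then to obtain the statements about $\lambda^*$, $v^*$ and stability from the general theory for convex nonlinearities (Brezis--Cazenave--Martel--Ramiandrisoa and Brezis--V\'azquez), adapted to the inhomogeneous term.

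\textbf{Step 1: the parametrization.} Write a radial solution as $v=\phi_0+w$, so that $-\Delta w=\lambda e^{\phi_0}e^{w}$ with $w=0$ on $\partial B_1$. Because $f$ depends on $r$, the equation is not scale invariant. I would therefore fix $\beta\in\R$ and solve the singular initial value problem
\begin{equation*}
v''+\tfrac{N-1}{r}v'+\lambda e^{v}-f(r)=0
\end{equation*}
in a form where $\lambda$ is determined by the boundary condition. Set $\lambda e^{v}=e^{\beta}e^{v-v(0)}$, that is, $\beta=v(0)+\log\lambda$. This gives $\alpha=\beta-\log\lambda$, as in \eqref{intro-parameter}. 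In these variables $v$ is given by a shooting problem in which the unknown $\lambda$ enters only through the boundary condition $v(1)=0$. Concretely, let $z(r;\beta)$ solve $z''+\frac{N-1}{r}z'+e^{\beta}e^{z}-f=0$ with $z(0)=0$ and $z'(0)=0$. Then $v=z-z(1)$, and $\lambda=e^{\beta}e^{z(1)}$. The map $\beta\mapsto z(\cdot;\beta)$ is analytic because the equation depends analytically on $\beta$. Every radial solution arises this way, and the correspondence is a bijection. The curve emanates from $(0,\phi_0)$ as $\beta\to-\infty$ and is unbounded as $\beta\to\infty$, because $v(0)=\alpha\to\infty$.

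\textbf{Step 2: $\lambda^*<\infty$.} I would test against the first eigenfunction $\varphi_1>0$. Using $e^{v}\ge 1+v$, we get
\begin{equation*}
\lambda_1\int v\varphi_1=\lambda\int e^{v}\varphi_1-\int f\varphi_1\ge \lambda\int(1+v)\varphi_1-\int f\varphi_1 .
\end{equation*}
Combined with the lower bound $v\ge\phi_0$, this gives a contradiction for large $\lambda$. The argument applies to weak solutions as well. Positivity of $\lambda^*$ follows from the implicit function theorem at $(0,\phi_0)$.

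\textbf{Step 3: stable solutions.} I would follow Brezis--Cazenave--Martel--Ramiandrisoa. For $\lambda<\lambda^*$, monotone iteration starting from $\phi_0$ and using a radial supersolution yields a minimal solution $v_\lambda$. This solution is stable, and it is increasing in $\lambda$ (strictly, by the strong maximum principle). Stable solutions are unique: if $v_1<v_2$ were both stable, convexity of $e^{v}$ and testing $Q_{v_1}$ against $v_2-v_1$ give a contradiction. By the same argument, any stable solution must be minimal, and the gap lies in these two claims together. Uniqueness of stable solutions has the consequence that, by the Gidas symmetry argument, they are radial. Hence they lie on the curve from Step 1, and they form exactly the initial branch up to the first point where $Q$ degenerates.

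\textbf{Step 4: the extremal solution.} Define $v^*=\lim_{\lambda\uparrow\lambda^*}v_\lambda$. Stability gives a uniform $H^1_0$ bound. I would test with $e^{v_\lambda}-1$ and use $Q_{v_\lambda}\ge0$ on $e^{v_\lambda/2}-1$, which is the standard Crandall--Rabinowitz-type estimate with the extra term $f$ treated as a bounded perturbation. Uniqueness of $v^*$ follows from Martel's argument that weak solutions at $\lambda^*$ coincide with the extremal one.

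I expect the main obstacle to be the nonexistence of non-radial or weak solutions for $\lambda>\lambda^*$, since $\lambda^*$ is defined only through radial solutions. My plan for this step is the Brezis--Cazenave--Martel--Ramiandrisoa principle: a weak solution at level $\lambda$ produces, for every $\lambda'<\lambda$, a classical (indeed the minimal) solution at level $\lambda'$. That minimal solution is stable and hence radial by Step 3. Therefore the radial threshold dominates every $\lambda$ admitting a weak solution.
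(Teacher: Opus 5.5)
Your route is the same as the paper's. The substitution $w=v+\log\lambda$ (your $z=w-\beta$) turns radial classical solutions into the shooting family $\beta\mapsto(e^{w(1,\beta)},\,w(\cdot,\beta)-w(1,\beta))$, and the rest is imported from the Brezis--Cazenave--Martel--Ramiandrisoa, Brezis--V\'azquez and Martel theory, which the paper packages as Proposition~\ref{stableprop-1}. Your reduction $v=\phi_0+w$, $-\Delta w=\lambda e^{\phi_0}e^{w}$, puts the problem in the bounded-positive-weight setting where that theory applies, so invoke it explicitly in Steps 3--4. However, two of the arguments you actually write out fail as stated.

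\textbf{Step 2.} Normalize $\int\varphi_1=1$ and set $X=\int v\varphi_1$. From $e^v\ge 1+v$ you get $X\le(\int f\varphi_1-\lambda)/(\lambda-\lambda_1)$, and this tends to $-1$. From $v\ge\phi_0$ you get $X\ge\int\phi_0\varphi_1=-\lambda_1^{-1}\int f\varphi_1$. These two bounds are incompatible only when $\int f\varphi_1<\lambda_1$. For $f\equiv c\ge\lambda_1$, for example, there is no contradiction at any $\lambda$: the tangent line at $0$ is useless where $v$ is very negative. Use Jensen's inequality instead. It gives $\lambda_1X+\int f\varphi_1\ge\lambda e^{X}$. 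Since $\min_X(\lambda e^{X}-\lambda_1X)=\lambda_1(1+\log(\lambda/\lambda_1))$, this is impossible once that quantity exceeds $\|f\|_{L^\infty}$.

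\textbf{Step 3.} You test the wrong quadratic form. For solutions $v_1<v_2$ and $\xi=v_2-v_1$, convexity gives $Q_{v_1}(\xi)=\lambda\int(e^{v_2}-e^{v_1}-e^{v_1}\xi)\xi\,dx>0$, which contradicts nothing. The contradiction comes from the upper solution: $Q_{v_2}(\xi)=\lambda\int(e^{v_2}-e^{v_1}-e^{v_2}\xi)\xi\,dx<0$. Combine this with minimality: $v>v_\lambda$ for every solution $v\ne v_\lambda$, by the monotone iteration from $\phi_0$ and the strong maximum principle. Then every solution other than $v_\lambda$ is unstable, which is exactly what the uniqueness of stable solutions in (ii) requires.

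Two smaller points:
\begin{enumerate}
\item[(a)] The minimal solution is radial directly, because the iteration from the radial $\phi_0$ with radial $f$ stays radial. For an arbitrary stable solution, radial symmetry follows from uniqueness plus rotation invariance. Gidas--Ni--Nirenberg is not the right tool here, since its hypotheses (monotonicity of $f$, positivity of $v$) are not available.
\item[(b)] The claim $\alpha(\beta)\to\infty$ needs a reason. Once Step 2 is repaired, $\alpha(\beta)=\beta-\log\lambda(\beta)\ge\beta-\log\lambda^*$ gives it; the paper uses the a priori bound of Lemma~\ref{apriorilem1}.
\end{enumerate}
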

We call the curve $\{(\lambda(\beta), \alpha(\beta)); \beta\in \mathbb{R}\}$ the bifurcation curve of the problem \eqref{eq-intro-2}. We say that $(\lambda(\beta), \alpha(\beta))$ is a turning point if $\lambda(\beta)$ is a local minimum or local maximum.

We now aim to clarify the bifurcation structure. 
Following the idea of \cite{KumagaiJDE}, we explicitly construct a one-parameter family of singular solutions $(\lambda_h, V_{h})$ with a parameterized forcing term $f_h$. The explicit form of the singular solutions enables us to study the Morse index of $V_h$, which suggests the bifurcation structure.  
\begin{prop}
\label{intro-prop-2}
Assume that $N\ge 3$ and
$f\in \mathrm{Lip}[0,1]$. Then, \eqref{eq-intro-2} admits a unique radial singular solution $(\lambda_{*}, V_{*})$ 
such that $\lambda(\beta)\to\lambda_{*}$ and $v(r,\alpha(\beta))\to V_{*}$ in $C^{2}_{\mathrm{loc}}(0,1]\cap H^{1}_{0}(B_1)$ as $\beta\to\infty$. In addition, 
$V_{*}=-2\log r+\log (2N-4)-\log \lambda_{*}+o(1)$ as $r\to 0$.
For the particular inhomogeneous term 
\begin{equation}
\label{Deffh}
    f_{h}(r)=h\left(8(N-2)d^{2}(r)+2(N-2)d(r)+1\right), \quad \text{where}  \quad d(r)=\frac{1}{2N-4+hr^2}
\end{equation}
with $h>-(2N-4)$,
the singular solution is explicitly given by
\begin{equation*}
(\lambda_h, V_h)=
\bigg(
    2N-4+h,
    -2\log r-\log d(r)-\log\left[2N-4+h\right]
\bigg).
\end{equation*}
Moreover, $m(V_h)=\infty$ if $3\le N\le 9$; $m(V_h)=0$ and $V_h$ is stable if $N\ge 10$, $h\le H$; and $1\le m(V_h)<\infty$ if $N\ge 10$, $h>H$, where 
\begin{equation}
\label{DefH}
H:= \text{the first eigenvalue of }
\begin{cases}
   \text{$-\Delta_D$ in $B_{1}^{2}\subset \mathbb{R}^2$}& \text{if $N=10$} \\
  \text{$-\Delta_D-\frac{2N-4}{|x|^2}$ in $B_{1}^{N}\subset \mathbb{R}^N$}  &  \text{if $N\ge 11$}\\
\end{cases}
\quad >0.
\end{equation}
\end{prop}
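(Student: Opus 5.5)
The plan treats the general $f$ and the explicit family $f_h$ separately. For general $f$ I work with the radial ODE in Emden--Fowler variables. Write $t=-\log r$ and
\[
z(t)=v(r)+2\log r+\log\lambda-\log(2N-4).
\]
Then the radial equation becomes an autonomous second-order equation in $z$,
\[
z''-(N-2)z'+(2N-4)(e^{z}-1)=0,
\]
plus a forcing term of size $e^{-2t}f(e^{-t})$, which is exponentially small as $t\to\infty$.

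The equilibrium $z=0$ of the unperturbed system is hyperbolic in one direction. Regular solutions correspond to orbits that emanate, as $t\to-\infty$ in a rescaled sense, from the Liouville solution. The singular solution corresponds to the one-parameter family of solutions tending to $0$ as $t\to\infty$. I would construct this family by a contraction-mapping argument on $[T,\infty)$, treating the forcing as a perturbation. This yields $V_*=-2\log r+\log(2N-4)-\log\lambda_*+o(1)$ directly. Imposing $V_*(1)=0$ and using monotonicity in the free parameter of the family then gives uniqueness of the radial singular solution.

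For the convergence as $\beta\to\infty$, I use the parametrization of Proposition~\ref{intro-prop-1}. After the scaling $r\mapsto e^{-\beta/2}r$, the rescaled regular solutions converge locally uniformly to the regular Liouville solution $-2\log(1+r^2/(8(N-2)))$ (up to normalization). In $z$-variables, the Liouville solution's orbit tends to $0$ as $t\to\infty$, since it approaches the singular solution of the homogeneous problem at infinity. Continuous dependence on initial data, combined with the exponential smallness of the forcing term, then transfers this convergence to $(\lambda(\beta),v(\cdot,\alpha(\beta)))\to(\lambda_*,V_*)$ in $C^2_{\rm loc}(0,1]$. An integrable majorant near $0$, namely $|\nabla v|\lesssim 1/r$ uniformly in $\beta$, which holds for $N\ge3$, upgrades this to $H^1_0$ convergence. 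I expect this step to be the main obstacle. The forced problem is not scale invariant, so the matching between the inner Liouville region and the outer region, together with uniform control of $\lambda(\beta)$ away from $0$ and $\infty$, must be done carefully. My first attempt would be energy/Pohozaev-type estimates in the $(z,z')$ phase plane.

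For $f_h$, the singular solution is verified by direct substitution. The key observation is that $e^{V_h}=\bigl(r^2d(r)(2N-4+h)\bigr)^{-1}$, so that
\[
\lambda_h e^{V_h}=\frac{2N-4}{r^2}+h,\qquad Q_{V_h}(\xi)=\int_{B_1}|\nabla\xi|^2-\frac{2N-4}{|x|^2}\xi^2-h\xi^2\,dx.
\]
The Morse index then follows from the Hardy inequality with constant $(N-2)^2/4$, since $2N-4\le (N-2)^2/4$ if and only if $N\ge10$. The cases are as follows.
\begin{enumerate}
\item For $N\le 9$, radial test functions $|x|^{-(N-2)/2}\sin(\kappa\log|x|)$, cut off on disjoint dyadic annuli, produce infinitely many mutually orthogonal negative directions, so $m(V_h)=\infty$.
\item For $N\ge11$, the operator $-\Delta-(2N-4)/|x|^2$ is subcritical. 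It therefore has a positive first Dirichlet eigenvalue $H$, and the form is nonnegative precisely when $h\le H$.
\item For $N=10$, the ground-state substitution $\xi=|x|^{-4}\eta$ turns the radial quadratic form into the two-dimensional Dirichlet form $\int|\nabla\eta|^2-h\eta^2$ with respect to the measure $r\,dr$. This identifies $H$ with the first Dirichlet eigenvalue of $-\Delta$ on $B^2_1$.
\end{enumerate}
Finally, for $h>H$ we have $m(V_h)\ge1$ by the definition of $H$. Finiteness of $m(V_h)$ follows because the operator is bounded below with compact resolvent: Hardy's inequality together with a Hardy remainder term gives a compact embedding, so only finitely many eigenvalues lie below $h$.
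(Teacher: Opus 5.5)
Your treatment of $f_h$ is sound and is essentially the paper's argument. It uses $\lambda_he^{V_h}=(2N-4)r^{-2}+h$, the Hardy threshold $2N-4\le (N-2)^2/4\iff N\ge10$, and oscillating test functions supported between consecutive zeros for $N\le9$ (Lemma~\ref{optimality-1}). It then reduces, via $\phi=r^{-\gamma}\psi$, to the Dirichlet Laplacian in dimension $m=N-2\gamma$ ($m=2$ when $N=10$), as in Lemma~\ref{b-v-lem}. For $N=10$, stability must also hold for non-radial $\xi$; this is the Brezis--V\'azquez inequality of Lemma~\ref{hardy}. The gaps are in the general-$f$ part.

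\textbf{Uniqueness.} Your phase-plane picture is wrong, and uniqueness is not actually proved. The linearization $z''-(N-2)z'+(2N-4)z=0$ has roots $\frac12\bigl(N-2\pm\sqrt{(N-2)(N-10)}\bigr)$. Their sum $N-2$ and product $2N-4$ are positive, so both have positive real part for every $N\ge3$. Thus $z=0$ is a source as $t=-\log r\to\infty$, not a saddle, and there is no one-parameter family of solutions with $z\to0$. The forced equation has exactly one such solution, obtained as a fixed point of the variation-of-constants formula on $[T,\infty)$ (compare the proof of Lemma~\ref{non-degenerate-lem}). Since $\lambda$ does not enter the $z$-equation, $\lambda_*=e^{W(1)}$ is simply read off. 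Your step ``impose $V_*(1)=0$ and use monotonicity in the free parameter'' therefore has no content. Even under your own picture it would not select a unique member, since $\lambda$ can always be adjusted to give $V(1)=0$.

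More importantly, the contraction only gives uniqueness among solutions already known to satisfy $z\to0$. A radial singular solution is only assumed to satisfy $V_*\to\infty$ as $r\to0$. The missing ingredient is the a priori asymptotics of Lemma~\ref{uniquelem}:
\begin{itemize}
\item the upper bound $w\le-2\log r+\log 4N$, from the integral identity in Lemma~\ref{apriorilem1};
\item a lower bound $\limsup_{s\to\infty}\Tilde W>-\infty$;
\item then $\Tilde W\to0$ via \cite{MN20}.
\end{itemize}
Only after that does the difference argument of \cite{LLD} give uniqueness.

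\textbf{Convergence.} You flag the convergence as $\beta\to\infty$ as the main obstacle, and it is left open and mis-framed. The function $-2\log(1+r^2/(8(N-2)))$ does not solve $-\Delta U=e^U$ for $N\ge3$; the formula $-2\log(1+r^2/8)$ is special to $N=2$. Also, the orbit of the entire regular solution tends to $z=0$ as $r\to\infty$, i.e.\ as $t\to-\infty$, not $t\to+\infty$. A matching argument would hinge on exactly the source property you missed. The forcing cancels in the difference of two solutions, and near $z=0$ that difference decays when you integrate toward larger $r$.

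The paper avoids matching entirely, as follows.
\begin{itemize}
\item The Pohozaev identity gives $w(r,\beta)>\rho$ on $(0,r_\rho)$ for $\beta>4\rho$ (Lemma~\ref{needle}).
\item Hence $r_w\ge r_q$ uniformly, and Lemma~\ref{apriorilem1} gives $\beta$-uniform $C^1$ bounds on $[\epsilon,1]$.
\item So subsequential $C^2_{\mathrm{loc}}(0,1]$ limits exist, and they are singular by Lemma~\ref{needle}~(ii).
\item Uniqueness of the singular solution upgrades this to convergence of the whole family, with $\lambda(\beta)=e^{w(1,\beta)}\to e^{W(1)}=\lambda_*$ automatically.
\end{itemize}
Your $H^1_0$ upgrade via the uniform bound $|w'|\lesssim r^{-1}$ then goes through.
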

Proposition \ref{intro-prop-2} and the conjecture of Miyamoto \cite{M14} suggest that,
at the threshold $h=H$, the
bifurcation structure changes from Type II to Type III when $N\ge 10$. Here, we say that the bifurcation diagram is of Type III if the curve bends back at $\lambda=\lambda^{*}$ and then exhibits only finitely many further turning points (possibly none).
In fact, we prove the following classification result.
\begin{thm}
\label{intro-thm-1}
Assume that $N\ge 3$ and
$f\in \mathrm{Lip}[0,1]$. Then, the bifurcation diagram of \eqref{eq-intro-2}
is of
\begin{enumerate}
    \item[(a)] Type I if $N\le 9$;
    \item[(b)] Type II if $N\ge 10$ and $f\le f_H$ in  $B_1$;
    \item[(c)] Type I or Type III if $N\ge 10$, $f\ge f_H$ in $B_1$ and $f\not\equiv f_H$.
\end{enumerate}
In particular, $v^{*}\in C^2(\overline{B_1})$ 
in cases (a), (c), and $(\lambda^{*},v^{*})=(\lambda_{*}, V_{*})$ in case (b). Moreover, in case (c), the bifurcation diagram is of Type III if $N=10,11$ or $f\equiv f_h$ with some $h>H$.
\end{thm}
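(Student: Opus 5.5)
We plan to reduce everything to the stability or instability of the singular solution $V_*$ of Proposition \ref{intro-prop-2}, combined with the Brezis--V\'azquez criterion and a comparison argument against the explicit family $(\lambda_h,V_h)$. Throughout we use the parametrization of Proposition \ref{intro-prop-1}: $\lambda(\beta)\to\lambda_*$ and $v(\cdot,\alpha(\beta))\to V_*$ as $\beta\to\infty$.

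\textbf{Case (a), $N\le 9$.} Near $r=0$ we have $V_*=-2\log r+O(1)$, so $\lambda_* e^{V_*}=(2N-4+o(1))r^{-2}$. Since $2N-4>(N-2)^2/4$ for $N\le 9$, a Hardy-type argument shows that $V_*$ has infinite Morse index; one localizes test functions near $0$, where the constant $f$-term is negligible. This gives intersection numbers $v(\cdot,\alpha)-V_*$ tending to infinity as $\alpha\to\infty$. The oscillation of $\lambda(\beta)$ around $\lambda_*$ then follows as in Joseph--Lundgren or Miyamoto, via the Emden--Fowler change of variables $s=-\log r$. After this change, the linearized autonomous ODE at the singular point has complex roots for $N\le 9$, and $f$ enters only as an exponentially small perturbation $O(e^{-2s})$. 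Hence infinitely many turning points occur, which is Type I, and $v^*$ is classical.

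\textbf{Case (b), $N\ge 10$, $f\le f_H$.} We first show that $\lambda_*\ge\lambda^*$ and that $V_*$ is stable. The main tool is a comparison: $V_H$ is a stable singular solution for $f_H$, and decreasing $f$ only helps sub/supersolution arguments. Concretely, we would compare $V_*$ with $V_H+c$, using $\lambda_* e^{V_*}\le\lambda_H e^{V_H}$ pointwise, so that the quadratic form satisfies $Q_{V_*}\ge Q_{V_H}\ge 0$. This needs a pointwise ordering of $\lambda_*e^{V_*}$ against the explicit potential $(2N-4)\,r^{-2}\,\frac{2N-4+H}{2N-4+Hr^2}\cdot\frac{1}{\ldots}$. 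We expect to obtain it by writing $w=V_*+\log\lambda_*$, which solves $-\Delta w=e^w-f$. The ordering $f\le f_H$ together with the common asymptotics at $0$ then gives $w\le W_H$ by a maximum-principle argument in the variable $s$, where the singularity is removed. Once $V_*\in H^1_0$ is stable, Brezis--V\'azquez identifies $(\lambda_*,V_*)=(\lambda^*,v^*)$. The uniqueness of stable solutions in Proposition \ref{intro-prop-1}(ii) together with the monotone stable branch then forces the curve to be monotone, which is Type II.

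\textbf{Case (c).} By the same comparison with the inequality reversed, $f\ge f_H$ and $f\not\equiv f_H$ give $Q_{V_*}<Q_{V_H}$ on some function. Since $H$ is the optimal (first-eigenvalue) threshold, we expect $m(V_*)\ge 1$, so $V_*$ is unstable. Brezis--V\'azquez then shows $v^*$ is classical and the curve bends back at $\lambda^*$, so the diagram is of Type I or Type III.

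\textbf{Refinements in case (c).} The finiteness of the number of turning points is the main obstacle. For $N\ge 10$ the linearization in $s$ has real roots, so oscillation is not forced. We would prove that $v(\cdot,\alpha)-V_*$ eventually has a constant number of zeros and that $\lambda(\beta)-\lambda_*$ has a fixed sign for large $\beta$, via asymptotic expansions of $\lambda(\beta)-\lambda_*$ whose leading coefficient is nonzero.
\begin{itemize}
\item For $f=f_h$, explicit formulas make this coefficient computable.
\item For $N=10,11$, the relevant rates come from the resonance/log structure: double root at $N=10$, and a gap large enough at $N=11$ that the perturbation by $f$ does not affect the leading term.
\end{itemize}
For general $N\ge 12$ the sign of this coefficient cannot be determined, which is why only ``Type I or III'' is asserted there.
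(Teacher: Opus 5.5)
Your sketch of (a), and the step from ``$V_*$ stable'' to $(\lambda^*,v^*)=(\lambda_*,V_*)$, are acceptable in outline. For (a), note that passing from infinite Morse index to divergent intersection numbers needs instability of $W$ on disjoint annuli shrinking to $0$, plus an intersection lemma, as in the paper. The real gap is the Type III refinement in (c). A fixed sign of $\lambda(\beta)-\lambda_*$, or an eventually constant number of zeros of $v(\cdot,\alpha)-V_*$, does not bound the number of turning points. An analytic $\lambda(\beta)$ can have infinitely many local maxima and minima on one side of $\lambda_*$, and an expansion of $\lambda-\lambda_*$ alone does not control $\dot{\lambda}$. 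Turning points are counted by the zeros of $\dot{w}(\cdot,\beta)$ on $(0,1)$. That count can change forever only if $0$ is a radial eigenvalue of $\mathcal{L}$, and your ``double root'' and ``gap'' heuristics for $N=10,11$ do not address this.

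The missing idea is a nondegeneracy argument at the turning points themselves:
\begin{itemize}
\item If $\dot{\lambda}(\beta_k)=0$, then $\dot{w}(1,\beta_k)=0$. Green's identity for the equations of $\dot{w}$ and $\ddot{w}$ then gives $N\omega_N\ddot{\lambda}\,\dot{w}'(1)\lambda^{-1}=\int_{B_1}e^{w}\dot{w}^3\,dx$.
\item Normalize $r^{\gamma}\dot{w}(\cdot,\beta_k)$ in $L^2(B_1^m)$. These functions converge to some $\psi_i$ with $\mu_i=0$. In particular, without a zero eigenvalue there are no turning points for large $\beta$.
\item The normalized cubic integrals converge to a positive multiple of $\int_{B_1}\lambda_*e^{V_*}\phi_i^3\,dx$. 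This limit is $+\infty$ for $N=10,11$, since $\phi_i\sim r^{-\gamma}$ and $N-2-3\gamma\le 0$. For $f=f_h$ it is positive by a Sturm-type comparison applied to $r^{\gamma+(m-1)/2}\phi_i$.
\item A nonzero limit fixes the sign of $\ddot{\lambda}(\beta_k)$ for all large $k$. This contradicts the alternation of maxima and minima of the analytic function $\lambda$.
\end{itemize}

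The comparison steps in (b) and (c) also need repair. Two singular solutions with the same leading term $-2\log r+\log(2N-4)$ cannot be ordered by a maximum principle in $s$, for three reasons:
\begin{itemize}
\item the operator $\zeta''-(N-2)\zeta'+2(N-2)\zeta$ has a positive zeroth-order coefficient;
\item the common limit gives no sign of $W-W_H$ near $r=0$;
\item the order of $W(1)=\log\lambda_*$ and $W_H(1)=\log\lambda_H$ is unknown in advance.
\end{itemize}
(Also, $\lambda_He^{V_H}=e^{W_H}=(2N-4)r^{-2}+H$ exactly.) The paper instead compares the \emph{regular} solutions with $W_H$. Set $\eta=W_H-w(\cdot,\beta)$; it is automatically positive near $0$. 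On its first positivity ball $B_{r_0}$, the assumption $f\le f_H$ and convexity give $-\Delta\eta\le e^{W_H}\eta$, which contradicts strict stability of $W_H$ in $B_{r_0}$. So $\eta$ has no zero in $(0,1)$, and letting $\beta\to\infty$ yields $W\le W_H$.

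This mechanism is one-sided: it needs stability of the singular solution lying above. So in (c) it cannot be run ``with the inequality reversed'' unconditionally. You must argue by contradiction. If $v^*$ were singular, then $V_*=v^*$ is stable. Running the same argument with $W$ as the stable upper solution, against the regular solutions for $f_H$, gives $W\ge W_H$. Then $0\le Q_W\le Q_{W_H}$ together with $\mu_1=0$ for $f_H$ forces $W\equiv W_H$, i.e.\ $f\equiv f_H$, a contradiction.
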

\begin{remark}\,
\rm{
\begin{enumerate}
    \item[(i)] Type III bifurcations were first established in \cite{M14} for specific nonlinearities
and later confirmed in \cite{KumagaiJDE}
for the weighted problem with $N=10$.
    \item[(ii)] In case (c), the proof of Type III bifurcations requires the
non-degeneracy condition \eqref{non-degenerate}. Lemma \ref{non-degenerate-lem} shows that this condition holds
when $N=10, 11$, or $f\equiv f_h$ with some $h>H$.
Moreover, \eqref{non-degenerate} holds for all but at most countably many $\nu$
along any analytic family $\{f^\nu\}_{\nu\in \R}$ with $f^{0}=f_H$.
\end{enumerate}
}
\end{remark}
\subsection{Inhomogeneous parabolic problem}
The change in the bifurcation structure induced by the inhomogeneous term $f$, established in Theorem \ref{intro-thm-1},
leads to qualitative changes in the asymptotic behavior of solutions to \eqref{eq-intro-1}.
In fact, we prove the following.
\begin{thm}
\label{intro-thm-2}
Assume that $N\ge 3$, $f\in \mathrm{Lip}[0,1]$ and \eqref{asu0}. Then,
\begin{enumerate}
\item[{(i)}] If $\lambda<\lambda^{*}$ and $u_{0}\le v_{\lambda}$ in $B_1$, there exists a unique global solution $u$ of \eqref{eq-intro-1} such that $u\to v_{\lambda}$ in $C^2(\overline{B_1})$ as $t\to\infty$.
    \item [{(ii)}] If $\lambda>\lambda^{*}$, there exists a unique local solution of \eqref{eq-intro-1} that blows up in finite time.
    \item[{(iii)}] If $\lambda=\lambda^{*}$ and $u_{0}\le v^{*}$ in $B_1$, there exists a unique global solution $u$ of \eqref{eq-intro-1}. Moreover,
    \begin{enumerate}
        \item $u\to v^{*}$ in $C^2(\overline{B_1})$ as $t\to\infty$ when $N\le 9$;
        \item $u\to v^{*}=V_{*}$ in $L^2(B_1)\cap C^{2}_{\mathrm{loc}}(\overline{B_1} \setminus \{0\})$ when $N\ge 10$ and $f\le f_H$ in $B_1$. In particular, $\lVert u\rVert_{L^{\infty}(B_1)}\to\infty$ as $t\to\infty$;
        \item $u\to v^{*}$ in $C^2(\overline{B_1})$ as $t\to\infty$ when $N\ge 10$, $f\ge f_H$ in $B_1$ and $f\not\equiv f_H$.
        \end{enumerate}
    \end{enumerate}
\end{thm}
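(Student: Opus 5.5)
The plan is to combine the bifurcation information from Proposition \ref{intro-prop-1} and Theorem \ref{intro-thm-1} with comparison and monotonicity arguments for \eqref{eq-intro-1}.

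\textbf{Local well-posedness and monotone approach.} Since $e^u$ is locally Lipschitz and $u_0\in C^0(\overline{B_1})$, standard semigroup theory gives a unique local classical solution. The lower bound \eqref{asu0} is preserved: $\phi_0$ is a subsolution because $\lambda e^{\phi_0}>0$. Hence $u\ge\phi_0$ for all $t$, and $\phi_0$ is a natural lower barrier.

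\textbf{Part (i).} Let $\underline u$ be the solution with initial datum $\phi_0$. It is a subsolution and a stationary-type lower barrier, so it is nondecreasing in $t$. The stable solution $v_\lambda$ is a supersolution, so comparison gives $\phi_0\le \underline u\le u\le v_\lambda$ whenever $u_0\le v_\lambda$. Global existence follows. The monotone limit of $\underline u$ is a bounded stationary solution lying below $v_\lambda$; by minimality of the stable branch (Proposition \ref{intro-prop-1}(ii)) it equals $v_\lambda$.

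To get convergence of $u$ itself, I would also squeeze from above. For $\lambda'\in(\lambda,\lambda^*)$, the stable solution $v_{\lambda'}$ is a strict supersolution of the $\lambda$-problem. So the solution started from $v_\lambda$ is nonincreasing and converges to a stationary solution in $[\underline u_\infty, v_\lambda]$, which must be $v_\lambda$. Parabolic regularity then upgrades this to $C^2(\overline{B_1})$ convergence.

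\textbf{Part (ii).} I would argue by contradiction. Suppose $u$ is global. The Kaplan/Fujita-type argument with the first eigenfunction of a slightly smaller ball, combined with the Brezis--Cazenave--Martel--Ramiandrisoa weak-solution argument (\cite{BCMR}), would show that a global solution for $\lambda$ yields a weak stationary solution for every $\lambda'<\lambda$. Concretely, the function $\int_0^t$-averaged, or the transformation $w=1-e^{-u}$-type, produces a weak supersolution of the stationary problem with $\lambda'\in(\lambda^*,\lambda)$. Proposition \ref{intro-prop-1}(i) excludes weak solutions for $\lambda'>\lambda^*$, so this is a contradiction.

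The inhomogeneous term needs care here. I would work with $u-\phi_0$, which satisfies $\partial_t w-\Delta w=\lambda e^{\phi_0}e^{w}$ with a bounded positive weight. The BCMR argument applies to such weighted convex nonlinearities.

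\textbf{Part (iii).} The monotone sandwich from part (i) gives $\phi_0\le\underline u\le u\le v^*$ when $v^*$ is a (weak) supersolution. In cases (a) and (c), $v^*\in C^2(\overline{B_1})$ by Theorem \ref{intro-thm-1}, so $u$ is globally bounded. The lower solution $\underline u$ increases to a stationary solution $\le v^*$; by uniqueness of stable solutions at $\lambda^*$ it equals $v^*$. For the upper side, $\lambda'>\lambda^*$ is not available, so this is the \emph{main obstacle}.

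I would try to use degeneracy of $v^*$: the linearized operator has first eigenvalue $0$ with positive eigenfunction $\psi$. Then I would use a Lyapunov/energy argument. The $\omega$-limit set of the bounded solution $u$ consists of stationary solutions below $v^*$. Every such solution lies on the stable branch at $\lambda^*$, hence equals $v^*$. This requires the $\omega$-limit set to be nonempty in $C^2$, which holds by boundedness and parabolic estimates, and uses the energy functional to guarantee $\omega$-limits are stationary.

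In case (b), $v^*=V_*$ is singular but lies in $H^1_0$ and is stable. The sandwich still holds in the weak sense, so global existence follows from comparison with the weak supersolution; this needs an approximation argument (e.g.\ truncating $V_*$ by $v(\cdot,\alpha)$ from the curve, which lies below $V_*$). Monotone convergence of $\underline u$ gives an $H^1_0$ stationary limit, which equals $V_*$ by uniqueness in Proposition \ref{intro-prop-1}(ii). The same energy argument, now in $L^2$ with local $C^2$ estimates away from the origin, gives $u\to V_*$. Unboundedness follows because $V_*\to\infty$ at $0$.
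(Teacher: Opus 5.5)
Your proposal has a genuine gap in case (iii)(b): global existence. When the upper barrier is the singular solution $V_*$, comparison only gives $\phi_0\le u\le V_*$. That does not by itself rule out $\|u(\cdot,t)\|_{L^\infty(B_1)}\to\infty$ as $t\uparrow T<\infty$ at the origin. Global existence of a bounded classical solution is part of the claim, and so is global existence of your barrier $\underline{u}$, which is also only bounded above by $V_*$. The conclusion $\|u\|_{L^\infty}\to\infty$ as $t\to\infty$ also presupposes that $u(\cdot,t)$ is bounded for each finite $t$.

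The approximation you suggest does not work. The functions $v(\cdot,\alpha)$ on the branch solve the problem with $\lambda(\beta)<\lambda^*$, so $-\Delta v=\lambda(\beta)e^{v}-f\le \lambda^*e^{v}-f$. They are therefore \emph{sub}solutions of the $\lambda^*$-problem and cannot bound $u$ from above. Moreover, $u_0\le V_*$ does not imply $u_0\le v(\cdot,\alpha)$ for any $\alpha$; take $u_0=\min\{V_*,M\}$.

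The missing idea is the regularization step in Proposition~\ref{p-prop-1}:
\begin{enumerate}
\item $V_*-u$ is a supersolution of the heat equation, so it dominates a positive caloric function. Hence $u(\cdot,\tau_2)\le V_*-c$ on some $B_{r_1}$.
\item Iterate $u\le u^{(i)}$, where $(\partial_t-\Delta)u^{(i)}=\lambda_*e^{u^{(i-1)}}-f$ and $u^{(0)}=V_*$. This gives $e^{u^{(1)}}\le\theta e^{V_*}+C^\dagger$ with $\theta<1$.
\item Consequently $u^{(2)}\le \theta V_*+(1-\theta)S+C^\dagger\sigma$, where $(\partial_t-\Delta)S=-f$, $S(\cdot,\tau_2)=V_*$, and $-\Delta\sigma=1$.
\item Therefore $e^{u}\lesssim r^{-2\theta}\in L^{N/2+\delta}$, so $u$ is bounded on $[\tau,T]$ for every $T$.
\end{enumerate}
Some argument of this kind is indispensable. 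Once it is in place, your sandwich (or energy) argument gives the convergence to $V_*$ in $L^2(B_1)\cap C^2_{\mathrm{loc}}(\overline{B_1}\setminus\{0\})$.

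The rest is sound. Part (ii) follows the paper's route, a BCMR-type argument. For the exponential nonlinearity, the concave transform $\Psi(s)=-\log(\varepsilon+(1-\varepsilon)e^{-s})$ applied to $u-\phi_0$ does not see the weight $e^{\phi_0}$, so your reduction is legitimate.

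In (i) and (iii)(a),(c) your sandwich is a more elementary route than the paper's energy/$\omega$-limit argument (Proposition~\ref{p-prop-2}, following Peral--V\'azquez). However, the ``main obstacle'' you flag does not exist. The upper side of the squeeze is free, since $u\le v_\lambda$ (resp.\ $u\le v^*$) by comparison. Meanwhile $\underline{u}\uparrow v_\lambda$ (resp.\ $v^*$), because these are the minimal solutions; minimality, not uniqueness of stable solutions, is the right fact here. Dini's theorem then gives uniform convergence, and parabolic regularity upgrades it to $C^2(\overline{B_1})$. Your remarks about $v_{\lambda'}$ and ``the solution started from $v_\lambda$'' (which is stationary) can be dropped.
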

\begin{remark}
\rm{For the case $u_0\not\le v_{\lambda}$ (or $u_0\not \le v^{*}$), we also show in Theorem \ref{p-thm-1} that when they are smooth, unstable solutions $v$ and $v^{*}$ are thresholds separating convergence to $v_{\lambda}$ or $v^{*}$ from finite-time blow-up. Moreover, $V_{*}$ is a threshold of the existence/non-existence of solutions to \eqref{eq-intro-1} in some sense.
}
\end{remark}
Theorem \ref{intro-thm-2} shows that the grow-up phenomenon disappears once $f$ exceeds the threshold $f_H$. This phenomenon can be understood qualitatively through the change in the stability of $V_{*}$. In the following theorem, we give a quantitative understanding by determining the \textit{sharp} grow-up rate in the case $N\ge 11$.
\begin{thm}
\label{intro-thm-3}
Assume that $N\geq 11$, $\lambda=\lambda^{*}$,
$f\in \mathrm{Lip}[0,1]$, \eqref{asu0} and $u_{0}\leq v^{*}$ in $B_1$. Let $\gamma:=\frac{1}{2}\left(N-2-\sqrt{(N-2)(N-10)}\right)$.
Then, the solution $u$ of \eqref{eq-intro-1} satisfies
\begin{enumerate}
    \item[(i)] $\lVert u\rVert_{L^{\infty}}=\frac{2\mu_1}{\gamma}t+O(1)$ if $0\le f\le f_H$ in $B_1$ and $f\not\equiv f_{H}$;
    \item[(ii)] $\lVert u\rVert_{L^{\infty}}=\frac{2}{\gamma}\log t+O(1)$ if $f\equiv f_H$ and $N\ge 12$;
\item[(iii)] $\lVert u\rVert_{L^{\infty}}=\frac{2}{\gamma}(\log t+\log \log t) +O(1)$ if $f\equiv f_H$ and $N=11$
\end{enumerate}
as $t\to\infty$, where $\mu_1$ is the first eigenvalue of $\cL=-\Delta - \lambda_{*}e^{V_{*}}$.
\end{thm}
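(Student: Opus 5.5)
We follow the matched-asymptotics / sub–super-solution strategy of \cite{DGLV1998}, adapted to the inhomogeneous operator. Write $u=V_*-w$ near the singularity and work in self-similar variables $y=x/\rho(t)$, where $\rho(t)\to0$ is the inner scale defined through $\|u(t)\|_{L^\infty}=V_*(\rho)+O(1)$. By Theorem \ref{intro-thm-2}(iii)(b), $u\to V_*$ in $L^2\cap C^2_{\rm loc}(\overline{B_1}\setminus\{0\})$ and $u\le V_*$, so it suffices to describe the approach of $u$ to $V_*$. Linearizing at $V_*$ gives $\partial_t w+\mathcal L w\approx 0$ away from the core, with $\mathcal L=-\Delta-\lambda_*e^{V_*}$. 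Since $\lambda_*e^{V_*}=(2N-4)r^{-2}(1+o(1))$, the radial solutions of $\mathcal L\phi=0$ near $0$ behave like $r^{-\gamma}$ and $r^{-(N-2-\gamma)}$, and $\gamma<N-2-\gamma$ for $N\ge11$. The inner region is governed by the regular Gelfand solution $U$ of $-\Delta U=e^U$ in $\R^N$, rescaled: $u\approx U(x/\rho)-2\log\rho+c$. Its expansion $U(y)=-2\log|y|+\log(2N-4)-A|y|^{-\gamma}+\dots$ as $|y|\to\infty$ forces the matching condition
\[
w(r,t)\approx A\,\rho(t)^{\gamma}r^{-\gamma}\quad (\rho\ll r\ll1),\qquad\text{so}\qquad \|u(t)\|_{L^\infty}=-2\log\rho(t)+O(1)=\tfrac{2}{\gamma}\bigl(-\log a(t)\bigr)+O(1),
\]
where $a(t)$ is the coefficient of $r^{-\gamma}$ in the outer profile $w$. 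Thus everything reduces to the decay rate of the outer linear problem.

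The key spectral step distinguishes the three cases. When $0\le f\le f_H$ and $f\not\equiv f_H$, I expect a comparison argument (using $f_H$ and Proposition \ref{intro-prop-2}) to give $\mu_1>0$ with a positive first eigenfunction $\phi_1\sim r^{-\gamma}$. Then $w\approx c e^{-\mu_1 t}\phi_1$, so $a(t)\sim e^{-\mu_1t}$, which gives (i) with rate $\frac{2\mu_1}{\gamma}t$. When $f=f_H$, everything is explicit through $V_H$, and $\mathcal L$ is (after the Hardy-type ground state substitution) critical: $\mu_1=0$ and the spectrum reaches $0$. Concretely, write $\lambda_*e^{V_H}=(2N-4)r^{-2}\cdot(2N-4)d(r)$, which equals $(2N-4)/r^2-\text{(bounded)}\cdot(\ldots)$ via $h=H$. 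Then $w=r^{-\gamma}\psi$ satisfies a heat equation $\psi_t=\psi_{rr}+\frac{N-1-2\gamma}{r}\psi_r+\dots$ in effective dimension $D=N-2\gamma=\sqrt{(N-2)(N-10)}+2$, with the potential arranged so that $H$ is the threshold eigenvalue. The decay of $\psi$ at the origin is then polynomial, $\psi(0,t)\sim t^{-1}$. The decisive point is that the relevant bound state or zero mode is in $L^2$ precisely when $D>4$, i.e. $N\ge12$. For $N=11$ we have $D=5$... I must actually check this count. Either way, the plan is to identify the borderline effective dimension at which $0$ is a resonance of $\mathcal L$ rather than an eigenvalue. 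In that borderline case the decay picks up a logarithm, $a(t)\sim (t\log t)^{-1}$, which gives (iii); otherwise $a(t)\sim t^{-1}$, which gives (ii).

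To make this rigorous, we build explicit sub- and super-solutions in each regime by gluing the rescaled $U$ (inner region) to $V_*-\epsilon a_\pm(t)\phi$ (outer region). Here $\phi$ is either $\phi_1$ or a truncated approximate zero mode, taken with slightly perturbed rates $a_\pm$. We then compare using the comparison principle with $u_0\le v^*$ and the $C^2_{\rm loc}$ convergence to fix initial ordering after a time shift. Time shifts and $O(1)$ constants absorb the mismatch, yielding the $O(1)$ errors.

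\textbf{Main obstacle.} I expect the hardest step to be the critical case $f=f_H$: constructing outer barriers with the correct polynomial/logarithmic decay for a zero-energy operator, and proving the $\log\log t$ correction exactly when $N=11$. I would first try to conjugate $\mathcal L$ explicitly, using that $V_H$ is explicit, to a radial Laplacian in fractional dimension plus a compactly effective potential, and then use known heat-kernel asymptotics at a zero resonance.
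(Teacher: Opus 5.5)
Your case (i) matches the paper, which follows \cite{DGLV1998} there: linear decay $e^{-\mu_1 t}\phi_1$ in the outer region, matched to the inner profile through the coefficient of $r^{-\gamma}$. The mechanism you propose for the critical case $f\equiv f_H$ is wrong, and (ii)--(iii) depend on it.

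For $f=f_H$ one has exactly $\lambda_*e^{V_*}=(2N-4)r^{-2}+H$. The sign is plus, not a bounded negative correction. With $\Psi=r^{\gamma}\Phi$, the linear part becomes $\partial_t-\Delta-H$ on $B_1^m$, where $m=N-2\gamma=2+\sqrt{(N-2)(N-10)}\ge 5$. Here $H$ is exactly the first Dirichlet eigenvalue of $-\Delta$ on $B_1^m$.

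Hence $0$ is a genuine simple eigenvalue of $\cL$, with eigenfunction $\phi_1\simeq r^{-\gamma}\in L^2(B_1)$, for every $N\ge 11$. The spectrum is discrete, so there is no resonance and no heat-kernel decay. The linear flow $w_t+\cL w=0$ conserves the $\phi_1$-component, so linear asymptotics predict $a(t)\to\mathrm{const}$, not $t^{-1}$. Your own count $D=5>4$ for $N=11$ already shows that no spectral threshold separates $N=11$ from $N\ge 12$.

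The decay is purely nonlinear. Projecting \eqref{phieq-1} onto $\phi_1$ gives $c'(t)=-(\lambda_*e^{V_*}F(c\phi_1),\phi_1)_{L^2}$. Since $F(\Phi)\sim\Phi$ where $cr^{-\gamma}\gtrsim 1$, and $F(\Phi)\sim\Phi^2$ for $r\gtrsim\delta:=c^{1/\gamma}$, this quantity is comparable to $c\,\delta^{N-2-2\gamma}+c^2\int_\delta^1 r^{N-3-3\gamma}\,dr$.

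The exponent $N-2-3\gamma$ is positive for $N\ge 12$ and vanishes exactly for $N=11$ (where $\gamma=3$). This gives $c'\simeq-c^2$, resp. $c'\simeq -c^2|\log c|$, and hence $c\simeq t^{-1}$, resp. $(t\log t)^{-1}$. So the $\log\log t$ correction comes from the borderline integrability of $e^{V_*}\phi_1^3$, not from a zero-energy resonance.

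Consequently, barriers $V_*-\epsilon a_\pm(t)\phi$ with $a_\pm$ taken from linear asymptotics cannot be sub- or super-solutions. What is needed, and what the paper does, is barriers for $\Psi$ of the form $c(t)\psi_1+\xi$. Here $c\simeq t^{-1}$ or $(t\log t)^{-1}$, and the correction $\xi\perp\psi_1$ solves $(-\Delta-H)\xi=(G,\psi_1)\psi_1-G$, with $G$ a suitable approximation of the nonlinear term at $c\psi_1$.

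Without $\xi$, no pointwise inequality can hold. The term $c'\psi_1$ and the nonlinearity have different spatial profiles: the nonlinearity is of size $c^2r^{-2-\gamma}\psi_1^2$ outside the core and $c\,r^{-2}\psi_1$ inside. For $N=11$ two further ingredients are needed:
\begin{itemize}
\item a time-dependent truncation of $G$ near the origin, to produce the factor $\log t$ in $(G,\psi_1)$;
\item for the sub-solution, control of $\xi$ on $r\le\delta(t)$, to secure the initial ordering.
\end{itemize}

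Separately, your inner matching is only heuristic. For $f\not\equiv 0$ there is no scaling invariance, and the maximum of $u$ need not sit at the origin. The paper obtains the needed bound $\Phi\lesssim e^{-\gamma\alpha(t)/2}r^{-\gamma}$ at $r=\ep$ in two steps: intersection comparison (Lemma \ref{lem:4-1}), then comparison with the $f=0$ problem (Proposition \ref{inner-prop-1}). The second step is where the hypothesis $0\le f\le f_H$ enters.
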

\begin{remark}
\rm{For the case $N\ge 10$, $\cL$ has discrete spectrum, and its eigenfunctions form an orthonormal basis of $L^2(B_{1})$ (see Proposition \ref{sepa-pro-1} and Lemma \ref{b-v-lem}). 
The properties of $\cL$ are as follows. 
\begin{enumerate}
    \item[(i)]  $\mu_1(f_1)\ge \mu_1(f_2)>0$ if $f_1\le f_2\le f_H$ in $B_1$ and $f_2\not\equiv f_H$ (see Proposition \ref{sepa-pro-1}).
    \item[(ii)] The map $\mathrm{Lip}[0,1]\ni f\mapsto \mu_1$ is continuous and $\mu_1\downarrow 0$ as $f\uparrow f_H$ (see Lemma \ref{fnolem}). 
    \item[(iii)] $\mu_1=H-h$ for the case $f\equiv f_H$ (see Lemma \ref{b-v-lem}).
\end{enumerate}
}
\end{remark}
Theorem \ref{intro-thm-3} contains the result of \cite{DGLV1998} as a special case $f=0$. Moreover, this theorem provides a quantitative description of the disappearance of grow-up phenomenon. Indeed, the leading-order coefficient $2\mu_1 \gamma^{-1}$ tends to zero as $f\uparrow f_H$. In addition, at the threshold $f\equiv f_H$, the grow-up rate changes from linear to logarithmic. 
Finally, we emphasize that when $f\equiv f_H$,  an additional log-log type second-order term appears only in the case $N=11$. The exceptional grow-up behavior is driven by the singularity of the first eigenfunction $\phi_1$ of $\cL$. 
\begin{remark}
\rm{As mentioned above, the grow-up rate was formally computed in \cite{GK} in the case $N=10$ and $f=0$, while its rigorous verification remained open. In a forthcoming work by the first author, the grow-up rates will be characterized in the case $N=10$ and $f\le f_H$ in $B_1$. The result provides the rigorous verification of the computation in \cite{GK}.}
\end{remark}

\subsection{Mechanism of the change in grow-up rates}
The change in the grow-up rates is governed by the asymptotic behavior of $u$ in the outer region. Moreover, the following equation plays a key role in understanding the outer behavior of $u$:
\begin{equation*}
  \Phi_t +\cL \Phi=-\lambda_{*}e^{V_{*}} F(\Phi)\quad \text{in $B_1\times (0,\infty)$,} \quad \Phi=0 \quad \text{on $\partial B_1\times (0,\infty)$},
\end{equation*}
where $F(u)=e^{-u_{+}}-1+u_{+}$ and $\Phi:=V_{*}-u$.
When $f=0$, the authors \cite{DGLV1998} showed that the leading-order behavior of $\Phi$ in the outer region is governed by the linear equation $(\pl_{t}+\cL)w=0$. The same mechanism persists whenever $f\not\equiv f_H$ (i.e., $\mu_1>0$). As a result, 
we can prove that $\Phi\simeq_{\ep} e^{-\mu_1 t}\phi_1$ in $B_{1}\setminus B_{\ep}$ for any fixed $\ep>0$ by a modification of the argument in \cite{DGLV1998}.
In contrast, when $f\equiv f_H$ (i.e., $\mu_1=0$), the effect of the nonlinear term $\lambda_{*}e^{V_{*}} F(\Phi)$ becomes apparent in the leading-order dynamics of $\Phi$. To understand the nonlinear effect,
we formally assume that the leading-order term of $\Phi$ in the outer-region is governed by its projection $c(t)\phi_1$ onto the first eigenspace of $\cL$.
Then a formal computation yields
\begin{equation*}
c'(t)\simeq
-\left(\lambda_{*}e^{V_{*}}F(c(t)\phi_1), \phi_1\right)
_{L^2(B_1)}\simeq
\begin{cases}
-c(t)^2& \text{when }\,\,N\ge 12,\\
-c(t)^2\log c(t)& \text{when }\,\,N=11
\end{cases}
\end{equation*}
(see Section \ref{sec-matching} for details).
We emphasize that $\phi_1\simeq r^{-\gamma}$ near $r=0$ (see Lemma \ref{b-v-lem}) and this singularity generates the logarithmic term only for $N=11$. It follows from the above computation that $c(t)\simeq t^{-1}$ for $N\ge 12$ and $c(t)\simeq (t\log t)^{-1}$ for $N=11$. The different decay rates of $\Phi$ yield the distinct grow-up rates obtained in Theorem \ref{intro-thm-3}.

The main difficulty in the proof of Theorem \ref{intro-thm-3} is the rigorous justification of the above formal computation. Our idea is to construct specific super/sub-solutions in the form $c(t)\phi_1+\xi$, where $\xi\in\mathrm{span}\left\{\phi_{1}\right\}^{\perp}$ is a sufficiently small correction term in $L^2(B_1)$. The correction term is obtained by applying
$\cL^{-1}$ to the orthogonal component of suitable approximations of the nonlinear term with respect to $\phi_1$. 
This construction enables us to capture the nonlinear effects that are not captured by the linearized dynamics.

The novelty of this paper lies in revealing the disappearance of the grow-up phenomenon induced by the inhomogeneous term, providing its quantitative characterization, and identifying a new phenomenon specific to the dimension $N=11$.

This paper is organized as follows. In Section 2, we classify the bifurcation structure of \eqref{eq-intro-2}
and obtain the properties of $\cL$. We also prove Proposition \ref{inner-prop-1}, which plays a key role in determining the grow-up rate. In Section 3, we clarify the large-time behavior of solutions of \eqref{eq-intro-1}. In Section 4, we determine the grow-up rates for $N\ge 11$. 
\section{Bifurcation structure of the elliptic equation}
In this section,
we study the bifurcation structure of \eqref{eq-intro-2}. 
We start by introducing a specific change of variables used in \cite{KumagaiJDE, Korman14}.
Let $(\lambda, v)$ be a radial classical solution of \eqref{eq-intro-2} with $v(0)=\alpha$ and $(\lambda_{*}, V_{*})$ be a radial singular solution of \eqref{eq-intro-2}.
We define
\begin{equation}
\label{sp-transform}
w:=v+\log \lambda, \quad W:=V_{*}+\log \lambda_{*}, \quad \beta:=\alpha+\log \lambda.
\end{equation}
Then, $w$ and $W$ satisfy $w(1)=\log \lam$,
$W(1)=\log \lambda_{\ast}$ and
\begin{align}
\label{eqofw}
    w''+\frac{N-1}{r}w'+e^w-f(r)=0& \hspace{2mm}\text{in $(0,1]$}, \hspace{2mm}  w(0)=\beta\in\R, \hspace{2mm} w'(0)=0;
    \\
\label{eqofW}
    W''+\frac{N-1}{r}W'+e^W-f(r)=0& \hspace{2mm}\text{in $(0,1]$},
    \hspace{2mm} W\in C^2(0,1],\hspace{2mm}\lim_{r\to 0}W(r)=\infty.
\end{align}
Note that \eqref{eqofw} has the unique solution $w=w(r,\beta)\in C^2[0,1]$ for each $\beta\in \mathbb{R}$. Thus, the set of radial classical solutions of \eqref{eq-intro-2} is described by
\begin{equation}\label{parameter}
    \left\{(\lambda(\beta), v(r,\alpha(\beta)))=(e^{w(1,\beta)}, w(r,\beta)-w(1,\beta)): \beta\in\R\right\}.
\end{equation}
\subsection{Properties of the bifurcation curve}
In this subsection,
we study the properties of the bifurcation curve and its stable branch for \eqref{eq-intro-2}. As a result, we prove
Proposition \ref{intro-prop-1}.
We first introduce an apriori estimate for radial (possibly singular) solutions.
Let $w$ be any solution of either \eqref{eqofw} or \eqref{eqofW}.
We define $r_{w}$ as
\begin{align*}
r_{w}:=
\begin{cases}
  \sup\left\{r>0:w(s)>2\norm{f}_{L^{\infty}}\,\,\text{for all }s<r\right\}
  &\text{ if }\,\,w(0)>2\lVert f\lVert_{L^{\infty}},
  \\
  0 &\text{ if }\,\,w(0)\le 2\lVert f\lVert_{L^{\infty}}.
\end{cases}
\end{align*}
Then, we obtain the following.
\begin{lem}
\label{apriorilem1}
Assume that $N\ge 3$ and $f\in \mathrm{Lip}[0,1]$.
Let $w\in C^2(0,1]$ be a solution of either \eqref{eqofw} or \eqref{eqofW}.
Then, we have
\begin{equation}
\label{apriorieq-1}
w\le -2\log r +\log 4N \quad\text{and}\quad  0\le -w'\le \frac{6N}{N-2}r^{-1} \quad \text{for any $r\in (0,r_w)$;}
\end{equation}
\begin{equation}
\label{apriorieq-2}
 -C(\lVert f\rVert_{L^{\infty}})(1+|w'(r_{w})|)<w<C\lVert f\rVert_{L^{\infty}}, \quad |w'| \le C(\lVert f\rVert_{L^{\infty}})(1+|w'(r_{w})|)
\end{equation}
for any $r\in [r_w, 1]$, where $C>0$ is depending only on $N$. In addition, it follows that
\begin{equation}
\label{integralform-1}
    -r^{N-1}w'(r)=\int_{0}^{r}s^{N-1}(e^{w(s)}-f(s))\,ds \quad\text{for all $r\in [0,1]$.}
\end{equation}
\end{lem}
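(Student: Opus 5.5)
First I would derive the integral identity \eqref{integralform-1}, since the other estimates rest on it. Multiplying \eqref{eqofw} or \eqref{eqofW} by $r^{N-1}$ gives $(r^{N-1}w')'=-r^{N-1}(e^w-f)$, and I integrate this from $\ep$ to $r$. For the regular problem \eqref{eqofw} the boundary term $\ep^{N-1}w'(\ep)$ vanishes because $w'(0)=0$. For the singular problem \eqref{eqofW} I have to show two things: that $\ep^{N-1}W'(\ep)\to0$, and that $s^{N-1}e^{W}$ is integrable near $0$. Since $W\to\infty$, for small $r$ we have $e^W-f>0$, so $r^{N-1}W'$ is monotone there and has a limit $L\in[-\infty,\infty)$. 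If $L\neq 0$, then $W'\sim Lr^{1-N}$, and either $W$ is bounded (impossible, since $W\to\infty$) or $W\sim -Lr^{2-N}/(N-2)$ grows like $r^{2-N}$. In the second case $e^W$ would make $r^{N-1}W'$ tend to $-\infty$ at a super-exponential rate, which contradicts the monotone limit. A cleaner way to get $L=0$: since $W\to\infty$, $W'<0$ near $0$ once $r^{N-1}W'$ is decreasing in $r$ near $0$, so $L\ge r^{N-1}W'(r)$ for $r$ small. If $L<0$, this forces $W\ge c r^{2-N}$, and then $\int s^{N-1}e^{W}=\infty$, which makes $r^{N-1}W'\to+\infty$ as $r\to0$, a contradiction. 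So $L\ge0$, and $W'<0$ forces $L=0$. Integrability of $s^{N-1}e^W$ follows from the same identity, because the integral is finite once the limit exists. This gives \eqref{integralform-1}.

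Next I would prove \eqref{apriorieq-1} on $(0,r_w)$. There $w>2\|f\|_\infty$, so $e^w-f\ge e^w/2>0$. By \eqref{integralform-1}, $-w'>0$ and $w$ is decreasing. Since $w$ is decreasing, $e^{w(s)}\ge e^{w(r)}$ for $s<r$, so
\[
-r^{N-1}w'(r)\ge \tfrac12 e^{w(r)}r^N/N,
\]
that is, $-w'\ge e^{w}r/(2N)$. Integrating $(e^{-w})'=-w'e^{-w}\ge r/(2N)$ from $0$ (where $e^{-w}\ge0$) gives $e^{-w(r)}\ge r^2/(4N)$. This is exactly $w\le-2\log r+\log4N$. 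Feeding this back, $e^w-f\le \tfrac32e^w\le 6Nr^{-2}$, so
\[
-r^{N-1}w'\le 6N\int_0^r s^{N-3}\,ds=\tfrac{6N}{N-2}r^{N-2},
\]
which is the derivative bound. This step is direct.

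For \eqref{apriorieq-2} on $[r_w,1]$, the main obstacle is that $r_w$ may be close to $0$, so the constants must not depend on $r_w$; that is why the bound carries $|w'(r_w)|$. First I would show $w\le 2\|f\|_\infty$ after $r_w$, up to a constant. Whenever $w>2\|f\|_\infty$ on some later interval, $r^{N-1}w'$ decreases, so $w$ cannot make a local interior maximum above that level. Hence $w\le\max(2\|f\|_\infty, w(r_w))=2\|f\|_\infty$ (with $w(r_w)=2\|f\|_\infty$ when $r_w>0$, or $w(0)\le2\|f\|_\infty$ otherwise). This gives the upper bound. Then, with $e^w\le e^{2\|f\|_\infty}$ bounded, integrating $(r^{N-1}w')'$ from $r_w$ gives
\[
|r^{N-1}w'(r)|\le r_w^{N-1}|w'(r_w)|+C(\|f\|_\infty)\,r^N/N .
\]
Since $r\ge r_w$, this yields $|w'(r)|\le |w'(r_w)|+C$. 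Integrating once more over an interval of length at most $1$ gives $w\ge w(r_w)-C(1+|w'(r_w)|)$, which is the lower bound, with $w(r_w)$ itself bounded by $2\|f\|_\infty$. In the case $r_w=0$ the term $w'(r_w)=0$, and the same argument starts from $0$.
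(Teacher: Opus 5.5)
Your derivation of \eqref{integralform-1} and \eqref{apriorieq-1} is sound and is essentially the paper's argument in a different order. The paper first shows $w'\le 0$ on $(0,r_w)$. It then uses the one-sided inequality $-r^{N-1}w'(r)\ge\int_0^r s^{N-1}(e^{w}-f)\,ds$ to get $w\le -2\log r+\log 4N$, and only afterwards shows the boundary term vanishes. Your ``cleaner way,'' via non-integrability of $s^{N-1}e^{cs^{2-N}}$, works equally well. One slip: $r^{N-1}W'$ is decreasing in $r$, so its limit as $r\downarrow 0$ is a supremum in $(-\infty,+\infty]$, not in $[-\infty,\infty)$ as you first wrote.

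The genuine gap is the upper bound on $[r_w,1]$. Where $w>2\|f\|_\infty$ you have $-\Delta w=e^{w}-f>0$, so $w$ is superharmonic there. This rules out interior local \emph{minima} above that level, not maxima: at a critical point $w''=-(e^{w}-f)<0$, which is precisely a local maximum. Moreover, $r=1$ is a free endpoint, since $w(1)=\log\lambda$ is not prescribed. So any maximum-principle bound on $[r_w,1]$ would also have to control $w(1)$, which your $\max(2\|f\|_\infty,w(r_w))$ ignores. Monotonicity of $r^{N-1}w'$ above the level cannot stop $w$ from dipping below $2\|f\|_\infty$ and climbing back above it; what stops this is a quantitative bound on $w'$.

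You already have the tool for that bound. Dropping $e^{w}\ge 0$ in \eqref{integralform-1} gives $w'(r)\le r^{1-N}\int_0^r s^{N-1}f(s)\,ds\le \|f\|_\infty r/N$. Integrating from $r_w$ yields $w(r)\le w(r_w)+\|f\|_\infty/(2N)\le (2+\tfrac{1}{2N})\|f\|_\infty$ on $[r_w,1]$. This is exactly the paper's step, and it is why the statement reads $C\|f\|_\infty$ rather than $2\|f\|_\infty$. With this in place, your derivative bound and lower bound go through.

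One more point concerns the lower bound. You cite $w(r_w)\le 2\|f\|_\infty$, but what is needed is a bound on $w(r_w)$ from \emph{below}. This holds when $r_w>0$, since then $w(r_w)=2\|f\|_\infty$. When $r_w=0$, however, $w(r_w)=\beta$ is arbitrary, and for \eqref{eqofw} with $\beta\to-\infty$ the lower bound in \eqref{apriorieq-2} actually fails at $r=0$. The paper's proof tacitly assumes $r_w>0$ (``without loss of generality''), which is the only case used later. So your claim that the case $r_w=0$ follows by ``the same argument'' should be dropped rather than repaired.
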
 
\begin{proof}
Without loss of generality, we assume that $r_{w}>0$. we first show that $w'\leq 0$ in $(0,r_w)$.
We suppose to the contrary that 
$w'(r_{0})>0$ for some $r_{0}\in (0,r_w)$. Since
\begin{equation}
\label{hojo-apriori-1}
(-r^{N-1}w')'= r^{N-1}(e^{w} - f(r))\ge 0 \quad \text{for $r\in (0, r_w)$,} 
\end{equation}
we have $s^{N-1}w'(s)\ge r_{0}^{N-1}w'(r_{0})>0$ for $0<s<r_{0}$.
From this inequality, we get
\begin{equation*}
    w(s)\le w(r_{0})-r_{0}^{N-1}w'(r_{0})\frac{s^{-(N-2)}-r_{0}^{-(N-2)}}{N-2}\to -\infty\quad\text{as $s\to 0$,}
\end{equation*}
which is a contradiction.
Thus,
we deduce that $w'\leq 0$ in $(0,r_{w})$. Therefore,
for all $r\in\left[0,r_{w}\right]$
it follows from  \eqref{eqofw} that
\begin{equation*}
    -r^{N-1}w'(r)\ge \int_{0}^{r}s^{N-1} (e^{w(s)}-f(s))\,ds\ge \frac{r^{N}}{N}(e^{w(r)}-\lVert f\rVert_{L^{\infty}})\ge \frac{r^N}{2N}e^{w}.\quad
\end{equation*}
 It implies that $(e^{-w})'\ge r/2N$. By integrating this inequality on $(\rho,r)$ and then by letting $\rho\to 0$, we have $w\le -2\log r +\log 4N$.
 By \eqref{hojo-apriori-1},
 we have $-r^{N-1}w'(r)\to c$ for some $c\in[0,\infty)$.
 Then we deduce $c=0$ from the fact $w\le -2\log r+\log 4N$.
 Therefore, by integrating \eqref{hojo-apriori-1},
 we have \eqref{integralform-1} for $r\in (0,r_{w})$.
 Again, by using $w\le -2\log r+\log 4N$ and \eqref{integralform-1},
 we have
\begin{equation*}
    -w'=r^{1-N}\int_{0}^{r}s^{N-1}(e^w(s)-f(s))\,ds\le \frac{6N}{N-2}r^{-1} \quad\text{on $(0,r_w]$.}
\end{equation*}
Therefore, we obtain  \eqref{apriorieq-1}.

Now, since \eqref{integralform-1} holds for $(0,r_{w})$, a standard ODE argument yields \eqref{integralform-1} on $(0,1)$.
Moreover, by definitions of $r_{w}$ and
\begin{equation*}
   w'\le -r^{1-N}\int_{0}^{r}s^{N-1}(e^w(s)-f(s))\,ds
   \leq
   \frac{1}{N}
   \norm{f}_{L^{\infty}}r,
\end{equation*}
we have $w(r)=w(r)-w(r_{w})+w(r_{w})\le C\norm{f}_{L^{\infty}}$ for $r_{w}<r<1$.
As a result, we get
\begin{equation*}
|w'(r)|\le {\frac{r_{w}^{N-1}}{r^{N-1}}}|w'(r_{w})|
+\left|\int_{r_{w}}^{r}\frac{s^{N-1}}{r^{N-1}}(e^w-f(s))\,ds\right|
\le C\left(\norm{f}_{L^{\infty}}\right)(1+|w'(r_{w})|).
\end{equation*}
Therefore, we obtain
\eqref{apriorieq-2}.
\end{proof}
Next, we introduce the property of the stable-branch. 
\begin{prop}[see \cite{BV, BCMR, Dup}]\label{stableprop-1}
Assume that $N\ge 3$ and $f\in \mathrm{Lip}[0,1]$. Then,
there exists $\lambda^{*}\in (0,\infty)$ such that
\begin{enumerate}
    \item[(i)] For any $\lambda<\lambda^{*}$, \eqref{eq-intro-2} has a unique stable solution 
    $v_{\lambda}\in C^{2}(B_1)$. It follows that $v_{\lambda}$ is radially symmetric and $v_{\lambda_1}(r)<v_{\lambda_2}(r)$ in $B_1$ for $\lambda_1<\lambda_2<\lambda^{*}$. 
    \item[(ii)] When $\lambda=\lambda^{*}$, there exists a unique solution $v^{*}\in H^{1}_{0}(B_1)$ such that $v_{\lambda}\uparrow v^{*}$ in $B_1$ as $\lambda\uparrow\lambda^{*}$. Moreover, there exists no weak solution if $\lambda>\lambda^{*}$.
    \item[(iii)] For each $\lambda<\lambda^*$,
    the stable solution $v_{\lambda}$ is minimal. More precisely, if $v$ is any unstable solution of \eqref{eq-intro-2}, then $v>v_{\lambda}$ in $B_1$.
    \item[(iv)] The map $\lambda\mapsto v_{\lambda}(0)$ is continuous and  $v_{\lambda}\to \phi_0$ in $C^2(\overline{B_1})$ as $\lambda\to 0$.
\end{enumerate}
\end{prop}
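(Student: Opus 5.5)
The plan is to reduce everything to the homogeneous theory by a shift. Set $\tilde v := v-\phi_0$, where $\phi_0$ is given by \eqref{Defphi0}. Then \eqref{eq-intro-2} is equivalent to
\[
-\Delta \tilde v = \lambda e^{\phi_0} e^{\tilde v} = \lambda g(x)\, e^{\tilde v} \quad \text{in } B_1,\qquad \tilde v=0 \text{ on } \partial B_1,
\]
where $g:=e^{\phi_0}$. Since $f\in \mathrm{Lip}[0,1]$, elliptic regularity gives $\phi_0\in C^{2,\alpha}(\overline{B_1})$, so $g$ is a positive radial weight that is bounded above and below. The nonlinearity $s\mapsto e^s$ is positive, increasing and convex with $e^0>0$. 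This puts us exactly in the setting of the general theory of Crandall--Rabinowitz, Mignot--Puel and Brezis--Cazenave--Martel--Ramiandrisoa \cite{BCMR}, and Brezis--V\'azquez \cite{BV} (see also Dupaigne \cite{Dup}) for $-\Delta u=\lambda g(x)\phi(u)$ with a bounded positive weight. Stability transfers unchanged, because $\lambda e^{v}=\lambda g e^{\tilde v}$, so $Q_v$ is the same quadratic form in both formulations.

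\textbf{Step 1: $0<\lambda^*<\infty$.} For small $\lambda$, a constant multiple of the torsion function is a supersolution and $0$ is a subsolution, so monotone iteration gives a minimal solution; hence $\lambda^*>0$. For the upper bound, test the equation against the first Dirichlet eigenfunction $\varphi_1>0$. Using $e^{s}\ge e\, s$ and $g\ge \min g>0$ gives $\lambda_1\int \tilde v\varphi_1 \ge \lambda\, e \min g\int \tilde v\varphi_1$, which forces $\lambda\le \lambda_1/(e\min g)$. The same test-function argument, in the weak $L^1$ sense of \cite{BCMR}, rules out weak solutions for $\lambda>\lambda^*$.

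\textbf{Step 2: minimal branch (items (i), (iii)).} For $\lambda<\lambda^*$, the minimal solution $v_\lambda$ is obtained by monotone iteration starting from $\tilde v=0$ (i.e.\ $v=\phi_0$), using any solution at a larger parameter as a supersolution. This construction gives radial symmetry, because the iteration preserves radiality, and strict monotonicity in $\lambda$ by the strong maximum principle. Stability of the minimal solution and the fact that it is the \emph{unique} stable solution follow from the convexity argument: if $v$ is stable and $w$ is another solution with $w\ge v_\lambda$, the convexity of $e^s$ together with $Q\ge0$ applied to $(w-v_\lambda)_+$ forces equality. This is also what gives the comparison $v>v_\lambda$ for any unstable solution $v$, after invoking the strong maximum principle on $v-v_\lambda\ge0$, $v\ne v_\lambda$.

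\textbf{Step 3: extremal solution (item (ii)).} Define $v^*:=\lim_{\lambda\uparrow\lambda^*}v_\lambda$. The standard uniform $H^1_0$ bound for stable solutions comes from testing stability with $e^{\tilde v_\lambda/2}-1$ and the equation with $e^{\tilde v_\lambda}-1$; this bound depends only on $\|g\|_\infty$ and $\min g$. It shows that $v^*\in H^1_0$ is a weak solution. Uniqueness of weak solutions at $\lambda^*$ is the Brezis--V\'azquez/Martel result, again resting only on convexity.

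\textbf{Step 4: item (iv).} Continuity of $\lambda\mapsto v_\lambda(0)$ follows from monotonicity combined with the implicit function theorem. Stability of $v_\lambda$ for $\lambda<\lambda^*$ is in fact strict, i.e.\ the first eigenvalue is positive. That makes the branch a smooth curve in $C^2$, which gives continuity from both sides. As $\lambda\to0$, uniform bounds give $\tilde v_\lambda\to0$ in $C^2$, i.e.\ $v_\lambda\to\phi_0$.

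The main obstacle I expect is checking that the cited general results really allow the $x$-dependent weight $g$. In particular, the Brezis--V\'azquez uniqueness and the $H^1_0$ a priori estimate should be verified to use only $0<c\le g\le C$; I expect they do.
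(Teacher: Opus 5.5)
Your shift $\tilde v=v-\phi_0$, which turns \eqref{eq-intro-2} into $-\Delta\tilde v=\lambda g e^{\tilde v}$ with $g=e^{\phi_0}$ bounded above and below, is exactly what makes the paper's bare citation of \cite{BV, BCMR, Dup} applicable. Steps 2--4 are the standard minimal-branch arguments.

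\textbf{The gap is in the last sentence of Step 1.} Testing against the first eigenfunction only excludes (weak) solutions for $\lambda>\lambda_1/(e\min g)$. That number is in general strictly larger than $\lambda^*$, which you defined as the threshold for classical solutions. Nothing in that argument rules out a weak, unbounded solution for $\lambda\in(\lambda^*,\lambda_1/(e\min g)]$.

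The missing idea is the BCMR result that a weak solution at $\lambda$ produces classical solutions at every smaller parameter. Let $\tilde v\ge 0$ be a weak solution at $\lambda$ and fix $\varepsilon\in(0,1)$. The concave increasing map $\Psi(s)=-\log\bigl(\varepsilon+(1-\varepsilon)e^{-s}\bigr)$ satisfies $\Psi'(s)e^{s}=(1-\varepsilon)e^{\Psi(s)}$. The BCMR inequality $-\Delta\Psi(u)\ge\Psi'(u)(-\Delta u)$ for concave $\Psi$ then gives $-\Delta\Psi(\tilde v)\ge(1-\varepsilon)\lambda g\,e^{\Psi(\tilde v)}$ weakly, while $0\le\Psi(\tilde v)\le-\log\varepsilon$. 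So $\Psi(\tilde v)$ is a bounded supersolution at $(1-\varepsilon)\lambda$, and iteration from $0$ gives a classical solution there. Choosing $\varepsilon$ small so that $(1-\varepsilon)\lambda>\lambda^*$ gives the contradiction. The argument is pointwise in $x$, so the weight $g$ causes no trouble.

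\textbf{Two smaller points.}

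In Step 2 the ingredients are attributed to the wrong mechanisms.
\begin{itemize}
\item Stability of $v_\lambda$ comes from minimality, not convexity. If the linearized first eigenvalue were negative with eigenfunction $\varphi$, then $\tilde v_\lambda-\epsilon\varphi\ge0$ would be a supersolution for small $\epsilon$, producing a smaller solution.
\item The inequality $v\ge v_\lambda$ for an arbitrary (possibly singular) solution $v$ also comes from minimality.
\item Uniqueness of the stable solution comes from applying $Q_v\ge 0$ for the \emph{stable} solution $v$ to $v-v_\lambda\ge 0$, combined with strict convexity. It does not come from a form evaluated on some other solution $w$.
\end{itemize}

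In Step 4, strict stability of $v_\lambda$ needs a line of proof. For $\lambda<\lambda'<\lambda^*$, convexity gives $(-\Delta-\lambda g e^{\tilde v_\lambda})(\tilde v_{\lambda'}-\tilde v_\lambda)\ge(\lambda'-\lambda)g e^{\tilde v_{\lambda'}}>0$. Pairing this with the positive first eigenfunction shows that the first eigenvalue is positive.
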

\begin{proof}[Proof of Proposition \ref{intro-prop-1}]
By \eqref{sp-transform} and \eqref{parameter}, we obtain the parameterization result \eqref{intro-parameter}. The analyticity of the curve is a consequence of
the analyticity of the exponential function. The unboundedness of the curve follows from Lemma \ref{apriorilem1}.
The remaining assertions follow from Proposition \ref{stableprop-1}.  
\end{proof}
\subsection{Singular solution}
In this subsection,
following the idea of \cite{MN20},
we investigate the properties of a solution of \eqref{eqofW}
to prove Proposition \ref{intro-prop-2}.
We first show the uniqueness and asymptotic behavior of the singular solution.
\begin{lem}
\label{uniquelem}
Equation \eqref{eqofW} admits at most one solution $W$.
If it exist, $W$ satisfies
\begin{equation}
\label{as-eq-1}
W(r)=-2\log r+\log 2(N-2)+o(1), \quad \text{as $r\to 0$}.
\end{equation}
\end{lem}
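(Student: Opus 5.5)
The plan is to pass to Emden--Fowler variables, where both the asymptotics and the uniqueness become statements about an autonomous system perturbed by a small term.

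\textbf{Change of variables.} Set $s=-\log r\in[0,\infty)$ and $W(r)=-2\log r+\log 2(N-2)+z(s)$. A direct computation from \eqref{eqofW} gives
\begin{equation*}
z''-(N-2)z'+2(N-2)(e^{z}-1)=e^{-2s}f(e^{-s}),
\end{equation*}
with $z\in C^2[0,\infty)$. The right-hand side is $O(e^{-2s})$, so it is an exponentially small perturbation. The linearization at $z=0$ has characteristic polynomial $\mu^2-(N-2)\mu+2(N-2)$. Both roots have positive real part. They are real if $N\ge10$ and complex if $N\le9$.

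\textbf{Step 1: boundedness of $z$.} Lemma \ref{apriorilem1} applies since $W(r)\to\infty$, so $r_W>0$. It gives $z\le \log\frac{4N}{2(N-2)}$ and $|z'|=|rW'+2|\le C$ for large $s$. For a lower bound I use \eqref{integralform-1}. If $z(s_k)\to-\infty$ along some sequence, then $-r^{N-1}W'\le \int_0^r\rho^{N-1}e^{W}d\rho+Cr^N$ combined with $e^{W}\le 4N r^{-2}$ gives only weak control. Instead I use the energy
\begin{equation*}
E(s)=\tfrac12 z'^2+2(N-2)(e^{z}-z-1),
\end{equation*}
which satisfies $E'=(N-2)z'^2+O(e^{-2s})|z'|$. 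So $E$ is nondecreasing up to an integrable error, and $E(s)\to E_\infty\in[0,\infty]$. Since $z$ is bounded above and $z'$ is bounded, $E$ is finite as long as $z$ is bounded below. This also gives $\int^\infty z'^2<\infty$. Hence $z$ cannot drift to $-\infty$ unless $z'$ stays bounded away from $0$ on long intervals, which contradicts $\int z'^2<\infty$ together with the upper bound on $E$. One caveat: this argument must be arranged so that $E$ is first shown bounded. I would do that by noting that $E$ is bounded on any interval where $z$ is bounded below, and then arguing by contradiction.

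\textbf{Step 2: convergence $z\to0$.} From $\int z'^2<\infty$ and $z''$ bounded, I get $z'\to0$. Then the equation gives $e^{z}-1=\frac{z''}{2(N-2)}+o(1)$. An averaging (Barbalat) argument, using that $z''$ integrates to $o(1)$ over unit intervals, yields $z\to0$. This is \eqref{as-eq-1}.

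\textbf{Step 3: uniqueness.} Let $W_1,W_2$ be two solutions and set $\zeta=z_1-z_2$. Then $\zeta''-(N-2)\zeta'+2(N-2)a(s)\zeta=0$, where $a(s)=\int_0^1 e^{\theta z_1+(1-\theta)z_2}\,d\theta\to1$. Since both characteristic roots have positive real part, every nontrivial solution of this ODE with $a\to1$ grows exponentially, like $e^{\mathrm{Re}\,\mu\, s}$ (Levinson-type asymptotics, or a direct energy argument with $\tfrac12\zeta'^2+(N-2)a\zeta^2$). This contradicts $\zeta\to0$ unless $\zeta\equiv0$, which gives $W_1=W_2$. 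Equivalently, $0$ is a repeller for the system, so only one trajectory can converge to it.

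\textbf{Main obstacle.} The delicate part is making this uniqueness claim rigorous when $a(s)-1$ is merely $o(1)$ rather than integrable. I would handle it with a Lyapunov function $\tfrac12\zeta'^2+(N-2)\zeta^2-\epsilon\zeta\zeta'$, which for large $s$ is shown to grow exponentially under the perturbation. Step 1 is also a genuine obstacle, because it needs the lower bound on $z$ to be established carefully.
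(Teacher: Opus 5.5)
Your architecture matches the paper's: Emden--Fowler variables, the bounds from Lemma \ref{apriorilem1}, then $z\to0$, then uniqueness from the linear equation for $\zeta=z_1-z_2$. The paper cites \cite[Lemma 2.4, Lemma 3.2]{MN20} and \cite[Lemma 4.2]{LLD} for these steps. There is, however, a genuine gap in your Step 1.

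\textbf{The gap.} The lower-bound argument is circular: $\int^\infty z'^2<\infty$ and the upper bound on $E$ are available only once $E_\infty<\infty$, which is what you are trying to prove. More seriously, the ingredients you use cannot close it. You invoke only three things: the upper bound $z\le C$ and derivative bound $|z'|\le C$ from Lemma \ref{apriorilem1}, the $z$-equation, and the energy identity. All of these also hold for the regular solutions $w(r,\beta)$ with $\beta>2\lVert f\rVert_{L^\infty}$. For those, $z=w-2s-\log 2(N-2)\to-\infty$, $z'\to-2$ and $E\to\infty$. Given $z\le C$ and $|z'|\le C$, the alternative $E_\infty=\infty$ is equivalent to $z\to-\infty$. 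It is therefore a genuine possibility for your $z$-ODE, and it can only be excluded by the defining property of a singular solution, $\lim_{r\to0}W=\infty$, i.e.\ $z(s)+2s\to\infty$. You never use this property except to get $r_W>0$. This is precisely the step the paper attributes to \cite[Lemma 2.4]{MN20}.

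\textbf{A repair within your framework.} Suppose $z\to-\infty$ and set $y:=z'+2=dW/ds$. Then
$y'-(N-2)y=g:=-2(N-2)e^{z}+e^{-2s}f(e^{-s})$, and $g\to0$.
Since $y$ is bounded, it must be given by $y(s)=-\int_s^\infty e^{-(N-2)(\tau-s)}g(\tau)\,d\tau$, so $y\to0$, i.e.\ $z'\to-2$.

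Consequently $e^{z}\le Ce^{-(2-\epsilon)s}$ for large $s$. Feeding this back gives $|g|\le Ce^{-(2-\epsilon)s}$ and then $|y|\le Ce^{-(2-\epsilon)s}$. This is integrable, so $W$ stays bounded as $s\to\infty$, contradicting $W\to\infty$.

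With $z\to-\infty$ excluded, your almost-monotonicity of $E$ gives $E_\infty<\infty$. Hence $z$ is bounded below and $\int^\infty z'^2<\infty$.

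From there your Steps 2 and 3 work as self-contained replacements for the cited lemmas. In Step 2, note the sign: $e^z-1=-z''/(2(N-2))+o(1)$, which is harmless. In Step 3, the Lyapunov function $\tfrac12\zeta'^2+(N-2)\zeta^2-\epsilon\zeta\zeta'$ has a positive-definite derivative for small $\epsilon$ and large $s$ even when $a-1$ is merely $o(1)$. This yields exponential growth of nontrivial solutions, contradicting $(\zeta,\zeta')\to0$.
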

\begin{proof}
We introduce the following Emden-Fowler type transformation
\begin{equation}
\label{transform-emden}
    \Tilde{W}=W-2s-\log 2(N-2), \quad s=-\log r.
\end{equation}
Then, $\Tilde{W}$ is a solution of
\begin{equation*}
    \frac{d^2 \Tilde{W}}{ds^2}-(N-2)\frac{d\Tilde{W}}{ds}+2(N-2)(e^{\Tilde{W}}-1)-e^{-2s}f(e^{-s})=0.
\end{equation*}
It follows from Lemma \ref{apriorilem1} that $\limsup_{s\to \infty}\Tilde{W}\le C$.
Moreover,
we have $\limsup_{s\to\infty} \Tilde{W}>-\infty$ by a similar argument to that in the proof in \cite[Lemma 2.4]{MN20}.
In addition, we have 
\begin{equation*}
   \frac{e^{-2s}f(e^{-s})}{e^{\Tilde{W}}}=(2N-4)e^{-W}f(e^{-s})\to 0 \quad \text{as $s\to\infty$.}
\end{equation*}
Thanks to the upper and lower bounds,
we can deduce from \cite[Lemma 3.2]{MN20} that $\Tilde{W}=o(1)$ as $s\to\infty$,
which means \eqref{as-eq-1}.
Finally, for any singular solutions $W_{1}$ and $W_{2}$,
$\eta=\Tilde{W}_1 - \Tilde{W}_2$ satisfies
\begin{equation*}
 \frac{d^2 \eta}{ds^2}-(N-2)\frac{d\eta}{ds}+2(N-2)\frac{e^{\Tilde{W}_1}-e^{\Tilde{W}_2}}{\Tilde{W}_1 -\Tilde{W}_2}\eta=0.
\end{equation*}
Since $\Tilde{W}_1, \Tilde{W}_2 =o(1)$ as $s\to\infty$, we obtain $W_1= W_2$ by using \cite[Lemma 4.2]{LLD}. 
\end{proof}
Then, we show the existence of the singular solution and the convergence of the bifurcation curve to it. We introduce the following Pohozaev-type identity.
\begin{lem}\label{lem:poho}
Assume that $N\ge 3$ and $f\in \mathrm{Lip}[0,1]$.
Let $w$ be a solution of \eqref{eqofw}. Then, for any $\nu>0$, we have
\begin{align}
\label{poho}
\begin{split}
    \frac{d}{dr}&\left\{r^N\left(\frac{1}{2}(w')^2+ e^{w}-f(r)w+\frac{\nu}{r}ww' \right) \right\}+r^{N}f'(r)w\\
    &=r^{N-1}\left\{N e^w-Nf(r)w-\nu w(e^w-f(r))+\left(\nu+1-\frac{N}{2}\right)\left(w'\right)^2\right\}
    \end{split}
\end{align}
for a.e. $r\in [0,1]$.
\end{lem}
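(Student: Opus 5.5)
The identity is a direct differentiation, so the plan is to expand the derivative on the left-hand side of \eqref{poho} term by term and then use \eqref{eqofw} to eliminate $w''$. Since $f\in \mathrm{Lip}[0,1]$, $f$ is differentiable a.e. with $f'\in L^\infty$, and $w\in C^2[0,1]$. Hence the bracketed quantity $r^N(\cdots)$ is absolutely continuous. Its derivative exists for a.e. $r$, which explains the qualifier ``a.e.''.

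First, I differentiate the three groups separately. One has
\begin{equation*}
\frac{d}{dr}\Big\{r^N\big(\tfrac12 (w')^2+e^w-fw\big)\Big\}
= Nr^{N-1}\big(\tfrac12 (w')^2+e^w-fw\big) + r^N\big(w'w''+e^w w'-f'w-fw'\big),
\end{equation*}
and
\begin{equation*}
\frac{d}{dr}\big\{\nu r^{N-1}ww'\big\}=\nu(N-1)r^{N-2}ww'+\nu r^{N-1}(w')^2+\nu r^{N-1}ww''.
\end{equation*}

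Next, I substitute $w''=-\frac{N-1}{r}w'-e^w+f$ from \eqref{eqofw} in both places. In the first group,
\begin{equation*}
r^N w'w''=-(N-1)r^{N-1}(w')^2-r^N w'(e^w-f).
\end{equation*}
This cancels exactly the terms $r^N(e^w w'-fw')$, and the $(w')^2$ coefficient becomes $r^{N-1}\big(\tfrac N2-(N-1)\big)=r^{N-1}\big(1-\tfrac N2\big)$. In the second group,
\begin{equation*}
\nu r^{N-1}ww''=-\nu(N-1)r^{N-2}ww'-\nu r^{N-1}w(e^w-f).
\end{equation*}
The first of these terms cancels $\nu(N-1)r^{N-2}ww'$, which leaves $\nu r^{N-1}(w')^2-\nu r^{N-1}w(e^w-f)$.

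Collecting everything, the derivative equals
\begin{equation*}
r^{N-1}\Big\{Ne^w-Nfw-\nu w(e^w-f)+\big(\nu+1-\tfrac N2\big)(w')^2\Big\}-r^Nf'w.
\end{equation*}
Moving $r^N f' w$ to the left-hand side gives \eqref{poho}.

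There is no genuine obstacle. The only point needing care is regularity: $f$ is merely Lipschitz, so the product rule for $r^N f w$ holds only almost everywhere, which is exactly what the statement claims. At $r=0$, all terms are finite because $w\in C^2[0,1]$ with $w'(0)=0$. Thus the identity also integrates cleanly on $[0,r]$, which I expect is how it will be used later.
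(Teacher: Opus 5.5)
Your proof is correct and follows the paper's approach: the paper just says the identity follows by direct computation. Your term-by-term differentiation, substitution of $w''$ from \eqref{eqofw}, and a.e.\ justification via the Lipschitz regularity of $f$ carry out exactly that computation.
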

Lemma \ref{lem:poho} is shown by a direct computation.
We remark that this identity is proved in \cite{serrin} for the case $f=0$. The identity \eqref{poho} allows us to obtain the following inequality,
which plays a key role in proving the convergence result.

\begin{lem}
\label{needle}
Assume that $N\ge 3$ and $f\in \mathrm{Lip}[0,1]$.
Let $w$ be a solution of \eqref{eqofw}. Then, there exists $q>1+2\norm{f}_{L^{\infty}}$ so that the following hold. 
\begin{enumerate}
    \item[(i)] If $w$ satisfies $w(r)>q$ on $[0,r_{0}]$ for some $r_{0}>0$, then 
    \begin{equation}
\label{grad}
0\le -rw'\le \frac{N-2}{2} w+4\lVert f'\rVert_{L^{\infty}} r \quad \text{for all $r\in\left(0,r_{0}\right)$}.
\end{equation}
\item[(ii)] Take any $\rho\ge q$ and define $r_{\rho}$ as
\begin{equation*}
  r_{\rho}:=\min\left\{
    \sqrt{
      \frac{N\rho}{2(e^{4\rho}+\norm{f}_{L^{\infty}})}
    },
    \frac{(N-2)\rho}{16
    \norm{f'}_{L^{\infty}}
    }
  \right\}
  \quad\text{for }\rho\geq q.
\end{equation*}
Then, we have $w(r,\beta)>\rho$ for any $r\in\left(0,r_{\rho}\right)$ provided that $\beta>4\rho$.
\end{enumerate}
\end{lem}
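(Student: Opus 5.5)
We would prove (i) with the Pohozaev identity \eqref{poho}, choosing $\nu=\frac{N-2}{2}$. Then the coefficient $\nu+1-\frac N2$ of $(w')^2$ vanishes, and the right-hand side becomes
\[
r^{N-1}\Bigl\{Ne^w-\tfrac{N-2}{2}we^w-\tfrac{N+2}{2}f w\Bigr\}.
\]
Once $w>q$ with $q$ large (depending on $N$ and $\norm{f}_{L^\infty}$), the term $-\frac{N-2}{2}we^w$ dominates. So the right-hand side is negative, and in fact $\le -c\,r^{N-1}we^w$.

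We integrate \eqref{poho} from $0$ to $r$. The boundary term at $0$ vanishes because $w$ is smooth and $w'(0)=0$. Moving $\int_0^r s^N f'(s)w\,ds$ to the other side, we get
\[
r^N\Bigl(\tfrac12 (w')^2+e^w-fw+\tfrac{N-2}{2}\tfrac{ww'}{r}\Bigr)\le \norm{f'}_{L^\infty}\int_0^r s^N w\,ds .
\]
Since $w$ is decreasing on $(0,r_0)$ (argue as in Lemma \ref{apriorilem1}, using $e^w-f>0$ there), the integral is at most $\frac{r^{N+1}}{N+1}w(0)$. This is awkward because it involves $w(0)$ rather than $w(r)$. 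Instead we would keep the negative right-hand side, $-c\int_0^r s^{N-1}we^w\,ds$, and use it to absorb $\norm{f'}\int_0^r s^N w\,ds$, which is possible since $e^w\ge e^q$ is large. This gives
\[
\tfrac12 (rw')^2+\tfrac{N-2}{2}w\,(rw')\le r^2 f w .
\]
Setting $X=-rw'\ge0$ (nonnegativity as in Lemma \ref{apriorilem1}), this reads $X^2-(N-2)wX\le 2r^2fw$, hence
\[
X\le (N-2)w+\frac{2r^2 f}{N-2}.
\]
That constant is $(N-2)w$, not the $\frac{N-2}{2}w$ claimed in \eqref{grad}, so this step is not right as written. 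To get $\frac{N-2}{2}$ we would also keep $e^w r^N$ on the left-hand side and use the bound $X\le$ (something derived from $\int s^{N-1}e^w$) via \eqref{integralform-1}, combining the two inequalities. The $4\norm{f'}_{L^\infty}r$ error in \eqref{grad} comes from the $f'$ term.

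This is the main obstacle: we must choose $\nu$ and the absorption argument carefully so that both the exact constant $\frac{N-2}{2}$ and the linear error $4\norm{f'}_{L^\infty}r$ appear. If the algebra above is too lossy, we would first try $\nu$ slightly different from $\frac{N-2}{2}$.

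For (ii), set $\rho\ge q$ and $\beta>4\rho$, and let $r_1$ be the first radius where $w=\rho$; we must show $r_1\ge r_\rho$. There are two regimes.

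\emph{Near the origin.} As long as $w\ge 4\rho$, we use \eqref{integralform-1} with $e^w\le e^\beta$; this alone is not enough, since $\beta$ is unbounded. Instead we rely on (i). On $(0,r_1)$, \eqref{grad} gives $-w'\le \frac{N-2}{2}\frac{w}{r}+4\norm{f'}_{L^\infty}$, which is only useful away from $0$.

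\emph{Combining.} Let $r_2$ be the point where $w=4\rho$. On $(0,r_2)$, (i) gives $(\log w)'\ge -\frac{N-2}{2r}-\frac{4\norm{f'}_{L^\infty}}{w}$. On $(r_2,r_1)$, where $w\in(\rho,4\rho)$, \eqref{integralform-1} gives $-w'\le \frac{r}{N}(e^{4\rho}+\norm{f}_{L^\infty})$ after splitting the integral at $r_2$. Bounding the contribution from $(0,r_2)$ is the delicate point. Integrating over $(r_2,r)$ gives a drop of at most $\frac{r^2}{2N}(e^{4\rho}+\norm{f}_{L^\infty})\le \rho$ for $r\le r_\rho$, using the first entry of the minimum defining $r_\rho$. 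The second entry of the minimum ensures that the $\norm{f'}_{L^\infty}$ error from (i) costs at most about $\rho$. Together these force $w>\rho$ on $(0,r_\rho)$.
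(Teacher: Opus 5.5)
There is a genuine gap in (i), and you have essentially spotted it yourself. With $\nu=\frac{N-2}{2}$ your route yields only $-rw'\le (N-2)w+\cdots$. Neither keeping $r^Ne^w$ on the left nor perturbing $\nu$ slightly repairs this.

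The point you are missing is that the constant produced is always $2\nu$. The coefficient $\nu+1-\frac N2$ of $(w')^2$ in \eqref{poho} need not vanish; it only has to be non-positive. The paper takes $\nu=\frac{N-2}{4}$, so that $\nu+1-\frac N2=-\frac{N-2}{4}<0$. Also $Ne^w-Nfw-\nu w(e^w-f)\le 0$ once $w>q$, with $q$ depending only on $N$ and $\norm{f}_{L^\infty}$. Hence the right-hand side of \eqref{poho} is non-positive.

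The $f'$-term is then handled not via $w(0)$ but through \eqref{integralform-1}:
\[
\int_0^r s^N w\,ds\le r\int_0^r s^{N-1}e^{w}\,ds\le 2r\int_0^r s^{N-1}\bigl(e^{w}-f\bigr)\,ds=-2r^Nw'(r).
\]
Set $X=-rw'\ge 0$ and use $e^w\ge fw$. This gives $\frac12X^2-\nu wX\le 2\norm{f'}_{L^\infty}rX$, i.e.\ $X\le 2\nu w+4\norm{f'}_{L^\infty}r$, which is \eqref{grad}.

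Your absorption idea (using $-c\,s^{N-1}we^{w}$ to swallow $\norm{f'}_{L^\infty}s^Nw$) is also legitimate if you let $q$ depend on $\norm{f'}_{L^\infty}$. It would even remove the $4\norm{f'}_{L^\infty}r$ term, but only after you switch to $\nu\le\frac{N-2}{4}$.

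The constant $\frac{N-2}{2}$ is not cosmetic: it is exactly what makes (ii) work, and your sketch of (ii) stops at the decisive step. (The paper defers (ii) to \cite[Lemma 5.1]{MN20}.) Your bound $-w'\le\frac rN(e^{4\rho}+\norm{f}_{L^\infty})$ on $(r_2,r_1)$ accounts only for the part of \eqref{integralform-1} over $(r_2,r)$. The part over $(0,r_2)$ is the flux $-r_2^{N-1}w'(r_2)$. Integrating $(\log w)'$ over $(0,r_2)$ does not control it: that integral diverges at $0$, and $w\ge 4\rho$ there anyway.

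Instead, apply (i) at the single point $r_2$, where $w=4\rho$. Since $w$ is decreasing on $(0,r_1)$, for $r_2<r<\min\{r_1,r_\rho\}$ you get
\[
w(r_2)-w(r)\le\frac{-r_2w'(r_2)}{N-2}+\frac{r^2}{2N}\bigl(e^{4\rho}+\norm{f}_{L^\infty}\bigr)\le 2\rho+\frac{4\norm{f'}_{L^\infty}r_2}{N-2}+\frac{\rho}{4}<\frac52\rho .
\]
This uses $r_2^{N-2}\int_{r_2}^r s^{1-N}\,ds\le\frac{1}{N-2}$ and the two entries of the minimum defining $r_\rho$, each costing at most $\rho/4$. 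Your ``at most $\rho$'' for the source term would be too lossy here. Thus $w>\frac32\rho$ there, which forces $r_1\ge r_\rho$.

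With your constant $N-2$, the flux term alone would contribute $4\rho$. That exceeds the available margin $4\rho-\rho=3\rho$, so the stated (ii) would not follow from your version of (i).
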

\begin{proof}
We fix $\nu=\frac{N-2}{4}$. Then, we can take $q>1+2\lVert f \rVert_{L^{\infty}}$ so that the right-hand side of \eqref{poho} is non-positive on $[0,r_{0}]$ for all solution $w$ satisfying $w>q$ on $[0,r_{0}]$. Then,
we can confirm that $e^w\ge f(r)w$ in $(0,r_0)$.

Let $r<r_0$. We integrate \eqref{poho} on $\left[0,r\right]$ to yield
\begin{equation*}
\frac{r^N}{2}\left(w'\right)^2+ r^N e^w-r^Nf(r)w+\nu r^{N-1}ww'\le \norm{f'}_{L^{\infty}} \int_{0}^{r}s^N w(s)\,ds
\end{equation*}  
for all $r\in\left[0,r_{0}\right]$.
From \eqref{integralform-1} and the definition of $r_w$, we deduce that
\begin{equation*}
    \int_{0}^{r}s^N w(s)\,ds
    \le r\int_{0}^{r} s^{N-1} e^w\,ds\le 
    2r\int_{0}^{r}s^{N-1}(e^{w(s)}-f(s))\,ds
    =
    -2r^{N}w'(r).
\end{equation*}
These estimates lead \eqref{grad}.
The remaining assertion can be proved by a similar argument to that in the proof of \cite[Lemma 5.1]{MN20}. 
\end{proof}
As a result, we obtain the following property.
\begin{prop}
\label{uniqueprop}
Assume that $N\ge 3$ and $f\in \mathrm{Lip}[0,1]$.
Then, \eqref{eqofW} admits the unique solution $W$.
Moreover, $W$ satisfies \eqref{as-eq-1} and $w(r,\beta)\to W$ in $C^{2}_{\mathrm{loc}}(0,1]$ as $\beta\to\infty$.
\end{prop}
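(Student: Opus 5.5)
Uniqueness and the asymptotics \eqref{as-eq-1} are already given by Lemma \ref{uniquelem}. So the work is to prove existence of a solution of \eqref{eqofW} and the convergence $w(\cdot,\beta)\to W$. I will obtain existence as the limit of $w(\cdot,\beta)$ as $\beta\to\infty$. The plan has three steps: local uniform bounds, extraction of a limit, and identification of the limit as a singular solution.

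\emph{Step 1: uniform bounds on compact subsets of $(0,1]$.} By Lemma \ref{apriorilem1}, on $(0,r_w)$ we have $w\le -2\log r+\log 4N$ and $|w'|\le \frac{6N}{N-2}r^{-1}$. On $[r_w,1]$, \eqref{apriorieq-2} bounds $w$ and $w'$ in terms of $|w'(r_w)|$, and \eqref{apriorieq-1} gives $|w'(r_w)|\le Cr_w^{-1}$.

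To make these bounds uniform in $\beta$, I need $r_w$ bounded below as $\beta\to\infty$. Lemma \ref{needle}(ii) supplies this: take $\rho=q$; then for $\beta>4q$ we get $w>q>2\|f\|_{L^\infty}$ on $(0,r_q)$, hence $r_w\ge r_q>0$.

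With this, on each $[\delta,1]$ the families $w(\cdot,\beta)$ and $w'(\cdot,\beta)$ are bounded above uniformly in $\beta$, and $w'$ is bounded uniformly. For the lower bound on $w$, I integrate $w'$ from $r$ down toward $r_q$ and use $w(r_q)>q$. This gives a uniform lower bound on $[\delta,1]$ as well.

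\emph{Step 2: extraction of a limit.} The equation \eqref{eqofw} then bounds $w''$ uniformly on $[\delta,1]$. By Arzel\`a--Ascoli and a diagonal argument, a subsequence $\beta_k\to\infty$ gives $w(\cdot,\beta_k)\to W$ in $C^1_{\mathrm{loc}}(0,1]$. Passing to the limit in the equation upgrades this to $C^2_{\mathrm{loc}}$, and $W$ solves the ODE in \eqref{eqofW} on $(0,1]$.

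\emph{Step 3: singularity of the limit.} This is the step I expect to be the main obstacle. I will use Lemma \ref{needle}(ii) again, now with arbitrary $\rho\ge q$: for $\beta>4\rho$ we have $w(\cdot,\beta)>\rho$ on $(0,r_\rho)$. Passing to the limit gives $W\ge\rho$ on $(0,r_\rho)$ for every $\rho\ge q$. Since $r_\rho>0$ for each $\rho$, this forces $\liminf_{r\to0}W(r)\ge\rho$ for all $\rho$, hence $\lim_{r\to0}W(r)=\infty$. So $W$ is a solution of \eqref{eqofW}.

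\emph{Conclusion.} By the uniqueness in Lemma \ref{uniquelem}, every subsequential limit equals this same $W$. Therefore the whole family converges, $w(\cdot,\beta)\to W$ in $C^2_{\mathrm{loc}}(0,1]$ as $\beta\to\infty$, and $W$ satisfies \eqref{as-eq-1}.

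A possible further subtlety is ensuring the limit is not trivialized on $(0,r_q)$. The lower bounds in Step 1 and Step 3 handle this, since they hold uniformly in $\beta$ and survive passage to the limit.
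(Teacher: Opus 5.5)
Your proposal is correct and matches the paper's proof step by step. Lemma \ref{needle}(ii) with $\rho=q$ gives $r_w\ge r_q$ uniformly for $\beta>4q$. Lemma \ref{apriorilem1} together with compactness and a diagonal argument then yields a subsequential $C^2_{\mathrm{loc}}(0,1]$ limit. Lemma \ref{needle}(ii) with arbitrary $\rho\ge q$ forces $W\to\infty$ as $r\to 0$. Finally, the uniqueness in Lemma \ref{uniquelem} upgrades subsequential convergence to convergence of the whole family. Your only addition is that you write out the uniform bounds on $[\delta,1]$ and the ODE-based upgrade from $C^1$ to $C^2$ convergence, which the paper compresses into ``elliptic regularity and a diagonal argument.''
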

\begin{proof}
By Lemma \ref{needle}, we obtain that $w(r,\beta)>q$ for any $r<r_{q}$ and $\beta>4q$, which implies that 
$0<r_{q}<r_{w}$ for all $\beta>4q$.
Thus, Lemma \ref{apriorilem1}, the elliptic regularity theory (see \cite{GTBook}) and a diagonal argument yield
the existence of a sequence $\beta_{n}$ and a radial solution $W\in C^{2}\left(0,1\right]$ of
$-\Delta W=e^{W}-f$ in $B_1$,
satisfying $w(r,\beta_n)\to W$ in $C^{2}_{\mathrm{loc}}(0,1]$ and $\beta_{n}\to\infty$ as $n\to \infty$.
Again by Lemma \ref{needle},
we deduce that $W\to \infty$ as $r\to 0$ and thus
$W$ is a solution of \eqref{eqofW}.
Therefore, the result follows from Lemma \ref{uniquelem}.
\end{proof}
\begin{remark}
\label{monotone-rem}
By virtue of Lemma \ref{apriorilem1} and the proof of Proposition \ref{uniqueprop},
we verify that there exists $\beta_0 >0$ such that
$w(r,\beta)<\beta$ in $(0,1]$ for $\beta>\beta_0$.
\end{remark}
Next, we compute the Morse index of the explicit singular solution.

\begin{prop}
\label{specificprop}
Let $N\ge 3$
Set $f=f_{h}$ as in Proposition \ref{intro-prop-2}.
Then, the solution $W=W_{h}$ of \eqref{eqofW} is represented by
\begin{equation*}
W_h(r)=-2\log r+\log\left(1+\frac{h}{2N-4}r^{2}\right)+\log (2N-4).
\end{equation*}
In addition, $W_h$ is stable if and only if $N\ge 10$ and $h\le H$, and
\begin{align}\label{specific-morse-ind}
m(W_h)=\infty\quad\text{if $N\le 9$;}
\quad
\begin{cases}
  \text{$m(W_h)=0$}& \text{if }\,\,N\geq 10,\,h\leq H,\\
    1\le m(W_h)<\infty & \text{if }\,\,N\ge 10,\,h>H.
\end{cases}
\end{align}
\end{prop}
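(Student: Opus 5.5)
The plan is to verify the explicit formula by direct substitution, and then reduce the stability question to a Hardy-type eigenvalue problem whose potential is explicit.

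\emph{Step 1: the explicit solution.} Put $a:=h/(2N-4)$ and $W_h=-2\log r+\log(1+ar^2)+\log(2N-4)$. A direct computation gives $W_h''+\frac{N-1}{r}W_h'=-\frac{2N-4}{r^2}+\Delta\log(1+ar^2)$, while
\begin{equation*}
e^{W_h}=\frac{2N-4}{r^2}+h .
\end{equation*}
The term $(2N-4)/r^2$ cancels. What remains must be checked to equal $f_h$ as defined in \eqref{Deffh}, rewritten using $d(r)=1/((2N-4)(1+ar^2))$; this is routine. Moreover $W_h\to\infty$ as $r\to0$, and $W_h(1)=\log(2N-4+h)=\log\lambda_h$. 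Hence $W_h$ solves \eqref{eqofW}, and by Lemma \ref{uniquelem} it is the singular solution. Consequently $V_h=W_h-\log\lambda_h$ is the form given in Proposition \ref{intro-prop-2}.

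\emph{Step 2: reduction to a Hardy operator.} Since $\lambda_h e^{V_h}=e^{W_h}=\frac{2N-4}{|x|^2}+h$, the quadratic form becomes
\begin{equation*}
Q_{V_h}(\xi)=\int_{B_1}|\nabla\xi|^2-\frac{2N-4}{|x|^2}\xi^2\,dx-h\int_{B_1}\xi^2\,dx .
\end{equation*}
So stability and the Morse index are governed by the operator $A:=-\Delta-\frac{2N-4}{|x|^2}$ on radial $H^1_0$, compared against the constant $h$. The sharp Hardy constant is $\frac{(N-2)^2}{4}$, and $\frac{(N-2)^2}{4}\ge 2N-4$ holds if and only if $N\ge10$.

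\emph{Case $N\le9$.} Here $2N-4$ exceeds the Hardy constant. Using near-optimizers of Hardy's inequality, truncated and rescaled into disjoint annuli $\{\varepsilon_k^2<r<\varepsilon_k\}$ with $\varepsilon_k\to0$, I will produce infinitely many radial functions with disjoint supports on which $Q<0$. The term $-h\int\xi^2$ is negligible under this scaling. Since the supports are disjoint, these functions span spaces of arbitrarily large dimension on which $Q$ is negative, so $m(W_h)=\infty$.

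\emph{Case $N\ge10$.} I will use the ground-state substitution $\xi=r^{-\gamma}\eta$, with $\gamma$ the smaller root of $\gamma^2-(N-2)\gamma+2N-4=0$. For radial functions this gives
\begin{equation*}
\int|\nabla\xi|^2-\frac{2N-4}{r^2}\xi^2=\omega\int_0^1 r^{N-1-2\gamma}|\eta'|^2\,dr ,
\end{equation*}
so the weighted form is effectively a Dirichlet form in the "dimension" $N-2\gamma\ge2$.
\begin{itemize}
\item For $N=10$ we have $\gamma=4$, so $N-2\gamma=2$. The form is then exactly the radial Dirichlet form of $-\Delta_D$ on $B_1^2$, and $\int\xi^2 r^{N-1}dr$ becomes $\int\eta^2 r\,dr$. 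Hence the bottom of the radial spectrum is the first Dirichlet eigenvalue on the unit disk, which is $H$ by \eqref{DefH}.
\item For $N\ge11$ the constant $2N-4$ is strictly subcritical, so the form is coercive on $H^1_0$.
\end{itemize}
In both subcases the form domain embeds compactly into $L^2(B_1)$: for $N=10$ via the 2D Rellich theorem, and for $N\ge11$ via Hardy together with Rellich. Therefore $A$ has discrete spectrum $H=\mu_1(A)<\mu_2(A)\le\cdots\to\infty$ with a positive first eigenvalue. Then $Q\ge0$ if and only if $h\le H$, which gives $m=0$. For $h>H$, $m(W_h)$ equals the number of radial eigenvalues below $h$, so $1\le m<\infty$.

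\emph{Expected obstacle.} The main difficulty is justifying, for $N=10$, that the radial form domain, the compactness, and the identification of the infimum with the 2D Dirichlet eigenvalue are valid. The issue is that $H^1_0(B_1)$ functions need not correspond exactly to $H^1_0(B_1^2)$ functions after the substitution. I would handle this by density of $C^1_0$ radial functions, applying the substitution to them and then passing to closures.
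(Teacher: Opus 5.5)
Your proposal is correct and follows essentially the paper's route. The paper also verifies the explicit formula directly and gets uniqueness from Lemma \ref{uniquelem}. For $N\le 9$ it uses disjointly supported Hardy-type test functions on shrinking annuli (Lemmas \ref{optimality-1} and \ref{intersec-lem}). For $N\ge 10$ it uses the ground-state substitution to the $m=N-2\gamma$ dimensional Dirichlet problem and counts the radial eigenvalues of $-\Delta-(2N-4)r^{-2}$ below $h$ (Lemma \ref{b-v-lem}).

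One point to add: stability is tested against all $\xi\in C^1_0(B_1)$, not only radial ones. So for $h\le H$ you should note that the radial infimum equals the full one. This follows from a spherical-harmonic decomposition, or from Schwarz symmetrization, since $(2N-4)|x|^{-2}+h$ is radially nonincreasing.
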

In order to prove this proposition, we introduce two lemmata.
\begin{lem}
\label{b-v-lem}
Let $N\ge 10$ and $W$ be the solution of \eqref{eqofW}.
Assume that $f\in \mathrm{Lip}[0,1]$ and
\begin{equation}
\label{defk}
K(r):=e^{W}-\frac{2N-4}{r^2}\in C^{0}[0,1].
\end{equation}
We define $\mathcal{L}:=-\Delta- \lambda_{*}e^{V_{*}}
=-\Del-(2N-4)r^{-2}-K(r)$. Then, the following hold.
\begin{enumerate}[{\rm (i)}]
\item The operator $\cL$ has a countable sequence of discrete eigenvalues, and the corresponding eigenfunctions form an orthonormal basis of $L^2(B_1)$. Moreover, the first eigenvalue is simple.
    \item The restriction of $\cL$ to $L^{2}_{\mathrm{rad}}(B_1)$ has an orthonormal basis of eigenfunctions $\{\phi_k\}_{k\in \N}$ with corresponding eigenvalues $\{\mu_k\}_{k\in \N}$. 
    Each eigenspace is one-dimensional. Thus,
    after the normalization $\phi_k(r)>0$ near $r=0$, $\phi_k$ is uniquely determined. Moreover, $\mu_1$ and $\phi_1$
    coincide with the principal eigenvalue and a corresponding eigenfunction of $\cL$ in $L^2 (B_1)$, respectively. Furthermore, $\phi_1>0$ in $B_1$.
\item Let $m=N-2\gamma$, where $\gamma$ is that in Theorem \eqref{intro-thm-3}. We define
    \begin{equation*}
   \psi_k(r):=\left(\frac{N\omega_N}{m\omega_m}\right)^{1/2}r^{\gamma}\phi_k,
    \end{equation*}
    where $\omega_{m}$ is the volume of the $m$-dimensional unit ball.
    Then, $\psi=\psi_k$
    is an orthonormal basis in $L^{2}_{\mathrm{rad}}(B_{1}^m)$ 
    for the following eigenvalue problems
    \begin{equation}
    \label{b-v-eq-1} 
    -\Delta\psi - K(r) \psi=\mu\psi \quad \text{in $B_{1}^m$} \quad \psi=0 \quad \text{on $\partial B_{1}^m $}
    \end{equation}
    with $\mu=\mu_k$.
 Moreover, $\psi_k\in C^2(\overline{B_{1}^{m}})$ and $\psi_1>0$, $\psi_1'\le 0$ in $B_{1}^m$. For the definition of non-integer dimensions, see Appendix~\ref{sec-app-m}.

\item We have $\cL_{h}:=-\Delta-\lambda_{h}e^{V_{h}}=
-\Del -e^{W_{h}}=-\Delta-{(2N-4)}{r^{-2}}-h$. Thus, in the case $f\equiv f_H$, we have $\mu_1=H-h$.

\item It follows that
\begin{equation}
\label{stW}
    Q_{W}(\xi):=\int_{B_1} |\nabla \xi|^2-e^{W}\xi^{2}\,dx \ge \mu_1 \int_{B_1}\xi^2\,dx, \quad\text{for any $\xi\in C^{1}_{0}(B_1)$}.
\end{equation}
Moreover, $Q_{W_h}(\xi)\ge (Hr^{-2}-h)\lVert \xi\rVert_{L^2(B_1)}^2$ for all $\xi\in C^{1}_{0}(B_r)$ and $r>0$. 
\end{enumerate}
\end{lem}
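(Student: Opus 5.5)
The idea is to conjugate away the Hardy-type singular potential by the ground-state substitution $\phi=r^{-\gamma}\psi$. This reduces $\cL$ to a bounded perturbation of the Dirichlet Laplacian in the (possibly fractional) dimension $m=N-2\gamma$; parts (i)--(iii) then follow from standard compact-resolvent theory, and (iv)--(v) from direct computation together with the Hardy inequality.

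First I record that $\gamma$ is the smaller root of $\gamma^2-(N-2)\gamma+(2N-4)=0$, which is real precisely when $N\ge 10$. Hence $r^{-\gamma}$ solves $-\Delta r^{-\gamma}=(2N-4)r^{-2}r^{-\gamma}$. For radial $\phi=r^{-\gamma}\psi$, a direct computation gives
\begin{equation*}
r^{\gamma}\Big(-\phi''-\tfrac{N-1}{r}\phi'-\tfrac{2N-4}{r^2}\phi\Big)=-\psi''-\tfrac{N-2\gamma-1}{r}\psi',
\end{equation*}
which is the radial Laplacian in dimension $m=N-2\gamma>2$. Moreover, $\int_{B_1}\phi_k\phi_j\,dx=N\omega_N\int_0^1 r^{N-1-2\gamma}\psi_k\psi_j r^{-2\gamma}\cdot r^{2\gamma}dr$, so with the stated constant the map $\phi\mapsto\psi$ is an isometry $L^2_{\mathrm{rad}}(B_1)\to L^2_{\mathrm{rad}}(B_1^m)$. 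Since $K\in C^0[0,1]$ by Lemma \ref{uniquelem} and the assumption \eqref{defk}, the operator $-\Delta_m-K$ is a bounded perturbation of the Dirichlet Laplacian on $B_1^m$. That Laplacian has compact resolvent (a one-dimensional Sturm--Liouville problem with a limit-circle/regular endpoint structure, see Appendix \ref{sec-app-m}). This yields simple eigenvalues $\mu_k$, an orthonormal basis $\psi_k$, the regularity $\psi_k\in C^2(\overline{B_1^m})$ via ODE theory at $r=0$, and a nodeless first eigenfunction $\psi_1>0$. The sign $\psi_1'\le 0$ I would get from $-(r^{m-1}\psi_1')'=r^{m-1}(K+\mu_1)\psi_1$ only when the right side is nonnegative. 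Otherwise I would argue via the integral identity and a maximum-principle argument; this step needs care, and possibly the sign of $K+\mu_1$ near $0$. This establishes (iii) and the radial part (ii).

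For (i), I would use the Hardy inequality $\int|\nabla\xi|^2\ge\frac{(N-2)^2}{4}\int\xi^2/r^2$ together with $2N-4\le (N-2)^2/4$ (which holds for $N\ge10$). These show that the quadratic form of $\cL$ is bounded below, so $\cL$ is defined via Friedrichs extension. Compactness of the resolvent follows by decomposing into spherical harmonics: for modes $\ell\ge1$ the extra term $\ell(\ell+N-2)/r^2$ makes the potential subcritical with room to spare, giving compact embedding on each mode, and the eigenvalues grow in $\ell$. Simplicity of the principal eigenvalue and the fact that it is attained in the radial sector follow from the positivity-improving property of the ground state (or from $|\xi|$ and symmetrization, since $\mu_1$ is the bottom and the $\ell\ge1$ sectors have strictly larger forms). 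This completes (ii).

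For (iv), I would use the explicit $W_h$ from Proposition \ref{specificprop}: $e^{W_h}=(2N-4)r^{-2}(1+\frac{h}{2N-4}r^2)=(2N-4)r^{-2}+h$. Thus $K\equiv h$, and $\mu_1$ equals the first eigenvalue of $-\Delta-(2N-4)r^{-2}$ minus $h$, which is $H-h$ for $N\ge11$ by \eqref{DefH}. For (v), the bound \eqref{stW} is the variational characterization of $\mu_1$ applied to the form domain containing $C_0^1$. For the scaled bound, scaling $\xi(x)=\eta(x/r)$ with $\eta$ supported in $B_1$ gives $\int|\nabla\xi|^2-(2N-4)|x|^{-2}\xi^2\ge Hr^{-2}\int\xi^2$, and subtracting $h\int\xi^2$ finishes. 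The main obstacle I expect is the justification of compactness and of the behavior at $r=0$ for the non-integer dimension $m$, which I would handle through the Sturm--Liouville framework of the appendix.
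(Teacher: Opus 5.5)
Your route for (i)--(v) matches what the paper sketches. The paper cites V\'azquez--Zuazua for (i)--(ii), a direct computation plus an ODE argument for (iii), $K\equiv h$ for (iv), and the Rayleigh quotient for (v). Your substitution $\phi=r^{-\gamma}\psi$, the sector comparison for simplicity and radiality of the ground state, and the scaling step in (v) are all fine.

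The genuine gap is $\psi_1'\le 0$ in (iii). You leave it open, and your fallback (an integral identity plus a maximum principle when $K+\mu_1$ has no sign) cannot work, because the claim is false without a sign condition. The identity
\[
-r^{m-1}\psi_1'(r)=\int_0^r s^{m-1}\bigl(K(s)+\mu_1\bigr)\psi_1(s)\,ds,\qquad \psi_1(0)>0,
\]
shows that $K(0)+\mu_1<0$ forces $\psi_1'>0$ near $r=0$. This happens within the lemma's hypotheses:
\begin{itemize}
\item $W=-2\log r+\log(2N-4)+\log(1+br^4)$ solves \eqref{eqofW} with the smooth forcing $f=(2N-4)br^2+\Delta\log(1+br^4)$.
\item Hence $K=(2N-4)br^2$ and $K(0)=0$.
\item Test the Rayleigh quotient of $-\Delta-K$ on $B_1^m$ with the normalized first Dirichlet eigenfunction $e_1$ of $B_1^m$. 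This gives $\mu_1\le H-(2N-4)b\int_{B_1^m}r^2e_1^2\,dx$, which is negative for $b$ large.
\end{itemize}

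The paper only establishes the monotonicity later, in Lemma \ref{fnolem}, under $0\le f\le f_H$. There $W\ge W_0$ by Proposition \ref{sepa-pro-1}(iii), so $K=e^{W}-e^{W_0}\ge 0$, and also $\mu_1\ge 0$. Hence $K+\mu_1\ge 0$, and your first argument (the integral identity, as in Lemma \ref{apriorilem1}) suffices. That is the only setting where the paper uses it, namely the definition of $\delta(t)$ in Section \ref{sec-rate}. So you should prove $\psi_1'\le 0$ under the hypothesis $K+\mu_1\ge 0$ rather than look for an unconditional proof.

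Smaller points:
\begin{itemize}
\item Since $m=2+\sqrt{(N-2)(N-10)}$, you have $m=2$, not $m>2$, when $N=10$. This is the critical Hardy case. The form domain of $\mathcal{L}$ is strictly larger than $H^1_0(B_1)$, and $r=0$ is limit-circle for the radial problem (solutions $1$ and $\log r$), so you must select the bounded (Friedrichs) solution. For $N\ge 11$ you have $m\ge 5$, and $r=0$ is limit-point.
\item For $N=10$, the identification of the bottom of $-\Delta-16|x|^{-2}$ on $B_1^{10}$ with $H$ is used in (iv) and in your scaling step in (v). It comes from your (ii)--(iii) with $m=2$, not directly from \eqref{DefH}, which you only invoked for $N\ge 11$.
\item Continuity of $K$ is the hypothesis \eqref{defk}, or follows from Proposition \ref{sepa-pro-1}(ii). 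Lemma \ref{uniquelem} alone only gives $K=o(r^{-2})$.
\end{itemize}
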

\begin{proof}
The assertions (i) and (ii) can be proved by arguments similar to those in Subsections 3.1 and 4.1 of \cite{VZ}, together with a direct computation. Assertion (iii) follows from a direct computation and a standard ODE argument. Estimate \eqref{stW} is an immediate consequence of the Rayleigh quotient.
In addition, since $K=h$ when $f\equiv f_H$, the first eigenvalue of $\mathcal{L}$ in $H^{1}_{0}(B_r)$ coincides with $Hr^{-2}-h$. Thus, we get the assertion (iv).
\end{proof}
\begin{remark}
\rm{The condition \eqref{defk} is satisfied for the case $f\equiv f_H$. In Proposition~\ref{sepa-pro-1} below,
we show that \eqref{defk} is satisfied for any $f\in \mathrm{Lip}[0,1]$ when $N\ge 10$. 
}
\end{remark}

\begin{lem}
\label{intersec-lem}
Assume that $f\in \mathrm{Lip}[0,1]$ and $3\le N\le 9$. Then, there exists a sequence $r_i\downarrow 0$ such that the solution $W$ of \eqref{eqofW} is unstable in $B_{r_{i}}\setminus B_{r_{i+1}}$ for any $i\in \mathbb{N}$. In particular, $m(W)=\infty$.  
\end{lem}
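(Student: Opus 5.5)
The plan is to exploit the asymptotics from Lemma \ref{uniquelem}: $W(r)=-2\log r+\log(2N-4)+o(1)$ as $r\to 0$, so $e^{W}=(2N-4)r^{-2}(1+o(1))$. For $3\le N\le 9$ we have $2N-4>(N-2)^2/4$, since this is equivalent to $(N-2)(N-10)<0$. In other words, the coefficient $2N-4$ exceeds the Hardy constant $(N-2)^2/4$. The construction of compactly supported negative directions in thin annuli therefore works exactly as in the classical case $f=0$; the only role of $f$ is through the $o(1)$ error.

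First, fix $\delta>0$ with $(2N-4)(1-\delta)>(N-2)^2/4+\delta$, and choose $\rho_0$ so small that $e^{W}\ge (2N-4)(1-\delta)r^{-2}$ on $(0,\rho_0)$. Next, construct a single radial test function $\xi_1\in C^1_0(B_{1}\setminus B_{a})$, with $a<1$, satisfying
\[
\int|\nabla\xi_1|^2-(2N-4)(1-\delta)|x|^{-2}\xi_1^2\,dx<0 .
\]
This is possible because, in the variable $s=-\log r$ with $\xi=r^{-(N-2)/2}\eta(s)$, the quadratic form becomes
\[
\int\bigl(\eta'^2+\bigl((N-2)^2/4-(2N-4)(1-\delta)\bigr)\eta^2\bigr)\,ds ,
\]
up to a positive constant. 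The coefficient of $\eta^2$ is negative, so a bump $\eta$ supported on a long enough $s$-interval of length $L$ makes the form negative (take $L>\pi/\sqrt{\kappa}$, where $-\kappa$ is that coefficient).

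Then use the scaling invariance of the $r^{-2}$ weight. The rescaled function $\xi_1(x/\rho)$ is supported in $B_\rho\setminus B_{a\rho}$, and the form scales by $\rho^{N-2}$, so it stays negative. Set $r_1=\rho_0$ and $r_{i+1}=a r_i$, so that $r_i\downarrow 0$. On each annulus $B_{r_i}\setminus B_{r_{i+1}}$ we then have $Q_W(\xi_i)\le \int|\nabla\xi_i|^2-(2N-4)(1-\delta)r^{-2}\xi_i^2<0$, which is instability there. The $\xi_i$ have disjoint supports, so for any finite linear combination $\sum c_i\xi_i\ne0$ we get $Q_W(\sum c_i\xi_i)=\sum c_i^2Q_W(\xi_i)<0$. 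Hence the spans are negative subspaces of arbitrarily large dimension in $H^1_{0,\mathrm{rad}}$, and $m(W)=\infty$.

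The only delicate point is the uniform lower bound on $e^W$. This comes directly from \eqref{as-eq-1}, so I expect no real obstacle; the main care is in fixing $\delta$ and the interval length $L$ before scaling, so that a single ratio $a$ works for every $i$.
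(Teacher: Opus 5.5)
Your proposal is correct and takes essentially the same route as the paper, which combines the asymptotics $e^{W}=(2N-4)r^{-2}(1+o(1))$ from Lemma \ref{uniquelem} with the Hardy-optimality construction of Lemma \ref{optimality-1}. The only cosmetic difference is the choice of test functions: the paper uses the explicit functions $r^{(2-N)/2}\sin(\frac12\sqrt{\ep_0}\log r)$ on geometric annuli $[r_{i+1},r_i]$, which satisfy $-\Delta\xi_i=\frac{(N-2)^2+\ep_0}{4r^2}\xi_i$ there, whereas you take a generic bump in $s=-\log r$ and rescale it, which is the same Emden--Fowler argument.
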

\begin{proof}
The result follows from Lemma \ref{uniquelem} and Lemma \ref{optimality-1}.
\end{proof}

\begin{proof}[Proof of Proposition \ref{specificprop}]
Thanks to Proposition \ref{uniqueprop}, Lemma \ref{intersec-lem} and direct computations, it is sufficient to obtain \eqref{specific-morse-ind} and the stability of $W_h$
in the case $N\ge 10$ and $h\le H$. In this case, we can confirm the condition \eqref{defk}.  
Thanks to Lemma \ref{b-v-lem}, we can verify that $m(W)$ is equal to the number of negative radial eigenvalues of $\mathcal{L}$ by using an argument similar to that of the proof in \cite[Proposition 1.5.1]{Dup}. Therefore, the result follows from Lemma \ref{b-v-lem}. 
\end{proof}

\begin{proof}[Proof of Proposition \ref{intro-prop-2}]
The result follows from Propositions \ref{uniqueprop}, \ref{specificprop} and the change of variables \eqref{sp-transform}. 
\end{proof}
\subsection{Separation results}
In this subsection, we introduce separation results that play a key role in clarifying the bifurcation structure and determining the grow-up rate of solutions to \eqref{eq-intro-1}. 
\begin{prop}\label{sepa-pro-1}
Let $N\ge 10$ and $f\in \mathrm{Lip}[0,1]$. Then, 
\begin{enumerate}
    \item[(i)]\label{sepa-pro-1-1} there exist $\underline{r}$ and $\underline{h}>0$ depending only on $f$ such that $w(r,\beta_1)<w(r,\beta_2)<W_{\underline{h}}$ in $B_{\underline{r}}$ for any $\beta_1<\beta_2$. In particular, $W\le W_{\underline{h}}$ in $B_{\underline{r}}$.
    Moreover, when $f\le f_H$, we can take $\underline{r}=1$ and $\underline{h}=H$.
    \item[(ii)] \label{sepa-pro-2}
    $e^{W}-e^{W_{h_{0}}}\to 0$ as $r\to 0$, where $h_{0}=(N-2)f(0)/(2N-2)$. 
    \item[(iii)]\label{sepa-pro-3} When $f_1\le f_2\le f_H$ and $f_2\not\equiv f_H$ in $B_1$, we have 
    $W_{f_1}(r)\le W_{f_2}(r)\le W_H$ in $B_1$ and $\mu_1(f_1)\ge \mu_1(f_2)>0$. Moreover, $W_{f}\to W_{H}$ in $C^{0}(\overline{B_1})$ and $\mu_1(f)\to 0$ as $f\to f_H$ in $C^{0}(\overline{B_1})$.
\end{enumerate}
\end{prop}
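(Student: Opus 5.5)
The plan is to compare solutions of \eqref{eqofw} and \eqref{eqofW} with the explicit singular solutions $W_h$ of Proposition \ref{specificprop}, using the Emden--Fowler variables \eqref{transform-emden} and intersection-number arguments of the type in \cite{MN20}. Two facts drive everything. First, for $N\ge 10$ the linearization of $\tilde W''-(N-2)\tilde W'+2(N-2)(e^{\tilde W}-1)=0$ at $\tilde W=0$ has characteristic roots
\begin{equation*}
\frac{(N-2)\pm\sqrt{(N-2)(N-10)}}{2},
\end{equation*}
which are real and positive. Hence solutions near the singular one approach it monotonically, without oscillation. Second, $f_h$ is increasing in $h$; I expect this from the explicit formula \eqref{Deffh}, and the property I actually need is this monotonicity in $h$ near $r=0$.

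For (ii), I would set $\eta:=\tilde W-\tilde W_{h_0}$. By Lemma \ref{uniquelem}, $\eta=o(1)$ as $s\to\infty$. Subtracting the two equations gives a linear equation for $\eta$ with coefficient $2(N-2)e^{\theta}\to 2(N-2)$ and forcing $e^{-2s}(f-f_{h_0})(e^{-s})$. Since $f-f_{h_0}$ is Lipschitz, this forcing is $O(e^{-2s})$ plus $e^{-2s}(f(0)-f_{h_0}(0))$, and $h_0$ is exactly the value for which $f_{h_0}(0)=f(0)$. I would check this from $f_h(0)=h(8(N-2)/(2N-4)^2+2(N-2)/(2N-4)+1)$; otherwise $h_0$ is the value that cancels the $e^{-2s}$ resonance. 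A variation-of-constants estimate using the positive roots $\gamma<N-2-\gamma$ then gives $\eta=o(e^{-2s})$. Since $\gamma\ge 2$ exactly when $N\ge 10$, the rate $e^{-2s}$ lies at or below the slow root $\gamma$, and the borderline case $\gamma=2$ at $N=10$ needs a logarithm check. Translating back, $e^{W}-e^{W_{h_0}}=r^{-2}(2N-4)e^{\tilde W_{h_0}}(e^{\eta}-1)=o(1)$, which is the claim.

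For (i), I would choose $\underline h>\max(h_0,0)$, so that $f\le f_{\underline h}$ near $0$, and choose $\underline r$ so that this inequality holds on $B_{\underline r}$. Then $W_{\underline h}$ is a supersolution on $B_{\underline r}$. The argument has four steps.
\begin{itemize}
\item I first show $w(\cdot,\beta)<W_{\underline h}$ on $(0,\underline r)$. The stability of $W_{\underline h}$ on small balls, from Lemma \ref{b-v-lem}(v), combined with the classical argument of \cite{BV} and \cite{JL}, shows that a regular solution cannot touch a stable singular supersolution from below. In Emden--Fowler variables, the real positive roots exclude a first intersection point coming from $s=\infty$.
\item I then obtain monotonicity in $\beta$. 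The derivative $z=\partial_\beta w$ solves the linearized equation with $z(0)=1$. If $z$ had a zero in $(0,\underline r)$, Sturm comparison against the positive solution of the linearization at $W_{\underline h}$ would give a contradiction, because $e^{w}<e^{W_{\underline h}}$.
\item Letting $\beta\to\infty$ and using Proposition \ref{uniqueprop} gives $W\le W_{\underline h}$.
\item When $f\le f_H$, the same argument works with $\underline h=H$ and $\underline r=1$, because Lemma \ref{b-v-lem}(v) gives stability of $W_H$ on all of $B_1$.
\end{itemize}

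For (iii), I would apply the same comparison to $f_1\le f_2\le f_H$. A smaller forcing term gives a larger $-\Delta$ side. I would verify the direction of the inequality carefully, taking as a guide the stated conclusion $W_{f_1}\le W_{f_2}$; it comes from comparing regular solutions via a maximum principle below a stable supersolution and then passing to the limit. The comparison $e^{W_{f_1}}\le e^{W_{f_2}}$, together with the Rayleigh quotient, gives $\mu_1(f_1)\ge\mu_1(f_2)$. To get $\mu_1(f_2)>0$ I would argue by contradiction: $\mu_1(f_2)=0$ would force $W_{f_2}\equiv W_H$ by strong comparison, and hence $f_2\equiv f_H$. For continuity, a $C^0$ perturbation of $f$ moves $W$ in $C^0$ through the integral equation \eqref{integralform-1} and the uniform control from (ii); $\mu_1$ then varies continuously with $K$.

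The main obstacle is the first step of (i): ruling out that the regular solutions cross the singular supersolution. This is exactly where the hypothesis $N\ge 10$ enters, through the Hardy inequality and the real roots.
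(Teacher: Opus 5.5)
\paragraph{Parts (i) and (iii), first claims.} Your outline matches the paper's. You compare $w(\cdot,\beta)$ with the explicit $W_{\underline h}$ on a ball where $f\le f_{\underline h}$ and $W_{\underline h}$ is stable (with $\underline h=H$, $\underline r=1$ when $f\le f_H$), and then let $\beta\to\infty$. Your $\partial_\beta w$/Sturm step is a valid substitute for the paper's direct comparison of $w(\cdot,\beta_1)$ with $w(\cdot,\beta_2)$.

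The step you call the main obstacle, however, is not carried out, and the mechanism you describe is off. Since $-\Delta W_{\underline h}=e^{W_{\underline h}}-f_{\underline h}\le e^{W_{\underline h}}-f$, the function $W_{\underline h}$ is a \emph{sub}solution of the $f$-equation, so no touching or maximum-principle argument puts regular solutions below it. The real roots also say nothing about a crossing at positive radius. The ordering comes from the singularity at $0$ plus stability. Suppose $r_0<\underline r$ were the first zero of $\eta:=W_{\underline h}-w(\cdot,\beta)$. Then $\eta\ge0$, $\eta\in H^1_0(B_{r_0})$, and convexity gives $-\Delta\eta\le e^{W_{\underline h}}\eta$. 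Testing with $\eta$ gives $\int_{B_{r_0}}|\nabla\eta|^2\le\int_{B_{r_0}}e^{W_{\underline h}}\eta^2$, which contradicts the strict stability of $W_{\underline h}$ on $B_{r_0}$ from Lemma~\ref{b-v-lem}(v).

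In (iii) the same argument applies with $W_{f_2}$, a singular subsolution of the $f_1$-equation that is stable because $W_{f_2}\le W_H$. For $\mu_1(f_2)>0$, test with the positive first eigenfunction $\phi$ of $\cL$. If $\mu_1(f_2)=0$, then $0=Q_{W_{f_2}}(\phi)=Q_{W_H}(\phi)+\int(e^{W_H}-e^{W_{f_2}})\phi^2\ge0$ forces $W_{f_2}\equiv W_H$, hence $f_2\equiv f_H$.

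\paragraph{Part (ii): a different route.} The paper reuses the comparison. It picks $r_0$ with $f_{h_0-\ep}\le f\le f_{h_0+\ep}$ and $W_{h_0+\ep}$ stable on $B_{r_0}$, and obtains $W_{h_0-\ep}\le W\le W_{h_0+\ep}$ there. Then $e^{W_h}=(2N-4)r^{-2}+h$ gives $|e^W-e^{W_{h_0}}|\le\ep$.

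Your Emden--Fowler argument also works. It uses no stability, so it holds for every $N\ge3$, and it gives a rate. Two points need correcting:
\begin{enumerate}
\item Since $f_h(0)=\frac{2(N-1)}{N-2}h$, indeed $f_{h_0}(0)=f(0)$, and the Lipschitz bound makes the forcing $O(e^{-3s})$, not $O(e^{-2s})$. With only $O(e^{-2s})$ you would get $e^W-e^{W_{h_0}}=O(1)$.
\item Both roots are growth rates as $s\to\infty$. A bounded $\eta$ therefore has no homogeneous component and equals the Green-kernel integral from $s=\infty$, so there is no resonance to check. Also, $\gamma=4$ at $N=10$ and $\gamma\in(2,4]$ for $N\ge10$; $\gamma$ is never $2$.
\end{enumerate}
Absorbing $2(N-2)(e^{\theta}-1)\eta$ through $\sup_{\tau\ge s}|\eta(\tau)|$ then gives $\eta=O(e^{-3s})$, i.e.\ $e^W-e^{W_{h_0}}=O(r)$.

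\paragraph{The gap: convergence at the end of (iii).} What you wrote does not justify this claim. The singular solution is not determined by an initial-value problem at $r=0$, so \eqref{integralform-1} gives no continuous dependence on $f$ up to $r=0$. Part (ii) holds for each fixed $f$, with no uniformity in $f$. Moreover, continuity of $\mu_1$ in $K$ for the sup norm would need $W_H-W_f=o(r^2)$ uniformly, which is more than $C^0$ convergence.

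The missing idea is to feed the monotonicity you just proved into the explicit family. A direct computation gives $\partial_h f_h=1+4(N-2)(N-4)d^2+32(N-2)^2d^3\ge1$. So with $\delta:=\norm{f_H-f}_{L^{\infty}}$ you have $f_{H-\delta}\le f\le f_H$, hence $W_{H-\delta}\le W_f\le W_H$. This yields $0\le W_H-W_f\le C\delta$ and $0\le e^{W_H}-e^{W_f}\le\delta$. It also gives $\mu_1(f)\le\mu_1(f_{H-\delta})=\delta$ by Lemma~\ref{b-v-lem}(iv).
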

\begin{remark}
    \rm{Since $e^{W_h}=h+(2N-4)r^{-2}$,
    Proposition \ref{sepa-pro-1} (ii) ensures that the assumption \eqref{defk} is always satisfied.
    Hence, Lemma \ref{b-v-lem} gives standard properties for the eigenvalues of $\mathcal{L}$.}
\end{remark}
\begin{proof}
We show (i).
Since $f_{h}'\leq 0$ in $(0,1]$ and $f_h(1)\geq h$,
we can choose $\underline{h}$ such that $f\le f_{\underline{h}}$ in $B_1$.
By Lemma \ref{b-v-lem}~(v),
there exists $\underline{r}$ such that $W_{\underline{h}}$ is stable in $B_{\underline{r}}$.
Suppose, to the contrary, that the first zero $r_{0}$ of 
$\eta=W_{\underline{h}}-w(r,\beta_2)$ satisfies $r_{0}<\underline{r}$.
Then, we have
\begin{equation*}
    -\Delta\eta
    =e^{W_{\underline{h}}}-e^{w(r,\beta_2)}
    -(f_{\underline{h}}-f)\le e^{W_{\underline{h}}}\eta
    \quad\text{in $B_{r_0}$}.
\end{equation*}
Hence, it follows from the strict-stability\footnote{We say that $W$ is strictly-stable in $B_{r}$ if the first eigenvalue of $-\Delta -e^{W}$ is positive.
It follows from Lemma \ref{b-v-lem} that $W_{\underline{h}}$ is strictly-stable in $B_s$ for any $s<r$ if $W_{\underline{h}}$ is stable in $B_r$.}
of $W_{\underline{h}}$ in $B_{r_0}$ that
\begin{equation*}
    \int_{B_{r_{0}}}|\nabla \eta|^2\,dx\le \int_{B_{r_0}}e^{W_{\underline{h}}}\eta^2\,dx< \int_{B_{r_{0}}}|\nabla \eta|^2\,dx,
\end{equation*}
which is a contradiction.
Thus we get $w(r,\beta_2)< W_{\underline{h}}$ in $B_{\underline{r}}$.
In particular, $w(r,\beta_2)$ is stable in $B_{\underline{r}}$.
By the same argument, we obtain $w(r,\beta_1)<w(r,\beta_2)$ in $B_{\underline{r}}$. Thus, we have
$W\leq W_{\underline{h}}$ in $B_{\underline{r}}$ by Proposition \ref{uniqueprop}.
The remaining assertion follows from the fact that $W_{H}$ is stable in $B_1$. 

Next we prove (ii). It suffices to prove the following: for any $\ep>0$, there exists $r_{0}$ such that $W_{h_{0}-\e}\le W \le W_{h_0 +\ep}$ in $B_{r_0}$. By the assertion (i) and the fact that $f(0)=f_{h_{0}}(0)$, we can choose $r_{0}$ such that $f_{h_{0}-\ep}\le f \le f_{h_{0}+\ep}$ in $B_{r_{0}}$ and
$W_{h_{0}+\ep}$ is stable in $B_{r_{0}}$. Therefore, the result follows as in the proof of (i).

Finally, we prove (iii). By the assertion (i), for any $f$ satisfying $f\le f_H$ in $B_1$, we deduce that
$W\le W_H$ in $B_1$ and thus $W$ is stable in $B_1$. Hence, the monotonicity of $W_{f}$ with respect to $f$ and the convergence to $W_H$ follows as in the proof of (i).
Moreover, thanks to (ii), we can check \eqref{defk}. Lemma \ref{b-v-lem} and the monotonicity of $W$ and the convergence to $W_H$ enable us to see the monotonicity of $\mu_1$ and the convergence to $0$.
The positivity of $\mu_1$ follows from the fact that $W\not\equiv W_H$.
\end{proof} 
From the separation result, we obtain the following.
\begin{prop}
\label{bifurcation-prop-1}
Let $N\ge 10$ and $f\in \mathrm{Lip}[0,1]$. Then,
the bifurcation diagram is of Type II if $N\ge 10$ and $f\le f_H$ in $B_1$. In addition, $v^{*}\in C^2(\overline{B_1})$ if $N\ge 10$, $f\ge f_H$ in $B_1$
and $f\not\equiv f_H$. Finally, $v^{*}\in C^2(\overline{B_1})$ if $N\le 9$.
\end{prop}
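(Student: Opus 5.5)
The proposition has three parts, and I would treat them in turn.

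\textbf{Type II when $f\le f_H$.} I would use Proposition \ref{sepa-pro-1}~(i) with $\underline{r}=1$ and $\underline{h}=H$. It gives
\[
w(r,\beta_1)<w(r,\beta_2)<W_H \quad\text{in } B_1 \quad\text{for all } \beta_1<\beta_2 .
\]
Evaluating at $r=1$ shows that $\lambda(\beta)=e^{w(1,\beta)}$ is strictly increasing in $\beta$, so the curve has no turning point. Since $w(\cdot,\beta)<W_H$ and $e^{W_H}\le e^{W_{H}}$, each $w(\cdot,\beta)$ is dominated by a stable singular profile. By comparison with $Q_{W_H}\ge 0$, every $v(\cdot,\alpha(\beta))$ is stable, so the whole curve is the stable branch.

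By Proposition \ref{uniqueprop}, $w(\cdot,\beta)\to W$ as $\beta\to\infty$. Hence $\lambda(\beta)\uparrow\lambda_*$, and $V_*$ is the monotone limit of the stable solutions $v_\lambda$. Proposition \ref{stableprop-1}~(ii) then identifies $(\lambda^*,v^*)=(\lambda_*,V_*)$. This is exactly Type II.

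\textbf{Regularity of $v^*$ when $f\ge f_H$, $f\not\equiv f_H$, and when $N\le 9$.} I would argue by contradiction: suppose $v^*\notin C^2(\overline{B_1})$. Since $v_\lambda\uparrow v^*$ and $v^*$ is the unique $H^1_0$ solution at $\lambda^*$, the stable branch can contain no turning point before $\lambda^*$ is reached. If $v^*$ were unbounded, the branch would be monotone and unbounded, i.e.\ $\beta\to\infty$ along it. Proposition \ref{uniqueprop} would then give $v^*=V_*$, so $V_*$ would be stable. It therefore suffices to show that $V_*$, equivalently $W$, is unstable in each case.

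\emph{Case $N\le 9$.} Lemma \ref{intersec-lem} gives $m(W)=\infty$, so $W$ is unstable.

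\emph{Case $N\ge 10$.} I would run the argument of Proposition \ref{sepa-pro-1}~(i) with the inequalities reversed. Since $f\ge f_H$, the difference $\eta=W_H-W$ satisfies
\[
-\Delta\eta\ \ge\ e^{W_H}-e^{W}+(f-f_H).
\]
I would aim to show $W\ge W_H$ and $W\not\equiv W_H$. Strictness should then push the first eigenvalue of $\cL$ below $\mu_1(f_H)=0$ (Lemma \ref{b-v-lem}~(iv)). To get there, first suppose instead that $W$ is stable. Then compare $W$ with $W_H$ near the singularity using Proposition \ref{sepa-pro-1}~(ii), which gives $e^W-e^{W_{h_0}}\to0$ with $h_0\ge H$. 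This yields an ordering between $W$ and $W_H$ in $B_1$, and the strict inequality $f\ge f_H$, $f\not\equiv f_H$ gives the contradiction. Concretely, I would test $Q_W$ on $\phi_1^{H}$, the first eigenfunction of $\cL_H$, truncated near $0$. Using $W\ge W_H$ with strict inequality on an open set, one gets $Q_W(\phi_1^H)<Q_{W_H}(\phi_1^H)=0$. The truncation error is controlled because $\phi_1^H\sim r^{-\gamma}$ lies in $H^1_0$ when $N\ge 11$; for $N=10$ one needs care.

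Once instability of $W$ is established, the contradiction shows $v^*$ is bounded. Standard elliptic regularity then gives $v^*\in C^2(\overline{B_1})$.

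\textbf{Main obstacle.} The hard step is proving the ordering $W\ge W_H$ (strictly) when $f\ge f_H$. The comparison argument in Proposition \ref{sepa-pro-1} used stability of the larger function. Here I would instead use stability of $W_H$, which holds since $H\ge H$, on the region where $W<W_H$, and derive a contradiction from the first-zero argument in each annulus. Matching the two profiles near $r=0$, through Proposition \ref{sepa-pro-1}~(ii), is the delicate point.
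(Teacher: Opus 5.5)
Your Type II argument, the case $N\le 9$, and your reduction of the case $f\ge f_H$, $f\not\equiv f_H$ to showing that $W$ is unstable all match the paper. For Type II you are in fact more explicit than the paper: you get monotonicity of the whole curve from Proposition~\ref{sepa-pro-1}~(i) with $\underline{r}=1$, $\underline{h}=H$.

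The gap is in the step you flag as the main obstacle, the ordering $W\ge W_H$. Using stability of $W_H$ on a component of $\{W<W_H\}$ cannot work. There $\eta=W_H-W>0$ satisfies
\[
-\Delta\eta=(e^{W_H}-e^{W})+(f-f_H)\ \ge\ e^{W}\eta .
\]
Because $f-f_H\ge 0$ enters with the wrong sign, you never obtain $-\Delta\eta\le e^{W_H}\eta$. So $\eta$ is a positive \emph{super}solution, which is compatible with stability of either profile, and the first-zero argument yields nothing. In Proposition~\ref{sepa-pro-1} the stable function used is the one with the \emph{larger} forcing; here that is $W$, not $W_H$. Matching near the origin through Proposition~\ref{sepa-pro-1}~(ii) does not help either. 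It only gives $e^{W}-e^{W_H}\to h_0-H$, which fixes no sign of $W-W_H$ near $0$ when $f(0)=f_H(0)$.

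The missing idea is to avoid comparing two singular profiles at all. Start from the contradiction hypothesis that $W$ is stable. Upgrade it to strict stability in every $B_r$ with $r<1$: otherwise a first eigenfunction on $B_r$, extended by zero, would be a first eigenfunction on $B_1$ vanishing on an open set. Then compare $W$ with the \emph{regular} solutions $w_H(\cdot,\beta)$ of \eqref{eqofw} with $f$ replaced by $f_H$. The difference $\eta=W-w_H(\cdot,\beta)$ tends to $+\infty$ as $r\to0$, so it is automatically positive near the origin, and
\[
-\Delta\eta=(e^{W}-e^{w_H})-(f-f_H)\le e^{W}\eta .
\]
Hence a first zero $r_0<1$ would contradict strict stability of $W$ in $B_{r_0}$. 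Letting $\beta\to\infty$ via Proposition~\ref{uniqueprop} gives $W\ge W_H$, and $W\not\equiv W_H$ because $f\not\equiv f_H$. This is the comparison ``as in Proposition~\ref{sepa-pro-1}'' that the paper uses, with the stability of $W$.

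For the final contradiction, work with Lemma~\ref{b-v-lem} rather than truncating $\phi_1^H$. Since $K\ge H$ and $K\not\equiv H$, testing against the $f_H$-eigenfunction $\psi_1$ in $B_1^m$ gives $\mu_1<0$. This also covers $N=10$, where $\phi_1^H\notin H^1_0(B_1)$.
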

\begin{proof}
We first consider the case $N\ge 10$ and $f\le f_H$.
By Proposition \ref{sepa-pro-1} (iii), we have $W\le W_{H}$ and hence $W$ is stable. Hence, by Proposition \ref{intro-prop-1} and \eqref{sp-transform}, we obtain $(\lambda^{*}, v^{*})=(\lambda_{*},V_{*})$. In particular, the bifurcation diagram is of Type II.
Next let $N\ge 10$, $f\ge f_H$ in $B_1$
and $f\not \equiv f_H$. Suppose that $v^{*}$ is singular.
By \eqref{sp-transform},
the stability of $v^{*}$ and Proposition \ref{uniqueprop},
we verify that $W$ is stable in $B_1$.
Arguing as in the proof Proposition \ref{sepa-pro-1},
we have $W\ge W_H$ in $B_1$\footnote{
Here the strict stability of $W$ in $B_r$ for all $r<1$
is necessary.
For the proof, we note that $W$ is stable in $B_r$.
If $W$ is not strictly stable in $B_{r}$,
we can take the first eigenfunction $\phi$ of $\mathcal{L}$ in $H^{1}_{0}(B_{r})$.
we extend $\phi$ to $B_1$ so that $\phi=0$ on $B_1\setminus B_{r}$.
Then, $\phi$ is also a first eigenfunction of $\mathcal{L}$ in $B_1$,
which contradicts the fact that any first eigenfunction has no zero in $B_1$.
}. 
Here, we recall that $\mu_{1}=0$ for $f\equiv f_H$ (see Lemma \ref{b-v-lem}). Therefore, from the stability of $W$ and the fact that $W\ge W_H$ in $B_1$, we obtain $\mu_{1}=0$ and $W=W_H$,
which contradicts the fact that $f\not\equiv f_H$.
Finally, we consider the case $3\le N\le 9$.
By Lemma \ref{intersec-lem}, we deduce that $W$ is unstable, which implies $v^{*}\in C^{2}(\overline{B_1})$.
\end{proof}

The following is crucial for obtaining the grow-up rate of solutions to \eqref{eq-intro-1}.
\begin{prop}\label{inner-prop-1}
Suppose that $N\ge 10$ and $f\in \mathrm{Lip}[0,1]$ satisfies $0\le f\le f_{H}$ in $B_1$.
 Let $w_{0}(r,\beta)$ and $W_0(r)$ be solutions of \eqref{eqofw} and \eqref{eqofW} with $f=0$, respectively.
    Set $\hat{r}:=\min\{(2e)^{-1/2}, H^{-1/2}\}$.
  Then, 
    $W-w(r,\beta)\le W_{0}-w_{0}(r,\beta)$ in $B_{\hat{r}}$ for any $\beta>0$.
\end{prop}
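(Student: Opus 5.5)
The plan is to interpolate between $f=0$ and $f$ through the family $f_\tau:=\tau f$, $\tau\in[0,1]$, and to show that $D_\tau:=W_{f_\tau}-w_{f_\tau}(\cdot,\beta)$ is nonincreasing in $\tau$ on $B_{\hat r}$. The key observation is that the forcing cancels in the difference:
\[
-\Delta D_\tau=e^{W_{f_\tau}}-e^{w_{f_\tau}}.
\]
Moreover, by Lemma \ref{uniquelem}, $W_{f_\tau}=-2\log r+\log(2N-4)+o(1)$ independently of $\tau$, and $w_{f_\tau}(0)=\beta$. Hence all the $D_\tau$ share the same singular profile at $r=0$, and only the regular parts need to be compared. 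Since $0\le \tau f\le f_H$, Proposition \ref{sepa-pro-1} applies to every $f_\tau$.

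\textbf{Step 1 (differentiation in $\tau$).} Using the analytic (at least $C^1$) dependence of the ODE solutions on the parameter, set $a:=\partial_\tau W_{f_\tau}$ and $b:=\partial_\tau w_{f_\tau}$. They satisfy
\[
-\Delta a=e^{W}a-f,\qquad -\Delta b=e^{w}b-f,\qquad b(0)=b'(0)=0,\qquad a=o(1)\ \text{as } r\to0.
\]
For $a$, the vanishing at $r=0$ follows from the $\tau$-independent asymptotics; for instance $\partial_hW_h=r^2/(2N-4+hr^2)$. Proposition \ref{sepa-pro-1}(iii) gives $W_{f_\tau}$ nondecreasing in $\tau$, hence $a\ge0$. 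Proposition \ref{sepa-pro-1}(i) (with $\underline r=1$, $\underline h=H$), together with $w\uparrow W$ as $\beta\to\infty$, gives $w<W\le W_H$ in $B_1$.

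\textbf{Step 2 (equation for $\eta$).} Set $\eta:=a-b=\partial_\tau D_\tau$. Then
\[
-\Delta\eta-e^{w}\eta=(e^{W}-e^{w})\,a=:g\ge0 .
\]
Near $r=0$ one has $\eta(0)=0$. From the integral form $-r^{N-1}\eta'=\int_0^r s^{N-1}(e^w\eta+g)\,ds$ and the local expansions of $a$ and $b$ (both of order $r^2$, with $b\approx f(0)r^2/(2N)$), $\eta$ is negative on some $(0,\delta)$. I will use the restriction $\hat r\le(2e)^{-1/2}$ here, if needed, to guarantee this initial sign uniformly in $\beta>0$ (for example via crude bounds $e^{w}\le e^{\beta}$ balanced against $r^2\le 1/(2e)$).

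\textbf{Step 3 (first-zero argument).} Suppose $\eta$ vanishes again at a first point $r_1<\hat r$. Multiply the $\eta$-equation by $\eta\le0$ on $B_{r_1}$ and integrate to get
\[
\int_{B_{r_1}}|\nabla\eta|^2-e^{w}\eta^2\,dx=\int_{B_{r_1}}g\eta\,dx\le0 .
\]
Since $w<W_H$, Lemma \ref{b-v-lem}(v) with $h=H$ gives $Q_w\ge Q_{W_H}\ge(Hr_1^{-2}-H)\|\eta\|_{L^2}^2>0$ for $r_1<1$. This is a contradiction, so $\eta<0$ on $(0,\hat r)$. The restriction $\hat r\le H^{-1/2}$ leaves room for this stability estimate, but strict stability in $B_{r_1}$ is in fact already available for every $r_1<1$. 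Integrating $\eta$ over $\tau\in[0,1]$ then yields $W-w\le W_0-w_0$ in $B_{\hat r}$.

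\textbf{Main obstacle.} I expect the delicate part to be Step 2. It requires justifying differentiability of the singular solution $W_{f_\tau}$ in $\tau$ with $a=o(1)$ at the origin; one alternative is to work with difference quotients between $\tau$ and $\tau'$, which satisfy the same structure. It also requires establishing the initial negativity of $\eta$ uniformly in $\beta$, which is where the explicit constant $(2e)^{-1/2}$ should enter. If the initial sign proves hard to control, I would instead rewrite the equation as $-\Delta\eta-e^{W}\eta=(e^W-e^w)b$ and prove $b\ge0$ on $B_{\hat r}$. The route to $b\ge0$ would be the bound $e^{w}r^2\le$ const coming from $w\le-2\log r+\log 4N$ in Lemma \ref{apriorilem1}, combined with $r\le(2e)^{-1/2}$; I would then run the same first-zero argument using the stability of $W$.
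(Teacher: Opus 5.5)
Your argument is correct in outline, and it takes a genuinely different route from the paper.

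\textbf{The paper's route.} The paper never differentiates in a parameter. For fixed $\beta$ it compares the endpoint problems $f$ and $0$ directly. It introduces $k$ with $-\Delta k=e^{k+w_0}-e^{w_0}-f$, $k(0)=k'(0)=0$, so that $k=w-w_0$ by ODE uniqueness. It then proves $k\ge W-W_0$ by a first-zero argument for $\xi=k-(W-W_0)$:
\begin{itemize}
\item $\xi>0$ near $0$ because $\xi''(0)=h_0/N$;
\item on the positivity set, $\Delta\xi\le e^{W_H}-e^{W_0}=H$, which gives $\xi\le Hr^2/(2N)$;
\item a first zero is excluded by bounding the potential by $e^{W}e^{\xi}$ and applying the improved Hardy inequality (Lemma~\ref{hardy}).
\end{itemize}
The last step is where $\hat r$ enters.

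\textbf{What your route buys.} Your linearization in $\tau$ produces the potential $e^{w}<e^{W_H}$. This potential is strictly stable in every $B_{r_1}$ with $r_1<1$. You therefore need neither an a priori bound on $\eta$ nor the Hardy inequality, and your argument gives the inequality on all of $B_1$. For the same reason, your guess that $(2e)^{-1/2}$ is needed for the initial sign ``uniformly in $\beta$'' is off: the sign is a local statement for each fixed $\beta$.
\begin{itemize}
\item Since the homogeneous solutions of $-\Delta-e^{W}$ blow up at $0$, the condition $a=o(1)$ forces $a=O(r^2)$.
\item The ODE then gives $a=\frac{f(0)}{4(N-1)}r^2+o(r^2)$, while $b=\frac{f(0)}{2N}r^2+o(r^2)$. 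Hence $\eta\sim-\frac{(N-2)f(0)}{4N(N-1)}r^2$.
\item This is the paper's computation of $\xi''(0)$, and like it, it uses $f(0)>0$.
\end{itemize}
You can avoid the initial-sign issue entirely by writing $\eta(r)=-\int_0^rG(r,s)s^{N-1}g(s)\,ds$. The kernel $G$ is positive by strict stability of $w$ on annuli.

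\textbf{What it costs.} Your route needs $C^1$-dependence of the singular solution $W_{\tau f}$ on $\tau$, with $\partial_\tau W_{\tau f}=o(1)$ at $0$. The paper only sketches this, via analyticity in Lemma~\ref{non-degenerate-lem}. Your difference-quotient fallback removes the issue, and with $\tau=0$, $\tau'=1$ no limit is needed at all. With $A=W-W_0\ge0$, $B=w-w_0$ and $\zeta=A-B$,
\[
-\Delta\zeta=e^{w}\bigl(e^{\zeta}-1\bigr)+\bigl(e^{W_0}-e^{w_0}\bigr)\bigl(e^{A}-1\bigr)\ge e^{w}\zeta ,
\]
so the first-zero argument runs with the stable potential $e^{w}$. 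This is exactly the paper's proof, but with the sharper bound $e^{W-W_0+w_0}(e^{\xi}-1)\xi\le e^{w}\xi^2$ in place of $e^{W}e^{\xi}\xi^2$.

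Your second fallback, via $b\ge0$, is unnecessary. It still needs an initial sign. In any case, $b\ge0$ follows from $-\Delta b-e^{w}b=-f\le0$ with $b(0)=b'(0)=0$, by the same stability argument.
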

In order to prove Proposition \ref{inner-prop-1},
we prepare the following.
\begin{lem}
\label{inner-lem-1}
Assume the hypothesis of Proposition \ref{inner-prop-1}.
 Let $k=k(r,\beta)$ be a solution of 
\begin{equation*}
-\Delta k=e^{k+w_{0}(r,\beta)}-e^{w_{0}(r,\beta)}-f(r)
\quad \text{in $B_{\hat{r}}$}\,,
\quad k(0)=k'(0)=0.
\end{equation*}
Then, we obtain
\begin{equation}
\label{kest1}
W-W_{0}\le k\le W-W_{0}+\frac{H}{2N}r^2\quad \text{in $B_{\hat{r}}$}.
\end{equation}
\end{lem}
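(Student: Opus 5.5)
The plan is to compare $k$ with $g:=W-W_0$ through the radial ODE, shooting from $r=0$, and to use the separation results of Proposition \ref{sepa-pro-1} to get the signs right.

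\emph{Preliminary facts.} Since $f=0$ corresponds to $h=0$ in \eqref{Deffh}, we have $W_0=W_{h=0}=-2\log r+\log(2N-4)$, so $e^{W_0}=(2N-4)r^{-2}$. Because $0\le f\le f_H$, Proposition \ref{sepa-pro-1} (iii) (with $f_1=0$, $f_2=f$) gives $W_0\le W\le W_H$ in $B_1$, hence
\begin{equation*}
0\le g\le W_H-W_0=\log\Big(1+\tfrac{H}{2N-4}r^2\Big)\le \tfrac{H}{2N-4}r^2 .
\end{equation*}
By Proposition \ref{sepa-pro-1} (ii), $g(r)\to 0$ as $r\to0$, consistent with $k(0)=0$. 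Proposition \ref{sepa-pro-1} (i), applied with $f=0$, gives $w_0(r,\beta)<W_0$, so $e^{w_0}\le e^{W_0}$. The function $g$ satisfies
\begin{equation*}
-\Delta g=e^{W_0}(e^{g}-1)-f,
\end{equation*}
whereas $-\Delta k=e^{w_0}(e^k-1)-f$.

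\emph{Lower bound $k\ge g$.} Set $\eta=k-g$ and linearize:
\begin{equation*}
\big(r^{N-1}\eta'\big)'+r^{N-1}c(r)\eta=r^{N-1}\big(e^{W_0}-e^{w_0}\big)\big(e^{g}-1\big)\ \ge 0,
\end{equation*}
where $c=e^{w_0}\,(e^k-e^g)/(k-g)\ge0$. The right-hand side is integrable at $0$, because $e^{W_0}\sim r^{-2}$ while $e^g-1=O(r^2)$. Integrating from $0$ with $\eta(0)=0$, $r^{N-1}\eta'\to0$, one has
\begin{equation*}
r^{N-1}\eta'(r)=\int_0^r s^{N-1}\Big[(e^{W_0}-e^{w_0})(e^g-1)-c\,\eta\Big]\,ds .
\end{equation*}
I will show $\eta\ge0$ on $(0,\hat r)$ by a continuity argument. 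As long as $\eta\ge0$ one needs the forcing to dominate $c\eta$. Alternatively, one can use a Sturm-type comparison of $\eta$ with the positive solution of $(r^{N-1}\phi')'+r^{N-1}c\phi=0$, $\phi(0)=1$; this requires $\phi>0$ on $[0,\hat r)$. For that I want a bound like $c\le e^{w_0+\max(k,g)}\le e^{W_0}e^{1}$ on $B_{\hat r}$, i.e.\ $\max(k,g)\le 1$ there. Then I compare with the explicit radial solution of $\Delta\phi+e\cdot(2N-4)r^{-2}\phi=0$, or use the Hardy-type lower bound of Lemma \ref{b-v-lem} (v); the restriction $\hat r\le(2e)^{-1/2}$ is what I expect makes this work.

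\emph{Upper bound.} Set $\zeta=k-g-\frac{H}{2N}r^2$ and use $-\Delta\big(\frac{H}{2N}r^2\big)=-H$:
\begin{equation*}
-\Delta\zeta=e^{w_0}(e^k-1)-e^{W_0}(e^g-1)+H .
\end{equation*}
On the set where $\zeta\ge0$, the inequalities $e^{w_0}\le e^{W_0}$ and $g\le \frac{H}{2N-4}r^2$ let me bound $e^{w_0}(e^k-e^g)$ by $c\,\zeta$ plus the contribution from $\frac{H}{2N}r^2$. The constant $+H$ then has the favorable sign, and the condition $\hat r\le H^{-1/2}$ is what should make $H\hat r^2\le 1$ absorb the extra term, so that $\zeta$ cannot become positive starting from $\zeta(0)=0$. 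As a by-product this step gives $k\le g+\frac{H}{2N}r^2\le 1$ on $B_{\hat r}$, which is exactly the bound needed in the previous step. So I will run both inequalities simultaneously in a single continuity (bootstrap) argument on the maximal interval where both hold.

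\emph{Main obstacle.} The main obstacle is controlling the coefficient $c$, which involves $e^{w_0}$. Its size is not uniformly bounded in $\beta$ (it is as large as $e^\beta$ near $0$), and it is only controlled by $e^{W_0}=(2N-4)r^{-2}$, a critical Hardy-type potential. The comparison must therefore rely on the specific constants in $\hat r$ rather than on smallness. If the Sturm comparison with $e\cdot(2N-4)r^{-2}$ fails for some $N\ge10$, I will instead exploit the monotone structure: $e^{w_0+k}$ versus $e^{W_0+g}=e^{W}$, together with the stability of $W_H$ on $B_{\hat r}$ given by Lemma \ref{b-v-lem} (v) with first eigenvalue at least $H\hat r^{-2}-H$. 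With that I will run the energy argument from the proof of Proposition \ref{sepa-pro-1} (i) on the first zero of $\eta$ (respectively $\zeta$).
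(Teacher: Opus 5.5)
Your setup is correct: the equations for $g=W-W_0$ and $\eta=k-g$, the sign of the forcing, $W_0\le W\le W_H$ and $w_0<W_0$. The gap is in your main mechanism for the lower bound $\eta\ge0$, which fails. The bound $c\le e^{w_0+\max(k,g)}\le e\,e^{W_0}=2e(N-2)r^{-2}$ gives an inverse-square potential whose constant $2e(N-2)$ exceeds the Hardy constant $(N-2)^2/4$ whenever $N-2<8e$, that is, for all $10\le N\le 23$. In that range $-\Delta-2e(N-2)r^{-2}$ is not nonnegative on any ball, and every nontrivial radial solution of $\Delta\phi+2e(N-2)r^{-2}\phi=0$ oscillates infinitely often as $r\to0$. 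Since this potential is scale invariant, no restriction on the radius (in particular not $\hat r\le(2e)^{-1/2}$) can rescue the Sturm comparison. The same objection applies if you feed this bound on $c$ into Lemma~\ref{b-v-lem}~(v). The excess it allows, $e\,e^{W_0}-e^{W_H}=(e-1)(2N-4)r^{-2}-H$, is unbounded near $0$ and cannot be absorbed by the remainder $Hr_0^{-2}-H$. Knowing $\max(k,g)\le1$ is too weak. What is needed is $e^{k-g}-1=O(r^2)$, so that $c$ exceeds the critical potential $e^{W_H}$ only by a bounded amount.

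Your fallback, the energy argument at the first zero of $\eta$, is the paper's route. It closes once the two bounds are linked correctly.

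The upper bound costs nothing and does not involve $\hat r$. On any interval $(0,r_0)$ where $\eta>0$ you have $-c\eta\le0$ and $(e^{W_0}-e^{w_0})(e^g-1)\le e^{W}-e^{W_0}\le e^{W_H}-e^{W_0}=H$. Hence $\Delta\eta\le H$, and integrating twice from $r=0$ gives $0<\eta\le\frac{H}{2N}r^2$ there. So no separate crossing argument for $\zeta$ is needed.

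This upper bound is exactly what controls $c$ on $(0,r_0)$:
$c\le e^{w_0+k}\le e^{W}e^{\eta}\le e^{W_H}e^{\eta}\le\bigl(\frac{2N-4}{r^2}+H\bigr)\bigl(1+\frac{eH}{2N}r^2\bigr)\le\frac{2N-4}{r^2}+C_0H$, with $C_0=1+e+\frac{e}{2N}$. This step uses $Hr^2\le1$, i.e.\ $\hat r\le H^{-1/2}$.

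Now suppose $\eta(r_0)=0$ for some $r_0<\hat r$. Test the equation for $\eta$ against $\eta\in H^1_0(B_{r_0})$ and drop the nonnegative forcing term:
\[
\int_{B_{r_0}}\Bigl(|\nabla\eta|^2-\frac{2N-4}{r^2}\eta^2\Bigr)\,dx\le C_0H\int_{B_{r_0}}\eta^2\,dx.
\]
Lemma~\ref{b-v-lem}~(v) with $h=0$ bounds the left-hand side below by $Hr_0^{-2}\int_{B_{r_0}}\eta^2\,dx$. Since $r_0^2<\hat r^2\le\frac{1}{2e}<\frac{1}{C_0}$, this is a contradiction. So $\hat r\le(2e)^{-1/2}$ is used to beat a \emph{bounded} excess on a small ball, not to repair a supercritical comparison.

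To start the argument you also need $\eta>0$ on an initial interval. The paper reads this off from $\Delta\eta(0)=h_0$ via Proposition~\ref{sepa-pro-1}~(ii).
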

\begin{proof}
We set $\xi:=k-W+W_0$.
Then, $\xi$ satisfies
\begin{equation}\label{xieq1}
-\Delta \xi
=e^{W-W_{0}
+w_{0}(r,\beta)}(e^{\xi}-1)
+(e^{W-W_{0}}-1)(e^{w_{0}(r,\beta)}-e^{W_{0}}) \quad \text{in $B_{\hat{r}}$}.
\end{equation}
Thanks to Proposition \ref{sepa-pro-1}, we have $\xi(0)=\xi'(0)=0$ 
and 
\begin{equation*}
    \xi''(0)=\lim_{r\to 0}\frac{1}{N}\Delta \xi(r)=\frac{h_{0}}{N}>0, \quad \text{where $h_{0}=\frac{(N-2)f(0)}{2N-2}$.}
\end{equation*}
Therefore, $\xi$ is positive if $r$ is sufficiently small.
Here, we define $r_0$ as the supremum of $r<\hat{r}$ so that $\xi(s)>0$ in $s\in (0,r)$.
Then, \eqref{xieq1} and Proposition \ref{sepa-pro-1} (iii) give
\begin{equation*}
    \frac{1}{r^{N-1}}\frac{d}{dr}\left(r^{N-1}\frac{d\xi}{dr}\right)=
    \Del\xi\leq (e^{W_{H}-W_{0}}-1)e^{W_{0}}= H \quad \text{in $B_{r_{0}}$}.
\end{equation*}
Integrating twice,
we obtain $\xi(r)\leq Hr^{2}/(2N)$ in $B_{r_{0}}$.
It remains to show that $r_{0}=\hat{r}$.
Suppose for contradiction that $r_0<\hat{r}$.
Since $\hat{r}\leq H^{-1/2}$, we have
\begin{equation*}
    e^{\xi}\le e^{\frac{H}{2N}r^2}\le 1+ \frac{Her^2}{2N} \quad \text{if $r<r_{0}$}.
\end{equation*} 
Thus, by combining \eqref{xieq1}, Proposition \ref{sepa-pro-1} and Lemma \ref{hardy}, we obtain 
\begin{align*}
&\norm{\nabla \xi}_{L^{2}(B_{r_0})}^2
\le \norm{e^{W-W_0+w_0(r,\beta)}(e^{\xi}-1)\xi}_{L^1(B_{r_0})} \le \norm{ e^{W}e^{\xi}\xi^2}_{L^1(B_{r_0})}\\
&\quad\le \norm{\frac{2N-4}{r^2}(1+Hr^2)\left(1+\frac{Her^2}{2N} \right)\xi^2}_{L^1}\le 
\norm{\frac{2N-4}{r^2}
\left(1+\frac{H(e+1)}{N}r^2\right)\xi^2}_{L^1(B_{r_0})}\\
&\quad<\norm{\nabla \xi}_{L^{2}(B_{r_0})}^2,
\end{align*}
which is a contradiction.
\end{proof}
\begin{proof}[Proof of Proposition \ref{inner-prop-1}]
The function $\xi:=w(r,\beta)-w_{0}(r,\beta)-k(r,\beta)$ satisfies
\begin{align*}
    -\Delta \xi=e^{w_0(r,\beta)+k(r,\beta)}(e^{\xi}-1) \quad \text{in $B_{\hat{r}}$},
    \qquad
    \xi(0)=\xi'(0)=0.
\end{align*}
Thus $\xi=0$ in $B_{\hat{r}}$ follows from the uniqueness of the solution to an ODE.
Finally, we use \eqref{kest1} to get
$W_{0}-w_{0}(r,\beta)-(W-w(r,\beta))=k(r,\beta)+W_{0}-W\ge 0.$
\end{proof}
\subsection{Bifurcation structure}
\begin{lem}
\label{extremal-lem}
Assume that $N\ge 3$, $f\in \mathrm{Lip}(B_1)$ and $v^{*}\in C^{2}(\overline{B_1})$. Then, the first eigenvalue of $-\Delta-\lambda^{*}e^{v^{*}}$ in $H^{1}_{0}(B_1)$ coincides with $0$. Moreover, the bifurcation curve turns at $\lambda=\lambda^{*}$.
\end{lem}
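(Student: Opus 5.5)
We have to prove two things: the principal eigenvalue $\mu_1^*$ of $\cL^*:=-\Delta-\lambda^{*}e^{v^{*}}$ in $H^1_0(B_1)$ equals $0$, and the curve turns at $\lambda^{*}$. Throughout, $v^{*}$ is the classical extremal solution, so it lies on the analytic radial curve of Proposition \ref{intro-prop-1}; write $v^{*}=v(\cdot,\alpha(\beta^{*}))$ with $\lambda(\beta^{*})=\lambda^{*}$.

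\textbf{Step 1: $\mu_1^*\ge 0$.} By Proposition \ref{stableprop-1}(ii), $v_\lambda\uparrow v^*$ as $\lambda\uparrow\lambda^*$. Since $v^*$ is bounded, monotone convergence gives $\lambda e^{v_\lambda}\to\lambda^*e^{v^*}$ uniformly on $\overline{B_1}$ (Dini's theorem, or dominated convergence inside the quadratic form). Each $v_\lambda$ is stable, so for fixed $\xi\in C^1_0(B_1)$ we can pass to the limit in $Q_{v_\lambda}(\xi)\ge 0$ and obtain $Q_{v^*}(\xi)\ge0$.

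\textbf{Step 2: $\mu_1^*\le 0$.} Suppose instead $\mu_1^*>0$. Then $\cL^*$ is invertible from $\{u\in C^{2,\alpha}(\overline{B_1}):u=0\text{ on }\partial B_1\}$ to $C^{0,\alpha}(\overline{B_1})$. The implicit function theorem, applied to $G(\lambda,v)=-\Delta v-\lambda e^{v}+f$, then produces solutions $v$ for $\lambda$ in a neighbourhood of $\lambda^*$, in particular for some $\lambda>\lambda^*$. This contradicts the nonexistence part of Proposition \ref{stableprop-1}(ii). Hence $\mu_1^*=0$. Its eigenfunction $\phi^*>0$ is radial, since the principal eigenvalue is simple and the potential is radial.

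\textbf{Step 3: the turning.} Parametrize by $\beta$ and differentiate. Set $\dot v=\partial_\beta v(\cdot,\alpha(\beta))$ at $\beta^*$; by the transformation \eqref{sp-transform},
\begin{equation*}
\dot v=\partial_\beta w-\lambda'(\beta^*)/\lambda^*,
\end{equation*}
and it satisfies
\begin{equation*}
\cL^*\dot v=\lambda'(\beta^*)e^{v^*},\qquad \dot v=0 \text{ on }\partial B_1 .
\end{equation*}
Test against $\phi^*$. Self-adjointness and $\mu_1^*=0$ give $\lambda'(\beta^*)\int e^{v^*}\phi^*=0$, hence $\lambda'(\beta^*)=0$. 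Therefore $\dot v\in\ker\cL^*=\mathrm{span}\{\phi^*\}$, and $\dot v\not\equiv0$ because $\dot v(0)=\partial_\beta w(0)=1$. So $\dot v=c\phi^*$ with $c>0$.

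Next differentiate twice:
\begin{equation*}
\cL^*\ddot v=\lambda''(\beta^*)e^{v^*}+\lambda^* e^{v^*}\dot v^2 .
\end{equation*}
Testing with $\phi^*$ again yields
\begin{equation*}
\lambda''(\beta^*)\int e^{v^*}\phi^*=-\lambda^*c^2\int e^{v^*}(\phi^*)^3<0 .
\end{equation*}
Thus $\lambda''(\beta^*)<0$, and $\lambda(\beta)$ has a strict local maximum at $\beta^*$. On the analytic curve this means the curve reaches $\lambda^*$ and bends back, i.e.\ it turns at $\lambda=\lambda^{*}$.

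The main obstacle is Step 3: the differentiated equations have to be justified along the $\beta$-parametrization, which is analytic by Proposition \ref{intro-prop-1}. If the second-derivative argument becomes delicate, an alternative is available. Since $\lambda(\beta)\le\lambda^*$ everywhere, $\beta^*$ is an interior maximum of an analytic, nonconstant function, which already gives the local maximum and hence the turn. Nonconstancy holds because $\lambda(\beta)\to0$ as $\beta\to-\infty$, by Proposition \ref{stableprop-1}(iv).
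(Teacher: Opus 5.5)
Your proposal is correct and follows the paper's route. Steps 1--2 are exactly the paper's argument (stability of $v^*$ as the limit of the stable branch from Proposition \ref{stableprop-1}, plus the implicit function theorem contradicting nonexistence for $\lambda>\lambda^*$), and Step 3 carries out by hand, along the global $\beta$-parametrization, the turning-point computation ($\lambda'(\beta^*)=0$ and $\lambda''(\beta^*)<0$ via $\int_{B_1}e^{v^*}(\phi^*)^3\,dx>0$) that the paper obtains by citing \cite[Theorem 1.3]{Korbook}; your closing remark is also sound and even shorter, since with the paper's definition of a turning point as a local extremum of $\lambda(\beta)$ the second assertion follows directly from $\lambda(\beta)\le\lambda^*=\lambda(\beta^*)$.
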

\begin{proof}
The first assertion follows from
Proposition \ref{stableprop-1} and the implicit function theorem. Hence, the result follows from \cite[Theorem 1.3]{Korbook}.
\end{proof}

\begin{prop}
\label{bifurcation-prop-2}
Assume that $3\le N\le 9$ and $f\in \mathrm{Lip}[0,1]$. Then, the bifurcation diagram is of Type I. 
\end{prop}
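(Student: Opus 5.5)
We need to show Type I for $3\le N\le 9$: the curve bends back at $\lambda^*$, turns infinitely many times around $\lambda_*$, and converges to $\lambda_*$. The plan is to work in the variables \eqref{sp-transform}, where the curve is $\lambda(\beta)=e^{w(1,\beta)}$ and $w(\cdot,\beta)\to W$ in $C^2_{\mathrm{loc}}(0,1]$ by Proposition \ref{uniqueprop}, so $\lambda(\beta)\to\lambda_*=e^{W(1)}$ as $\beta\to\infty$. For the first turn, Proposition \ref{bifurcation-prop-1} gives $v^*\in C^2(\overline{B_1})$ when $N\le 9$, and Lemma \ref{extremal-lem} then shows that the curve turns at $\lambda=\lambda^*$. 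Since $\lambda^*$ is attained at a regular solution and $W$ is unstable (Lemma \ref{intersec-lem}), we get $\lambda_*<\lambda^*$ once we know $\lambda_*$ is not a maximum; this will also follow from the oscillation established next.

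The core step is to show that $\lambda(\beta)-\lambda_*$ changes sign infinitely often as $\beta\to\infty$. We count intersections. Because $w(1,\beta)\ne W(1)$ generically, we compare $w(\cdot,\beta)$ with the shifted singular solution: let $\widetilde W$ denote the singular solution of the ODE with the same boundary value, or more simply track the number $Z(\beta)$ of zeros of $w(\cdot,\beta)-W$ in $(0,1]$. In Emden--Fowler variables \eqref{transform-emden}, the difference $\eta=\tilde w-\tilde W$ satisfies a linear equation
\[
\eta''-(N-2)\eta'+2(N-2)a(s)\eta=0,\qquad a(s)\to 1\ \text{ as } s\to\infty ,
\]
on $s\in[0,\log\beta']$ with $\beta'\to\infty$. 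For $N\le 9$ we have $(N-2)^2<8(N-2)$, so the characteristic roots are complex and $\eta$ oscillates. Combined with $w\to W$ locally and the scaling near $r=0$ (where $w$ is close to the rescaled regular solution of $\Delta u+e^u=0$, which intersects $-2\log r+\log(2N-4)$ infinitely often for $N\le 9$), this gives $Z(\beta)\to\infty$ as $\beta\to\infty$. This is exactly the mechanism behind Lemma \ref{optimality-1} used for Lemma \ref{intersec-lem}, so I would reuse it. Moreover, zeros of $w-W$ can enter or leave only through $r=1$, since double zeros are impossible by ODE uniqueness and no zero escapes at $r=0$ because $W\to\infty$ there. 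Hence $Z(\beta)$ changes only when $w(1,\beta)=W(1)$, i.e.\ when $\lambda(\beta)=\lambda_*$. The condition $Z\to\infty$ then forces infinitely many $\beta_k\to\infty$ with $\lambda(\beta_k)=\lambda_*$. Since $\lambda(\beta)$ is not eventually constant (it is analytic and non-constant by Proposition \ref{intro-prop-1}), it crosses $\lambda_*$ infinitely often, giving infinitely many turning points accumulating at $\lambda_*$.

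To conclude Type I, we combine the pieces. By Proposition \ref{stableprop-1}, the stable branch is monotone up to $\lambda^*$, where the first turn occurs. Afterwards $\lambda(\beta)$ oscillates around $\lambda_*$ with infinitely many turning points and converges to $\lambda_*$. Analyticity guarantees that the turning points are isolated.

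The main obstacle is the rigorous claim $Z(\beta)\to\infty$ near $r=0$ uniformly in the presence of $f$. The inner scaling $w(r,\beta)=\beta+U(e^{\beta/2}r)+\text{error}$ must be controlled, with the error from $f$ being $O(r^2)$, by a Gronwall argument on compact sets in the scaled variable. The oscillation of $U+2\log\rho-\log(2N-4)$ for $N\le 9$ is classical (Joseph--Lundgren), and the number of its zeros on $[0,R]$ then transfers to $w-W$ for $\beta$ large.
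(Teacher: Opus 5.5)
Your argument is correct, and its skeleton is the paper's: the turn at $\lambda^*$ comes from Proposition~\ref{bifurcation-prop-1} and Lemma~\ref{extremal-lem}, and $\lambda(\beta)\to\lambda_*$ from Proposition~\ref{uniqueprop}. The oscillation comes from the zero count $Z(\beta)$ of $w(\cdot,\beta)-W$, whose zeros are simple and can only enter or leave through $r=1$.

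Where you genuinely differ is in proving $Z(\beta)\to\infty$. The paper stays in the outer region. $W$ is unstable on annuli $B_{r_i}\setminus B_{r_{i+1}}$ with $r_i\downarrow 0$ (Lemma~\ref{intersec-lem}). Instability on a fixed annulus survives the $C^2_{\mathrm{loc}}(0,1]$ convergence $w(\cdot,\beta)\to W$. Lemma~\ref{intersec-lem-2} then forces an intersection in each such annulus.

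You instead blow up at the origin. With $\rho=e^{\beta/2}r$, $\hat w=w-\beta$ solves $\hat w''+\frac{N-1}{\rho}\hat w'+e^{\hat w}=e^{-\beta}f(e^{-\beta/2}\rho)$, so $\hat w\to U$ in $C^1[0,R]$. Meanwhile \eqref{as-eq-1} gives $W(e^{-\beta/2}\rho)-\beta\to-2\log\rho+\log(2N-4)$ uniformly on $[\rho_0,R]$; you need $\rho_0>0$ here, not $[0,R]$ as written. The simple Joseph--Lundgren zeros of $U+2\log\rho-\log(2N-4)$ then persist.

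What each route buys:
\begin{itemize}
\item Your route is more elementary: continuous dependence, \eqref{as-eq-1} and the classical $f=0$ oscillation suffice. It relies on the rescaled limit problem $\Delta U+e^U=0$, and it locates the intersections in the shrinking region $r\sim e^{-\beta/2}$.
\item The paper's route needs neither inner asymptotics of $w(\cdot,\beta)$ nor the $f=0$ theory, only outer convergence and the infinite Morse index of $W$. It therefore does not depend on any rescaled limit equation and fits the paper's Morse-index viewpoint, at the cost of importing the intersection lemma from \cite{KK25}.
\end{itemize}

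Two minor corrections.
\begin{itemize}
\item Turning the Hardy-type instability of Lemma~\ref{optimality-1} into intersections is exactly the job of Lemma~\ref{intersec-lem-2}, which your argument never uses. So your remark that you are reusing that mechanism is misplaced.
\item Infinitely many sign changes of $\lambda(\beta)-\lambda_*$ do not follow from non-constancy of $\lambda$. They follow from the count itself: $Z$ changes at $\beta_k$ only if the zero near $r=1$ moves from one side of $r=1$ to the other, and this forces $w(1,\beta)-W(1)$ to change sign there.
\end{itemize}
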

\begin{proof}
By Proposition \ref{uniqueprop}, we obtain $w(r,\beta)\to W$ in $C^{2}_{\mathrm{loc}}(0,1]$ as $\beta\to\infty$. Hence, thanks Lemmata \ref{intersec-lem} and \ref{intersec-lem-2}, we can deduce that the intersection number between $w(r,\beta)$ and $W$ in $(0,1]$ diverges as $\beta\to\infty$. Note that each intersection point is isolated by the uniqueness of the solution of ODE. Therefore, we deduce that $w(1,\beta)$ and $W(1)$ intersect infinitely many times, which implies the oscillation of the bifurcation curve around $\lambda=\lambda_{*}$ since $\lambda=e^{w(1,\beta)}$ and $\lambda_{*}=e^{W(1)}$ (see \eqref{parameter}). Therefore, the result follows from Lemma \ref{extremal-lem} and Proposition \ref{bifurcation-prop-1}.
\end{proof}
Next, we obtain the following proposition.
\begin{prop}
Assume that $N\ge 10$, $f\in \mathrm{Lip}[0,1]$, 
$f\ge f_H$ in $B_1$ and $f\not\equiv f_H$. Then, the bifurcation diagram is of Type I or Type III. If 
additionally the following condition
\begin{align}
\label{non-degenerate}
\text{$\mu_i=0$ for some $i\ge 1$ $\Longrightarrow$ $\int_{B_{1}}\lambda_* e^{V_*}\phi_{i}^3\,dx\neq 0$ }
\end{align}
is supposed, the bifurcation diagram is of Type III, where $\mu_i$, $\phi_i $ are those in Lemma \ref{b-v-lem}.
\end{prop}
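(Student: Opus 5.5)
Since $N\ge 10$, $f\ge f_H$ and $f\not\equiv f_H$, Proposition~\ref{bifurcation-prop-1} gives $v^{*}\in C^2(\overline{B_1})$. Then Lemma~\ref{extremal-lem} shows that the curve bends back at $\lambda=\lambda^{*}$. It therefore remains to decide whether the number of turning points after $\lambda^*$ is infinite (Type I) or finite (Type III). The diagram cannot be of Type II, because the curve has turned. So the first assertion amounts to showing that the only possibilities are infinitely many turning points or finitely many.

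By analyticity of the curve $\beta\mapsto(\lambda(\beta),\alpha(\beta))$ (Proposition~\ref{intro-prop-1}), the turning points are critical points of the nonconstant analytic function $\lambda(\beta)$ that are local extrema. On any compact $\beta$-interval there are finitely many of them. Hence the first assertion follows once we know that $\lambda$ is not constant. If $\lambda$ were constant, the solution set would contain a continuum at a single $\lambda$, which contradicts $\lambda(\beta)\to\lambda_*$ together with the unboundedness of $\alpha$. Either way, at most a dichotomy between accumulation at $\beta=\infty$ (Type I) and finitely many turning points (Type III) remains.

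For Type III under \eqref{non-degenerate}, I need to show that $\lambda(\beta)$ is eventually strictly monotone as $\beta\to\infty$. The plan is to linearize at the singular solution. Set $z=\partial_\beta w(r,\beta)$, which solves $z''+\frac{N-1}{r}z'+e^{w}z=0$ with $z(0)=1$, $z'(0)=0$. Note that $\lambda'(\beta)=\lambda z(1,\beta)$. By Proposition~\ref{sepa-pro-1}(ii), the singular linearized operator $\cL$ has the regular Frobenius behaviour $r^{-\gamma}$ and the other exponent. For $N\ge 10$ the exponents are real, so there is no oscillation. Using $w\to W$ in $C^2_{\mathrm{loc}}(0,1]$ and a rescaling near the origin (where $w$ looks like the $f=0$ bubble and, for $N\ge10$, the bubble linearization has no zeros), I will show that after normalization $z(\cdot,\beta)\to c\,\Psi$ locally uniformly on $(0,1]$. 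Here $\Psi$ is the solution of $\cL\Psi=0$ on $(0,1]$ with the less singular behaviour $r^{-\gamma}$ at $0$. Then $z(1,\beta)$ has eventually constant sign unless $\Psi(1)=0$. The case $\Psi(1)=0$ means exactly that $0=\mu_i$ is an eigenvalue with $\phi_i=\Psi$. In the nondegenerate case the sign of $\lambda'$ is eventually constant, so the turning points are finitely many.

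The main obstacle is the degenerate case $\mu_i=0$, where $z(1,\beta)\to0$ and the leading order gives no information. Here I would expand to the next order: write $w=W+\epsilon(\beta)\phi_i+\epsilon^2\eta+\dots$ on $(\delta,1]$, with $\epsilon(\beta)\to0$ determined by matching with the inner region. Projecting the equation onto $\phi_i$ (a Lyapunov--Schmidt reduction), the solvability condition gives $\lambda(\beta)-\lambda_*\approx C\epsilon^2\int\lambda_*e^{V_*}\phi_i^3\,dx$ plus higher-order terms. By \eqref{non-degenerate} this has a definite sign, so $\lambda$ is eventually monotone. The delicate points are controlling the matching error near $r=0$ against the singular weight, using the inner estimates of Proposition~\ref{sepa-pro-1}, and ensuring that $\epsilon(\beta)$ is eventually monotone. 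I expect the justification of this quadratic expansion to be the hardest step.
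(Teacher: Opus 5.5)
Your first step (Proposition~\ref{bifurcation-prop-1} plus Lemma~\ref{extremal-lem}, then isolated zeros of the analytic function $\dot\lambda$) agrees with the paper. One small correction: $\lambda$ is nonconstant simply because the curve emanates from $\lambda=0$; the limit $\lambda(\beta)\to\lambda_*$ would not rule out $\lambda\equiv\lambda_*$. Your treatment of the case where $0$ is not a radial eigenvalue of $\cL$ is a plausible, if heavier, variant of the paper's compactness step: there you show that normalized $\partial_\beta w$ converges to the $r^{-\gamma}$-solution $\Psi$ and that $\Psi(1)\neq0$.

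\textbf{The gap.} It lies in the degenerate case $\mu_i=0$, which is exactly where \eqref{non-degenerate} is needed. Grant your Lyapunov--Schmidt expansion. The conclusion $\lambda(\beta)-\lambda_*\approx C\epsilon(\beta)^2\int_{B_1}\lambda_*e^{V_*}\phi_i^3\,dx$ only fixes the side of $\lambda_*$ on which the curve eventually lies. It says nothing about the sign of $\dot\lambda$, so infinitely many local maxima and minima on that side are not excluded. You would need a $\beta$-differentiated version, with differentiated error terms controlled and $\dot\epsilon$ of eventually constant sign, and none of this is provided.

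There is a second problem for $N=10,11$. There $\phi_i\simeq r^{-\gamma}$ near $0$ and $N-2-3\gamma\le 0$, so $\int_{B_1}\lambda_*e^{V_*}\phi_i^3\,dx=+\infty$. Your leading coefficient is then not finite, and the quadratic term is governed by the inner region with a different scaling.

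\textbf{The missing idea.} Only the sign of $\ddot\lambda$ at turning points matters, and an exact identity gives it, so no expansion of $w$ around $W$ is needed. Suppose $\beta_k\to\infty$ are turning points with $(-1)^k\ddot\lambda(\beta_k)\ge 0$. Differentiating \eqref{eqofw} once and twice in $\beta$ gives $-\Delta\dot w=e^w\dot w$ and $-\Delta\ddot w=e^w\ddot w+e^w\dot w^2$. At $\beta_k$ one has $\dot w(1)=0$ and $\ddot w(1)=\ddot\lambda/\lambda$. Green's identity for the pair $\dot w,\ddot w$ then yields
\[
N\omega_N\,\ddot\lambda(\beta_k)\,\dot w'(1,\beta_k)\,\lambda(\beta_k)^{-1}=\int_{B_1}e^{w}\dot w^3\,dx .
\]

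Because $\dot w(\cdot,\beta_k)$ vanishes on $\partial B_1$, set $z_k=r^\gamma\dot w/\|r^\gamma\dot w\|_{L^2(B_1^m)}$. These functions have the following properties:
\begin{enumerate}
\item[(a)] they are bounded in $H^1_0(B_1^m)$, using $e^{w}\le(2N-4)r^{-2}+C$ from Proposition~\ref{sepa-pro-1};
\item[(b)] they are nonnegative near $0$, by the monotonicity in Proposition~\ref{sepa-pro-1}(i);
\item[(c)] they converge to $\psi_i$ with $\mu_i=0$; if no radial eigenvalue vanishes, this is already a contradiction.
\end{enumerate}
Dividing the identity by $\|r^\gamma\dot w\|_{L^2(B_1^m)}^3$, the sign of $\ddot\lambda(\beta_k)z_k'(1)$ equals that of $\int_{B_1^m}r^{-\gamma}e^{w}z_k^3\,dx$. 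This integral tends to a positive multiple of $\int_{B_1}e^W\phi_i^3\,dx$, and to $+\infty$ for $N=10,11$ by Fatou and positivity near $0$. Meanwhile $z_k'(1)\to\psi_i'(1)\neq 0$. Hence $\ddot\lambda(\beta_k)$ eventually has a fixed sign, contradicting the alternation of maxima and minima. This argument covers all $N\ge 10$ at once and needs no control of $\epsilon(\beta)$ or of matching errors.
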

\begin{proof}
By Proposition \ref{bifurcation-prop-1} and Lemma \ref{extremal-lem}, we can deduce that the bifurcation diagram is of Type I or Type III. Now, we show that the number of turning points is finite if \eqref{non-degenerate} is satisfied. Since $\lambda(\beta)$ is analytic in $(-\infty, \infty)$ 
and monotone in $(-\infty, v^{*}(0)+\log \lambda^{*}]$, it suffices to prove the non-degeneracy of $\lambda(\beta)$ for all $\beta$ sufficiently large. 
In order to prove it, we assume the contrary that there exists a sequence $\beta_k$ such that
$\dot{\lambda}(\beta_k)=0$ 
and $(-1)^{k}\ddot{\lambda}(\beta_k)\ge 0$, where $\,\dot{}\,$ is defined as the differentiation with respect to $\beta$. By differentiating the equation, we have
\begin{align*}
  \begin{cases}
    -\Delta \dot{w}= e^{w}\dot{w} \quad \text{in }B_1,
    \\
    \dot{w}(0)=\dot{w}'(0)=1, \hspace{2mm}
    \dot{w}(1)={\dot{\lambda}}/{\lambda},
  \end{cases}
  \hspace{2mm}
  \begin{cases}
    -\Delta \ddot{w}= e^{w}\ddot{w}+e^{w}\dot{w}^2 \quad \text{in }B_{1},
    \\
    \ddot{w}(0)=\ddot{w}'(0)=0, \hspace{2mm}
    \ddot{w}(1)={\left(\ddot{\lambda}\lambda-\dot{\lambda}^2\right)}/{\lambda^2}.
  \end{cases}
\end{align*}
Note that $\dot{w}(1)=0$ and $\ddot{w}(1)=\ddot{\lambda}\lambda^{-1}$ at $\beta=\beta_k$. Hence, it follows from the equations above and Green's identity that
\begin{equation}
\label{hojo-1}
N\omega_N\ddot{\lambda}\dot{w}'(1)\lambda^{-1}=\int_{B_1}e^w \dot{w}^3\,dx
\end{equation}
at $\beta=\beta_k$, where $\omega_N$ is the volume of $B_{1}$. Now, we define
\begin{equation*}
z_k:=\frac{r^{\gamma}\dot{w}(r,\beta_k)}{\lVert r^{\gamma}\dot{w}(r,\beta_k)\rVert_{L^2(B_{1}^{m})}}, \quad m=N-2\gamma.
\end{equation*}
Then, $z_k$ satisfies
\begin{equation*}
    -\Delta z_k=\left(e^{w(r, \beta_k)}-\frac{2N-4}{r^2}\right)z_k \quad \text{in $B_{1}^{m}$,} \quad z_{k}'(0)=z_k(1)=0, \quad \lVert z_{k}\rVert_{L^2}=1.
\end{equation*}
By Proposition \ref{sepa-pro-1}, we obtain $e^{w_k}\le (2N-4)r^{-2}+C$. Hence, by an energy estimate, we obtain $\lVert z_k\rVert_{H^{1}_{0}(B_{1}^{m})}<C$. Moreover, It follows from Proposition \ref{sepa-pro-1} (i)
that $z_k\ge 0$ for $r<\underline{r}$ with some $\underline{r}>0$ depending only on $f$. Hence, by using \cite[Proposition 47.5]{QSbook} and the elliptic regularity theory, we obtain $\lVert z_k \rVert_{L^{\infty}(B_1)}<C$ when $N\ge 11$. 
Moreover, by the $H^{1}$-bound and the elliptic regularity theorem, 
we obtain $z_k\to z$ in $C^2_{\mathrm{loc}}(0,1]$ and $z_{k}\rightharpoonup z$ in $H^{1}_{0,\mathrm{rad}}(B_{1}^{m})$ by taking a subsequence if necessary.
Hence, it follows from Proposition \ref{uniqueprop} that $z$ satisfies \eqref{b-v-eq-1} with $\mu=0$ and $\lVert z\rVert_{L^2}=1$. In particular, by the uniqueness of the solution, we deduce that $z_k\to z$ in $C^{2}_{\mathrm{loc}}(0,1]$
without taking a subsequence.
Moreover, thanks to Proposition \ref{sepa-pro-1} and Lemma \ref{b-v-lem}, we deduce that $\mu_i=0$ for some $i>0$ and $z=\psi_i$. Then, since $\lVert r^{\gamma}\dot{w}(r,\beta_k)\rVert_{L^2(B_{1}^{m})}\dot{w}'(1, \beta_k)^{-1}=z_{k}'(1)^{-1}$, it follows from \eqref{hojo-1} that
\begin{equation*}
\frac{N\omega_N \ddot{\lambda}(\beta_k)z_{k}'(1)}{\lambda(\beta_k) \lVert r^{\gamma}\dot{w}(r,\beta_k)\rVert_{L^2(B_{1}^{m})}^2}= \frac{N\omega_N}{m\omega_m} \int_{B_{1}^{m}}r^{-\gamma }e^{w}z_{k}^3\,dx.
\end{equation*}
By using the elliptic regularity theory, Fatou's Lemma, Proposition \ref{uniqueprop} and the fact that $z_k\ge 0$ for $r<\underline{r}$ and $\lVert z_k\rVert_{L^{\infty}}<C$ for $N\ge 11$, we have
\begin{align}
\label{sekibun-0}
\begin{split}
    &\lim_{k\to\infty}\int_{B_{1}^{m}}r^{-\gamma}e^{w}z_{k}^3\,dx=\lim_{k\to\infty}\int_{B_{\underline{r}}^m}r^{-\gamma}e^{w}z_{k}^3\,dx+\int_{B_{1}^{m}\setminus B_{\underline{r}}^{m}}r^{-\gamma}e^{W}z^3\,dx\\
    &=\left(\frac{N\omega_N}{m\omega_m}\right)^{\frac{1}{2}}\int_{B_{1}\setminus B_{\underline{r}}}e^{W}\phi^{3}_i\,dx+
    \begin{cases}
        \infty &\text{if $N=10, 11$}\\
        \left(\frac{N\omega_N}{m\omega_m}\right)^{\frac{1}{2}}\int_{B_{\underline{r}}}e^{W}\phi_{i}^3\,dx &\text{if $N\ge 12$}
    \end{cases}
    \\
    &=\left(\frac{N\omega_N}{m\omega_m}\right)^{\frac{1}{2}}\int_{B_1}e^{W}\phi_{i}^3\,dx.
   \end{split} 
    \end{align}
Note that $z_{k}'(1)\to \psi_k'(1)\neq 0$ as $k\to\infty$ and $(-1)^{k}\ddot{\lambda}(\beta_k)\ge 0$. It contradicts the non-degeneracy condition \eqref{non-degenerate}.
\end{proof}
In the next lemma, we obtain \eqref{non-degenerate} for some cases by using an analyticity argument. Note that this type of argument is also used in \cite{KM25}.
\begin{lem}
\label{non-degenerate-lem}
The non-degeneracy condition holds when $N=10,11$ or $f\equiv f_H$. Moreover, for any parametrized force term $f^{\nu}$ associated with $\nu\ge 0$, when $(-1,1)\ni \nu\mapsto f^{\nu}\in \mathrm{Lip}[0,1]$ is analytic and $f^{0}=f_H$, there exists a sequence $\{\nu_k\}_{k\in \mathbb{N}}$ so that the non-degeneracy condition holds for all $\nu\not\in \{\nu_k\}_{k\in \mathbb{N}}$.
\end{lem}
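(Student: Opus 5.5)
The condition \eqref{non-degenerate} is checked in three separate regimes.

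\emph{Case $N=10,11$.} In the computation \eqref{sekibun-0} the inner integral diverges. More intrinsically, Lemma~\ref{b-v-lem}(iii) gives $\phi_i=c\,r^{-\gamma}\psi_i$ with $\psi_i\in C^2(\overline{B_1^m})$. Since $e^{W}\simeq (2N-4)r^{-2}$, the integrand $e^W\phi_i^3$ behaves like $r^{-2-3\gamma}\psi_i(0)^3$ near $0$, with $\psi_i(0)\neq0$ by ODE uniqueness (a regular radial solution vanishing at the origin with zero derivative is trivial). Integrability near $0$ requires $N-2-3\gamma>0$. From $\gamma=\frac12(N-2-\sqrt{(N-2)(N-10)})$ one gets $\gamma=4$ for $N=10$ and $\gamma=3$ for $N=11$. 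Then $N-2-3\gamma=-4,\,0$, so the integral is $\pm\infty\neq0$ (in the sense of \eqref{sekibun-0}). Hence \eqref{non-degenerate} holds trivially. I will state this in terms of the limit appearing in the previous proof, which is how the condition is actually used.

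\emph{Case $f\equiv f_h$.} By Lemma~\ref{b-v-lem}(iv), $\cL_h=-\Delta-(2N-4)r^{-2}-h$. Its radial eigenfunctions are exactly those of the Hardy operator, with eigenvalues $\mu_k=H_k-h$, where $H_k$ are the radial eigenvalues of $-\Delta-(2N-4)r^{-2}$ and $H_1=H$. Via \eqref{b-v-eq-1} with $K\equiv h$, these are Bessel functions in dimension $m$, and the eigenvalues $H_k$ are simple and strictly increasing. If $\mu_i=0$, then $\phi_i=c\,r^{-\gamma}\psi_i$ with $\psi_i$ an explicit Bessel function: $\psi_i(r)=r^{1-m/2}J_{m/2-1}(\sqrt{H_i}\,r)$. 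The quantity $\int e^{W_h}\phi_i^3=\int((2N-4)r^{-2}+h)\phi_i^3$ becomes an explicit integral of cubes of Bessel functions. For $i=1$ it is positive since $\phi_1>0$. For $i\ge2$ I would try to show nonvanishing by using the equation $-\Delta\phi_i=e^{W_h}\phi_i$ to rewrite $\int e^{W_h}\phi_i^3=\int\phi_i^2(-\Delta\phi_i)$. Integrating by parts gives $-2\int\phi_i|\nabla\phi_i|^2$, and one then needs a sign or an explicit evaluation. This $i\ge2$ subcase is where I expect the real obstacle to lie. If no clean sign argument is found, I would fall back on the analytic argument below, with $h$ as the parameter. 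For $h$ in a compact interval the set of $h$ with $\mu_i(h)=0$ is discrete, namely $h=H_i$, and for each such value the condition is a single scalar inequality that could even be verified from Bessel asymptotics.

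\emph{Analytic family $f^\nu$.} By analytic perturbation theory (Kato) applied to \eqref{b-v-eq-1}, the radial eigenvalues $\mu_i(\nu)$ and the normalized eigenfunctions $\psi_i(\nu)$ are analytic in $\nu$. This uses that $K^\nu$, and hence $W^\nu$, depends analytically on $\nu$ through the singular ODE \eqref{eqofW}, and that the radial eigenvalues are simple. At $\nu=0$ we have $\mu_1(0)=0$ (Lemma~\ref{b-v-lem}(iv) with $h=H$), and $\mu_1'(0)=-\int \partial_\nu e^{W^\nu}\phi_1^2\neq0$ provided $\partial_\nu W^\nu\not\equiv0$. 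Also $\mu_i(0)=H_i-H>0$ for $i\ge2$. Hence each function $\nu\mapsto\mu_i(\nu)$ is a nonconstant analytic function, and its zero set is discrete. If instead some $\mu_i$ were identically zero, the claim would be vacuous in the needed form. In any case the zeros of all $\mu_i$ together form a countable set $\{\nu_k\}$. Off this set no eigenvalue vanishes, so \eqref{non-degenerate} holds vacuously. The main care here is in justifying analyticity of $\nu\mapsto W^\nu$ near the singularity. I would do this via the Emden--Fowler variable of Lemma~\ref{uniquelem}, in which the problem becomes a regular perturbation on a half-line, and then apply the implicit function theorem in weighted spaces.
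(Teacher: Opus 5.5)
\textbf{Assessment.} Your treatment of $N=10,11$ matches the paper: near the origin the integrand behaves like $\psi_i(0)^3r^{-2-3\gamma}$, so the quantity in \eqref{sekibun-0} is $+\infty$. Your plan to get analyticity of $\nu\mapsto W^\nu$ through the Emden--Fowler variable also matches.

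\textbf{The gap.} The case $f\equiv f_h$ with $\mu_i=0$ for some $i\ge2$, i.e.\ $h=H_i$, is not proved. For the literal hypothesis $f\equiv f_H$ only $\mu_1$ vanishes, and $\phi_1>0$ suffices. However, the lemma is used in Theorem~\ref{intro-thm-1} for $f\equiv f_h$ with $h>H$, and the paper's proof treats exactly that case. Your integration by parts gives $\int\phi_i^2(-\Delta\phi_i)\,dx=2\int\phi_i|\nabla\phi_i|^2\,dx$, which has no sign when $\phi_i$ changes sign. Your fallback is circular. An analytic argument in $h$ only shows the condition holds vacuously for $h\notin\{H_i\}$. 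The condition has content precisely at $h=H_i$, where it is the inequality $\int_{B_1}((2N-4)r^{-2}+H_i)\phi_i^3\,dx\neq0$ for every $i\ge2$. These are infinitely many separate statements, and Bessel asymptotics will not settle them.

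\textbf{The missing idea: a Sturm-type comparison.}
\begin{enumerate}
\item Set $z=r^{\gamma+(m-1)/2}\phi_i$. Then $z''=b(r)z$ on $(0,1)$ with $b(r)=\frac{(m-1)(m-3)}{4r^2}-h$. This $b$ is strictly decreasing because $m=N-2\gamma>3$ for $N\ge11$.
\item Lemma~\ref{odelem} then gives, at each zero $r_j$ of $z$, that $r_{j+1}-r_j\le r_j-r_{j-1}$ and $|z(r_j+s)|<|z(r_j-s)|$ for $0<s<r_{j+1}-r_j$.
\item One has $\int_{B_1}e^{W_h}\phi_i^3\,dx=N\omega_N\int_0^1\rho(r)z^3\,dr$, where $\rho(r)=(2N-4)r^{-\gamma-\frac{m+3}{2}}+hr^{-\gamma-\frac{m-1}{2}}$ is positive and decreasing.
\item Pair each positive lobe with the following negative lobe, starting from the positive first lobe. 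By step 2 and the monotonicity of $\rho$, each pair contributes a positive amount, so the total is strictly positive.
\end{enumerate}

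\textbf{Analytic-family part.} Two of your claims there are wrong, though you already have the facts to fix them.
\begin{itemize}
\item $\mu_1'(0)=-\int\partial_\nu(e^{W^\nu})\phi_1^2\,dx$ can vanish even if $\partial_\nu W^\nu\not\equiv0$, since $\partial_\nu W^\nu$ may change sign.
\item If some $\mu_i\equiv0$, the condition is not vacuous; it must then hold for every $\nu$.
\end{itemize}
The paper's route avoids both problems. For $i\ge2$, $\mu_i(0)=H_i-H>0$ rules out $\mu_i\equiv0$, so each such $\mu_i$ has a discrete zero set. For $i=1$ no information about the zeros of $\mu_1$ is needed, because $\int_{B_1}e^{W^\nu}\phi_1^3\,dx>0$ by $\phi_1>0$. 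So even $\mu_1\equiv0$ is harmless.
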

\begin{proof}
When $N=10$ or $N=11$, \eqref{non-degenerate} follows from \eqref{sekibun-0}. When $f\equiv f_H$ and $\mu_{i}=0$ for some $i\ge 1$, we obtain $\lambda_{*} e^{V_{*}}=(2N-4)r^{-2}+h$ by Proposition \ref{intro-prop-2}. From Lemma \ref{b-v-lem} and the Strum--Liouville theory, there exists $0=r_{0}<r_1<\cdots < r_{i}=1$ such that $z=r^{\gamma+(m-1)/2}\phi_{i}$ satisfies 
\begin{equation*}
z''= 
\left(\frac{(m-1)(m-3)}{4r^2}-h\right)z \quad \text{in $(0,1)$}, \hspace{2mm} (-1)^{i}z(r)>0 \hquad \text{on $r_{j}<r<r_{j+1}$} 
\end{equation*}
for all $0\le j\le i-1$. Since $m=N-2\gamma >3$ if $N\ge 11$, it follows that $z''/z$ is decreasing. Therefore, thanks to Lemma \ref{odelem}, we have
\begin{equation*}
    \int_{B_{1}} \lambda_{*}e^{V_{*}}\phi_{i}^3\,dx= N\omega_{N}\int_{0}^{1}((2N-4)r^{-\gamma-\frac{m+3}{2}}+hr^{-\gamma-\frac{m-1}{2}})z^3\,dr>0
\end{equation*}
Therefore, we obtain \eqref{non-degenerate}. Finally, we deal with the case
where $N\ge 12$ and the map $\nu\to f_{\nu}\in \mathrm{Lip} [0,1]$ is analytic.  
In this case, by using Emden-type transformation \eqref{transform-emden}, we obtain that the singular solution $W=W^{\nu}$ with $f=f^{\nu}$ transforms to the unique solution to the equation
\begin{equation*}
\frac{d^2 \Tilde{W}^{\nu}}{ds^2}-(N-2)\frac{d\Tilde{W}^{\nu}}{ds}+2(N-2)(e^{\Tilde{W}^{\nu}}-1)-e^{-2s}f^{\nu}(e^{-s})=0, \quad \Tilde{W}^{\nu}(\infty)=0.
\end{equation*}
As in the proof of Lemma \ref{uniquelem}, we can verify that the above equation admits the unique solution by using \cite[Lemma 4.2]{LLD}. Hence, we can confirm that $\Tilde{W}^{\nu}$ satisfies the following
$$
\Tilde{W}^{\nu}(t)=\frac{1}{N-2-2\gamma}\int_{t}^{\infty}(e^{(N-2-\gamma)(t-s)}-e^{\gamma(t-s)})Y(s)\,ds
$$
by using the fixed point theorem, where 
$$
Y(s)=(2N-4)(e^{\Tilde{W}^{\nu}(s)}-1-\Tilde{W}^{\nu}(s))-e^{-2s}f^{\nu}(e^{-s}).
$$
Since $\nu\mapsto f^{\nu}\in \mathrm{Lip}[0,1]$ is analytic, the map $\nu\mapsto \Tilde{W}^{\nu}\in C^{0}_{0}[0,\infty)$ is analytic by the implicit function theorem, where $C^{0}_{0}[0,\infty)$
is the set of $C^0[0,\infty)$ functions satisfying $u\to 0$ as $s\to\infty$. 

Moreover, it follows from Proposition \ref{sepa-pro-1} that $\hat{W}^{\nu}:=e^{2s}\Tilde{W}^{\nu}$ is a solution of 
\begin{equation*}
\frac{d^2 \hat{W}^{\nu}}{ds^2}-(N+2)\frac{d\hat{W}^{\nu}}{ds}+2N\hat{W}^{\nu}+(2N-4)\frac{e^{\Tilde{W}^{\nu}}-1}{\Tilde{W}^{\nu}}\hat{W}^{\nu}-f^{\nu}(e^{-s})=0, \hat{W}^{\nu}(\infty)=\frac{f^{\nu}(0)}{4(N-1)}.
\end{equation*}
Hence, we can deduce that the map $\nu\mapsto \Tilde{W}^{\nu}\in C^{0}_{0}[0,\infty)$ is analytic by a similar argument. Since $K(r)=(2N-4)(e^{W(s)}-1)W^{-1}(s)\hat{W}(s)$, we deduce that $K$ is analytic. Hence, each radial eigenvalue of $\mathcal{L}$ is analytic. As a result, $\mu_k$ coincides with $0$ at most countable times for any $k\ge 2$ because $f^{0}=f_H$. For the first eigenvalue, the above assertion or $\mu_1\equiv 0$ occur. Moreover, \eqref{non-degenerate} holds for the case $\mu_1\equiv 0$. Hence, the result follows. 
\end{proof}
Finally, we introduce the following.
\begin{lem}
\label{fnolem}
Assume that $N\ge 10$. Then, the maps
\[\mathrm{Lip}[0,1]\cap \{f\le f_H \} \ni f\mapsto \mu_1 \in [0,\infty), \hquad \mathrm{Lip}[0,1]\cap \{f\le f_H \} \ni f\mapsto  \psi_1(0) \in (0,\infty)
\]
are of class $C^1$. Moreover, for the case $0\le f\le f_H$ in $B_1$, we have
$\psi_{1}'\le 0$ in $B_1$.
\end{lem}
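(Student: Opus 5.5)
We prove the two claims separately: first the $C^1$ dependence of $\mu_1$ and $\psi_1(0)$ on $f$, then the monotonicity $\psi_1'\le 0$.

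\textbf{Step 1: reduction to a regular problem in dimension $m$.} By Lemma~\ref{b-v-lem}~(iii), $\mu_1$ is the principal eigenvalue of the radial problem \eqref{b-v-eq-1} in $B_1^m$, with potential $K=e^{W}-(2N-4)r^{-2}$, and $\psi_1$ is its normalized first eigenfunction. Since $m=N-2\gamma>2$ when $N\ge 10$, this is a regular Dirichlet problem with bounded potential.

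\textbf{Step 2: $C^1$ dependence of $K$ on $f$.} I would argue as in the proof of Lemma~\ref{non-degenerate-lem}. Set $\tilde W=W-2s-\log(2N-4)$ with $s=-\log r$. Then $\tilde W$ is the unique fixed point of the integral equation
\[
\tilde W(t)=\frac{1}{N-2-2\gamma}\int_t^\infty\bigl(e^{(N-2-\gamma)(t-s)}-e^{\gamma(t-s)}\bigr)Y(s)\,ds .
\]
For $N=10$ the two exponents coincide, so the kernel degenerates to $(t-s)e^{\gamma(t-s)}$; I would treat that case separately.

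The linearization of this fixed-point map at $\tilde W$ is $I$ minus the variation-of-constants operator associated with $-\mathcal L$. Its invertibility (after the weighting $\hat W=e^{2s}\tilde W$) is exactly the nondegeneracy needed for the implicit function theorem. Invertibility holds because $\mu_1>0$ for $f\le f_H$ with $f\not\equiv f_H$, which is Proposition~\ref{sepa-pro-1}~(iii).

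Applying the implicit function theorem in a weighted space of functions decaying like $e^{-2s}$, the map $f\mapsto \hat W\in C^0$ is $C^1$. Since
\[
K=(2N-4)\,\frac{e^{\tilde W}-1}{\tilde W}\,\hat W ,
\]
the map $f\mapsto K\in C^0[0,1]$ is $C^1$.

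\textbf{Step 3: $C^1$ dependence of $\mu_1$ and $\psi_1(0)$.} The principal eigenvalue is simple (Lemma~\ref{b-v-lem}), so standard perturbation theory, in the form of the implicit function theorem applied to $(\psi,\mu)\mapsto(-\Delta\psi-K\psi-\mu\psi,\ \|\psi\|_{L^2}^2-1)$ on $H^2\cap H^1_0(B_1^m)$, gives $C^1$ dependence of $(\mu_1,\psi_1)$ on $K$. Hence $\psi_1(0)$ is $C^1$, because point evaluation is continuous on $C(\overline{B_1^m})$. Positivity of $\psi_1(0)$ follows from $\psi_1>0$ and $\psi_1'\le 0$, once Step 4 is done.

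\textbf{Step 4: the endpoint $f=f_H$ (main obstacle).} At $f=f_H$ we have $\mu_1=0$, so the linearized operator in Step 2 is degenerate there. I would try to handle this by direct computation. For $f$ near $f_H$, write $f=f_H+g$ and solve for $W$ in the form $W_H+\eta$. This solvability needs invertibility only on the orthogonal complement of $\phi_1$, combined with a Lyapunov--Schmidt reduction.

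If the reduction does not close, I would interpret ``$C^1$'' on the closed set $\{f\le f_H\}$ as one-sided differentiability. In that case I would obtain it from the monotone convergence in Proposition~\ref{sepa-pro-1}~(iii) together with uniform $C^1$ bounds on the derivatives computed in the interior. This endpoint is where I expect the real difficulty to lie.

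\textbf{Step 5: $\psi_1'\le 0$ when $0\le f\le f_H$.} Write \eqref{b-v-eq-1} radially as
\[
-(r^{m-1}\psi_1')'=r^{m-1}(K+\mu_1)\psi_1 .
\]
It therefore suffices to show $K+\mu_1\ge 0$. By Proposition~\ref{sepa-pro-1}~(iii), $W_0\le W$, and $W_0=-2\log r+\log(2N-4)$, so $K=e^W-e^{W_0}\ge 0$. Combined with $\mu_1\ge 0$ and $\psi_1>0$, the right-hand side is nonnegative. Integrating from $0$, using the boundary condition $\psi_1'(0)=0$, gives $\psi_1'\le 0$.
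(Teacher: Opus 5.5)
\paragraph{Step 5.} Your Step 5 is the paper's own argument for $\psi_1'\le 0$: get $K\ge 0$ from Proposition~\ref{sepa-pro-1}, then integrate as in Lemma~\ref{apriorilem1}. One small fix: at $f\equiv f_H$ use $W_H\ge W_0$ from the explicit formula, since part (iii) excludes $f_H$.

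\paragraph{The gap in the $C^1$ part.} This comes from a misdiagnosis in Step 2. The invertibility needed for the implicit function theorem has nothing to do with $\mu_1$.

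Problem \eqref{eqofW} imposes no condition at $r=1$; the Dirichlet condition is absorbed into $\lambda_*=e^{W(1)}$. So $\tilde W$ is fixed only by $\tilde W(s)\to 0$ as $s\to\infty$, and the fixed-point map is of Volterra type. Its linearization is $I-\mathcal V$ with
\[
(\mathcal V h)(t)=\frac{2N-4}{N-2-2\gamma}\int_t^\infty\bigl(e^{(N-2-\gamma)(t-s)}-e^{\gamma(t-s)}\bigr)\bigl(e^{\tilde W(s)}-1\bigr)h(s)\,ds
\]
(for $N=10$, replace the kernel by its limit $(t-s)e^{\gamma(t-s)}$). This operator is invertible on $C^0_0[0,\infty)$ for every $f\in\mathrm{Lip}[0,1]$:
\begin{itemize}
\item on $[S,\infty)$ with $S$ large, $\mathcal V$ is a contraction because $e^{\tilde W}-1\to 0$;
\item on $[0,S]$ one is left with a Volterra equation with bounded kernel, which is uniquely solvable.
\end{itemize}
Equivalently, the characteristic exponents $\gamma$ and $N-2-\gamma$ are both positive. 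Hence no nontrivial solution of the homogeneous linearized equation decays as $s\to\infty$. In particular, the Dirichlet zero mode at $f=f_H$, namely $\phi_1\simeq r^{-\gamma}=e^{\gamma s}$, is not in the kernel.

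The same holds for $\hat W=e^{2s}\tilde W$, whose exponents are $\gamma+2$ and $N-\gamma$. Therefore $f\mapsto K\in C^0[0,1]$ is $C^1$ on all of $\mathrm{Lip}[0,1]$, including $f_H$. This is exactly how Lemma~\ref{non-degenerate-lem} obtains analyticity through $f^0=f_H$, where $\mu_1=0$. Your Step 3 then applies verbatim at $f_H$, since it needs only simplicity of $\mu_1$, not positivity.

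\paragraph{Consequence for Step 4.} As written, your proposal does not establish the $C^1$ property at $f=f_H$. That point lies in the domain; it is the reason the target is $[0,\infty)$. Step 4 is aimed at an obstruction that does not exist, and neither of its two options is carried out.
\begin{itemize}
\item The Lyapunov--Schmidt reduction works on the orthogonal complement of $\phi_1$. It therefore implicitly treats $\eta=W-W_H$ as a Dirichlet problem for $\mathcal{L}$, i.e.\ it imposes $\eta(1)=0$. This is false, since $W(1)=\log\lambda_*$ moves with $f$.
\item The fallback weakens the claim to one-sided differentiability, and it rests on uniform derivative bounds you do not prove. The monotone convergence in Proposition~\ref{sepa-pro-1}~(iii) yields only continuity of $\mu_1$ at $f_H$.
\end{itemize}

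\paragraph{Minor points.}
\begin{itemize}
\item $m=2+\sqrt{(N-2)(N-10)}$ equals $2$ when $N=10$, so $m>2$ fails there; this is harmless.
\item For $m\ge 4$, radial $H^2$ functions need not be bounded at $0$. To differentiate $\psi_1(0)$, run the eigenvalue perturbation in $W^{2,p}$ with $p>m/2$, or through the radial ODE.
\item Positivity of $\psi_1(0)$ for general $f\le f_H$ should come from $\psi_1>0$ together with uniqueness for the radial ODE at $r=0$, not from Step 5.
\end{itemize}
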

\begin{proof}
The $C^1$-regularity of the maps follows from an argument similar to that in the proof of Lemma \ref{non-degenerate-lem}. Next, we consider the case $0\le f\le f_H$ in $B_1$. In this case, by Proposition \ref{sepa-pro-1} and Lemma \ref{b-v-lem}, we have $-\Delta\psi_1=(K+\mu_1)\psi_1\ge 0$ in $B_1$. Hence, the assertion $\psi_{1}'\le 0$ follows by the same argument
as in the proof of Lemma \ref{apriorilem1}.
\end{proof}
\section{Parabolic problem}
The aim of this section is to prove Theorems \ref{intro-thm-2} and \ref{p-thm-1}. In this section, we do not assume the boundedness of the initial values unless stated otherwise. For any $u_0\in L^2(B_1)$,
we say that $u\in C^0 ((0,\infty); H^{1}_{0}(B_1))$ is a \textit{weak solution} if
$u_t$, $\Delta u$ and $e^u\in L^{1}(B_1\times [\tau, T))$ for every $0<\tau<T<\infty$,
$u$ satisfies \eqref{eq-intro-1} for a.e. $(x,t)\in Q$ and $u\to u_0$ in $L^2(B_1)$ as $t\to 0$, where $Q:=B_1\times (0,\infty)$.
\begin{thm}\label{p-thm-1} Assume that $N\ge 3$, 
$f\in \mathrm{Lip}[0,1]$ and $u_0\ge \phi_0$ in $B_1$.
\begin{enumerate}[{\rm (i)}]
\item Let $\lambda\in\left(0,\lambda^{*}\right)$. Assume that an unstable solution $v\in C^2(\overline{B_1})$ of \eqref{eq-intro-2}
exists. When $u_0\le v$ in $B_1$ and $u_0\not \equiv v$, there exists a unique global solution $u$ of \eqref{eq-intro-1}
such that $u\to v_{\lambda}$ in $C^{2}(\overline{B_1})$ as $t\to\infty$. On the contrary, when $v\le u_0$ in $B_1$, $u_0\in C^0(\overline{B_1})$ and $u_0\not\equiv v$, there exists a unique local solution $u$ which blows up in finite time.
    \item Let $\lambda=\lambda^{*}$. Assume that $v^*\in C^2(\overline{B_1})$, $u_0\in C^0 (\overline{B_1})$, $u_0\ge v^*$ in $B_1$
    and $u_0\not\equiv v^{*}$. Then, there exists a unique local solution $u$ which blows up in finite time.
\item Let $\lambda=\lambda_{*}$. When $u_0\leq V_{*}$ in $B_1$ 
and $u_0\not\equiv V_{*}$, there exists a global solution $u$ such that 
\begin{enumerate}
    \item $u\to v^{*}$ in $C^2(\overline{B_1})$
    as $t\to\infty$
    if $N\le 9$;
    \item $u\to V_{*}$ in $L^{2}(B_1)\cap C^{2}_{\mathrm{loc}}(\overline{B_1}\setminus \{0\})$ if $N\ge 10$, $f\ge f_H$ in $B_1$ and $f\not\equiv f_H$;
    \item $u\to v^{*}$ in $C^{2}(\overline{B_1})$ 
    if $N\ge 10$, $f\ge f_H$ in $B_1$ and $f\not\equiv f_H$.
\end{enumerate}
On the other hand, when  $u_0\ge V_{*}$ in $B_1$
and $u_0\not\equiv V_{*}$, there exists no weak solution which satisfies $u\ge V_{*}$ in $Q_{\tau}:=B_{1}\times\left(0,\tau\right)$, for any $\tau>0$.
\end{enumerate}
\end{thm}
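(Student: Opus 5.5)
The proof will rest on the comparison principle for \eqref{eq-intro-1} in the class of solutions bounded below by $\phi_0$, together with the variational characterization of stability and the convexity of $e^u$. Throughout, existence and uniqueness of local classical solutions for $u_0\in C^0(\overline{B_1})$ come from standard semilinear theory, since $\lambda e^u-f$ is locally Lipschitz in $u$. For unbounded data (part (iii) with $u_0\le V_*$), I will build a weak solution by monotone approximation from truncated data $\min\{u_0,n\}$. Because $\phi_0$ is a subsolution, the comparison $u\ge \phi_0$ persists.

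\textbf{Step 1: part (i), below $v$.} Let $u_0\le v$ with $u_0\not\equiv v$. By the strong maximum principle, $u(t)<v$ for $t>0$, so $u$ is global and bounded. I then compare with the solutions starting from $\max\{u(\cdot,t_0),\phi_0\}$ and from a suitable subsolution. The key point is that there is no stationary solution $\bar v$ with $v_\lambda<\bar v<v$ and $\bar v\le v$ that can act as an $\omega$-limit. To see this, I use a Lyapunov (energy) functional: the $\omega$-limit set consists of stationary solutions below $v$. By Proposition \ref{stableprop-1}(iii), every unstable solution lies above $v_\lambda$. To exclude convergence to unstable stationary states lying below $v$, I use $u(t_0)<v$ strictly and the first eigenfunction of the linearization at $v$. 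Since $v$ is unstable, $v-\varepsilon\varphi_1$ is a supersolution for small $\varepsilon$. The resulting solution decreases monotonically, and its limit is a stable solution, hence equal to $v_\lambda$. Combined with a lower comparison by the solution from $\min\{u_0,v_\lambda\}$, which converges to $v_\lambda$ by Theorem \ref{intro-thm-2}(i), this gives the claim.

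\textbf{Step 2: part (i) above $v$, and part (ii).} Here $v+\varepsilon\varphi_1$ is a subsolution, so the solution increases. If it were global and bounded, it would converge to a stationary solution strictly above $v$ along the same branch ordering, which contradicts instability. I will close this gap by a Kaplan-type argument using the eigenfunction $\varphi_1$ of $-\Delta-\lambda e^{v}$ with negative eigenvalue. Setting $z=u-v\ge0$ and using $e^{v+z}-e^v\ge e^v(z+z^2/2)$, the quantity $\int z\varphi_1$ satisfies a superlinear differential inequality and blows up in finite time. For part (ii) the same argument applies, with the zero eigenvalue of Lemma \ref{extremal-lem} replacing the negative one; the quadratic term alone then forces blow-up once $\int z\varphi_1>0$.

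\textbf{Step 3: part (iii).} The cases where the claimed limits should be read as in Theorem \ref{intro-thm-2} are $N\le9$ and $N\ge10$. For $u_0\le V_*$ the solution stays below $V_*$ and is nondecreasing if started from a subsolution. I squeeze it between the solution from $\phi_0$ (which increases to the minimal solution $v^*$ or $V_*$) and $V_*$. When $v^*$ is classical, I need $v^*\le V_*$, and the limit is the minimal solution, namely $v^*$. For nonexistence above $V_*$, I follow Brezis--Cazenave--Martel--Ramiandrisoa: suppose a weak solution $u\ge V_*$ exists. Then $z=u-V_*\ge0$ satisfies $z_t-\Delta z\ge \lambda_*e^{V_*}z$, and near $0$, by Lemma \ref{uniquelem}, $\lambda_*e^{V_*}\sim(2N-4)r^{-2}$, which exceeds the Hardy constant $(N-2)^2/4$ when $N\le9$, or $z$ becomes instantly infinite via the singular potential. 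I expect this to be the \textbf{main obstacle}: for $N\ge10$ the potential is Hardy-subcritical, so I would instead use $e^{V_*+z}-e^{V_*}\ge e^{V_*}(e^{z}-1)$ with $z>0$ forced positive near $0$ at positive times. Testing against localized cutoffs then yields $\int e^{u}=\infty$ near the origin, contradicting $e^u\in L^1$.
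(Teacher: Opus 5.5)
Your Steps 1 and 2 are sound. Step 2 is the paper's Kaplan argument (Proposition~\ref{p-prop-3}).

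Step 1 takes a different route from Proposition~\ref{p-prop-2}. The paper uses three ingredients: the convex-combination supersolution $\alpha v_\lambda+(1-\alpha)v$, the energy identity to make $\omega$-limits stationary, and the radial intersection Lemma~\ref{intersec-lem-2} to exclude unstable limits. You instead use the strict supersolution $v-\varepsilon\varphi_1$. Its decreasing orbit converges to the maximal solution below it, which is stable and hence equals $v_\lambda$. This works, and avoids radial symmetry, provided you add two things: Hopf's lemma for $v-u$ to get $u(\cdot,t_0)\le v-\varepsilon\varphi_1$, and the standard argument that the maximal solution below a strict supersolution is stable.

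\textbf{The gap is in Step~3, cases (a) and (c).}

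First, a misidentification. There $v^*\in C^2(\overline{B_1})$, hence $\lambda_*<\lambda^*$: otherwise $V_*\in H^1_0(B_1)$ would be the unique $H^1_0$-solution $v^*$. So $v^*$ is not even a solution at $\lambda=\lambda_*$. The limit the paper's argument actually produces is $v_{\lambda_*}$, and (b) should read $f\le f_H$. Your ``minimal solution, namely $v^*$'' is therefore wrong.

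More seriously, the squeeze does not identify the limit. You trap $u$ between the orbit from $\phi_0$, which increases to $v_{\lambda_*}$, and the \emph{stationary} barrier $V_*$. This at best confines the $\omega$-limit set to stationary solutions lying between $v_{\lambda_*}$ and $V_*$. Nothing in it excludes convergence to $V_*$ itself, and whether $V_*$ attracts orbits from below is exactly the $N$- and $f$-dependent question the theorem settles. Also, $u\le V_*$ gives no bound at $r=0$, so $C^2(\overline{B_1})$ convergence cannot follow. (In (b) the squeeze is fine, since there the minimal solution at $\lambda_*=\lambda^*$ is $V_*$ itself.)

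What is missing is an upper barrier lying strictly below $V_*$ with a milder singularity. The paper builds it in three steps:
\begin{enumerate}
\item It proves that $u(\cdot,t_0)$ is bounded for $t_0>0$, via the iteration $u^{(1)},u^{(2)},u^{(3)}$ in Proposition~\ref{p-prop-1}. Your monotone limit of truncated problems does not give this by itself.
\item Hopf's lemma then yields $u\le v_0:=\alpha v_{\lambda_*}+(1-\alpha)V_*$ for $t\ge t_0$ and some $\alpha\in(0,1)$. By convexity of $e^s$, $v_0$ is a strict supersolution.
\item Since $e^{v_0}\lesssim r^{-2(1-\alpha)}\in L^p(B_1)$ for some $p>N/2$, one gets uniform bounds and $C^2(\overline{B_1})$ compactness, and $V_*$ is excluded from the $\omega$-limit. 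The limit is then identified as $v_{\lambda_*}$, either via Lemma~\ref{intersec-lem-2} or via your own Step~1 mechanism applied to $v_0$.
\end{enumerate}
Your eigenfunction perturbation cannot simply be reused here. For $N\le 9$ the operator $-\Delta-\lambda_*e^{V_*}$ is unbounded below, with infinite Morse index, so there is no principal eigenfunction. For $N\ge 10$ its first eigenfunction behaves like $r^{-\gamma}$, so $V_*-\varepsilon\phi_1\to-\infty$ at the origin and cannot lie above $u(\cdot,t_0)$.

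Finally, your nonexistence argument for $N\ge 10$ is only a sketch. The bound $z\ge c>0$ near $0$ gives $e^u\ge e^{c}e^{V_*}$, which is still integrable, so ``testing against cutoffs'' does not by itself produce $\int e^u=\infty$. One has to bootstrap the singularity: the source $\lambda_*e^{V_*}(e^z-1)\gtrsim r^{-2}$ forces $z\gtrsim\log(1/r)$ immediately, which gives a source $\gtrsim r^{-2-\kappa}$, and so on until $e^u\notin L^1_{\mathrm{loc}}$. The paper delegates this to \cite[Theorem~7.1]{PV1995}.
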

We first assume that the initial data $u_{0}$ lies below a solution to \eqref{eq-intro-2}.
\begin{prop}
\label{p-prop-1}
Let $v$ be a (possibly singular) solution of \eqref{eq-intro-2}.
Assume that $\phi_0\le u_0\le v$ in $B_1$ and $u_0\not\equiv v$.
Then, there exists a global solution $u$ to the problem \eqref{eq-intro-1} satisfying $\phi_0\le u\le v$ in $Q$ and 
$u\in C^{2,1}(B_1\times\left(\tau,T\right))$
for all $T>\tau>0$.
In addition, if $u_0\in C^{0}(\overline{B_1})$,
\eqref{eq-intro-1} admits a unique classical global solution satisfying $u\le v$ in $Q$. 
\end{prop}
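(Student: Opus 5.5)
The plan is a monotone (sub/super-solution) construction. First observe that $\phi_0$ is a subsolution of \eqref{eq-intro-1}: $\partial_t\phi_0-\Delta\phi_0=-f\le \lambda e^{\phi_0}-f$. Likewise $v$ is a (stationary) supersolution; when $v=V_*$ is singular, it satisfies the equation in the distributional sense on $B_1$ with $e^{V_*}\in L^1(B_1)$. This holds because $V_*=-2\log r+O(1)$ by Proposition \ref{intro-prop-2}, so $e^{V_*}\simeq r^{-2}$, which is integrable for $N\ge3$. Also $V_*\in H^1_0(B_1)$.

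Since $u_0$ need not be bounded, I approximate. Set $u_{0,n}:=\min\{u_0,\,v,\,n\}$, or use a smooth monotone regularization squeezed between $\phi_0$ and $\min\{v,n\}$. Replace the nonlinearity by the truncated Lipschitz one $g_n(u)=\lambda e^{\min\{u,n\}}$. For each $n$ the truncated problem has a unique global classical solution $u_n$. The comparison principle then gives $\phi_0\le u_n\le v$, because $v$ is a supersolution of the truncated problem too: $e^{\min\{v,n\}}\le e^v$.

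The sequence $u_n$ is nondecreasing in $n$, since both the data and the truncation level increase. So $u:=\lim u_n$ exists and satisfies $\phi_0\le u\le v$. Because $u\le v$, we get $0\le \lambda e^{u_n}\le \lambda e^{v}\in L^1(B_1)$, uniformly in $n$. Monotone convergence in the Duhamel (mild) formulation then shows that $u$ solves the problem with the original nonlinearity. An energy estimate, obtained by testing with $u_n-\phi_0$ and using $v\in H^1_0$, gives $u\in L^2_{loc}((0,\infty);H^1_0)$, with $u_t$, $\Delta u$, $e^u\in L^1$. This yields a weak solution in the sense just defined.

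Interior regularity is the main obstacle. Away from the origin $v$ is bounded, so there $e^u$ is bounded and parabolic Schauder theory gives $C^{2,1}$. Near $r=0$ with $v$ singular, I would use the strict inequality coming from $u_0\not\equiv v$. By the strong maximum principle applied to $v-u_n$, which is a supersolution of a linear equation with nonnegative potential, one gets $u(\cdot,\tau)<v$. The key step is to exclude $u(\cdot,t)$ being unbounded at $0$ for $t>0$. For this I would compare with $v$ shifted in time. One way is to use $\lambda e^{u}\le\lambda e^{v}\in L^p$ for some $p>1$ when $N\ge3$, since $r^{-2}\in L^p$ for $p<N/2$. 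The heat semigroup bootstrap then gives $u(\tau)\in L^q$ for all $q$. After that, I would invoke known results (Brezis–Cazenave / Peral–Vázquez type) saying that a solution strictly below the singular steady state becomes bounded instantly. I expect this regularization step near the singularity to be the delicate part, and I would follow \cite{PV1995}.

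Finally, suppose $u_0\in C^0(\overline{B_1})$. Then standard local existence gives a classical solution, and comparison with $v$ (via the same truncation, since the solution is bounded on compact time intervals) shows that it stays below $v$. Hence it is global and coincides with the limit constructed above. Uniqueness among classical solutions with $u\le v$ follows from Gronwall applied to the difference of two bounded solutions on each $[0,T]$.
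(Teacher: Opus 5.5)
Your existence construction (truncation, comparison with $\phi_0$ and $v$, monotone limit using $e^{u_n}\le e^{v}\in L^1(B_1)$) and your uniqueness argument are fine. The paper simply quotes this iterative construction from \cite{PV1995}.

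The genuine gap is the step you flag yourself. When $v=V_*$, you never show that $u$ is bounded near $r=0$ for $t>0$. This is needed both for $u\in C^{2,1}(B_1\times(\tau,T))$ and for the globality of the classical solution when $u_0\in C^0(\overline{B_1})$, since $u\le V_*$ by itself gives no $L^\infty$ bound near the origin.

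The bootstrap you sketch cannot close. $e^{V_*}\simeq r^{-2}$ lies in $L^p$ only for $p<N/2$, which is exactly the critical exponent for $L^\infty$ bounds. Knowing $u(\tau)\in L^q$ for all $q$ gives no control of $e^{u}$. Indeed, $V_*$ itself satisfies every estimate in your bootstrap and is an unbounded stationary solution, so no argument that uses only $u\le V_*$ can work. The hypothesis $u_0\not\equiv V_*$ must enter quantitatively, and a pointwise strict inequality $u(\cdot,\tau)<V_*$ is not enough. Citing \cite{PV1995} as a black box does not close this either: that result is for $f\equiv 0$, and you do not identify the mechanism.

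The missing idea runs as follows.
\begin{enumerate}
\item Since $(\partial_t-\Delta)(V_*-u)=\lambda_*(e^{V_*}-e^{u})\ge 0$, the function $V_*-u$ dominates the heat flow of $V_*-u_0\ge 0$, $\not\equiv 0$. This yields a uniform gap $u\le V_*-c$ on $B_{r_1}\times[\tau_2,T]$ for some $c,r_1>0$.
\item Since $e^{V_*}$ is bounded away from the origin, the gap gives $e^{u}\le \theta e^{V_*}+C$ with some $\theta\in(0,1)$.
\item Comparison for the linear problem with this source gives $u\le \theta V_*+(1-\theta)S+C\sigma$. Here $(\partial_t-\Delta)S=-f$, $S(\tau_2)=V_*$, and $-\Delta\sigma=1$.
\item Once $S$ has become bounded, $e^{u}\lesssim r^{-2\theta}\in L^{N/2+\delta}$ for some $\delta>0$. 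This is supercritical integrability, so one more linear step bounds $u$ in $L^\infty$, and parabolic Schauder theory gives $C^{2,1}$.
\end{enumerate}
The paper runs exactly this through the iterates $u^{(i)}$, started at time $\tau_2$ from $V_*-c\chi_{B_{r_1}}$, and concludes $u\le u^{(3)}\in L^\infty$.
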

\begin{proof}
For the case $v\in C^{2}(\overline{B_1})$,
$\phi_0$ and $v$ are a sub-solution and a super-solution,
respectively. Hence, the assertions follow from a comparison principle and the parabolic regularity theory.
Thus, we focus on the case $v=V_{*}$.
By a standard iterative construction of solutions,
there exists a weak maximal solution $u\in C^{0}((0,\infty);H^{1}_{0}(B_1))$ satisfying $\phi_0\le u\le  V_{*}$ in $Q$  (see \cite{PV1995}).

Now, we show that $u\in C^{2,1}\left(B_{1}\times\left(\tau,\infty \right)\right)$ for any $\tau>0$. Take $0<\tau_2<\tau<T$.
Let $\psi$ be the solution of $\pl_{t}\psi=\Del\psi$ in $B_1$;
$\psi=0$ on $\pl B_{1}$; $\psi(x,0)=V_{\ast}-u_0$.
Then, $\psi$ is smooth and positive in $B_{1}\times\left[\tau_{2},\infty\right)$,
and $\psi\leq V_{\ast}-u$ in $B_{1}\times\left[\tau_{2},\infty\right)$ by a comparison principle.
Thus we have $u(x,\tau_2)<V_{*}-c$ in $B_{r_1}$ for some $c>0$ and $r_1\in\left(0,1\right)$. Moreover, let $S$ and $u^{(i)}$ be the solutions of
\begin{align*}
    &(\partial_t-\Delta)S=-f(r), \quad \text{in $B_1\times (\tau_2,\infty)$,}\quad S(x,\tau_2)=V_{*};
    \\
    &(\partial_t-\Delta) u^{(i)}
    =\lambda_{*}\exp\left({u^{(i-1)}}\right)
    -f(r)
    \quad \text{in }B_1\times (\tau_2,\infty),
    \quad u^{(i)}(x,\tau_2)=V_{*}-c\chi_{B_{r_1}}
\end{align*}
with 0-Dirichlet boundary condition and $u^{(0)}:=V_{*}$.
Since $\eta:=V_{*}-u^{(1)}$ satisfies
\begin{equation*}
    (\partial_t-\Delta)\eta=0
    \quad\text{in }B_1\times (\tau_2,\infty),
    \quad\eta(x,\tau_2)=c\chi_{B_{r_1}}(x),
\end{equation*}
we have $\eta>\ep$ in $B_{r_1/2}\times (\tau_2, T)$ for some $\ep>0$ depending only on $c$, $T$ and $r_1$.
Hence, there exist $\theta\in (0,1)$ and ${C}^{\dagger}>0$,
depending only on $c$, $r_1$ and $T$ such that
\begin{equation*}
\exp\left(u^{(1)}\right)=e^{V_{*}-\eta}
=\theta e^{V_{\ast}}+e^{V_{\ast}}
\left[e^{-\eta}-\theta\right]
\le \theta e^{V_{*}}+{C}^{\dagger} \quad \text{in $B_{1}\times (\tau_{2}, T)$.}
\end{equation*}
Therefore, we can deduce that $u^{(2)}\le \theta V_{*}+(1-\theta)S+{C}^{\dagger}\sigma$ in $B_{1}\times (\tau_2, T)$
by a comparison principle, where $\sigma$ is the solution of
$-\Delta \sigma =1$ in $B_1$;
$\sig=0$ on $\pl B_1$.
Fix $\tau_3\in (\tau_2,\tau)$.
Since $\norm{S(\cdot,t)}_{L^{\infty}(B_1)}<C(\tau_3)$ for any $t\ge \tau_3$,
Proposition \ref{intro-prop-2} yields $\exp\left({u^{(2)}}\right)\le C(\tau_2, \tau_3, u_0) r^{-2\theta}$ in $B_{1}\times [\tau_3,T)$, which implies that $\exp\left({u^{(2)}}\right)\in L^{\infty}((\tau_3,T);L^{\frac{N}{2}+\delta}(B_1))$ for some $\delta>0$.
As a result, we have $u^{(3)}\in L^{\infty}(B_{1}\times\left(\tau, \frac{T}{2}\right))$. Here, by the comparison principle, we have $u\le u^{(3)}$ in $B_{1}\times (\tau_2, T)$. Therefore,
the result follows from the standard parabolic theory \cite{Lieberman}.
In particular, if $u_{0}\in C^{0}(\overline{B_1})$, the uniqueness of the solution follows from a comparison principle. 
\end{proof}
Next, we identify the large time behavior of solutions.
\begin{prop}\label{p-prop-2}
    Fix $\lambda\le \lambda^{*}$ and let $v$ be a (possibly singular) solution of \eqref{eq-intro-2}.
    Let $u$ be a global-in-time solution of \eqref{eq-intro-1} with initial datum $u_{0}$,
    where $u_0\le v$ in $B_1$
    and $u_0\not\equiv v$.
    When $\lambda<\lambda^{*}$,
    we have $u\to v_{\lambda}$ in $C^2(\overline{B_1})$ as $t\to\infty$.
    For the case $\lambda=\lambda^{*}$, we have $u\to v^{*}$ in $L^2(B_1)\cap C^{2}_{\mathrm{loc}}(\overline{B_{1}}\setminus\{0\})$ as $t\to\infty$. In addition, 
    if $v^{*}\in C^{2}(\overline{B_1})$,
    we obtain $u\to v^{*}$ in $C^2(\overline{B_1})$ as $t\to\infty$.  
\end{prop}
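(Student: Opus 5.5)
The proof rests on a Lyapunov functional argument, combined with monotonicity obtained by comparison and the stability and uniqueness results of Proposition \ref{stableprop-1}. Set
\[
E(u)=\int_{B_1}\tfrac12|\nabla u|^2-\lambda e^{u}+fu\,dx .
\]
For the solution of Proposition \ref{p-prop-1} we have $\frac{d}{dt}E(u(t))=-\int u_t^2$ for $t>0$. Since $\phi_0\le u\le v$ and $e^{v}\in L^1$ (true also for $v=V_*$, because $e^{V_*}\simeq r^{-2}$ with $N\ge3$), $E$ is bounded below along the orbit. Hence $\int_\tau^\infty\!\int u_t^2<\infty$.

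\textbf{Step 1: a convenient initial datum.} To gain monotonicity, I replace $u_0$ by an ordered auxiliary datum and compare. Take $\underline u$ to be the solution with initial datum $\phi_0$. Since $\phi_0$ is a subsolution of \eqref{eq-intro-2}, $\underline u$ is nondecreasing in $t$ and bounded by any solution $\ge\phi_0$. In particular it lies below $v_\lambda$ by minimality, so it converges monotonically to a solution below $v_\lambda$, which must be $v_\lambda$ (resp.\ $v^*$). For the upper side, use the condition $u_0\not\equiv v$. By the strong maximum principle (argued as in the proof of Proposition \ref{p-prop-1}, via the heat solution $\psi$), at a positive time $\tau$ we have $u(\tau)<v$ strictly in $B_1$ with a positive normal-derivative gap. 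Let $\overline u$ be the solution starting from $v$ perturbed downward, for instance from $\max\{u(\tau),v-\epsilon\,\Phi\}$ with a suitable first eigenfunction. The cleaner route is to use $u(\cdot,t+\tau)$ itself and the $\omega$-limit set.

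\textbf{Step 2: $\omega$-limit set.} By the energy dissipation and parabolic estimates away from the origin (interior $C^{2,1}$ bounds on $B_1\setminus B_\epsilon$ from $u\le v$, plus global $C^2$ bounds when $v$ is classical), every sequence $t_n\to\infty$ has a subsequence along which $u(t_n+\cdot)\to z$, with $z$ a stationary weak solution of \eqref{eq-intro-2} satisfying $\phi_0\le z\le v$. The convergence holds in $C^2_{\rm loc}(\overline{B_1}\setminus\{0\})$ and, by dominated convergence, in $L^2$. Since $\int u_t^2$ is integrable, $z$ is independent of the extra time variable and hence stationary. Moreover $z\le u(\tau)\not\equiv v$ in the limit sense, via comparison: $u(t+\tau)\le w(t)$, where $w$ is the solution starting from $u(\tau)<v$.

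\textbf{Step 3: identify $z$.} This is the main obstacle. For $\lambda<\lambda^*$: the solution from $u(\tau)$ with $u(\tau)<v$, $v$ unstable, is nonincreasing in time. Indeed, since $v$ is unstable, a perturbation $v-\epsilon\phi$, with $\phi>0$ the first eigenfunction of the linearization at $v$ (negative eigenvalue), is a strict supersolution for small $\epsilon$. Choose $\epsilon$ so that $u(\tau)\le v-\epsilon\phi$ using the strict gap. The solution from $v-\epsilon\phi$ then decreases monotonically to a stationary solution $z\le v-\epsilon\phi<v$, and $z\ge\phi_0$. If $z$ were not stable, we would iterate. Instead, invoke Proposition \ref{stableprop-1}(iii): every unstable solution lies strictly above $v_\lambda$, and the monotone limit from a supersolution is the maximal solution below it. That maximal solution is stable (standard: the maximal solution under a strict supersolution has nonnegative first eigenvalue), hence equals $v_\lambda$ by uniqueness of stable solutions. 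Squeezing $\underline u\le u\le\overline u$ gives $u\to v_\lambda$, in $C^2(\overline{B_1})$ by parabolic regularity since $v_\lambda$ is classical. If $v$ is itself stable then $v=v_\lambda$, and the squeeze from $\underline u$ and the $\omega$-limit argument give the claim directly. For $\lambda=\lambda^*$: the only solution $\ge\phi_0$ below $v$ is $v^*$, since no solution other than $v^*$ lies below any solution at $\lambda^*$ (minimality and uniqueness in Proposition \ref{stableprop-1}(ii)). Hence $\underline u\uparrow v^*$, and every $\omega$-limit point $z$, which satisfies $\underline u$-limit $\le z$, must equal $v^*$. This yields convergence in $L^2\cap C^2_{\rm loc}(\overline{B_1}\setminus\{0\})$, upgraded to $C^2(\overline{B_1})$ when $v^*$ is classical because then $u\le v^*$ is uniformly bounded.
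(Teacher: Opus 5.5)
Your argument works when $v$ is classical, but it has a genuine gap in the case the statement explicitly allows and that actually occurs: $v=V_*$ singular with $\lambda=\lambda_*<\lambda^*$. This happens for $N\le 9$, and for $N\ge 10$ with $f\ge f_H$, $f\not\equiv f_H$.

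Step 3 rests entirely on the upper barrier $v-\varepsilon\phi$, with $\phi>0$ a first eigenfunction of $-\Delta-\lambda e^{v}$ having negative eigenvalue. When $v=V_*$ this barrier does not exist or is useless:
\begin{itemize}
\item For $3\le N\le 9$ one has $\lambda_*e^{V_*}=(2N-4)r^{-2}(1+o(1))$ with $2N-4>(N-2)^2/4$. By Lemma \ref{optimality-1} and scaling, $Q_{V_*}$ is unbounded below on $H^1_0$, so there is no principal eigenpair at all.
\item For $N\ge 10$ a first eigenfunction exists, but $\phi_1\simeq r^{-\gamma}$ near the origin (Lemma \ref{b-v-lem}). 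Meanwhile $V_*-u(\tau)\le -2\log r+C$, because $u(\tau)\ge\phi_0$. Hence $V_*-\varepsilon\phi_1\to-\infty$ at $r=0$, and $u(\tau)\le V_*-\varepsilon\phi_1$ fails for every $\varepsilon>0$. The supersolution inequality also fails near $0$, since $e^{V_*}\varepsilon\phi_1$ is unbounded.
\end{itemize}
Steps 1--2 do not compensate. The $\omega$-limit argument only gives stationary $z$ with $\phi_0\le z\le v$, and nothing there excludes $z=V_*$. The claim ``$z\le u(\tau)$'' in Step 2 is not correct as written. The same gap breaks the $C^2(\overline{B_1})$ upgrade: for $\lambda<\lambda^*$ and $v=V_*$, the bound $u\le V_*$ gives no uniform-in-time control of $u$ near the origin. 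Your squeeze needs an upper barrier that regularizes and converges to $v_\lambda$, and you have not produced one.

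The missing device, which the paper uses, is the convex combination $v_0:=\alpha v_\lambda+(1-\alpha)v$ with $\alpha\in(0,1)$.
\begin{itemize}
\item By convexity of $s\mapsto e^s$, $v_0$ is a supersolution also for singular $v$. It is strict, since $v>v_\lambda$ by Proposition \ref{stableprop-1}(iii).
\item For small $\alpha$ one has $u(\cdot,t_0)\le v_0$: use Hopf's lemma at $\partial B_1$, and near the origin use that $u(\cdot,t_0)$ is bounded while $(1-\alpha)V_*\to\infty$.
\item Then $e^{u}\le e^{v_0}\lesssim r^{-2(1-\alpha)}$, so $e^u$ is bounded in $L^{N/(2(1-\alpha))}$ with exponent $>N/2$. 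This gives the uniform bounds needed for $C^2(\overline{B_1})$ convergence.
\end{itemize}
With $v_0$ in place of $v-\varepsilon\phi$, your own mechanism goes through. The flow from $v_0$ decreases to the maximal solution $z\le v_0$, which is bounded. The subsolution $z+\varepsilon\phi_z$ argument shows $z$ is stable, hence $z=v_\lambda$. This is a legitimate alternative to the paper's use of the intersection Lemma \ref{intersec-lem-2}, and it does not require radial symmetry of the limit. Your treatment of the classical unstable case and of $\lambda=\lambda^*$ (squeezing $u$ between the increasing flow from $\phi_0$ and $v^*$) is fine.
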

\begin{proof}
Take $0<\tau<t$.
Multiplying $u_t$ to \eqref{eq-intro-1} and integrating it on $B_1\times (\tau,t)$, we have
\begin{equation*}
\int_{\tau}^{t}\int_{B_1}u_t^{2}
+\frac{1}{2}\int_{\tau}^{t}\int_{B_1}\frac{d}{dt}|\nabla u|^2
=
\int_{B_1}
\lambda(e^{u(r,t)}-e^{u(r,\tau)})
-f(r)(u(r,t)-u(r,\tau))
\,dx.
\end{equation*}
Then, we use Proposition \ref{p-prop-1} to yield
\begin{equation*}
\lVert u_{t}\rVert_{L^2(B_{1}\times (\tau,\infty))}\le C(\tau,f,v), \quad \sup_{t>\tau} \lVert \nabla u\rVert_{L^2(B_1)} \le C(\tau,f,v).
\end{equation*}
Arguing exactly as in the proof of Theorem~3.1 in \cite{PV1995}, 
we can deduce that the $L^2$-limit set of solutions is included in the set of solutions of \eqref{eq-intro-2}.
When $\lam=\lam^{\ast}$, Proposition~\ref{intro-prop-1}~(ii) tells us
$u(\cdot,t)\to v^{\ast}$ in $L^{2}(B_1)$ as $t\to\infty$.
Consider the case $\lam\in(0,\lam^{\ast})$.
For the case $v=v_{\lam}$, we can show that $u(r,t)\to v_{\lam}(r)$ in $L^2(B_1)$ as $t\to\infty$ since $v_{\lambda}$ is minimal. 
Thus, we focus on the case of $v\not\equiv v_{\lam}$ (namely, $v$ is unstable).
By Proposition~\ref{p-prop-1}, the comparison principle and Hopf's lemma shows that for $t_{0}>0$, there exists $\al\in(0,1)$ such that
$u(x,t)\leq \al v_{\lam}(x)+(1-\al)v(x)=:v_0$ for $t>t_0$. Hence,
we can assume without loss of generality that
$u_{0}\leq v_{0}$. We claim that
the function $v_0$ is a supersolution to \eqref{eq-intro-1} with initial data $v_{0}$. 
Indeed, we see that
$(v_{0})_{t}=0\geq\Del v_{0}+\lam_{\ast} e^{v_{0}}-f$ in $Q$
by the convexity of $s\mapsto e^s$. Therefore,
we have $u\leq v_{0}(x)<v(x)$ in $B_1$ by the comparison principle. 
Thus, by using Lemma~\ref{intersec-lem-2},
we deduce $u\to v_{\lam}$ in $L^2(B_{1})$ as $t\to\infty$. Note that the convergence of $u$ in $C^2_{\mathrm{loc}}(\overline{B_{1}}\setminus\{0\} )$ follows from the fact that $u\le v$ in $Q$ and the parabolic regularity theory.

Finally, we turn to the proof of the convergence of $u$ in $C^{2}(\overline{B_1})$,
except in case $\lam=\lam^{\ast}$ and $v=V_{\ast}$.
In light of parabolic regularity and the Arzel\`{a}--Ascoli theorem,
it suffices to show that $\norm{u}_{C^{2,1}(B_{1})}<C(T)$ for any $t>T$ with some $T>0$.
The case $v\in L^\infty(B_1)$ is trivial.
When $v=V_{\ast}$, again we use the supersolution $v_0$ to derive
$ \norm{e^{u}}_{L^{N/(2\beta)}(B_{1})}\le C(T)$ for all $t>T$ with some 
$\beta\in(0,1)$ and $T>0$. Hence, by the parabolic regularity theory, we obtain the result.
\end{proof}
We next turn to the case where the initial data $u_0$ lies above a solution to \eqref{eq-intro-2}.
\begin{prop}
\label{p-prop-3}
Assume that $N\ge 3$ and $f\in \mathrm{Lip}[0,1]$. Then,
there exists a unique local solution $u$ which blows up in finite time if 
\eqref{asu0} and one of the following three conditions holds:
\begin{enumerate}
    \item[{(i)}] $\lambda<\lambda^{*}$, an unstable solution $v\in C^2(\overline{B_1})$ exists, $u_0\ge v$ in $B_1$ and $u_0\not\equiv v$;
    \item[{(ii)}] $\lambda=\lambda^{*}$, $v^{*}\in C^2(\overline{B_1})$, $u_0\ge v^{*}$ in $B_1$ and $u_0\not\equiv v^{*}$;
    \item[{(iii)}] $\lambda>\lambda^{*}$.  
\end{enumerate}
Moreover, when $\lambda=\lambda_{*}$, $u_0\ge V_{*}$ in $B_1$ and $u_0\not\equiv V_{*}$, there exists no weak local solution satisfying $u\ge V_{*}$ in $Q_{\tau}$ for any $\tau>0$
(instantaneous blow up).  
\end{prop}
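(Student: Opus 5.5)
We will prove the three blow-up statements with a single mechanism: once $u$ lies strictly above a classical solution, it grows monotonically in time, and in the absence of a solution above it, it must blow up. The instantaneous blow-up statement will be handled separately by a Kaplan-type eigenfunction argument localized near the origin.

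\textbf{Case (iii), $\lambda>\lambda^{*}$.} We argue by contradiction. Suppose the solution $u$ (unique by local theory for $u_0\in C^0(\overline{B_1})$) is global. Starting from $\phi_0\le u_0$, the comparison principle gives $u\ge \underline u$, where $\underline u$ is the solution with initial datum $\phi_0$. Since $\phi_0$ is a strict subsolution of the stationary problem, $\underline u$ is nondecreasing in $t$. It cannot converge to a stationary weak solution, because none exists for $\lambda>\lambda^*$ (Proposition \ref{stableprop-1}(ii)). To make this finite-time rather than merely unbounded, we will use the standard argument of \cite{BCMR} or \cite{PV1995}: global existence of a solution with $u_0\ge\phi_0$ would produce a weak stationary supersolution via time averages. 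Alternatively, we test against the first Dirichlet eigenfunction $\varphi_1$ of $-\Delta$ and apply Jensen's inequality to $y(t)=\int u\varphi_1$. This gives $y'\ge -\lambda_1y+\lambda e^{y}-C$. For $\lambda>\lambda^*$ we would first push $y$ large using monotonicity of $\underline u$ and unboundedness, after which the ODE blows up in finite time.

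\textbf{Cases (i) and (ii).} Let $v$ be a classical solution: unstable in case (i), or $v^*$ in case (ii). By the strong maximum principle, for $t_0>0$ we have $u(\cdot,t_0)>v$ in $B_1$, with $\partial_\nu u<\partial_\nu v$ on $\partial B_1$. Let $\varphi>0$ be the first eigenfunction of $-\Delta-\lambda e^{v}$, with eigenvalue $\mu\le 0$ (strictly negative in (i), zero in (ii) by Lemma \ref{extremal-lem}). Take the initial datum $v+\varepsilon\varphi\le u(\cdot,t_0)$. By convexity,
\[
-\Delta(v+\varepsilon\varphi)-\lambda e^{v+\varepsilon\varphi}+f\le \varepsilon\mu\varphi-\lambda e^{v}(e^{\varepsilon\varphi}-1-\varepsilon\varphi)<0,
\]
so this datum is a strict subsolution. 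The corresponding solution is therefore increasing in $t$, and it bounds $u$ from below.

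If it stayed global and bounded, it would converge to a classical solution $\tilde v>v$ with $\tilde v\ge v+\varepsilon\varphi$. For case (ii), no solution exists above $v^*$ besides $v^*$ itself (the stable extremal solution is minimal and the curve turns at $\lambda^*$), which gives a contradiction. For case (i), a solution $\tilde v>v$ with $v$ unstable could exist, so we instead use the intersection and energy argument: the monotone limit is a stationary solution, but the Lyapunov functional strictly decreases along the flow, starting below the energy of $v$. We expect to invoke Lemma \ref{intersec-lem-2} here, in parallel with its use in Proposition \ref{p-prop-2}, to exclude convergence to a larger steady state.

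If instead the solution is global and unbounded, we use the energy functional. Along the flow the energy $E(u)=\int\frac12|\nabla u|^2-\lambda e^u+fu$ is decreasing and below $E(v)$. Combining this with the identity $\frac{d}{dt}\frac12\int u^2=-2E(u)+\int(2-u)\lambda e^{u}\ldots$ gives finite-time blow-up by a Levine concavity argument. Finite-time blow-up also follows from the eigenfunction ODE once $\int u\varphi_1$ is large. We expect excluding global unbounded solutions to be the main obstacle. Our first attempt will be to combine energy decay with the Kaplan ODE.

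\textbf{Instantaneous blow-up when $\lambda=\lambda_*$ and $u_0\ge V_*$, $u_0\not\equiv V_*$.} Suppose a weak solution $u\ge V_*$ exists on $Q_\tau$. Then $\Phi=u-V_*\ge0$ satisfies $\Phi_t-\Delta\Phi=\lambda_*e^{V_*}(e^{\Phi}-1)\ge \frac{2N-4}{r^2}(1+o(1))\Phi$. After a short time $\Phi\ge c>0$ near the origin by the heat-kernel lower bound, as in the proof of Proposition \ref{p-prop-1}. Then the potential $\lambda_*e^{V_*}e^{\Phi}\ge (1+c)(2N-4)r^{-2}$ exceeds the Hardy constant $(N-2)^2/4$ in small balls. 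Indeed $(1+c)(2N-4)>(N-2)^2/4$ is not automatic for large $N$, so we instead use $e^{\Phi}-1\ge \Phi^2/2$ and $e^{V_*}\sim r^{-2}$. This makes $\int_{B_\rho}\lambda_*e^{V_*}(e^\Phi-1)\,dx=\infty$ near $r=0$ unless $\Phi\to0$, contradicting $\Delta u\in L^1$. We will follow the Baras--Goldstein / \cite{PV1995} nonexistence argument with a localized test function.
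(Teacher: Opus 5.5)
Your treatment of (iii) and of the instantaneous blow-up defers to \cite{BCMR} and \cite{PV1995}, as the paper does. For (i) and (ii), however, there is a genuine gap: the argument is never closed. After building the increasing solution from the strict subsolution $v+\varepsilon\varphi$, you leave two steps open.
(a) In case (i) you must exclude convergence to a steady state $\tilde v>v$. You think such a $\tilde v$ might exist, and you only announce an energy/intersection argument to rule it out. An energy level below $E(v)$ does not by itself contradict convergence to another equilibrium.
(b) You must exclude global unbounded solutions, which you yourself call the main obstacle. Unboundedness in $L^\infty$ does not make $\int u\varphi_1$ large, so your Kaplan inequality for $-\Delta$ does not cover grow-up towards a weak or singular steady state. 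Levine's concavity method needs an energy sign you do not have.

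The missing idea is to test the equation for $u-v$ against the eigenfunction you already have. Let $\xi>0$ be the first eigenfunction of $-\Delta-\lambda e^{v}$, normalized by $\int_{B_1}\xi\,dx=1$, with eigenvalue $\mu\le 0$. Set $y(t)=\int_{B_1}(u-v)\xi\,dx$. Since $u\ge v$,
\[
y'=\int_{B_1}\Bigl(-\mu(u-v)+\lambda\bigl(e^{u}-e^{v}-e^{v}(u-v)\bigr)\Bigr)\xi\,dx\ \ge\ \frac{\lambda}{2}e^{-\|v\|_{L^\infty}}\int_{B_1}(u-v)^2\xi\,dx\ \ge\ \frac{\lambda}{2}e^{-\|v\|_{L^\infty}}\,y^2 .
\]
Since $y(0)>0$, this gives $T<\infty$ at once. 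This is the paper's proof; it needs no monotone subsolution, no identification of limits, and no separate treatment of unbounded global solutions.

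The same identity at a steady state also shows your worry in (i) is unfounded. If $\tilde v\ge v$, $\tilde v\not\equiv v$, solved the stationary problem (even weakly), then
\[
\mu\int_{B_1}\xi(\tilde v-v)\,dx=\lambda\int_{B_1}\xi e^{v}\bigl(e^{\tilde v-v}-1-(\tilde v-v)\bigr)\,dx>0,
\]
which forces $\mu>0$. So no steady state lies above an unstable solution, nor above $v^*$. With this fact your own route could be completed: if $\int\underline u\varphi_1$ stays bounded, monotone convergence produces a weak steady state above $v$, which is impossible; otherwise your $-\Delta$-Kaplan inequality gives blow-up. As written, though, it is not completed.

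Separately, your instantaneous blow-up sketch claims that $\int_{B_\rho}\lambda_*e^{V_*}(e^{\Phi}-1)\,dx=\infty$ unless $\Phi\to 0$. This is false, since $r^{-2}\in L^1_{\mathrm{loc}}(\mathbb{R}^N)$ for $N\ge 3$. You first need the source $\gtrsim r^{-2}$ to force $\Phi\gtrsim\log(1/r)$ near the origin. Only then is the effective potential $\lambda_*e^{V_*}(e^{\Phi}-1)/\Phi\gg r^{-2}$, exceeding Hardy's constant, which is incompatible with a positive solution. This is the Baras--Goldstein mechanism behind \cite[Theorem~7.1]{PV1995}.
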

\begin{proof}
We first prove (i) and (ii) by an argument similar to that in \cite{PV1995}.
We define $\mu$ and $\xi>0$ as the first eigenvalue and the eigenfunction of
$-\Del-\lam e^{v}$ in $B_1$ with $\lVert \xi\rVert_{L^1(B_1)}=1$, respectively. Here, we set $v=v^{*}$ for the case (ii). Since $v$ is unstable, we have $\mu\le 0$. Moreover,
since $u_0\ge v$ in $B_1$
and $u_0\not\equiv v$, we have $u\ge v$ in $B_{1}\times (0,T)$, where $T$ is the maximal existence time. Hence, $y:=\int_{B_1}(u-v)\xi\,dx$ satisfies
\begin{align*}
y'(t)&=\int_{B_1}u_t\xi\,dx=\int_{B_1}(u-v)\Delta \xi+\lambda(e^u -e^v)\xi\,dx
\\&
=\int_{B_1}-\mu\xi(u-v)+\lambda\xi(e^{u}-e^{v}-e^{v}(u-v))\,dx
\\&
\ge \frac{\lam}{2}\int_{B_1}e^{v}(u-v)^2\xi\,dx
\ge \frac{\lam}{2e^{\norm{v}_{L^{\infty}}}}
y^2(t)
\end{align*}
in $(0,T)$ by Jensen's inequality. It implies that $T<\infty$.

When $\lambda>\lambda^{*}$, the result follows by using a method similar to that in the proof in \cite[Corollary~3]{BCMR}.
Finally, the nonexistence result follows by using a method similar to that in the proof in \cite[Theorem~7.1]{PV1995}.
\end{proof}

\begin{proof}[Proof of Theorem \ref{intro-thm-2} and Theorem \ref{p-thm-1}]
The results follow from Propositions \ref{p-prop-1}, \ref{p-prop-2}, \ref{p-prop-3} and Theorem \ref{intro-thm-1}. 
\end{proof}
\section{Grow-up rate}
\label{sec-rate}
The aim of this section is to determine the grow-up rate.
Throughout this section, we assume that $N\ge 10$, $f\in \mathrm{Lip}[0,1]$, \eqref{asu0} and $0\le f\le f_H$ in $B_1$.  
By a comparison principle and the uniqueness of the solution (see Theorem \ref{intro-thm-2}), we assume without loss of generality that $u$ is radially symmetric. 
\subsection{Inner-outer matching}
\label{sec-matching}
We first consider the behavior of $u$ in the inner region.
When $f=0$, we can assume 
without loss of generality that $\alpha(t):=\lVert u(\cdot,t)\rVert_{L^{\infty}(B_{1})}=u(0,t)$ by using 
symmetrization and a comparison argument. Therefore, by a standard intersection method, we can obtain $u\ge y_0$ in $B_1$ for large $t$, where $r_t:=e^{-\alpha(t)/2}$ and
$y=y_{0}$ is the solution of the problem
\begin{equation}
\label{eq-y}
-\Delta y =\lambda_{*}e^{y}-f \quad \text{in $B_1$},\quad y(0)=\alpha(t), \quad y'(0)=0
\end{equation}
with $\lambda_*=\lambda_0:=2N-4$ and $f=f_0=0$. 
On the other hand, when $f\not\equiv 0$, the maximum point of $u$ may not coincide with $r=0$. Nevertheless, we can obtain the following analogous estimate.
\begin{lem}\label{lem:4-1}
Let $y=y(r,t)$ be the solution to \eqref{eq-y} with $\al(t):=\norm{u(\cdot,t)}_{L^{\infty}(B_1)}$.
For any $\ep>0$, there exists $T>0$ such that $y\le u$ in $B_1\setminus B_{\ep}$ for all $t>T$.
\end{lem}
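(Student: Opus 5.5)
We argue by comparing $u$ with the stationary profile $y$ on the outer region $B_1\setminus B_\ep$, where everything is bounded and the parabolic problem converges to the stationary one.

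\emph{Step 1: the size of $y$ away from the origin.} Since $\lambda=\lambda^*=\lambda_*$ and $f\le f_H$, Theorem \ref{intro-thm-2} (iii)(b) gives $\alpha(t)\to\infty$ as $t\to\infty$. The profile $y(\cdot,t)$ is the rescaled regular solution $w(r,\beta)-\log\lambda_*$ with $\beta=\alpha(t)+\log\lambda_*$. By Proposition \ref{uniqueprop}, $w(r,\beta)\to W$ in $C^2_{\mathrm{loc}}(0,1]$ as $\beta\to\infty$. Hence
\[
y(\cdot,t)\to W-\log\lambda_*=V_*\quad\text{in }C^0(\overline{B_1}\setminus B_\ep)
\]
as $t\to\infty$, uniformly in the way $\alpha(t)\to\infty$. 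We need more than convergence: $y$ must lie strictly below $V_*$ on $B_1\setminus B_\ep$. Proposition \ref{sepa-pro-1} (i) (with $\underline r=1$, $\underline h=H$ when $f\le f_H$) gives the ordering $w(r,\beta)<W_H$. The sharper fact we want is that $W-w(r,\beta)$ has a positive lower bound, decaying in $\beta$, on $[\ep,1]$. Here is how we would get it. Since $N\ge10$ and $W$ is stable, the intersection-number argument of Proposition \ref{sepa-pro-1} (i), applied to $W$ in place of $W_{\underline h}$, shows that $w(\cdot,\beta)$ and $W$ do not intersect in $(0,1]$. Then $w<W$, and in fact $W(1)-w(1,\beta)=\log\lambda_*-\log\lambda(\beta)>0$ with $\lambda(\beta)\uparrow\lambda_*$.

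\emph{Step 2: the boundary problem, which is the main obstacle.} Both $u$ and $y$ approach $V_*$ on $B_1\setminus B_\ep$, so a naive comparison fails. At $r=1$, $u=0$ while $y(1,t)=\log(\lambda(\beta)/\lambda_*)<0$. This gives a margin at the boundary, but only a vanishing one. To compare quantitatively, we would set $\Phi:=V_*-u$ and $\Psi:=V_*-y$, both positive, and show $\Psi\ge\Phi$ on $B_1\setminus B_\ep$ for large $t$.
\begin{itemize}
\item For $\Psi$: by linearization, $\Psi\approx\dot w$-type behavior, namely $\Psi(r,t)\simeq c\,r^{-\gamma}$-profile times $e^{-\gamma\alpha(t)/2}$. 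This follows from the Emden--Fowler variables (Lemma \ref{uniquelem}) and the fact that the regular solution departs from $W$ like the decaying mode $r^{-\gamma}$ when $N\ge11$.
\item For $\Phi$: $\Phi\ge0$ solves $\Phi_t+\cL\Phi=-\lambda_*e^{V_*}F(\Phi)\le0$. On $[\ep,1]$, $\Phi$ is controlled by $\Phi(\cdot,t)\le$ its value at the inner edge transported by the heat semigroup.
\end{itemize}

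\emph{Step 3: an intersection argument avoiding explicit rates.} Rather than matching rates, we prefer a zero-number argument. Consider $z:=u-y(\cdot,t)$, where $t$ appears as a parameter in $y$. For fixed $s$, the map $t\mapsto u(\cdot,t)-y(\cdot,s)$ satisfies a linear parabolic equation with bounded coefficients on $B_1\setminus B_\ep$ plus a forcing term. We use the following two facts.
\begin{itemize}
\item At $r=0$ and at the maximum point, $y(0)=\alpha(t)\ge u$, so $z$ changes sign near the maximum point.
\item At $r=1$, $z>0$.
\end{itemize}
Since $u$ is radial and increases to $V_*$ monotonically in time (by comparison, because $u_0\le v^*$ gives $u_t$ of one sign after a time shift), the zero number of $u(\cdot,t)-y(\cdot,s)$ is finite and nonincreasing by the Sturm/Angenent lemma. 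Combined with Lemma \ref{intersec-lem-2}, this places the unique sign change of $z$ in $B_\ep$ for $t>T$. The key point to verify is that $u$ and $y$ cross exactly once, as in the case $f=0$ from \cite{DGLV1998}. Stability of the profiles for $N\ge10$ rules out multiple intersections away from the origin, as in Proposition \ref{sepa-pro-1}, and the inner estimate Proposition \ref{inner-prop-1} ensures the single crossing happens at scale $r_t=e^{-\alpha(t)/2}\ll\ep$. This yields $y\le u$ on $B_1\setminus B_\ep$ for all $t>T$.
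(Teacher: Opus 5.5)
\textbf{Gap: the single-crossing bound is not established.} Your Step 3 has the right skeleton, and it is the paper's. First, $u-y>0$ at $r=1$. Second, $u-y$ vanishes somewhere in $B_\ep$. At $r=0$ one has $u\le\alpha(t)=y$. At a maximum point of $u$ one has $u=\alpha(t)\ge y$, since $y$ is maximal at the origin by Remark \ref{monotone-rem}; this point lies in $B_\ep$ once $\alpha(t)>\sup_{[\ep,1]}V_*$, because $u\le V_*$. So it suffices to know that $u(\cdot,t)$ and $y(\cdot,t)$ cross at most once. That bound is exactly what your proposal does not establish, and the tools you cite do not give it:
\begin{itemize}
\item Stability ``as in Proposition \ref{sepa-pro-1}'' rules out extra intersections only between two \emph{stationary} solutions, since it uses $-\Delta\eta\le e^{W}\eta$ for their difference. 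For $u(\cdot,t)-y$ the extra term $u_t$ has no sign, so the argument does not apply.
\item Lemma \ref{intersec-lem-2} gives \emph{existence} of an intersection of \emph{unstable} stationary solutions, and its hypothesis even fails here. It gives no upper bound.
\item Proposition \ref{inner-prop-1} compares $W-w$ with $W_0-w_0$ and says nothing about where $u$ meets $y$.
\end{itemize}
Moreover, your Sturm argument is for fixed $s$, whereas you need the count on the diagonal $s=t$. That transfer requires an initial count that is uniform in the parameter, and you never supply it.

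\textbf{The missing step, as in the paper.} Fix $\beta$ and regard $\hat y(\cdot,\beta)=w(\cdot,\beta)-\log\lambda_*$ as a time-independent solution of the same equation. Then $z:=u-\hat y(\cdot,\beta)$ solves a linear equation $z_t=\Delta z+cz$ with \emph{no} forcing term; with a genuine forcing term the zero number would not be monotone. Also $z(1,t)=-\hat y(1,\beta)>0$. So the zero number of $z$ on the whole radial interval $(0,1)$ is nonincreasing in $t$. Do not restrict to $B_1\setminus B_\ep$, where the sign at $r=\ep$ is uncontrolled.

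Now count at the fixed time $t=1$. We have $u(\cdot,1)<V_*$, and Hopf's lemma gives $-V_*'(1)>-\partial_r u(1,1)$. Also $\hat y(\cdot,\beta)\to V_*$ in $C^2_{\mathrm{loc}}(0,1]$, with $\hat y(1,\beta)<0$ and $\hat y(\cdot,\beta)$ large near the origin. Hence there is $\beta_0$ such that, for every $\beta>\beta_0$, $u(\cdot,1)$ crosses $\hat y(\cdot,\beta)$ exactly once, near $r=1$.

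By monotonicity of the zero number, $u(\cdot,t)$ crosses $\hat y(\cdot,\beta)$ at most once for all $t>1$ and all $\beta>\beta_0$ simultaneously. Only then do you set $\beta=\alpha(t)+\log\lambda_*$ for $t$ large.

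\textbf{Minor points.} Time-monotonicity of $u$ is neither available for general $u_0\le v^*$ nor needed. Steps 1--2 (rates, and the ordering $w<W$) play no role in the argument.
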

\begin{proof}
Fix sufficiently small $\ep>0$.
By Hopf's lemma, we have $-V_{*}'(1)>-\pl_r u(1,t)$ for any $t\ge 1$.
By Propositions \ref{uniqueprop} and \ref{sepa-pro-1},
the function
$\hat{y}(r,\beta):=w(r,\beta)-\log \lambda_{*}$ satisfies
\begin{align*}
    &-\Delta \hat{y}=\lambda_{*}e^{\hat{y}}-f\quad\text{in }B_1,
    \quad
    \hat{y}'(0)=0,\quad \hat{y}(1,\beta)<0
\end{align*}
and
$\hat{y}\to V_{*}$ in $C^{2}_{\mathrm{loc}}(0,1]$ as $\beta\to\infty$. 
In addition,
by Remark~\ref{monotone-rem},
we have  $\hat{y}(0,\beta)>\hat{y}(r,\beta)$ in $B_{1}$ for any sufficiently large $\beta>0$.
Therefore, there exists $\beta_0>\sup_{\ep<r<1}V_{*}(r)+\log \lambda_{*}$ such that $u(r,1)$ intersects with $\hat{y}(r,\beta)$ exactly once for any $\beta>\beta_0$.
By the intersection comparison method (see \cite[Proposition~52.28]{QSbook}),
$u(r,t)$ intersects with $\hat{y}(r,\beta)$ at most once for any $\beta>\beta_0$ and $t>1$.
Now, we take $T>1$ such that $\alpha(t)>\beta_0-\log \lambda_{*}$ for any $t>T$.
Since $u\le V_{*}$ in $B_1$ for all $t>0$,
we have $\alpha(t)=\sup_{r\le \ep}u(r,t)$ for any $t>T$.
Thus, we verify that $y=\hat{y}(r,\alpha(t)+\log \lambda_{*})$ intersects with $u(r,t)$ in $B_{\ep}$ for any $t>T$.
It implies that $u\ge y$ in $B_1\setminus B_{\ep}$ for any $t>T$.
\end{proof}
Next, for fixed $0<\ep\ll 1$,
we describe the asymptotic behavior of $y=y(r,t)$ in terms of $\alpha(t)$ near a matching point $r=\ep$.

When $f=0$,
by the self-similarity of the equation \eqref{eq-y},
we deduce that $y_0$ satisfies $y_0=\alpha(t)+\underline{y}(r/r_t)$,
where $\underline{y}$ is the solution of 
\begin{equation*}
    -\Delta \underline{y}=(2N-4)e^{\underline{y}} \quad \text{in $\mathbb{R}^{N}$,} \quad \underline{y}(0)=\underline{y}'(0)=0.
\end{equation*}
Since the solution $\underline{y}$ satisfies
\begin{equation*}
    \underline{y}(r)=
    \begin{cases}
          V_{0}(r)-b_{N}r^{-\gamma}(1+o(1)) &\text{if $N\ge 11$,}\\
          V_{0}(r)-b_{N}r^{-4}\log r(1+o(1))  &\text{if $N=10$,}
    \end{cases}
    \quad \text{as $r\to\infty$}
\end{equation*}
for some $b_N>0$ (see \cite{DGLV1998}),
Lemma \ref{lem:4-1} yields\label{error-inner}
\begin{align}\label{eq:v_0-y_0}
V_{0}-u\le V_{0}-y_{0}\simeq
 \begin{cases}
     e^{-\gamma \alpha(t)/2}r^{-\gamma} \quad &\text{if $N\ge 11$,}\\
    e^{-2\alpha(t)} r^{-4}\log (re^{\alpha(t)/2})  &\text{if $N=10$,}
 \end{cases}
 \quad \text{in $B_{1}\setminus B_{\ep}$}.
\end{align}
Here $f(t)\simeq g(t)$ means that there exists $C>0$ and $T$ independent of $t$ such that $C^{-1}f(t)\leq g(t)\leq Cf(t)$ for $t>T$.
As a result of \eqref{eq:v_0-y_0}, we can estimate $y_0$ in terms of $\alpha(t)$. 
When $f\not\equiv 0$, the above argument no longer applies due to the lack of scaling invariance.
Nevertheless, we can still obtain the following analogous result.
\begin{prop}
\label{prop-inner}
Define $\Phi:=V_{*}-u$ and let $\hat{r}$ be as in Proposition~\ref{inner-prop-1}.
For any small $\ep\in\left(0,\hat{r}\right)$,
there exist positive constants $T$ and $C$ such that 
\begin{align}\label{eq:prop-inner}
\Phi(r,t)\le 
 \begin{cases}
     Ce^{-\gamma\alpha(t)/2}r^{-\gamma} \quad &\text{if $N\ge 11$,}\\
    Ce^{-2\alpha(t)} r^{-4}\log (re^{\alpha(t)/2})  &\text{if $N=10$}
 \end{cases}
\end{align}
for all $r\in\left[\ep,\hat{r}\right)$ and for all $t>T$.
\end{prop}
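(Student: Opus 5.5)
We reduce the inhomogeneous problem to the homogeneous one through Proposition~\ref{inner-prop-1}, and then use the known self-similar asymptotics of $y_0$ in the matching region.

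\textbf{Step 1 (lower barrier).} By Lemma~\ref{lem:4-1}, for $t>T$ we have $u\ge y$ in $B_1\setminus B_\ep$, where $y(\cdot,t)$ solves \eqref{eq-y} with $y(0)=\alpha(t)$. Hence
\[
\Phi=V_*-u\le V_*-y .
\]
By \eqref{sp-transform}, $y=w(r,\beta)-\log\lambda_*$ with $\beta=\alpha(t)+\log\lambda_*$, and $V_*=W-\log\lambda_*$. Therefore $V_*-y=W-w(r,\beta)$.

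\textbf{Step 2 (compare with $f=0$).} Since $0\le f\le f_H$, Proposition~\ref{inner-prop-1} gives
\[
W-w(r,\beta)\le W_0-w_0(r,\beta)\quad\text{in } B_{\hat r},
\]
provided $\beta>0$. This holds for large $t$, because $\alpha(t)\to\infty$ by Theorem~\ref{intro-thm-2}(iii)(b). Here $W_0=-2\log r+\log(2N-4)$, and $w_0(r,\beta)=\beta+\underline{y}(re^{\beta/2}/\sqrt{2N-4})$ up to the scaling. In the notation of the $f=0$ discussion, $W_0-w_0(\cdot,\beta)=V_0-y_0$ with $y_0(0)=\beta-\log(2N-4)$.

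\textbf{Step 3 (inner asymptotics).} Using the expansion of $\underline{y}$ at infinity quoted from \cite{DGLV1998}, we get for $r\in[\ep,\hat r)$ and large $t$:
\[
V_0-y_0\le C\,e^{-\gamma\beta'/2}r^{-\gamma}\quad (N\ge11),\qquad V_0-y_0\le C\,e^{-2\beta'}r^{-4}\log(re^{\beta'/2})\quad (N=10),
\]
where $\beta'=\alpha(t)+\log\lambda_*-\log(2N-4)$. The shift $\log\lambda_*-\log(2N-4)$ is a fixed constant. It only changes $C$, and it changes $\log(re^{\beta'/2})$ by $O(1)$, which is harmless since that logarithm is $\ge c\,\alpha(t)$ for $r\ge\ep$. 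This yields \eqref{eq:prop-inner}.

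\textbf{Main obstacle.} The delicate point is uniformity. The $o(1)$ in the asymptotics of $\underline{y}$ is taken as $\rho=re^{\beta'/2}\to\infty$. We need $\rho\ge\ep e^{\beta'/2}$ to be large uniformly in $r\in[\ep,\hat r)$, which holds once $t>T$ for a suitably chosen $T$. We must also check that Proposition~\ref{inner-prop-1} applies with exactly this $\beta$, and that the identification in Step 1 respects the scaling conventions between $y_0$ and $w_0$.
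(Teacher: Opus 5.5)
Your proposal is correct and is essentially the paper's argument: $\Phi\le V_*-y=W-w(r,\alpha(t)+\log\lambda_*)\le W_0-w_0(r,\alpha(t)+\log\lambda_*)$ on $[\ep,\hat r)$, using Lemma~\ref{lem:4-1} and Proposition~\ref{inner-prop-1}, and then the self-similar asymptotics of $\underline{y}$ that underlie \eqref{eq:v_0-y_0}. The only difference is how the constant shift $\log(\lambda_*/\lambda_0)$ in the initial value is removed: the paper uses $\lambda_0\le\lambda_*$ and the monotonicity of $w_0$ in $\beta$ (Proposition~\ref{sepa-pro-1}) to land exactly on $y_0$ with $y_0(0)=\alpha(t)$, whereas you absorb the fixed shift into $C$ directly, which works equally well and does not even need the sign of $\log(\lambda_*/\lambda_0)$.
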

\begin{proof}
  By \eqref{eq:v_0-y_0},
  it suffices to show that $\Phi\leq V_{0}-y_{0}$ in $B_{\ep}\setminus B_{\hat{r}}$.
  We recall that
  \begin{equation*}
    V_{*}-y=W-w(r,\alpha(t)+\log \lambda_{*}) \quad \text{and} \quad V_{0}-y_{0}=W_{0}-w_{0}(r,\alpha(t)+\log \lambda_0).
  \end{equation*}
  Since $\lam_{0}\leq\lam_{\ast}$ follows from Proposition~\ref{sepa-pro-1}~(iii) and the fact $\lambda_{0}=e^{W_{0}(1)}$ and $\lambda_{*}=e^{W(1)}$,
  Proposition~\ref{sepa-pro-1}~(i) implies
  $w_0(r,\alpha(t)+\log \lambda_0)\le w_0(r,\alpha(t)+\log \lambda_{*})$ in $B_1$.
  The desired estimate is a consequence of
  Proposition~\ref{inner-prop-1} and Lemma~\ref{lem:4-1}.
\end{proof}
We remark that Proposition~\ref{prop-inner} gives a suitable control at the matching point $r=\ep$.
In fact, as we shall see at the end of this subsection,
Theorem~\ref{intro-thm-3} follows from \eqref{eq:prop-inner} and the following estimates of $\Phi$ in the outer region.
\begin{prop}\label{prop-outer}
Let $\Phi$ be as in Proposition~\ref{prop-inner} and let $\hat{Q}$ be defined by
\begin{equation*}
\hat{Q}(t):= e^{-\mu_1 t} \text{ if $f\not\equiv f_H$,}\quad
\hat{Q}(t):=
\begin{cases}
t^{-1} &\text{if $f\equiv f_H$, $N\ge 12$,}\\
t^{-1}(\log t)^{-1} &\text{if $f\equiv f_H$, 
$N=11$.}
\end{cases}
\end{equation*}
There exists positive constants $C$ and $\hat{T}$
independent of $t$ such that
$\Phi\leq C\hat{Q}(t)\phi_1$ in $B_{1}$ for all $t>\hat{T}$.
Moreover, for any $\ep>0$, there exist $c>0$ and $T>0$ independent of $t$
such that $\Phi\ge c\hat{Q}(t)\phi_1$ in $B_1\setminus B_{\ep}$ for all $t>T$.
\end{prop}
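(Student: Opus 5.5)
We will work with $\Phi=V_*-u\ge 0$, which solves
\[
\Phi_t+\cL\Phi=-\lambda_*e^{V_*}F(\Phi),\qquad F(s)=e^{-s}-1+s\ge 0 \quad (s\ge 0),
\]
with $\Phi=0$ on $\partial B_1$. Since $0\le F(s)\le s^2/2$, every supersolution of the linear problem $(\partial_t+\cL)w=0$ is a supersolution for $\Phi$. The proof runs by comparison with explicit barriers built from $\phi_1$, treated separately in the cases $f\not\equiv f_H$ and $f\equiv f_H$. In both cases we first move to a time $t_0$ after which $u$ is smooth and $u\le V_*$ (Proposition \ref{p-prop-1}). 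Because $\phi_1\simeq r^{-\gamma}$ near $0$ (Lemma \ref{b-v-lem}(iii)) and $\Phi(\cdot,t_0)\le V_*-\phi_0\lesssim \log(1/r)+C$, we get $\Phi(\cdot,t_0)\le C\phi_1$ on $B_1$. Positivity of $\phi_1$ in $B_1$ and Hopf's lemma at $r=1$ give this. Comparison near $r=0$ has to be justified in the singular setting, via the weighted variable $\psi=r^{\gamma}\phi$ on $B_1^m$ as in Lemma \ref{b-v-lem}(iii); there the weights are bounded and the maximum principle is standard.

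\textbf{Case $f\not\equiv f_H$ ($\mu_1>0$).} The upper bound is immediate: $Ce^{-\mu_1(t-t_0)}\phi_1$ solves the linear problem, so it is a supersolution, and comparison gives the claim.

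For the lower bound we use $\underline\Phi=c\,e^{-\mu_1 t}(\phi_1-\kappa e^{-\mu_1 t}g)$ with $g$ chosen to absorb the nonlinearity, where $\lambda_*e^{V_*}F(\Phi)\le (2N-4)r^{-2}\Phi^2/2$. The difficulty is that $\phi_1^2 r^{-2}$ is too singular near $0$. We therefore do not work on all of $B_1$. Instead we compare on $(B_1\setminus B_{\delta})\times(T,\infty)$, with lower boundary data at $r=\delta$ coming from a $\delta$-independent positivity estimate. That estimate follows from the outer $L^2$ decay $\|\Phi\|_{L^2}\ge c e^{-\mu_1t}$, obtained via the projection $a(t)=(\Phi,\phi_1)$. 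Indeed $a'=-\mu_1a-(\lambda_*e^{V_*}F(\Phi),\phi_1)$, and the error is integrable relative to $a$ because $\Phi\lesssim e^{-\mu_1 t}\phi_1$ and $\int e^{V_*}\phi_1^3<\infty$ for $N\ge 11$, since $2+3\gamma<N$. This gives $a(t)\simeq e^{-\mu_1t}$. A parabolic Harnack inequality on $B_1\setminus B_\delta$ then upgrades this to the pointwise lower bound.

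\textbf{Case $f\equiv f_H$ ($\mu_1=0$, $e^{W}=(2N-4)r^{-2}+H$).} We construct barriers of the form $c(t)\phi_1+\xi(t)$, as sketched in the introduction. Projecting the equation onto $\phi_1$ gives
\[
c'=-(\lambda_*e^{V_*}F(c\phi_1+\xi),\phi_1).
\]
For small $c$ we need the asymptotics $(e^{W}F(c\phi_1),\phi_1)\sim \kappa_N c^2$ when $N\ge12$, where $\kappa_N=\tfrac12\int e^{W}\phi_1^3<\infty$. When $N=11$ the quadratic approximation fails in the region $\phi_1\gtrsim c^{-1}$, i.e.\ $r\lesssim c^{1/\gamma}$. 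We split $B_1$ at $r_c=c^{1/\gamma}$. With $\gamma=3$ at $N=11$ we have $2+3\gamma=N$, so $\int_{r_c}^1 r^{N-1}r^{-2}r^{-3\gamma}\,dr\simeq\log(1/c)$, while the inner part contributes $O(c^2)$. This yields $\simeq c^2\log(1/c)$. Solving the resulting ODEs $c'=-\kappa c^2(1\pm\eta)$ and $c'=-\kappa c^2\log(1/c)(1\pm\eta)$ gives $c\simeq t^{-1}$ and $c\simeq (t\log t)^{-1}$ respectively.

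The correction is $\xi=\pm\cL^{-1}P^\perp[\lambda_*e^{V_*}F_{\rm approx}(c\phi_1)]$, where $P^\perp$ is the projection onto $\{\phi_1\}^\perp$. Here $\cL^{-1}$ is bounded on $\{\phi_1\}^\perp$ because $\mu_2>0$. The aim is $|\xi|\le \eta\, c\,\phi_1$ away from the origin, so that $c\phi_1+\xi$ stays comparable to $\phi_1$. The derivative term $\xi_t=O(c'c)=O(c^3)$ is then lower order. We check the super/sub-solution inequality pointwise and use a constant shift in $c$ to start above or below $\Phi(\cdot,t_0)$.

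\textbf{Main obstacle.} The hardest step is the pointwise control of $\xi$ near $r=0$, where $\phi_1$ is singular and $F$ is no longer quadratic; this is the delicate $N=11$ case. We would first try to estimate $\xi$ through the weighted problem on $B_1^m$, using that $\psi_1(0)>0$ and $\psi_1'\le0$ (Lemma \ref{fnolem}). The bound $F(s)\le s$ keeps the inner contribution $O(c)$ times a weight that $\cL^{-1}$ maps into $o(\phi_1)$ uniformly. For the lower barrier we would again restrict to $B_1\setminus B_\delta$, using Proposition \ref{prop-inner} only as the needed boundary information.
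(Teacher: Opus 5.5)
\textbf{Genuine gap: the barriers near the origin in the case $f\equiv f_H$.} Your overall design for $f\equiv f_H$ matches the paper's: barriers $c(t)\phi_1+\xi$ with $\xi\perp\phi_1$, obtained by inverting $\cL$ on the orthogonal part of a model nonlinearity. The step you flag as the main obstacle is where the proof actually lives, and your resolution of it is wrong.

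For $r\lesssim c^{1/\gamma}$ the nonlinearity at $c\phi_1$ is in its linear regime, so $\lambda_*e^{V_*}F(c\phi_1)\simeq c\,r^{-2-\gamma}$. This forcing resonates with the homogeneous solution $r^{-\gamma}$ of $-\Delta-(2N-4)r^{-2}$, because $(-\Delta-(2N-4)r^{-2})(r^{-\gamma}\log(1/r))=(m-2)r^{-2-\gamma}$. Hence $\cL^{-1}$ of the inner contribution has size $c\,r^{-\gamma}\log(1/r)$. That is not $o(\phi_1)$, and the bound $F(s)\le s$ does not help. With the sign the correction must carry, $\xi=-\cL^{-1}P^{\perp}G$, the barrier tends to $-\infty$ at $r=0$. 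A super-solution built from the accurate nonlinearity therefore cannot lie above $\Phi(\cdot,T)\ge0$. A quadratic $F_{\mathrm{approx}}$ is worse, since it gives $\xi\sim c^2r^{-2\gamma}$.

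The paper avoids this by using a deliberately crude but bounded lower bound of the nonlinearity for the upper barrier, since only $c'\le-\theta c^2$ (resp.\ $c'\le-\theta c^2\log t$) is needed:
\begin{itemize}
\item For $N\ge12$ it uses $\kappa_2t^{-2}\psi_1^2$ in the variable $\Psi=r^\gamma\Phi$ (Proposition~\ref{prop-super-12}).
\item For $N=11$ it uses the truncated weight $R=r^{-2}(\log r)^{-2}\min\{t^{1/2},r^{-3}(\log r)^2\}\psi_1^2$. Its projection still yields $\theta\log t$ from the region $r>\delta_1(t)$, and the $L^{5/2,1}$ estimate (Lemma~\ref{apenprop}) keeps $|\widetilde\psi_0|\le Ct^{1/2}\psi_1$, which is negligible after the factor $(t\log t)^{-2}$.
\end{itemize}

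\textbf{The sub-solution and its initial comparison.} Here the same logarithm is needed rather than avoided. Since $\Phi(\cdot,T)$ grows only like $\log(1/r)$ at the origin while $\phi_1\simeq r^{-\gamma}$, no barrier $c\phi_1+\xi$ with $|\xi|\le\eta c\phi_1$ lies below $\Phi(\cdot,T)$, whatever constant shift you apply to $c$.

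Your fallback of comparing on $B_1\setminus B_\delta$ with boundary data from Proposition~\ref{prop-inner} fails. That proposition is an \emph{upper} bound on $\Phi$, stated in terms of the unknown $\alpha(t)$. A sub-solution needs a lower bound on $\Phi$ at $r=\delta$, which is the very statement being proved.

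The paper takes $\underline\Psi=\sigma t^{-1}\psi_1+M_1\upsi$ with $\upsi$ forced by $2P$, i.e.\ the accurate nonlinearity including its linear regime. It shows $M_1\upsi\le-\sigma t^{-1}\psi_1$ on $B_{\delta(t)}$, so $\underline\Psi\le0$ there, where $F$ vanishes (Lemma~\ref{12-sub-lem-3}). It then closes the initial comparison by matching the profile $(-\log r+\gamma^{-1})/(-\log\delta+\gamma^{-1})$ against the Hopf-type bound $\Phi(r,T)\ge c(T)(-\log r+\gamma^{-1})\psi_1$, with $\sigma$ small.

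\textbf{The case $f\not\equiv f_H$.} Two problems:
\begin{itemize}
\item $\int e^{V_*}\phi_1^3$ diverges at $N=11$, since there $2+3\gamma=N$, as you yourself use later. This part is fixable via $F(s)\le\min\{s,s^2/2\}$.
\item The lower bound on $a(t)$ is circular as written. The estimate $E(t):=(\lambda_*e^{V_*}F(\Phi),\phi_1)\lesssim e^{-2\mu_1t}$ only shows that $e^{\mu_1t}a(t)$ decreases to some $\ell\ge0$. Excluding $\ell=0$ requires $E\le\epsilon(t)a(t)$ with $\epsilon$ integrable, i.e.\ a comparability $\Phi\lesssim a(t)\phi_1$. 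That is a boundary-Harnack or intrinsic-ultracontractivity estimate for the singular operator, and you do not provide it.
\end{itemize}
The paper instead compares $\Phi$ with the solution $z$ of the linear problem forced by $-\lambda_*e^{V_*}F(\eta)$, following \cite{DGLV1998}.
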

Here we provide a formal derivation of the result.
We note that the function $\Phi$ satisfies
\begin{equation}\label{phieq-1}
  \Phi_t +\cL \Phi=-\lambda_{*}e^{V_{*}} F(\Phi)\quad \text{in $B_1\times (0,\infty)$,} \quad\text{where } F(u):=e^{-u_{+}}-1+u_{+}
\end{equation}
and $\Phi=0$ on $\pl B_{1}\times (0,\infty)$.
Assume formally that the large-time behavior of $\Phi$
in the outer region is governed by its projection $c(t)\phi_1$
onto the first eigenspace of $\mathcal{L}$. 
Then, we have
\begin{equation*}
    c'(t)+\mu_1 c(t)=-(\lambda_{*}e^{V_{*}}F(\Phi), \phi_1)_{L^2(B_1)}.
\end{equation*}
We remark that 
\begin{align}\label{bp1}
0\le F'(u)\le \min\{u_{+},1\}, \qquad
    \begin{cases}
     \frac{u^2}{2e^2} \le F(u)\le \frac{u^2}{2}
     & \text{if}\hquad  0\le u<2,
     \\
     \frac{u}{2}\le F(u)\le  u  & \text{if}\hquad  u\ge 2;
    \end{cases}
\\
\label{bp2}
N-2-3\gamma=0 \quad \text{if }N=11,
\qquad N-2-3\gamma> 0 \quad \text{if }N\ge 12.
\end{align}
Set $\delta(t):=c(t)^{1/\gamma}$.
By using the fact that $\phi_1\simeq r^{-\gamma}$ near $r=0$, \eqref{bp1}, \eqref{bp2} and Proposition~\ref{intro-prop-2}, we formally obtain
\begin{align*}
&(\lambda_{*}e^{V_{*}}F(\Phi), \phi_1)_{L^2(B_1)}\simeq \int_{0}^{\delta(t)}r^{N-3}\Phi \phi_1 \,dr+\int_{\delta(t)}^{1}r^{N-3}\Phi^2 \phi_1\,dr\\
&\simeq
\begin{cases}
c(t)\delta(t)^{N-2-2\gamma}+c(t)^2, & \text{if}\hquad N\ge 12,\\
c(t)\delta(t)^{N-2-2\gamma}-c(t)^2 \log \delta(t), &\text{if}\hquad  N=11
\end{cases}
\simeq
\begin{cases}
c(t)^{2} & \text{if}\hquad N\ge 12,
\\
c(t)^{2} |\log c(t)| & \text{if}\hquad  N=11.
\end{cases}
\end{align*}
As a result, we obtain $c\simeq \hat{Q}$ by the fact that $\mu_1>0$ if $f\not\equiv f_H$ in $B_1$, and $\mu_1=0$ if $f\equiv f_H$ (see Lemma~\ref{b-v-lem}).

The rigorous verification for the case $f\not\equiv f_H$ proceeds similarly to that in \cite[Section 5]{DGLV1998}.
Indeed, we can show that $\Phi\lesssim e^{-\mu_1 t}\phi_1$ in $B_1$ for $t\ge T$ with some $T>0$ because the function $\eta:=C(T) e^{-\mu_1 t}\phi_1$ is a super-solution of \eqref{phieq-1} with some $C(T)>0$.
In addition, we construct a sub-solution $z$ as the solution of
\begin{equation*}
z_t +\cL z=-\lambda_{*}e^{V_{*}} F(\eta)\quad \text{in $B_1\times (0,\infty)$,} \quad z=0 \quad \text{on }\pl B_{1}
\end{equation*}
with the same initial data as $\Phi$. Estimating $z$ as in \cite{DGLV1998}, 
we can show that, for any fixed $\ep>0$, 
we have $z\ge c(\ep)e^{-\mu_1 t}\phi_{1}$ in $B_1\setminus B_{\ep}$ for all $t>T(\ep)$ with some $T_{\ep}$. Therefore, the assertion of Proposition~\ref{prop-outer} follows for the case $f\not\equiv f_H$.

On the contrary, when $f\equiv f_H$,
the nonlinear term affects the leading-order asymptotics of $u$ in the outer region. Therefore, the construction of super/sub-solutions yielding the optimal leading-order term is more subtle.
In the subsequent two subsections, we rigorously prove Proposition \ref{prop-outer} for the case $f \equiv f_H$ by constructing specific super/sub-solutions.

We conclude this subsection with the proof of Theorem~\ref{intro-thm-3},
using Proposition~\ref{prop-inner} and assuming Proposition~\ref{prop-outer}.
\begin{proof}[Proof of Theorem~\ref{intro-thm-3}]
Let $\ep\in\left(0,\hat{r}\right)$.
Thanks to Propositions~\ref{prop-inner},
\ref{prop-outer} and Lemma \ref{b-v-lem},
there exists $T>0$ such that 
\begin{equation*}
 Q(t)\ep^{-\gamma}
 \lesssim Q(t)\phi_{1}(\ep)
 \lesssim\Phi
 \lesssim e^{-\gamma \alpha(t)/2}\ep^{-\gamma} 
\end{equation*}
for all $t>T$,
which implies that $\alpha\le -{2}{\gamma}^{-1}\log Q(t)+O(1)$ as $t\to\infty$.
On the other hand, by using Propositions~\ref{intro-prop-2} and \ref{prop-outer}, we have
\begin{equation*}
\alpha(t)\ge u(r,t)= V_{*}(r)-\Phi(r,t)\ge \alpha(t)- C_1 {Q}(t)r^{-\gamma}+\log (2N-4)+o(1) \quad \text{as $t\to\infty$,}
\end{equation*}
where $r=(\lambda^{*})^{1/2}e^{-\alpha(t)/2}$ and $C_1>0$ is independent of $t$. Therefore, we obtain 
$\alpha\ge -{2}{\gamma}^{-1}\log{Q(t)}-O(1)$ as $t\to\infty$.
Thus, the proof is complete.
\end{proof}
\subsection{Proof of Proposition \ref{prop-outer} ($\boldsymbol{N\geq 12}$, $\boldsymbol{f\equiv f_H}$)}
\label{sec-rate-geq:12}
First we deal with the case $N\geq 12$.
For the simplicity of the computation,
we set $\Psi(r,t):=r^{\gam}\Phi(r,t)$.
Then, by Lemma~\ref{b-v-lem}, we deduce that $\Psi$ is a solution of
\begin{align}\label{eq-diff-12dim}
\begin{dcases}
  \pl_{t}\Psi-\Del\Psi-H\Psi=
  -\left(
    \frac{2N-4}{r^2}+H
  \right)r^{\gam}F(r^{-\gam}\Psi)
  & \text{in $B_{1}^{m}\times (T,\infty)$},
  \\
  \Psi=0
  &\text{on }\pl B_{1}^{m}\times(T,\infty)
\end{dcases}
\end{align}
for any $T\ge0$,
where $F$ is that in \eqref{phieq-1} and $m:=N-2\gam$.
Note that $H>0$ coincides with the first eigenvalue of $-\Delta$ in $B_{1}^{m}$.

We first construct a specific supersolution as follows.
\begin{prop}
\label{prop-super-12}
There exist $T,\kappa_1, \kappa_2>0$ such that 
$\widetilde{\Psi}(r,t):=\kappa_1 t^{-1}\psi_1+\kappa_2 t^{-2}\Tilde{\psi}$ is a supersolution of \eqref{eq-diff-12dim}, 
where $\Tilde{\psi}\in C^2_{0}(\overline{B_1})$ is a solution of 
\begin{align}\label{se1}
  \begin{cases}
      (-\Delta-H)\Tilde{\psi}=
      -\psi_1^2+(\psi_1^2,\psi_1)_{L^2(B_{1}^m)}\psi_1
      \quad \text{in $B_{1}^{m}$,}
      \\
      \Tilde{\psi}=0 \quad \text{on }\partial B_1,
      \quad\widetilde{\psi}\perp\psi_{1} \text{ in } L^{2}(B_{1}^m).
  \end{cases}
\end{align}
In particular, there exists $C>0$ such that $\Psi\le Ct^{-1}\psi_1$ in $B_1$ for any $t>T$.
\end{prop}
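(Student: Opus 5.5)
We will verify directly that $\widetilde\Psi=\kappa_1 t^{-1}\psi_1+\kappa_2 t^{-2}\widetilde\psi$ satisfies
\[
\mathcal{N}[\widetilde\Psi]:=\pl_t\widetilde\Psi-\Del\widetilde\Psi-H\widetilde\Psi+\Bigl(\tfrac{2N-4}{r^2}+H\Bigr)r^{\gam}F(r^{-\gam}\widetilde\Psi)\ge 0
\quad\text{in } B_1^m\times(T,\infty).
\]
Here $f\equiv f_H$, so $h=H$ and $\mu_1=0$, and $\psi_1$ is the positive Dirichlet eigenfunction of $-\Del$ in $B_1^m$ with eigenvalue $H$. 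Since $\psi_1\in C^2(\overline{B_1^m})$ is positive with $\psi_1'(1)<0$, it is comparable to $1-r$.

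The function $\widetilde\psi$ exists because the right-hand side of \eqref{se1} is orthogonal to $\psi_1$. It lies in $C^2$ up to the boundary by elliptic regularity, and it vanishes on $\partial B_1^m$. Hence $|\widetilde\psi|\le C\psi_1$, and for $t\ge T$ large we get $\widetilde\Psi\simeq t^{-1}\psi_1>0$.

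Using $(-\Del-H)\psi_1=0$ and the equation for $\widetilde\psi$,
\[
\mathcal{N}[\widetilde\Psi]=-\kappa_1t^{-2}\psi_1-2\kappa_2t^{-3}\widetilde\psi+\kappa_2t^{-2}\bigl(-\psi_1^2+(\psi_1^2,\psi_1)\psi_1\bigr)+G,
\]
where $G:=(\tfrac{2N-4}{r^2}+H)r^{\gam}F(r^{-\gam}\widetilde\Psi)\ge 0$. We first choose $\kappa_2$ so large that $\kappa_2(\psi_1^2,\psi_1)\ge 2\kappa_1$. Then the linear terms of order $t^{-2}$ contribute at least $\kappa_1 t^{-2}\psi_1-\kappa_2t^{-2}\psi_1^2$.

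The remaining task is to dominate $\kappa_2t^{-2}\psi_1^2$ and the $t^{-3}$ error by $\kappa_1t^{-2}\psi_1+G$. We split $B_1^m$ into two regions.

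\textbf{Away from the origin} ($r\ge\delta_0$). Here $r^{-\gam}\widetilde\Psi$ is small, so by \eqref{bp1} $G\ge c\,r^{-2-\gam}\widetilde\Psi^2\ge c'\kappa_1^2t^{-2}\psi_1^2$. Choosing $\kappa_1$ large relative to $\kappa_2$ gives $G\ge \kappa_2t^{-2}\psi_1^2$. This is compatible with the first choice, since $\kappa_2\sim\kappa_1$ and $\kappa_1^2\gg\kappa_1$. We therefore fix $\kappa_2=C_0\kappa_1$ and then take $\kappa_1$ large. Near $\partial B_1^m$ this is the key comparison: $\psi_1^2\lesssim\psi_1$ handles the boundary layer, and the $t^{-3}$ term is absorbed by $\kappa_1t^{-2}\psi_1/2$ for $t\ge T$ large, using $|\widetilde\psi|\le C\psi_1$.

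\textbf{Near the origin} ($r<\delta_0$). Here $\psi_1$ is bounded below, so we simply use $G\ge0$ together with $\kappa_1t^{-2}\psi_1$. This fails if $\kappa_2\psi_1^2$ exceeds it, so instead we use $G$ more carefully. On the region where $r^{-\gam}\widetilde\Psi\le2$ we have $G\gtrsim r^{-2-\gam}t^{-2}\kappa_1^2$, which is large for small $r$. On the region where $r^{-\gam}\widetilde\Psi\ge2$ we have $G\gtrsim r^{-2}\widetilde\Psi\gtrsim r^{-2}\kappa_1t^{-1}\gg t^{-2}$.

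So $G$ dominates $\kappa_2t^{-2}\psi_1^2$ everywhere, provided $\kappa_1\ge C\kappa_2$ with a constant coming from $\inf r^{-2-\gam}\ge1$. The coefficients must be tuned consistently. If the two constraints $\kappa_2\gtrsim\kappa_1$ and $\kappa_1^2\gtrsim\kappa_2$ are simultaneously attainable, the tuning works; they are, by taking $\kappa_1$ large.

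The main obstacle is this coefficient bookkeeping: showing $G\ge c\kappa_1^2t^{-2}\psi_1^2$ uniformly on all of $B_1^m$ via \eqref{bp1}. This needs $r^{-2-\gam}\ge1$ and $\min\{s^2,s\}$-type lower bounds with $s=r^{-\gam}\widetilde\Psi$. For $s\ge2$ we use $r^{\gam}F(s)\ge r^{\gam}s/2=\widetilde\Psi/2\ge\widetilde\Psi^2/2$ whenever $\widetilde\Psi\le1$, which holds for $t$ large.

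Finally, for the last claim we compare. By Theorem \ref{intro-thm-2} and Proposition \ref{prop-inner}, $\Psi(\cdot,T)$ is bounded in $L^\infty$ and vanishes on the boundary with bounded normal derivative. We shift time, replacing $t$ by $t-T_1$, and enlarge $\kappa_1$, scaling $\kappa_2$ accordingly, so that $\widetilde\Psi(\cdot,T)\ge\Psi(\cdot,T)$. Since $F$ is monotone, the comparison principle applies to \eqref{eq-diff-12dim}. This gives $\Psi\le\widetilde\Psi\le Ct^{-1}\psi_1$.
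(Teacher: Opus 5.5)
Your check of the differential inequality follows the paper: you fix $\kappa_2/\kappa_1$ so that $\kappa_2(\psi_1^2,\psi_1)_{L^2}$ dominates $\kappa_1$, take $\kappa_1$ large so that $\kappa_1^2\gtrsim\kappa_2$, and bound $G$ from below via \eqref{bp1} according to whether $r^{-\gamma}\widetilde\Psi\le 2$. The gap is the final comparison at the initial time, because of the order in which you choose $T$ and $\kappa_1$.

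There you have $\widetilde\Psi(\cdot,T)\le \frac{3\kappa_1}{2T}\psi_1$. What you know about the data is only a bound $\Psi(\cdot,T)\le C_*\psi_1$ with $C_*$ independent of $\kappa_1$; for instance $\Psi\le r^{\gamma}(V_H-\phi_0)$ because $u\ge\phi_0$. So you need $\kappa_1/T$ to be large.

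In your bookkeeping, however, $T$ must grow linearly in $\kappa_1$. Two of your steps force $t\gtrsim\kappa_1$:
\begin{itemize}
\item the claim that $r^{-\gamma}\widetilde\Psi$ is small on $\{r\ge\delta_0\}$;
\item the requirement $\widetilde\Psi\le 1$, which you use for the uniform bound $G\ge c\kappa_1^2t^{-2}\psi_1^2$ that you call the main obstacle.
\end{itemize}
That uniform bound genuinely fails for $t\ll\kappa_1$: at fixed interior $r$ you are in the linear regime of $F$, where $G\approx r^{-2}\kappa_1 t^{-1}\psi_1\ll\kappa_1^2t^{-2}\psi_1^2$. Hence enlarging $\kappa_1$ forces enlarging $T$, and $\kappa_1/T$ stays bounded. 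The step where you enlarge $\kappa_1$ to get $\widetilde\Psi(\cdot,T)\ge\Psi(\cdot,T)$ is therefore circular. The time shift $t\mapsto t-T_1$ does not change this ratio. It would help only if you knew $\sup_r\Psi(r,t)/\psi_1(r)\to 0$, and you invoke only boundedness.

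The fix is to decouple $T$ from $\kappa_1$, as the paper does. You never need $G\ge c\kappa_1^2t^{-2}\psi_1^2$; you only need $G\ge\kappa_2t^{-2}\psi_1^2$ with $\kappa_2=C_0\kappa_1$.
\begin{itemize}
\item Where $r^{-\gamma}\widetilde\Psi>2$: $G\ge r^{-2}\widetilde\Psi\ge\frac{\kappa_1}{2t}\psi_1\ge C_0\kappa_1t^{-2}\psi_1^2$ for $t\ge 2C_0\norm{\psi_1}_{L^\infty}$. This is the estimate you already use near the origin.
\item Where $r^{-\gamma}\widetilde\Psi\le 2$: $G\ge r^{-2-\gamma}\widetilde\Psi^2\ge\frac{\kappa_1^2}{4t^2}\psi_1^2$, which suffices once $\kappa_1\ge 4C_0$.
\end{itemize}
Together with positivity of $\widetilde\Psi$ and absorption of the $t^{-3}$ term, every threshold on $t$ then depends only on $C_0$, $\norm{\widetilde\psi/\psi_1}_{L^\infty}$ and $\norm{\psi_1}_{L^\infty}$, not on $\kappa_1$. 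So you fix $T$ first, and only then take $\kappa_1$ so large that $\frac{\kappa_1}{2T}\psi_1\ge r^{\gamma}(V_H-\phi_0)\ge\Psi(\cdot,T)$. This is exactly the paper's choice: with $\theta=(\psi_1^2,\psi_1)_{L^2(B_1^m)}$, it takes $T=\frac{8}{\theta}\max\{\norm{\widetilde\psi/\psi_1}_{L^\infty},\norm{\psi_1}_{L^\infty}\}$, $\kappa_2=4\kappa_1/\theta$, and $\kappa_1>16/\theta$.

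Alternatively, you could keep your scheme and supply decay. Theorem~\ref{intro-thm-2}~(iii)(b) gives $\Phi(\cdot,t)\to 0$ in $C^2(\overline{B_1}\setminus B_\varepsilon)$, and $\Psi\le r^{\gamma}(V_H-\phi_0)$ is uniformly small near $r=0$. Hence $\sup_r\Psi(r,t)/\psi_1(r)\to0$, and starting the comparison at a sufficiently late time works.
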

\begin{proof}
Since $-\psi_1^2+(\psi_1^2,\psi_1)_{L^2(B_{1}^m)}\psi_1$ is orthogonal to $\psi_1$ in $L^{2}(B_1^m)$, we can show the existence of $\Tilde{\psi}\in C^{2}_{0}(\overline{B_1})$.
Set $\theta:=(\psi_1^2,\psi_1)_{L^2(B_{1}^m)}$.
By Hopf's lemma,
we can take $T>0$ and $\kap_{1}>16/\theta$
so that
\begin{align}
\label{kiso-1}
T=\frac{8}{\theta}\max\left\{
  \norm{\Tilde{\psi}/\psi_1}_{L^{\infty}},
  \norm{\psi_1}_{L^{\infty}}
\right\},
\quad
\frac{\kap_{1}\psi_{1}}{2T}
>r^{\gamma}\left(V_{H}(r)
  -\phi_{0}
\right)
\,\text{ in $[0,1]$.}
\end{align}
We define $\kap_{2}:=4\kap_{1}/\theta$.
Then, we have $\widetilde{\Psi}\ge \kappa_1\psi_1/(2t)$ in $B_1\times [T,\infty)$,
Thus we get $\widetilde{\Psi}(r,T)
\ge r^{\gamma}(V_{H}(r)-
\phi_{0}
)\ge \Psi(r,T)$ in $B_{1}^{m}$ by the fact that $u\ge \phi_0$ in $B_1\times (0,\infty)$.
Moreover, it follows from \eqref{bp1} and \eqref{kiso-1} that
\begin{equation*}
     \left(\frac{2N-4}{r^2}+H\right)
     r^{\gamma}F(r^{-\gamma}\widetilde{\Psi})\ge
    \begin{cases}
      r^{-2}\widetilde{\Psi}
      &\text{if }r^{-\gamma}\widetilde{\Psi}>2,
      \\
      r^{-2-\gamma}\widetilde{\Psi}^2
      &\text{if }r^{-\gamma}\widetilde{\Psi}\le 2
    \end{cases}\,
    \ge \kappa_2 t^{-2}\psi_{1}^{2}.
\end{equation*}
As a result, it follows from \eqref{se1} and \eqref{kiso-1} that
\begin{align*}
    (\partial_t -\Del-H)\Tilde{\Psi}
    &=-\kap_1 t^{-2}\psi_1
    -2\kappa_2t^{-3}\Tilde{\psi}
    -\kap_2 t^{-2}\psi_{1}^2
    +\kap_2 \theta t^{-2}\psi_1&
    \\
    &\ge\left(\frac{3\theta}{4}\kappa_2-\kappa_1\right)t^{-2}\psi_1
    -\kappa_2 t^{-2}\psi_1^2\ge -\left(\frac{2N-4}{r^2}+H\right)
     r^{\gamma}F(r^{-\gamma}\widetilde{\Psi})
\end{align*}
in $B_1\times [T,\infty)$.
Hence, $\Tilde{\Psi}$ is a supersolution to \eqref{eq-diff-12dim},
and the claim follows from a comparison principle.\footnote{Since $-F$ is non-increasing, we can use a standard comparison principle.}
\end{proof}
We next turn to the construction of a sub-solution.
we begin by proving the following 
\begin{lem}\label{12-sub-lem-1}
Fix $\sig\in\left(0,1/2\right)$. Then, the following hold.
\begin{itemize}
  \item[(i)] For any $t>1$,
  there exists the unique solution $\upsi\in W^{1,\infty}_{\mathrm{loc}}((1,\infty);H^{1}_{0}(B_{1}^m))$ of 
\begin{align}\label{problem-sub-1}
(-\Delta-H)\upsi=-2P+(2P,\psi_1)_{L^{2}(B_1^m)}\psi_1\,\,\text{in $B_1^{m}$,}
\quad \text{$\upsi \perp\psi_1$ in $L^{2}(B_1^m)$}
\end{align}
with 0-Dirichlet boundary condition, where
\begin{equation*}
    P=P(r,t)
    :=\left(\frac{2N-4}{r^2}+H\right)r^{\gamma}F(r^{-\gamma}\sigma t^{-1}\psi_{1}).
\end{equation*}
  \item[(ii)] Let $a:=\min\left\{({m-4})/{4\gam},1/2\right\}>0$ and $\delta=\delta(t)$ be a unique solution of
\begin{equation}\label{defdel-12}
    2\delta(t)^{\gamma}={\sigma t^{-1}}\psi_1(\del(t)).
\end{equation}
Then, there exist $C>0$ independent of $\sigma$ and $t$ so that
\begin{equation*}
0\le (P,\psi_1)_{L^{2}(B_{1}^m)}\le C \sigma^2 t^{-2}, \quad \upsi \le C\sigma^{1+a} t^{-1-a}\psi_1 \quad \text{in $B_{1}^m$}
\end{equation*}
for any $t>1$, and 
\begin{equation*}
    0\le -(P_t, \psi_1)_{L^2(B_{1}^m)} \le C\sigma^2 t^{-3}, \hquad -
\upsi_t \le C \sigma^{1+a} t^{-2-a}\psi_1 \quad \text{in $B_{1}^m$.}
\end{equation*}
for a.e. $t>1$.
\item[(iii)] For any $\ep>0$, there exist $C(\ep), T_{1}(\ep)>0$ independent of $\sigma$, $t$ such that
\begin{equation*}
-\upsi\le  C(\ep) \sigma^{1+a} t^{-1-a}\psi_1, \quad  
\upsi_t \le C(\ep) \sigma^{1+a} t^{-2-a} \psi_1\hquad
\text{in $B_{1}^m\setminus B_{\ep}^m$}
\end{equation*}
for a.e. $t>T_{1}(\ep)$.
\end{itemize}
\end{lem}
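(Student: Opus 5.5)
The proof works throughout in the $m$-dimensional radial setting with $m=N-2\gamma$. Since $f\equiv f_H$, Lemma~\ref{b-v-lem} gives $\mu_1=0$, so $\psi_1$ spans the kernel of $-\Delta-H$ on $H^1_0(B_1^m)$. Moreover $\psi_1\in C^2(\overline{B_1^m})$, $\psi_1>0$, $\psi_1'\le 0$, and $\psi_1\simeq 1-r$ by Hopf's lemma.

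\textbf{Part (i).} This is Fredholm solvability. The right-hand side $g:=-2P+(2P,\psi_1)\psi_1$ is orthogonal to $\psi_1$ by construction, so it suffices to check $P(\cdot,t)\in L^2(B_1^m)$ (or even $H^{-1}$). Using \eqref{bp1}, on $\{r^{-\gamma}\sigma t^{-1}\psi_1\le 2\}$ we have $P\lesssim r^{-2-\gamma}\sigma^2t^{-2}\psi_1^2$, and on the complementary set (a ball $B_{\delta(t)}$) we have $P\lesssim r^{-2}\sigma t^{-1}\psi_1$. Hardy's inequality in dimension $m>4$ makes $r^{-2}$ and $r^{-2-\gamma}$ times bounded functions belong to $H^{-1}$ whenever the powers are integrable against $r^{m-1}$; this is where $m-4>0$ for $N\ge 12$ enters. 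Uniqueness in $\psi_1^\perp$ follows. For $W^{1,\infty}_{\rm loc}$ in $t$, I would differentiate in $t$: since $F\in C^1$ with $|F'(u)|\le\min\{u_+,1\}$, the map $t\mapsto P$ is Lipschitz into $H^{-1}$ on compact $t$-intervals.

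\textbf{Part (ii).} First I would compute $(P,\psi_1)$ by splitting at $\delta(t)$, which by \eqref{defdel-12} satisfies $\delta^\gamma\simeq\sigma t^{-1}$. This gives
\[
\int_0^\delta r^{m-3}\sigma t^{-1}\,dr+\int_\delta^1 r^{m-3-\gamma}\sigma^2t^{-2}\,dr .
\]
The first term is $\sigma t^{-1}\delta^{m-2}=\sigma t^{-1}(\sigma t^{-1})^{(m-2)/\gamma}$. Because $m-2-\gamma=N-2-3\gamma>0$ by \eqref{bp2}, this is $\lesssim\sigma^2t^{-2}$, and the second integral converges for the same reason. Positivity comes from $F\ge0$. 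The bound on $P_t$ is analogous, since $\partial_t$ of the argument is $-t^{-1}\times$(argument) and $F'\ge0$, so $-P_t\ge0$ with the same size times $t^{-1}$.

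For the pointwise bound $\upsi\le C\sigma^{1+a}t^{-1-a}\psi_1$, I would build a comparison function. Write $(-\Delta-H)\upsi=-2P+c(t)\psi_1$ with $0\le c\lesssim\sigma^2t^{-2}$. The comparison principle is not directly available because $\mu_1=0$, so I would compare on the orthogonal complement via a barrier. The choice is $\bar\psi=A\,\omega(r)$, where $\omega$ is a fixed supersolution: for instance take $\omega$ solving $(-\Delta-H-\epsilon_0)\omega\ge\chi$ locally, or more directly use a multiple of $\psi_1$ plus an explicit radial correction.

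Concretely, the negative part $-2P$ only helps an upper bound. The term $c(t)\psi_1$ has size $\sigma^2t^{-2}\psi_1$. To beat the zero eigenvalue I would split $\upsi=\upsi^{(1)}+\upsi^{(2)}$ via the radial ODE (variation of constants using $\psi_1$ and a second solution $\sim r^{2-m}$), giving explicit integral formulas:
\[
\upsi(r)=\psi_1(r)\int\!\!\int \frac{s^{m-1}g\psi_1}{\rho^{m-1}\psi_1(\rho)^2}+\kappa(t)\psi_1 ,
\]
with $\kappa$ fixed by orthogonality. The factor $\sigma^{1+a}t^{-1-a}$ arises from the part of $P$ near $\delta(t)$. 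There $|g|\lesssim\sigma t^{-1}r^{-2}$ on $B_\delta$ contributes $\sigma t^{-1}\delta^{m-4}$ after the double integral (using $m>4$), and $\delta^{m-4}\simeq(\sigma/t)^{(m-4)/\gamma}\ge(\sigma/t)^{4a}$, while the outer part contributes $\sigma^2t^{-2}\le\sigma^{3/2}t^{-3/2}$. The cap $a\le1/2$ handles the outer part and the $\delta$-dependence; dividing by $\psi_1$ stays bounded because $g$ vanishes suitably at the boundary. The same formula applied to $g_t$ gives the $\upsi_t$ bound with an extra $t^{-1}$, using $-P_t\ge0$ for the one-sided sign.

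\textbf{Part (iii).} On $B_1^m\setminus B_\epsilon^m$ the representation formula gives two-sided bounds. For $t>T_1(\epsilon)$ we have $\delta(t)<\epsilon$, so $P$ is in the quadratic regime there and all integrals are controlled by the totals computed in (ii). The lower bound on $\upsi$ and the upper bound on $\upsi_t$ then follow with constants depending on $\epsilon$ through $\min_{[\epsilon,1]}\psi_1/(1-r)$.

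The main obstacle I expect is the one-sided pointwise estimate in (ii) near $r=0$. The operator $-\Delta-H$ has $\psi_1$ as its kernel, so no maximum principle is available. I would handle this with the explicit radial representation and careful tracking of the inner region $B_{\delta(t)}$ to extract the exponent $a$.
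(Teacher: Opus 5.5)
Your part (i) and your bounds on $(P,\psi_1)$ and $(P_t,\psi_1)$ agree with the paper's. For the pointwise bounds you take a genuinely different route. The paper never writes a representation formula. It uses the spectral gap on $\psi_1^\perp$ to get $\norm{\upsi}_{L^2}\lesssim\norm{P}_{L^2}\lesssim(\sigma/t)^{1+a}$; the estimate $\norm{P}_{L^2}^2\lesssim\sigma^2t^{-2}\del^{m-4}+\sigma^4t^{-4}$ is the only place $a$ enters. It then applies the local sup bound \cite[Proposition~47.6]{QSbook} to the subsolution inequality $(-\Delta-H)\upsi\le(2P,\psi_1)\psi_1$, and elliptic regularity on $B_1^m\setminus B_\ep^m$ once $\del(t)<\ep$.

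Your reduction-of-order route can be made to work. However, your sketch of its central step is wrong in three places, so as written the one-sided bound in (ii) is not established.
\begin{itemize}
\item[(a)] You cannot anchor $v=\upsi/\psi_1$ at $r=0$. On $B_{\del(t)}$ one has $P\simeq\sigma t^{-1}r^{-2}$, so $\upsi\approx C\sigma t^{-1}\log r$ is unbounded below at the origin (cf.\ Lemma~\ref{12-sub-lem-2}). Anchor at $r=1$ instead.
\item[(b)] The claim that $-2P$ ``only helps an upper bound'' is false once orthogonality is imposed. The constant $\kappa$ fixed by $\upsi\perp\psi_1$ receives a \emph{positive} contribution from $-2P$, and bounding $\kappa$ is the actual content of the upper bound.
\item[(c)] $\sigma t^{-1}\del^{m-4}$ is not what your double integral produces. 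The inner part gives terms of size $\sigma t^{-1}(1+\log(\del/r))$ inside $B_\del$ and $\sigma t^{-1}(\del/r)^{m-2}$ outside. The quantity $\sigma t^{-1}\del^{m-4}$ is $\norm{P}_{L^2(B_\del)}^2/(\sigma t^{-1})$, which belongs to the paper's energy route. Also $\del^{m-4}\simeq(\sigma/t)^{(m-4)/\gam}\le(\sigma/t)^{4a}$, not $\ge$.
\end{itemize}

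Here is how the route actually closes. Set $g:=-2P+c\psi_1$ with $c:=(2P,\psi_1)$, and define $G(\rho):=\int_0^\rho s^{m-1}\psi_1 g\,ds=-\int_\rho^1 s^{m-1}\psi_1 g\,ds$ and $\Psi_2(\rho):=\int_0^\rho s^{m-1}\psi_1^2\,ds$. Then
\[
v(r)=\kappa+\int_r^1\frac{G(\rho)\,d\rho}{\rho^{m-1}\psi_1(\rho)^2},\qquad
\kappa=-\frac{1}{\Psi_2(1)}\int_0^1\frac{G(\rho)\,\Psi_2(\rho)}{\rho^{m-1}\psi_1(\rho)^2}\,d\rho .
\]
The two expressions for $G$ give
\[
G_+\le\min\Big\{c\Psi_2(\rho),\ 2\int_\rho^1 s^{m-1}\psi_1P\,ds\Big\},\qquad
G_-\le\min\Big\{2\int_0^\rho s^{m-1}\psi_1P\,ds,\ c\int_\rho^1 s^{m-1}\psi_1^2\,ds\Big\}.
\]
Both are $\lesssim(P,\psi_1)\lesssim\sigma^2t^{-2}$. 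Both are $\lesssim\sigma^2t^{-2}(1-\rho)^3$ near $\rho=1$; for $G_+$ use $F(u)\le u^2/2$, which holds for all $u\ge0$ by \eqref{bp1}. In addition, $G_+\lesssim\sigma^2t^{-2}\rho^m$.

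Hence $v\le C\sigma^2t^{-2}$ on $(0,1]$ and $v\ge -C(\ep)\sigma^2t^{-2}$ on $[\ep,1]$, for every $t>1$. The same argument applied to $-\upsi_t$ gives the $t^{-3}$ versions, since $0\le -P_t\le 2t^{-1}P$ (because $uF'(u)\le 2F(u)$). Everything is governed by the total mass $(P,\psi_1)$, which is where $N\ge 12$ enters, and $a$ never appears. You obtain $(\sigma/t)^2$, which implies the lemma since $\sigma/t<1$ and $a\le 1/2$.

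Compared with the paper, the corrected route is radial-specific but elementary and sharper. It gives (iii) for all $t>1$ and yields the factor $\psi_1$ up to $\pl B_1^m$ without a separate boundary-regularity step. The paper's route avoids the explicit ODE, at the price of the lossy $L^2$ step.
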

\begin{proof}
By a direct computation, we have
\begin{equation*}
    \pl_{t}P=-\sigma t^{-2}
    \left(\frac{2N-4}{r^2}+H\right)
    F'(r^{-\gamma}t^{-1}\sigma\psi_1)\psi_{1}
    \quad
    \text{and}\quad F'(u)=(1-e^{-u})_{+}.
\end{equation*}
We first prove the assertion
(i).
Note that $m\ge 5$ if $N\ge 11$,
whence $r^{-2}\in L^2(B_{1}^{m})$.
Thus we can check that $P\in W^{1,\infty}_{\mathrm{loc}}((1,\infty); L^{2}(B_{1}^{m}))$ by using \eqref{bp1}.
As a result, we can verify that the equation \eqref{problem-sub-1} admits the unique solution
$\upsi\in W^{1,\infty}_{\mathrm{loc}}((1,\infty); H^{1}_{0}(B_{1}^{m}))$. Next, we prove the assertions
(ii) and (iii). it follows from Lemma~\ref{fnolem} that the equation \eqref{defdel-12} has a unique solution $\delta$ such that
\begin{equation}
\label{kankei-12}
    r^{-\gamma}t^{-1}{\sigma}\psi_1(r)
    \leq 2
    \iff
    r\ge \delta(t).
\end{equation}
Hence, by \eqref{bp1} and \eqref{bp2}, we obtain
\begin{align}
\quad 0\le (P,\psi_1)_{L^2(B_1^{m})}&\lesssim
\int_{0}^{\delta(t)}r^{N-2\gam-3} \sigma t^{-1}
  \psi_{1}^2\,dr
  +
  \int_{\delta(t)}^{1}r^{N-3\gamma-3}\sigma^2 t^{-2}
  \psi_{1}^3\,dr
  \notag
  \\&
  \lesssim \sigma^2 t^{-2}\delta(t)^{N-3\gamma-2}
  +\sigma^{2}t^{-2}
  \lesssim\sigma^2 t^{-2}\quad \text{for any $t\ge 1$}.
\label{naiseki-1-a}
\end{align}
It also follows from \eqref{bp1} and \eqref{bp2} that
\begin{align*}
    \norm{P}_{L^2(B_{1}^m)}^2
    &\lesssim
    \int_{0}^{\del(t)}\sig^{2}t^{-2}r^{N-2\gam-5}\psi_{1}^{2}
    \,dr+
    \int_{\del(t)}^{1}
    \sig^{4}t^{-4}r^{N-4\gam-5}\psi_{1}^{4}\,dr
    \\
    &\lesssim
    \sig^{2}t^{-2}\del(t)^{m-4}+\sig^4 t^{-4}
    \lesssim \sigma^{2+2a}t^{-2-2a} \quad \text{for any $t\ge 1$}.
\end{align*}
Thus, by the condition $\underline{\psi}\perp\psi_{1}$,
the Poincar\'{e} inequality and energy estimate, we obtain
\begin{align*}
  \norm{\upsi}_{L^{2}(B_{1}^m)}^{2}
  \lesssim\norm{P}_{L^{2}}\norm{\upsi}_{L^{2}}
  +(P,\psi_{1})_{L^2}\norm{\upsi}_{L^2}
  \norm{\psi_1}_{L^2}
  \lesssim \sigma^{1+a} t^{-1-a }\norm{\upsi}_{L^2(B_{1}^m)} \quad \text{for $t\ge 1$.}
\end{align*}
Hence, we have
$
    \norm{\upsi}_{L^2(B_{1}^m)}\lesssim \left(\sig/t\right)^{1+a}
$
for $t\ge 1$.
Now we invoke \cite[Proposition~47.6]{QSbook} and \eqref{naiseki-1-a} to get
\begin{equation*}
\upsi\lesssim (P,\psi_{1})_{L^2(B_{1}^{m})} \lVert \psi_1\rVert_{L^{\infty}(B_{1}^m)}+\lVert \upsi_{+}\rVert_{L^2(B_{1}^m)}\lesssim \sigma^{1+a} t^{-1-a}  \hquad \text{in $B_{1}^m$}
\end{equation*}
for any $t>1$. Moreover, let $\ep>0$. Then, there exists $T_{1}(\ep)>0$ such that $\delta(t)<\ep$ for any $t>T_{1}$. Hence, by the elliptic regularity theory, we obtain
\begin{equation*}
  \norm{\upsi}_{C^{1}(\overline{B_{1}^m} \setminus B_{\ep}^m)}
  \lesssim_{\ep} (\norm{P}_{L^{\infty}(B_{1}^m \setminus B_{\ep/2}^m)}
  +\norm{\upsi}_{L^2(B_{1}^m)})
  \lesssim_{\ep} \sigma^{1+a} t^{-1-a}
\end{equation*}
for any $t>T_{\ep}$. Therefore, we obtain
\begin{equation}\label{12-sub-1}
\upsi\lesssim
\sigma^{1+a} t^{-1-a}\psi_1
\hquad\text{in $B_{1}^m\times (1,\infty)$},\quad
|\upsi|
\lesssim_{\ep}\sigma^{1+a} t^{-1-a}\psi_1 \hquad \text{in $B_{1}^m \setminus B_{\ep}^m\times (T_{1},\infty)$.}
\end{equation}
The remaining assertions follow by using the following and 
a similar method:
\begin{equation*}
(-\Delta -H)\upsi_{t}=-2P_t+(2P_t,\psi_1)\psi_1 \hquad
\text{in $B_{1}^{m}$},
\hquad \upsi_t=0 \hquad \text{on $\partial B_1^{m}$,}
\hquad \text{$\upsi_{t}\perp\psi_1$ in $L^{2}(B_{1}^m)$.}
\end{equation*}
\end{proof}

Next, we obtain upper bounds for $-\upsi$ and $\upsi_t$.
\begin{lem}\label{12-sub-lem-2}
There exist $C>0$ and $T_{2}>0$, independent of $\sigma$, $t$
such that
\begin{equation}\label{daiji-1}
-\upsi\leq  -C\frac{\sigma}{t}\log r
\hquad \text{in $(0,\delta(t)]$,}
\hquad -\upsi
\leq C\frac{\sigma^{1+a}}{t^{1+a}}r^{-a\gamma}
\log\left(\frac{t}{\sigma}\right)\psi_1 \hquad \text{in $(\delta(t), 1]$}
\end{equation}
for any $t>T_2$ and
\begin{equation}\label{daiji-2}
\upsi_t(r)\leq 
-\frac{C\sigma}{t^2}\log r \hquad \text{in $(0,\delta(t)]$},
\,\,\, \upsi_t(r)\leq
\frac{C\sigma^{1+a}}{t^{2+a}}r^{-a\gamma}
\log\left(\frac{t}{\sigma}\right)
\psi_1 \hquad \text{in $(\delta(t),1]$}
\end{equation}
for a.e. $t>T_2$.
\end{lem}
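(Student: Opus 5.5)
Since $\upsi$ is radial and solves \eqref{problem-sub-1}, I would work with the radial ODE in dimension $m$,
\begin{equation*}
-(r^{m-1}\upsi')'=r^{m-1}\bigl(H\upsi-2P+2(P,\psi_1)_{L^2(B_1^m)}\psi_1\bigr),
\end{equation*}
and integrate it from the origin; this is the same route as in the proof of Lemma~\ref{apriorilem1}. By regularity, $r^{m-1}\upsi'(r)\to 0$ as $r\to 0$. Since $P\ge 0$ and the other terms on the right are bounded by $C\sigma t^{-1}$, we get
\begin{equation*}
\upsi'(r)\le r^{1-m}\int_0^r s^{m-1}\bigl(2P+C|\upsi|\bigr)\,ds .
\end{equation*}
Hence a lower bound on $\upsi$, which is the upper bound on $-\upsi$, reduces to upper bounds on the integral of $P$ near the origin, together with a crude $L^\infty$ bound on $|\upsi|$. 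The plan is therefore: (1) a pointwise bound on $P$; (2) integrate twice inward from the matching radius $r=\delta(t)$ or from $r=1$; (3) repeat the argument for $\upsi_t$.

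\textbf{Step 1 (pointwise bounds on $P$).} By \eqref{bp1} and \eqref{kankei-12}, and using $\psi_1\simeq 1$ (Lemma~\ref{fnolem}):
\begin{itemize}
\item on $(0,\delta(t)]$ we have $P\lesssim r^{-2}\sigma t^{-1}$;
\item on $(\delta(t),1]$ we have $P\lesssim r^{-2-\gamma}\sigma^2t^{-2}$.
\end{itemize}
Also $\delta(t)\simeq(\sigma/t)^{1/\gamma}$.

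\textbf{Step 2 (region $r\le\delta$).} Integrating the first bound gives
\begin{equation*}
r^{1-m}\int_0^r s^{m-3}\,ds\lesssim \frac{r^{-1}}{m-2},
\end{equation*}
so $\upsi'\gtrsim -\sigma t^{-1}r^{-1}$ in the direction relevant for $-\upsi$. Integrating from $r$ up to $\delta(t)$ yields
\begin{equation*}
-\upsi(r)\le -\upsi(\delta)+C\sigma t^{-1}\log(\delta/r).
\end{equation*}

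\textbf{Step 3 (region $r>\delta$).} For $r>\delta$, integrate out to $r=1$, where $\upsi(1)=0$. The $P$-contribution is
\begin{equation*}
r^{1-m}\Bigl(\int_0^\delta s^{m-3}\sigma t^{-1}\,ds+\int_\delta^r s^{m-3-\gamma}\sigma^2t^{-2}\,ds\Bigr).
\end{equation*}
Since $m-2-\gamma=N-2-3\gamma>0$ by \eqref{bp2}, this is at most
\begin{equation*}
\sigma t^{-1}\delta^{m-2}r^{1-m}+\sigma^2t^{-2}r^{-1-\gamma}.
\end{equation*}
Integrating from $r$ to $1$ gives
\begin{equation*}
-\upsi(r)\lesssim \sigma t^{-1}\delta^{m-2}r^{2-m}+\sigma^2t^{-2}r^{-\gamma}+(\text{lower order}).
\end{equation*}
Using $\delta^\gamma\simeq\sigma/t$ and $r>\delta$:
\begin{itemize}
\item $\sigma^2t^{-2}r^{-\gamma}\lesssim (\sigma/t)^{1+a}r^{-a\gamma}$, since $(\sigma/t)^{1-a}\le r^{(1-a)\gamma}$;
\item $\sigma t^{-1}\delta^{m-2}r^{2-m}\lesssim(\sigma/t)^{1+a}r^{-a\gamma}$, since $m-2\ge a\gamma$ and the ratio $(\delta/r)^{m-2-a\gamma}\le1$.
\end{itemize}
The $\log(t/\sigma)$ factor is a safety margin for the borderline case $a\gamma=m-2$ or similar, where one integral produces $\log(r/\delta)\lesssim\log(t/\sigma)$.

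The term $H\upsi$ and the projection term are controlled by the $L^2$/$L^\infty$ bound $\|\upsi\|\lesssim(\sigma/t)^{1+a}$ from Lemma~\ref{12-sub-lem-1}. This gives an $O((\sigma/t)^{1+a})$ contribution, which is absorbed in both regions.

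Evaluating the Step~3 bound at $r=\delta$ gives $-\upsi(\delta)\lesssim \sigma/t\cdot\log(t/\sigma)$. For $\sigma<1/2$ and $r\le\delta<1$, this is at most $C(\sigma/t)|\log r|$. Combined with Step~2, this gives the first estimate in \eqref{daiji-1}.

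\textbf{Step 4 ($\upsi_t$).} The bounds \eqref{daiji-2} follow by the same computation applied to the differentiated problem from the proof of Lemma~\ref{12-sub-lem-1}. Here $-P_t\ge 0$ with $-P_t\lesssim \sigma t^{-2}r^{-2}$ on $(0,\delta]$ and $-P_t\lesssim\sigma^2t^{-3}r^{-2-\gamma}$ beyond, using $F'(u)\le\min\{u,1\}$. Each bound gains a factor $t^{-1}$, with the sign of $P_t$ reversed relative to $P$.

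\textbf{Main obstacle.} The hard part is the bookkeeping at the matching radius $\delta(t)$: converting the $\delta^{m-2}r^{2-m}$ tail into the factor $r^{-a\gamma}\psi_1$ with the right exponent, and handling the lower-order $H\upsi$ and projection terms uniformly in $\sigma$. I would first try to absorb these using the crude $L^\infty$ bound together with $\psi_1\gtrsim 1-r$ near the boundary via Hopf's lemma.
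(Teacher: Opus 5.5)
Your bookkeeping for the $P$-contribution is sound, and the ODE route is a reasonable alternative to the paper's barrier argument. Specifically:
\begin{itemize}
\item $\upsi\in H^1_0(B_1^m)$ does force $r^{m-1}\upsi'\to 0$.
\item The two-regime bounds on $P$ are right.
\item Integrating inward from $r=1$ bounds the $P$-part on $(\delta,1]$ by $\sigma t^{-1}\delta^{m-2}r^{2-m}+\sigma^2t^{-2}r^{-\gamma}\lesssim(\sigma/t)^{1+a}r^{-a\gamma}$, even without the $\log(t/\sigma)$ factor.
\item \eqref{daiji-2} does reduce to the same problem for $-\upsi_t$, with $-P_t\ge 0$ in place of $P$.
\end{itemize}
(In Step~2 the inequality you actually use is $\upsi'\lesssim\sigma t^{-1}r^{-1}$, not $\upsi'\gtrsim-\sigma t^{-1}r^{-1}$.)

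The gap is the zeroth-order term $H\upsi$. From $r^{m-1}\upsi'=\int_0^r s^{m-1}(2P-H\upsi-2(P,\psi_1)\psi_1)\,ds$, an upper bound on $\upsi'$ needs a lower bound on $\upsi$ near the origin. That is exactly what the lemma asserts, so the argument is circular unless this term is handled separately.

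The two-sided bound $|\upsi|\lesssim(\sigma/t)^{1+a}$ you invoke is not available. Lemma~\ref{12-sub-lem-1} gives only the one-sided bound $\upsi\le C(\sigma/t)^{1+a}\psi_1$ on $B_1^m$, and a two-sided bound only on $B_1^m\setminus B_\ep^m$. In fact the bound is false: $P\gtrsim\sigma t^{-1}r^{-2}$ on $(0,\delta]$ forces $\upsi\to-\infty$ like $\sigma t^{-1}\log r$.

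The $L^2$ bound with Cauchy--Schwarz only gives a contribution $\lesssim(\sigma/t)^{1+a}r^{2-m/2}$. This is far too singular as $r\to0$, since $m/2-2>(m-4)/4\ge a\gamma$. Nor can you absorb the term on the whole ball: $H$ is exactly the first Dirichlet eigenvalue of $-\Delta$ on $B_1^m$, so $H(-\Delta_D)^{-1}$ has norm one there.

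This is why the paper localizes to $B_\ep^m$, where $-\Delta-H$ obeys the maximum principle (its first eigenvalue is $H(\ep^{-2}-1)>0$). It takes boundary data at $r=\ep$ from Lemma~\ref{12-sub-lem-1}(iii). It then compares $\upsi$ with the barrier $C_1\sigma t^{-1}\log r$ on $B_\ep^m$ and with $C_2(\sigma/t)^{1+a}\log(\sigma/t)r^{-a\gamma}$ on $B_\ep^m\setminus B_{\delta(t)}^m$; the maximum principle absorbs $H$ automatically.

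Within your ODE framework the repair is the same localization plus an absorption step:
\begin{itemize}
\item On $[\ep,1]$ your $L^2$ estimate suffices.
\item On $(0,\ep]$, use that $H\int_r^\ep s^{1-m}\int_0^s\rho^{m-1}w(\rho)\,d\rho\,ds\le C\ep^2w(r)$ for the (essentially nonincreasing) target weight $w$.
\item Before absorbing, check that $\sup_{(0,\ep]}\upsi_-/w<\infty$, e.g.\ from the radial bound $|\upsi(r)|\lesssim r^{1-m/2}\|\nabla\upsi\|_{L^2}$ and finitely many iterations of the integral inequality.
\end{itemize}
The same repair is needed for $H\upsi_t$ in Step~4.
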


\begin{proof}
We only prove \eqref{daiji-1}.
It follows from  
\eqref{bp1} and \eqref{kankei-12} that
\begin{align}\label{mikata-1}
&\frac{1}{2}\left(\frac{2N-4}{r^2}+H\right)\sigma t^{-1}\psi_1\le 
P\le \frac{2N-4+H}{r^{2}}\sigma t^{-1}\psi_1 \quad
&\text{in $(0,\delta(t)]$}
,
\\
\label{mikata-2}
    &\frac{1}{2e^2}\left(\frac{2N-4}{r^2}+H\right)\sigma^2 t^{-2}r^{-\gamma}\psi_{1}^2\le 
P\le \frac{2N-4+H}{2r^{2+\gamma}t^2}\sigma^2 \psi_1^{2} \quad
&\text{in $(\delta(t),1]$}
\end{align}
for any $t>1$.
By using above inequalities and the definition of $\del(t)$,
we can choose sufficiently large constants $C_1>0$ and $1/\ep>0$,
independent of $\sig$ and $t$, so that the function
$\eta_1=\eta_{1}(r,t):=C_1\sigma t^{-1}\log r$ satisfies
\begin{align*}
 \frac{t}{C_1\sigma}(-\Delta-H)\eta_1
 =-(m-2)r^{-2}+H(-\log r)
 \le  -\frac{2t}{C_1\sigma} P(r,t) \quad \text{in $B^{m}_{\ep}$}
\end{align*}
for all $t>1$.
Then, we deduce that $(-\Delta-H)(\eta_1-\upsi)\le 0$ in $B_{\ep}^m$ for any $t>1$
in light of Lemma~\ref{12-sub-lem-1}~(i). In addition,
by Lemma~\ref{12-sub-lem-1}~(iii),
we can choose $T_{2,1}>T_1$ independent of $\sig$ 
such that $\eta_1(\ep,t)<\upsi(\ep,t)$ for any $t>T_{2,1}$. By applying the maximum principle, we have
$\eta_1<\upsi$ in $B_{\ep}^m$ for any $t>T_{2,1}$,
and thus we obtain the first inequality of \eqref{daiji-1}.

Now, we prove the second inequality. By the definition of $\delta(t)$ and \eqref{12-sub-1}, there exists $C_2>C_1$ independent of $\sigma, t>0$ so that $\eta_2<\eta_1$ at $r=\delta(t)$ for any $t>T_{2,1}$, where
\begin{equation*}
    \eta_2=\eta_{2}(r,t)
    :=C_2\sigma^{1+a}t^{-1-a}
    \log\left(\sig t^{-1}\right)r^{-a\gamma}.
\end{equation*}
It implies that $\eta_2(\delta(t))\le \upsi(\delta(t))$ for any $t>T_{2,1}$.
Moreover, from \eqref{mikata-1} and \eqref{mikata-2},
there exist $T_{2,2}>T_{2,1}$ and $\ep>0$ independent of $\sigma$ and $t$ such that
\begin{align*}
    &\frac{t^{1+a}}
    {C_{2}\sigma^{1+a}\log(\sig t^{-1})}
    (-\Delta-H)\eta_2
    = a\gamma (m-a\gamma-2)r^{-2-a\gamma}
    -H r^{-a\gamma}
    \\&\hspace{40pt}
    \ge
    \frac{\del(t)^{(1-a)\gam}}{4}a\gamma (m-a\gamma-2)r^{-2-\gamma}
    \ge -\frac{2t^{1+a}}{C_2\sigma^{1+a}\log(\sig t^{-1})}P
\end{align*}
in $B_{\ep}^{m}\setminus B_{\delta(t)}^{m}$ for any $t>T_{2,2}$. 
Furthermore, by \eqref{12-sub-1} and the definition of $a$,
we obtain $\eta_2(\ep)\leq\upsi(\ep)$ for any $t>T_{2,3}$,
where $T_{2,3}>T_{2,2}$ is independent of $\sigma$, $t$.
Therefore, we verify that
$(-\Delta-H)\eta_2-\upsi\le 0$ in $B_{\ep}^m \setminus B_{\delta(t)}^{m}$ and $\eta_2-\upsi \le 0$ for $r\in\left\{\delta(t),\ep\right\}$.
By using the maximum principle and \eqref{12-sub-1}, we obtain \eqref{daiji-1}.
We note that \eqref{daiji-2} follows from a similar method.
\end{proof}
Then, we estimate the zero of $\uPsi$, which is necessary to confirm the initial condition.
\begin{lem}\label{12-sub-lem-3}
Define $\underline{\Psi}:=\sigma t^{-1} \psi_1+M_{1} \upsi$,
where $\upsi$ is that in Lemma~\ref{12-sub-lem-1}~(i).
Then, there exist $M_{1}, T_3>2$
independent of $\sigma$ such that $\uPsi(r,t)\le 0$ in $B_{\delta(t)}^{m}$
for all $t>T_3$.
\end{lem}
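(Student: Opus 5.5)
The plan is to show that near the origin $\upsi$ is negative, of size at least a fixed multiple of $\sigma t^{-1}$. This is an \emph{upper} bound on $\upsi$ in $B^m_{\delta(t)}$, complementary to the lower bound in Lemma~\ref{12-sub-lem-2}. Once we have $\upsi\le -c_0\sigma t^{-1}$ in $B^m_{\delta(t)}$ with $c_0$ independent of $\sigma,t$, we take $M_1>\norm{\psi_1}_{L^\infty}/c_0$, using $\psi_1\le\psi_1(0)$ from Lemma~\ref{b-v-lem}. Then $\uPsi=\sigma t^{-1}\psi_1+M_1\upsi\le \sigma t^{-1}(\psi_1(0)-M_1c_0)\le 0$ in $B^m_{\delta(t)}$.

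\textbf{Step 1: sign information on the equation for $\upsi$.} From \eqref{problem-sub-1}, $P\ge0$ and $(P,\psi_1)\le C\sigma^2t^{-2}$ (Lemma~\ref{12-sub-lem-1}~(ii)) we get
\[
(-\Delta-H)\upsi\le -2P\,\chi_{B^m_{\delta(t)}}+C\sigma^2t^{-2}\psi_1 \quad\text{in } B_1^m .
\]
By \eqref{mikata-1}, in $B^m_{\delta(t)}$ we have $P\ge (N-2)r^{-2}\sigma t^{-1}\psi_1\ge c\,r^{-2}\sigma t^{-1}$, since $\psi_1\ge\psi_1(\delta)>0$ there and $\delta(t)\to0$. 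By \eqref{defdel-12} and Lemma~\ref{fnolem}, $\delta(t)^\gamma\simeq \sigma/t$, and we recall $m\ge5$.

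\textbf{Step 2: a radial comparison function.} Fix a small $\ep>0$ independent of $\sigma,t$ so that $-\Delta-H$ satisfies the maximum principle on $B^m_\ep$. Define $g=g(r;t)$ as the radial solution of $-\Delta g=-r^{-2}\chi_{\{r<\delta\}}$ in $B^m_\ep$ with $g(\ep)=0$. It is explicit:
\[
g=\tfrac{1}{m-2}\log(r/\delta)+g(\delta)\ \text{ for } r<\delta,\qquad g=-A\,\delta^{m-2}(r^{2-m}-\ep^{2-m})\ \text{ for } \delta\le r\le\ep ,
\]
with $A=1/(m-2)^2$ fixed by flux matching. In particular $g\le g(\delta)\le -c_1<0$ in $B^m_\delta$ for $t$ large, with $c_1$ depending only on $m$. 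Set
\[
\eta:=c\,\sigma t^{-1}g+K\sigma^{1+a}t^{-1-a}+K'\sigma^2t^{-2},
\]
where the constants are chosen to absorb the boundary value $\upsi(\ep)\le C(\ep)\sigma^{1+a}t^{-1-a}$ from Lemma~\ref{12-sub-lem-1}~(ii), the source term $C\sigma^2t^{-2}\psi_1$, and the zero-order term $-H\eta$. The last is harmless: $H c\sigma t^{-1}|g|$ is negligible against $c\sigma t^{-1}r^{-2}$ inside $B_\delta$, while outside $B_\delta$, $g\le0$ gives $-Hg\ge0$, which has the right sign. We then check $(-\Delta-H)(\upsi-\eta)\le0$ in $B^m_\ep$ and $\upsi-\eta\le0$ at $r=\ep$. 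The maximum principle gives $\upsi\le\eta$, and since $\sigma^{1+a}t^{-1-a}=o(\sigma t^{-1})$ we conclude $\upsi\le -\tfrac{c c_1}{2}\sigma t^{-1}$ in $B^m_{\delta(t)}$ for $t>T_3$.

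\textbf{Main obstacle.} The delicate point is that the required negativity of $\upsi$ at $r=\delta(t)$ is only of order $\sigma t^{-1}$, not $\sigma t^{-1}\log t$. It comes from the total mass $\int_{B_\delta}P\simeq\sigma t^{-1}\delta^{m-2}$ producing a Newtonian-type potential of size $\sigma t^{-1}$ at distance $\delta$. We therefore have to verify that the constant $c_1$ is independent of $\sigma$, which holds because $\delta^{m-2}$ cancels exactly against $r^{2-m}$ at $r=\delta$. We also have to check that the error terms, namely $\upsi(\ep)$ and the orthogonal-projection term $(2P,\psi_1)\psi_1$, are of strictly smaller order $\sigma^{1+a}t^{-1-a}$ uniformly in $\sigma\in(0,1/2)$. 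Both follow from Lemma~\ref{12-sub-lem-1}. If the maximum principle on $B_\ep$ causes trouble with $H$, the fallback is to work with $e^{-\cdot}$-free comparison on $B^m_\ep$ for $\ep<\hat r$, where $-\Delta-H$ is coercive.
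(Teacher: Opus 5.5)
Your strategy works and differs from the paper's, but the barrier in Step~2 fails as written.

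\textbf{The flaw in the barrier.} The constant terms $K\sigma^{1+a}t^{-1-a}+K'\sigma^2t^{-2}$ in $\eta$ have the wrong sign under the operator. For a constant $k>0$ one has $(-\Delta-H)k=-Hk<0$, so adding $k$ to $\eta$ \emph{decreases} $(-\Delta-H)\eta$. It therefore cannot absorb the positive source $C\sigma^2t^{-2}\psi_1$ from Step~1.

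Concretely, on $(\delta,\ep)$ your Step~1 left side is just $C\sigma^2t^{-2}\psi_1$, since you discarded $-2P$ there. The right side is $(-\Delta-H)\eta=Hc\sigma t^{-1}|g|-H(K\sigma^{1+a}t^{-1-a}+K'\sigma^2t^{-2})$. At $r=\ep$ we have $g(\ep)=0$, so the right side is negative and $(-\Delta-H)(\upsi-\eta)\le 0$ fails there. Keeping $-2P\lesssim-\sigma^2t^{-2}r^{-2-\gamma}$ from \eqref{mikata-2} does not save the term $-HK\sigma^{1+a}t^{-1-a}$, because $\sigma^{1+a}t^{-1-a}\gg\sigma^2t^{-2}$ for large $t$.

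\textbf{Two easy repairs.}
\begin{enumerate}
\item Replace the constants by a positive strict supersolution: use $(K\sigma^{1+a}t^{-1-a}+K'\sigma^2t^{-2})\,w$ with $w(r)=\psi_1(r/\ep')$ and $\ep<\ep'<1$. Since $-\Delta\psi_1=H\psi_1$ when $f\equiv f_H$, this gives $(-\Delta-H)w=H(\ep'^{-2}-1)w>0$ and $w\ge\psi_1(\ep/\ep')>0$ on $B^m_\ep$.
\item Keep $-2P$ on $(\delta,\ep)$; by \eqref{mikata-2} it dominates $C\sigma^2t^{-2}\psi_1$ once $\ep$ is small. Then take $\eta=c\sigma t^{-1}g+K\sigma^{1+a}t^{-1-a}\psi_1$. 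The last term is annihilated by $-\Delta-H$ and handles $\upsi(\ep)\le C\sigma^{1+a}t^{-1-a}\psi_1(\ep)$.
\end{enumerate}
With either fix, the rest of your argument goes through with constants independent of $\sigma$. The weak maximum principle on $B^m_\ep$ is fine, since $g\in H^1(B^m_\ep)$ carries no flux at $r=0$ when $m>2$. This gives $\upsi\le-\tfrac{cc_1}{2}\sigma t^{-1}$ on $B^m_{\delta(t)}$, and the choice of $M_1$ follows.

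\textbf{Comparison with the paper.} The paper uses no barrier. It first shows $r^{m-1}\upsi'\to0$ at the origin, using Lemma~\ref{12-sub-lem-2} and the argument of Lemma~\ref{apriorilem1}. It then writes the exact radial formula $\upsi(r)=\upsi(1/2)+\int_r^{1/2}\int_0^\rho (s/\rho)^{m-1}[H\upsi-2P+(2P,\psi_1)\psi_1]\,ds\,d\rho$. The $H\upsi$, projection and boundary terms are bounded by Lemma~\ref{12-sub-lem-1}~(ii).

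The negativity comes from the \emph{outer} part of $P$. By \eqref{mikata-2}, $P\gtrsim\sigma^2t^{-2}r^{-2-\gamma}$ on $(\delta,1/2)$, and the double integral over $\delta<s<\rho<1/2$ is $\gtrsim(\sigma/t)^2\delta^{-\gamma}\simeq\sigma t^{-1}$, evaluated explicitly using $N-3\gamma-2>0$.

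You extract the same order from the \emph{inner} mass $\int_{B_\delta}P\simeq\sigma t^{-1}\delta^{m-2}$, whose Newtonian potential at $r\le\delta$ is $\simeq\sigma t^{-1}$. Both pieces of $P$ contribute at this order, so either suffices.

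The paper's identity handles the zeroth-order term $H\upsi$ for free via the a priori upper bound, at the cost of justifying the behaviour of $\upsi'$ at the origin. Your comparison argument needs a genuine supersolution for $-\Delta-H$, hence the issue above. In exchange it avoids Lemma~\ref{12-sub-lem-2}, and the inner-mass computation does not depend on the sign of $N-3\gamma-2$.
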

\begin{proof}
It follows from Lemmata~\ref{12-sub-lem-1}, \ref{12-sub-lem-2} and \eqref{problem-sub-1}
that $\Delta \upsi\ge 0$ near $r=0$ for any fixed $t$.
Combining the same argument as in Lemma~\ref{apriorilem1} with  Lemmata~\ref{12-sub-lem-1} and \ref{12-sub-lem-2},
we can deduce that $r^{m-1}\upsi'\downarrow 0$ as $r\downarrow 0$. 
Thus we have
\begin{equation*}
\upsi(r)
=\upsi(1/2)
+\int_{r}^{1/2}\int_{0}^{\rho}
    \frac{s^{m-1}}{\rho^{m-1}}
    [H\upsi-2P(s,t)+(2P,\psi_1)_{L^2}\psi_1]
\,ds\,d\rho
\end{equation*}
for any $r\in [0,1/2)$.
By using Lemma~\ref{12-sub-lem-1}~(ii), we see that
\begin{equation*}
\upsi(r)\le
-2\int_{\delta}^{1/2}\int_{\delta}^{\rho}
    \frac{s^{m-1}}{\rho^{m-1}}P(s,t)
\,ds\,d\rho
+ O(\sigma^{1+a} t^{-1-a})+ O(\sigma^{2} t^{-2})\quad\text{in $B_{\delta(t)}$.}
\end{equation*}
Now, by using \eqref{mikata-2} and the fact $N-3\gam-2>0$ (if $N\geq12$), we have
\begin{align*}
2\int_{\delta}^{1/2}&\int_{\delta}^{\rho}
\frac{s^{m-1}}{\rho^{m-1}}P(s,t)
\,ds\,d\rho
\gtrsim
\frac{\sig^2}{t^2}
\int_{\delta}^{1/2}\int_{\delta}^{\rho}
\frac{s^{N-3\gam-3}}{\rho^{N-2\gam-1}}
\,ds\,d\rho\\
&=\frac{(\sigma t^{-1})^2}{N-3\gamma-2}\int_{\delta}^{1/2}\frac{1}{\rho^{N-2\gamma-1}}(\rho^{N-3\gamma-2}-\delta^{N-3\gamma-2})
\\&=\frac{(\sigma t^{-1})^2}{N-3\gamma-2}\left[\frac{\rho^{-\gam}}{\gam}-\frac{\del^{N-3\gamma-2}\rho^{-(N-2\gam-2)}}{N-2\gam-2}\right]_{\delta}^{1/2}
=\frac{\sigma t^{-1}}{\gam(N-2\gam-2)}+O(\sigma^2 t^{-2}).
\end{align*}
Thus,
by choosing sufficiently large $M_1$ and $T_3$ independent of $\sigma$, we obtain the result.
\end{proof}

As a result of the three lemmata above, we obtain the following
\begin{prop}
\label{prop-sub-12}
There exists $T>0$ and $\sigma>0$ such that the function
\[
\underline{\Psi}\in W^{1,\infty}_{\mathrm{loc}}((1,\infty); H^{1}_{0}(B_{1}^{m}))
\]
defined in Lemma~\ref{12-sub-lem-3} is a sub-solution of \eqref{eq-diff-12dim}.
In addition, for any $\ep>0$,
there exist $M>1$ and $c>0$ such that $\Psi>ct^{-1}\psi_1$ in
$\left(B_{1}^{m}\setminus B_{\ep}^m\right)\times(MT,\infty)$.
\end{prop}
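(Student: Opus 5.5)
Since $-F$ is non-increasing, the comparison principle applies to \eqref{eq-diff-12dim}. So it suffices to verify two things. First, the differential inequality $(\partial_t-\Delta-H)\uPsi\le -\bigl(\frac{2N-4}{r^2}+H\bigr)r^{\gamma}F(r^{-\gamma}\uPsi)$ in $B_1^m\times(T,\infty)$. Second, $\uPsi(\cdot,T)\le\Psi(\cdot,T)$. The lower bound away from the origin then follows from Lemma~\ref{12-sub-lem-1}~(iii).

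For the differential inequality, expand
\begin{equation*}
(\partial_t-\Delta-H)\uPsi=-\sigma t^{-2}\psi_1+M_1\upsi_t-2M_1P+2M_1(P,\psi_1)_{L^2}\psi_1,
\end{equation*}
using \eqref{problem-sub-1}. The right-hand side to compare with is $-P_{\uPsi}$, where $P_{\uPsi}:=\bigl(\frac{2N-4}{r^2}+H\bigr)r^{\gamma}F(r^{-\gamma}\uPsi)$. Since $F$ vanishes on negative arguments and is non-decreasing, $P_{\uPsi}\le P_{\sigma t^{-1}\psi_1+M_1\upsi_+}$. I would split into the inner region $B^m_{\delta(t)}$ and the outer region $B^m_1\setminus B^m_{\delta(t)}$.

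\emph{Inner region.} By Lemma~\ref{12-sub-lem-3}, $\uPsi\le0$ there, so $P_{\uPsi}=0$. It then suffices that $-\sigma t^{-2}\psi_1+M_1\upsi_t-2M_1P\le0$ (note $(P,\psi_1)\le C\sigma^2t^{-2}$). By \eqref{mikata-1}, $P\gtrsim r^{-2}\sigma t^{-1}\psi_1$. By \eqref{daiji-2}, $\upsi_t\le C\sigma t^{-2}|\log r|$ up to the $\psi_1$-part, and $r^{-2}$ dominates $t^{-1}|\log r|$. So $-2M_1P$ absorbs everything, together with the positive term $2M_1(P,\psi_1)\psi_1\lesssim M_1\sigma^2t^{-2}\psi_1$, which is small against $\sigma t^{-1}r^{-2}\psi_1$.

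\emph{Outer region.} Here $r^{-\gamma}\sigma t^{-1}\psi_1\le2$. By Lemma~\ref{12-sub-lem-1}~(ii), $M_1\upsi_+\le CM_1\sigma^{1+a}t^{-1-a}\psi_1$, which is a small perturbation once $\sigma$ is small. Using $F'(u)\le u_+$ we get
\begin{equation*}
P_{\uPsi}\le P+CM_1\sigma^{1+a}t^{-1-a}\bigl(\tfrac{2N-4}{r^2}+H\bigr)\sigma t^{-1}r^{-\gamma}\psi_1^2
\le P\bigl(1+C'M_1\sigma^a t^{-a}\bigr)
\end{equation*}
by \eqref{mikata-2}. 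Hence $-2M_1P+P_{\uPsi}\le -(2M_1-2)P\le0$ for $M_1\ge2$ and large $t$. It remains to dominate $M_1\upsi_t+2M_1(P,\psi_1)\psi_1$ by $\sigma t^{-2}\psi_1$. The term $2M_1(P,\psi_1)\psi_1\le 2CM_1\sigma^2t^{-2}\psi_1$ is at most $\frac14\sigma t^{-2}\psi_1$ once $\sigma\le(8CM_1)^{-1}$; here I use that $M_1$ is independent of $\sigma$ (Lemma~\ref{12-sub-lem-3}). For $\upsi_t$ I use \eqref{daiji-2}: $\upsi_t\le C\sigma^{1+a}t^{-2-a}r^{-a\gamma}\log(t/\sigma)\psi_1$. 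On $r>\delta(t)$ we have $r^{-a\gamma}\le\delta^{-a\gamma}\simeq (t/\sigma)^{a\gamma/(2\gamma)}$, since $\delta^{2\gamma}\simeq\sigma/t$ by \eqref{defdel-12}. With $a\le1/2$ this gives the bound $\sigma t^{-2}(\sigma/t)^{a/2}\log(t/\sigma)\psi_1$, which is $o(\sigma t^{-2}\psi_1)$.

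This bookkeeping in the outer region, balancing the log factor and $r^{-a\gamma}$ near $r=\delta(t)$ against the gain $(\sigma/t)^{a}$, is the step I expect to be delicate. If the exponents do not close, I would instead absorb the excess near $\delta$ into $-2M_1P$, which is of size $M_1\sigma^2t^{-2}r^{-2-\gamma}\psi_1^2$ there and hence large.

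\emph{Initial comparison.} Fix $T$ large. Since $u\le V_*$ and $u\not\equiv V_*$, a heat-kernel comparison as in Proposition~\ref{p-prop-1} gives $\Phi(\cdot,T)\ge c_0\phi_1$ in $B_1$ for some $c_0>0$. By Hopf's lemma at $r=1$ and positivity inside, this yields $\Psi(\cdot,T)\ge c_0'\psi_1$. Meanwhile $\uPsi(\cdot,T)\le \sigma T^{-1}\psi_1+M_1C\sigma^{1+a}T^{-1-a}\psi_1$, so shrinking $\sigma$ gives $\uPsi\le\Psi$ at $t=T$.

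Comparison then gives $\Psi\ge\uPsi$ for $t>T$. On $B^m_1\setminus B^m_\ep$ and for $t>\max(MT,T_1(\ep))$, Lemma~\ref{12-sub-lem-1}~(iii) gives $M_1|\upsi|\le M_1C(\ep)\sigma^{1+a}t^{-1-a}\psi_1\le\frac{\sigma}{2t}\psi_1$. Hence $\Psi>\frac{\sigma}{2}t^{-1}\psi_1$, which is the claim with $c=\sigma/2$.
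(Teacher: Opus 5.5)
Your inner-region argument and your bound on the nonlinear term in the outer region are sound; the latter is essentially the paper's \eqref{sub-12-nonlin}. The initial comparison at $t=T$, however, is wrong, and this is a genuine gap. The claim $\Phi(\cdot,T)\ge c_0\phi_1$ cannot hold. Since $u(\cdot,T)$ is bounded (it is the classical solution), $\Phi(r,T)=V_*(r)-u(r,T)=-2\log r+O(1)$ as $r\to0$, whereas $\phi_1\simeq r^{-\gamma}$. Equivalently, $\Psi(r,T)=r^{\gamma}\Phi(r,T)=O(r^{\gamma}|\log r|)\to0$ at the origin while $\psi_1(0)>0$. So no bound of the form $\uPsi(\cdot,T)\le C\sigma T^{-1}\psi_1$, however small $\sigma$ is, can give $\uPsi(\cdot,T)\le\Psi(\cdot,T)$ near $r=0$.

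This is exactly what Lemma~\ref{12-sub-lem-3} is for: on $B^m_{\delta(T)}$ it gives $\uPsi(\cdot,T)\le0\le\Psi(\cdot,T)$. On $r\ge\delta(T)$ you must compare $\uPsi(r,T)\le\sigma T^{-1}(1+CM_1\sigma^aT^{-a})\psi_1$ with the correct lower bound $\Psi(r,T)\ge c(T)\,r^{\gamma}(-\log r+\gamma^{-1})\psi_1$, which follows from Hopf's lemma and the boundedness of $u(\cdot,T)$. Since $r^{\gamma}(-\log r+\gamma^{-1})$ is increasing and $2\delta^{\gamma}=\sigma T^{-1}\psi_1(\delta)$, the comparison reduces to
$\tfrac{c(T)}{2}\psi_1(\delta(T))\bigl(-\log\delta(T)+\gamma^{-1}\bigr)\ge 1+CM_1\sigma^aT^{-a}$.
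This holds once $\sigma$ is small enough that $-\log\delta(T)$ is large. It requires fixing $T$ before shrinking $\sigma$. Hence the differential inequality must hold for $t>T$ with $T$ independent of $\sigma$, or you must time-shift $\uPsi$.

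There is also an error in your outer-region estimate for $\upsi_t$. By \eqref{defdel-12}, $\delta^{\gamma}\simeq\sigma/t$, not $\delta^{2\gamma}\simeq\sigma/t$. Hence $r^{-a\gamma}\le\delta^{-a\gamma}\simeq(t/\sigma)^{a}$ exactly cancels the gain $(\sigma/t)^a$ in \eqref{daiji-2}. Near $r=\delta(t)$ this leaves $C\sigma t^{-2}\log(t/\sigma)\psi_1$, which is not dominated by $\sigma t^{-2}\psi_1$.

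Your fallback is the right repair, and it is what the paper does. Take $b=\frac{\gamma+1}{\gamma+2}a$ and use $\sigma^{a}\log\sigma^{-1}\le\sigma^{b}$. Young's inequality then gives $\sigma^{1+b}r^{-a\gamma}\le(1-b)\sigma+b\,\sigma^{2}r^{-a\gamma/b}$. Since $a\gamma/b=\gamma(\gamma+2)/(\gamma+1)<\gamma+2$, the second piece is absorbed via \eqref{mikata-2}. This yields $\upsi_t\le C\sigma t^{-2-a/2}\psi_1+t^{-a/2}P$ for all $\sigma<\sigma_0$ and all $t$ beyond a threshold independent of $\sigma$. That is precisely the uniformity the corrected initial comparison needs.
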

\begin{proof}
Define $b=\frac{\gam+1}{\gam+2}a$. Then, there exists $\sigma_{0}>0$ depending only on $\gam$ and $N$
such that $\sig^{a}\log(\sig^{-1})\leq \sig^{b}$ for all $\sigma\in\left(0,\sig_0\right)$.
Then, the Young inequality and \eqref{daiji-2} imply
\begin{equation*}
\upsi_t \le C
t^{-2-\frac{a}{2}} (\sigma^{1-b}) (r^{-a\gamma}\sigma^{2b}) \psi_1 \le Ct^{-2-\frac{a}{2}}
\left[
  (1-b)\sig
  +br^{-a\gam/b}\sig^{2}
\right] \psi_1 \hquad \text{in $(\delta(t),1]$}
\end{equation*}
for any $t>T_{4,1}$ and $\sigma<\sigma_0$, where $C$ and
$T_{4,1}>0$ are independent of $\sigma$ and $t$.
Hence, by using \eqref{mikata-1}, \eqref{mikata-2} and \eqref{daiji-2},
we obtain
\begin{equation}
\label{upsit-1-a}    
\text{$\upsi_t \le C\sigma t^{-2-\frac{a}{2}}\psi_1+ t^{-a/2} P$ \quad in $B_{1}^m$ \quad for any $t>T_{4,2}$ and $\sigma<\sigma_0$}
\end{equation}
with some $T_{4,2}>T_{4,1}$ independent of $\sigma$ and $t$.
Now we combine the mean value theorem with Lemma \ref{12-sub-lem-1},
\eqref{bp1}, \eqref{12-sub-1} and \eqref{mikata-2} to get
\begin{align}\label{sub-12-nonlin}
\left(\frac{2N-4}{r^2}+ H\right)
r^{\gamma} F(r^{-\gamma}\uPsi)
&\le P+C\left(\frac{2N-4}{r^2}+ H\right)
F'(
    r^{-\gamma}
    (t^{-1}\sigma\psi_1+M_{1}\upsi_{+})
)\upsi_{+}
\notag
\\
&\le  P+ C\left(\sigma t^{-1}\right)^a P
\leq 3P/2 \quad \text{in $B_{1}^m$} 
\end{align}
for any $t>T_{4,3}$ and $\sigma<\sigma_{0}$
with some $T_{4,3}>T_{4,2}$ independent of $\sigma$.
Therefore, by combining \eqref{naiseki-1-a}, \eqref{upsit-1-a} and \eqref{sub-12-nonlin}, we obtain
\begin{align*}
\begin{split}
&\hspace{-30pt}
(\partial_t-\Delta-H)\uPsi
+\left(\frac{2N-4}{r^2}+ H\right)r^{\gamma}F(r^{-\gamma}\uPsi)
\\&\le
\left(
    M_{1}\left(2P,\psi_{1}\right)_{L^2}
    -{\sigma}t^{-2}
\right)\psi_1
+M_{1}\upsi_{t}
+\left({3}/{2}-2M_{1}\right)P
\\&
\leq
\sig t^{-2}
\left(
    CM_{1}\sig+{M_1}{T^{-a/2}}-1
\right)\psi_{1}
+
\left[
    3/2-M_{1}\left(2-{T^{-a/2}}\right)
\right]P
\le 0
\end{split}
\end{align*}
for any $t>T$ and $\sig<\sig_1$, where $T>T_{4,3}$ and $\sig_1<\sig_0$
are independent of $\sigma$ and $t$.

Finally, we fix $T>0$ above and show $\uPsi(r,T)\le \Psi(r,T)$ in $B_{1}^m$.
Since $r\geq\del(T)$ means
$
    r^{\gamma}(-\log r+\gamma^{-1})\ge
    \delta^{\gam}(-\log \delta+\gamma^{-1})
$,
it follows from
Lemma~\ref{12-sub-lem-1}~(ii) and Lemma \ref{12-sub-lem-3} that
\begin{equation*}
   r^{-\gamma} \uPsi(r,T)\le C(-\log r + \gamma^{-1})(-\log \delta+\gamma^{-1})^{-1}(1+CM_{1}\sig^{a}T^{-a})\psi_1(r).
\end{equation*}
On the other hand,
by the Hopf's lemma and Proposition~\ref{p-prop-1},
we have
\begin{equation*}
r^{-\gamma}\Psi(r,T)=\Phi(r,T)=V_{*}-u(r,T)\ge c(T)(-\log r + \gamma^{-1}) \psi_{1}(r).
\end{equation*}
By taking $\sigma$ sufficiently small (i.e., $-\log{\del}\gg 1$)
and using Lemma~\ref{12-sub-lem-3}, we obtain the result.
Therefore, we can deduce that $\uPsi$ is a sub-solution of \eqref{eq-diff-12dim}. The remaining assertion follows from a comparison principle and \eqref{daiji-1}.
\end{proof}
Note that for the case $N\ge 12$, Proposition \ref{prop-outer} follows from Propositions \ref{prop-super-12} and \ref{prop-sub-12}.
\subsection{Proof of Proposition \ref{prop-outer}
($\boldsymbol{N=11}$, $\boldsymbol{f\equiv f_H}$)}
\label{sec-rate-11}
In this subsection, we deal with the case $N=11$.
Note that in this case, $\gamma=3$ and $m=5$ are satisfied.
As before, we take any $T>0$ and set $\Psi:=r^{3}\Phi$ to yield
\begin{align}\label{eq-diff-11dim}
  \begin{dcases}
    \pl_{t}\Psi-\Del\Psi-H\Psi
    =-\left(\frac{2N-4}{r^{2}}+H\right)r^{3}F(r^{-3}\Psi)
    &\text{in }B_{1}^{5}\times(T,\infty),
    \\
    \Psi=0&\text{on }\pl B_{1}^{5}\times(T,\infty).
  \end{dcases}
\end{align}
We first construct a super-solution.
\begin{prop}
\label{prop-super-11}
Define $R(r,t):=r^{-2}(\log r)^{-2}\min\left\{t^{1/2}, r^{-3}(\log r)^{2}\right\}\psi_{1}^{2}$
and set $\widetilde{\psi}:=(t\log t)^{-2}\widetilde{\psi}_0$,
where $\widetilde{\psi}_0$ be a solution to
\begin{align*}
  \begin{cases}
    (-\Del-H)\widetilde{\psi}_{0}
    =\left(R,\psi_{1}\right)_{L^{2}\left(B_{1}^{5}\right)}\psi_{1}-R  \hquad \text{in } B_{1}^{5},
    \\
    \widetilde{\psi}_{0}=0 \hquad \text{on }\pl B_{1}^{5},
    \quad \Tilde{\psi}_0\perp \psi_1 \text{ in } L^{2}(B_{1}^5).
  \end{cases}
\end{align*}
Then, there exist $T, \kappa_1, \kappa_2>0$ such that
$\widetilde{\Psi}(r,t):=\kappa_1 (t\log t)^{-1}\psi_{1}(r)+\kappa_2 \widetilde{\psi}(r,t)$ is a super-solution to \eqref{eq-diff-11dim}.
In particular, there exists $C>0$
independent of $r$ and $t$
such that $\Psi\le C(t\log t)^{-1}\psi_{1}(r)$ in $B_1^5$ for any $t>T$.
\end{prop}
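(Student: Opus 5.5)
I will follow the scheme of Proposition \ref{prop-super-12}, adapted to $N=11$ ($\gamma=3$, $m=5$, $N-2-3\gamma=0$). Write $Q(t):=(t\log t)^{-1}$, so $Q'=-Q^{2}\log t(1+(\log t)^{-1})\simeq -Q^2\log t$. The choice of $R$ is meant to be a pointwise lower bound for the nonlinearity $(\frac{2N-4}{r^2}+H)r^3F(r^{-3}\widetilde\Psi)$ after dividing by $Q^2$.

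\textbf{Step 1 (existence and size of $\widetilde\psi_0$).} Since $\psi_1\simeq 1$ near $0$ in $B_1^5$, we have $R\lesssim r^{-5}$ near $0$ when $r^{-3}(\log r)^2\le t^{1/2}$, and $R\lesssim t^{1/2} r^{-2}(\log r)^{-2}$ otherwise; in particular $R(\cdot,t)\in L^2(B_1^5)$ for each $t$. The key computation is
\begin{equation*}
(R,\psi_1)_{L^2(B_1^5)}\simeq \int_{\rho_t}^{1/2} r^{4}\, r^{-5}(\log r)^{-2}\cdot(\log r)^2\,dr+\int_0^{\rho_t} r^{2}t^{1/2}(\log r)^{-2}\,dr\simeq \log t,
\end{equation*}
where $\rho_t\simeq t^{-1/6}$ up to logs is the switching radius. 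This logarithm is exactly what matches $-Q'/Q^2\simeq\log t$. The right-hand side of the $\widetilde\psi_0$ problem is orthogonal to $\psi_1$, so $\widetilde\psi_0$ exists by the Fredholm alternative. I then need a pointwise bound $|\widetilde\psi_0|\le C(\log t)^{1-\eta}\psi_1$, or at least $\widetilde\psi_0\ge -C(\log t)\,t^{-\epsilon}\ldots$, so that $\kappa_2\widetilde\psi$ is a lower-order perturbation of $\kappa_1Q\psi_1$. I would get this by comparison functions in the spirit of Lemma \ref{12-sub-lem-2}: $-\Delta$ in $B^5$ of $r^{-3}$-type profiles absorbs $r^{-5}$ sources with a logarithmic loss. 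I would also get the time derivative $|\partial_t\widetilde\psi_0|\lesssim t^{-1/2}\cdots$, which is supported only where $R$ uses the $t^{1/2}$ branch.

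\textbf{Step 2 (supersolution inequality).} Choose $T$ large and $\kappa_1$ so that $\widetilde\Psi(\cdot,T)\ge r^{3}(V_H-\phi_0)\ge\Psi(\cdot,T)$, using Hopf's lemma as in \eqref{kiso-1}. Then show $\widetilde\Psi\ge \frac{\kappa_1}{2}Q\psi_1$ for $t\ge T$. By \eqref{bp1}, in the region $r^{-3}\widetilde\Psi\le2$ the nonlinearity is at least
\begin{equation*}
c\,r^{-5}\kappa_1^2Q^2\psi_1^2\ge c\kappa_1^2Q^2 R,
\end{equation*}
since $r^{-2}\cdot r^{-3}\ge r^{-2}(\log r)^{-2}\min\{\cdot\}$ trivially. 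In the region $r^{-3}\widetilde\Psi>2$, i.e.\ $r\lesssim Q^{1/3}$, it is at least $c r^{-2}\kappa_1 Q\psi_1$, which dominates $\kappa_2Q^2t^{1/2}r^{-2}(\log r)^{-2}\psi_1^2$ because $Q t^{1/2}\to0$. Hence the nonlinearity is $\ge\kappa_2Q^2R$ once $\kappa_2\le c\min\{\kappa_1^2,\kappa_1\}$. On the other side,
\begin{equation*}
(\partial_t-\Delta-H)\widetilde\Psi=\kappa_1Q'\psi_1+\kappa_2\partial_t\widetilde\psi+\kappa_2Q^2\big[(R,\psi_1)\psi_1-R\big].
\end{equation*}
Since $(R,\psi_1)\ge c\log t$, the term $\kappa_2Q^2(R,\psi_1)\psi_1\ge c\kappa_2Q^2\log t\,\psi_1$ beats $\kappa_1|Q'|\psi_1\le 2\kappa_1Q^2\log t\,\psi_1$ provided $\kappa_2>C\kappa_1$. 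This is compatible with $\kappa_2\le c\kappa_1^2$ if $\kappa_1$ is large, matching the choice $\kappa_2=4\kappa_1/\theta$ in the $N\ge12$ case. The $\partial_t\widetilde\psi$ term is $O(Q^2 t^{-1}\log t\cdot\ldots)$ and is absorbed by the remaining $\frac12$ of the positive $\psi_1$-term, using the Step 1 bounds. Thus $(\partial_t-\Delta-H)\widetilde\Psi\ge-\kappa_2Q^2R\ge-(\text{nonlinearity})$, and the comparison principle (with $-F$ nonincreasing) gives $\Psi\le\widetilde\Psi\le CQ\psi_1$.

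\textbf{Main obstacle.} The delicate point is Step 1: getting pointwise control of $\widetilde\psi_0$ relative to $\psi_1$, uniformly with only logarithmic growth in $t$. We need $\kappa_2(t\log t)^{-2}|\widetilde\psi_0|\ll(t\log t)^{-1}\psi_1$, which requires $|\widetilde\psi_0|\lesssim t^{1-\epsilon}$ — quite generous. The truncation $t^{1/2}$ in $R$ keeps $\|R\|_{L^2}$ polynomially bounded, about $t^{1/2}$ times a power of $t^{1/6}$. I will therefore first try the crude route: an $L^2$ energy estimate followed by \cite[Proposition~47.6]{QSbook}, as in Lemma \ref{12-sub-lem-1}, which should give $|\widetilde\psi_0|\lesssim t^{\theta}\psi_1$ with $\theta<1$. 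Refined log-barriers should be needed only near the boundary, where Hopf's lemma handles it.
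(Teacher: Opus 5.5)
Your Step 2 is essentially the paper's argument: the same ansatz, $\kappa_2$ a fixed multiple of $\kappa_1$, $(R,\psi_1)\ge\theta\log t$ beating $|Q'|/Q^2$, and the lower bound of the nonlinearity by $\min\{c\kappa_1 t^{1/2},\kappa_1^2/4\}R$. The constraint $\kappa_2\le c\kappa_1$ you extract from the region $r^{-3}\widetilde\Psi>2$ is spurious. There the available factor is of order $\kappa_1(Qt^{1/2})^{-1}\to\infty$, so it does not conflict with $\kappa_2>C\kappa_1$.

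\textbf{The gap is in Step 1, and it sits at the origin, not at the boundary.} Step 2 needs two \emph{lower} bounds:
\begin{itemize}
\item $\widetilde\psi_0\ge -o(t\log t)\psi_1$, to get $\widetilde\Psi\ge\frac{\kappa_1}{2}Q\psi_1$ and the initial ordering;
\item $\partial_t\widetilde\psi_0\ge -o(\log t)\psi_1$, to absorb the time-derivative term.
\end{itemize}
In the equations for $-\widetilde\psi_0$ and $-\partial_t\widetilde\psi_0$, the singular sources $R$ and $\partial_tR=\frac12 t^{-1/2}r^{-2}(\log r)^{-2}\psi_1^2\chi_{\{r<\delta_1(t)\}}$ enter with the unfavourable sign. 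So they cannot be discarded the way $-2P$ was in Lemma~\ref{12-sub-lem-1}; there \cite[Proposition~47.6]{QSbook} was used only for a one-sided (upper) bound.

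Near $r=0$ one has $R=t^{1/2}r^{-2}(\log r)^{-2}\psi_1^2$. This lies in $L^{5/2}(B_1^5)$ but in no $L^q(B_1^5)$ with $q>5/2=m/2$. Hence an $L^2$ energy estimate followed by any $L^q$-based local maximum principle gives no $L^\infty$ control. Your ``crude route'' therefore does not yield $|\widetilde\psi_0|\lesssim t^{\theta}\psi_1$.

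This is not a technicality. Without the factor $(\log r)^{-2}$ in $R$, the corresponding $\widetilde\psi_0$ really would tend to $-\infty$ like $t^{1/2}\log r$ at the origin (compare the $\log r$ behaviour of $\upsi$ in Lemma~\ref{12-sub-lem-2}), and $\widetilde\Psi$ would fail to be positive near $0$. Your barrier heuristic ($r^{-3}$-type profiles against $r^{-5}$ sources) addresses the outer region $r>\delta_1(t)$, not this point. It also does not explain why the $(\log r)^{-2}$ factor is there.

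\textbf{How the paper closes it.} The paper observes that $(r\log r)^{-2}$ belongs to $L^{5/2,1}$ near the origin in $B_1^5$. It then applies the borderline Lorentz estimate $\|u\|_{L^\infty}\le C\|f\|_{L^{m/2,1}}$ of Lemma~\ref{apenprop}, together with the $L^2$ bound for the lower-order terms, elliptic regularity, and Hopf's lemma near $\partial B_1^5$. This gives the two-sided bounds $|\widetilde\psi_0|\le Ct^{1/2}\psi_1$ and $|\partial_t\widetilde\psi_0|\le Ct^{-1/2}\psi_1$.

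Since everything is radial, you could instead integrate the ODE directly. From $\int_0^\rho s^{4}\,s^{-2}|\log s|^{-2}\,ds\lesssim\rho^{3}|\log\rho|^{-2}$ one gets $|u'(\rho)|\lesssim t^{1/2}\rho^{-1}|\log\rho|^{-2}$ on $(0,\delta_1(t))$, and this is integrable at $0$; that integrability is exactly the role of the $(\log r)^{-2}$ factor. Either way, this step has to be supplied for the proof to go through.
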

\begin{proof}
We define $\delta_1(t)$ as the solution to $\delta_{1}^{-3}(\log \delta_{1})^2=\sqrt{t}$.
Since 
\begin{align*}
  \pl_{t}R(r,t)
  =\frac{1}{2} t^{-1/2}r^{-2}(\log r)^{-2}\psi_{1}^{2}(r)\chi_{\left\{r<\delta_1(t)\right\}}(r)
\end{align*}
and $(r\log r)^{-2}\in L^{5/2,1}(B_{1}^5)$\footnote{See Appendix \ref{sec-app-m} for the definition of the Lorentz space},
we obtain $R(r,t)\in W^{1,\infty}_{\mathrm{loc}}((e,\infty);L^2(B_1^{5}))$.
Thus, we have $\widetilde{\psi}_{0}\in W^{1,\infty}_{\mathrm{loc}}((e,\infty);H^{1}_{0}(B_1^{5}))$.
Moreover, we have
\begin{align*}
-(\Del+H)\pl_{t}\widetilde{\psi}_{0}
=\left(\pl_{t}R,\psi_{1}\right)_{L^{2}\left(B_{1}^{5}\right)}\psi_{1}-\pl_{t}R\hquad \text{in } B_{1}^{5},
\quad \pl_{t} \widetilde{\psi}_{0}=0 \hquad \text{on }  \pl  B_{1}^{5}
\end{align*}
and $\partial_t\widetilde{\psi}_{0}\perp\psi_1$ in $L^{2}(B_{1}^5)$. Hence, 
by applying Lemma \ref{apenprop}, the elliptic regularity theory and the Hopf's lemma, we have $|\partial_t\widetilde{\psi}_{0}|\le C t^{-1/2} \psi_1$ in $B_{1}^{5}$
and $|\widetilde{\psi}_{0}|\le C t^{1/2} \psi_1$ in $B_{1}^5$
for any $t>e$, where $C>0$ is independent of $t$.
Moreover, we have
\begin{equation}
\label{naiseki-2-a}
(R(r,t), \psi_{1})_{L^2(B_{1}^5)}\ge
\int_{\delta_{1}(t)}^{1}r^{-1}\psi_1^{3}(r)\,dr\ge \theta \log t  
\end{equation}
for some $\theta>0$ depending only on $\psi$.
Let $\kap_{1}>0$ be a constant to be fixed later
and define $\kappa_2:=3\theta^{-1}\kappa_1$.
Then, by the above estimates, we have
\begin{equation}
\label{katei-1}
(t\log t)^{2}  |\partial_t \Tilde{\psi}|\le 
\frac{C}{t^{1/2}}\psi_1  \le  \frac{\theta}{2}\psi_1 \quad\text{and}\quad  \kappa_2|\widetilde{\psi}|\le \frac{\kappa_1}{2t\log t}\psi_1
\end{equation}
in $B_{1}^5$ for any $t>T_1$,
for some $T_1>e$ independent of $\kappa_1$.
In particular,
\eqref{katei-1} implies $F(r^{-3}\widetilde{\Psi})\ge F(r^{-3}\kappa_1(2t\log t)^{-1}\psi_1)$ in $B_{1}^5$ for $t>T_1$.
Moreover, we have 
\begin{align}\label{ababa}
    &\hspace{-10pt}
    \left(\frac{2N-4}{r^2}+H\right)(t\log t)^2 r^3
    F\left(
        \frac{\kappa_1 \psi_1}{2t\log t r^3}
    \right)
    \notag
    \\
    &\ge (t\log t)^2
    \min\left\{
        \frac{\kappa_1\psi_1}{2t\log t r^2},
        \frac{\kappa_{1}^2 \psi_1^2}{4(t\log t)^2 r^5}
    \right\}
    \geq
    \min\left\{
        \frac{\kappa_1 t^{1/2}}{2\norm{\psi_1}_{L^{\infty}}},
        \frac{\kappa_1^{2}}{4}
    \right\}R(r,t)
\end{align}
in $B_{1}^5\times (T_1,\infty)$.
We choose $T>T_1$ and $\kappa_1$ so that the right-hand side of \eqref{ababa} is greater than $\kappa_2 R$ 
for any $r\in [0,1]$, $t>T$ and 
$\kappa_1(2T\log T)^{-1}\psi_1\ge r^{3}(V_H-\phi_0)$ in $B_{1}^5$.
Thus \eqref{katei-1} yields $\Psi(r, T) \le \widetilde{\Psi}(r,T)$ in $B_1^{5}$.
Thanks to \eqref{naiseki-2-a}, \eqref{katei-1} and \eqref{ababa}, we obtain
\begin{align*}
&(t\log t)^{2}(\pl_{t}
  -\Del-H)\widetilde{\Psi}= 
  -\kappa_{1}(1+\log t)\psi_{1}
  +\kappa_{2}\left[
      \left(R,\psi_{1}\right)_{L^{2}}\psi_{1}
      -R+(t\log t)^2 \partial_t \Tilde{\psi}
  \right]
\\&\ge
-\kappa_{1}(1+\log t)\psi_1
-\kappa_2 R
+\kappa_2\left(
    \theta\log t
    -\frac{\theta}{2}
\right)\psi_1
\ge -\kappa_2 R
\\&\ge
-\left(\frac{2N-4}{r^2}+H\right)
(t\log t)^2 r^3
F\left(
    \frac{\kap_1 \psi_1}{2t\log t r^3}
\right)
\ge
-\left(\frac{2N-4}{r^2}+H\right)
(t\log t)^2 r^3 F(r^{-3}\widetilde{\Phi})
\end{align*}
in $B_{1}^{5}$ for any $t>T$.
Hence, $\widetilde{\Psi}$ is a super-solution to \eqref{eq-diff-11dim}.
the remaining assertion is obtained by a comparison principle and the fact that $|\widetilde{\psi}_{0}|\le Ct^{1/2}\psi_1$ in $B_{1}^5$.
\end{proof}
Next, we turn to the construction of a sub-solution. As in the case $N\ge 12$, we introduce three lemmata below.
\begin{lem}\label{11-sub-lem-1}
Fix $\sig\in(0,1/2)$. Then, the following hold.
  \begin{enumerate}
    \item[(i)] For any $t>e$,
        there exists the unique solution $\upsi\in W^{1,\infty}_{\mathrm{loc}}((e,\infty); H^{1}_{0}(B_{1}^{5}))$ of 
        \begin{equation}
        \label{problem-sub-2}
        (-\Delta-H)\upsi=-2P+(2P,\psi_1)\psi_1 \hquad \text{in $B_1^{5}$,} 
        \quad \upsi\perp \psi_1\hspace{2mm} \text{in $L^2(B_{1}^{5})$} 
        \end{equation}
        with 0-Dirichlet boundary condition,
        where
        \begin{equation*}
        P=
        P(r,t):=\left(\frac{2N-4}{r^2}+H\right)r^{\gamma}F(r^{-\gamma}\sigma (t\log t)^{-1}\psi_{1}).
        \end{equation*}
    \item[(ii)]
    We set $a:=1/12$ and define $\delta=\delta(t)$ as a unique solution of
    \begin{equation}\label{defdel-11}
        \delta(t)^{3}
        =\frac{1}{2}\sigma\psi_1(\del(t))(t\log t)^{-1}. 
    \end{equation}
    Then, there exists  $C>0$ is independent of $\sigma$ and $t$ such that 
    \begin{equation*}
        0\le (P,\psi_1)_{L^{2}(B_{1}^5)}
        \le C \sigma^2 |\log \sigma|(\log t)^{-1} t^{-2},
        \quad \upsi \le C\sigma^{1+a} t^{-1-a}\psi_1
        \quad \text{in $B_{1}^5$}
    \end{equation*}
    for any $t>e$ and
    \begin{equation*}
        0\le -(P_t, \psi_1)_{L^2}
        \le C\sigma^2 |\log \sigma|(\log t)^{-1} t^{-3},
        \hquad -\upsi_t \le C \sigma^{1+a} t^{-2-a}\psi_1
        \quad \text{in $B_{1}^5$}
    \end{equation*}
    for a.e. $t>e$.
    \item[(iii)]
    For any $\ep>0$,
    there exist $C(\ep)>0$ and $T_1(\ep)>0$ independent of $\sigma$ such that 
    \begin{equation*}
        -\upsi\le  C(\ep) \sigma^{1+a} t^{-1-a}\psi_1, \quad  
        \upsi_t \le C(\ep) \sigma^{1+a} t^{-2-a} \psi_1\hquad
        \text{in $B_{1}^5\setminus B_{\ep}^5$}
    \end{equation*}
 for a.e. $t>T_{1}(\ep)$.
  \end{enumerate}
\end{lem}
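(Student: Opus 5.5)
The plan is to follow the proof of Lemma~\ref{12-sub-lem-1} line by line, with $\sigma t^{-1}$ replaced by $\sigma(t\log t)^{-1}$, $\gamma=3$ and $m=5$. The one structural change comes from the identity $N-3\gamma-2=0$ in \eqref{bp2}: the outer integral, which for $N\ge 12$ was bounded, now produces a logarithm.

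\textbf{Assertion (i).} By \eqref{bp1}, $0\le P\lesssim r^{-2}\sigma(t\log t)^{-1}\psi_1$. Since $r^{-2}\in L^2(B_1^5)$, we get $P\in L^2$, and $\pl_tP$ has the same form because $F'(u)=(1-e^{-u})_+\le 1$. The right-hand side of \eqref{problem-sub-2} is orthogonal to $\psi_1$, and $H$ is the simple first eigenvalue of $-\Delta$ on $B_1^5$. Hence $-\Delta-H$ is invertible on $\{\psi_1\}^\perp$, which gives existence, uniqueness and the $W^{1,\infty}_{\mathrm{loc}}((e,\infty);H^1_0)$ regularity.

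\textbf{Assertion (ii), estimate of $(P,\psi_1)$.} By Lemma~\ref{fnolem}, $\psi_1$ is positive and nonincreasing, so \eqref{defdel-11} has a unique solution $\delta(t)$. Moreover $r^{-3}\sigma(t\log t)^{-1}\psi_1\le 2$ exactly when $r\ge\delta(t)$, and $\delta^3\simeq\sigma(t\log t)^{-1}$. Split at $\delta$ using \eqref{bp1}:
\begin{equation*}
(P,\psi_1)\lesssim \int_0^{\delta} r^{N-2\gamma-3}\frac{\sigma}{t\log t}\psi_1^2\,dr+\int_\delta^1 r^{N-3\gamma-3}\frac{\sigma^2}{(t\log t)^2}\psi_1^3\,dr.
\end{equation*}
The first term is $\lesssim \sigma(t\log t)^{-1}\delta^{3}\simeq\sigma^2(t\log t)^{-2}$. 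In the second term the exponent $N-3\gamma-3$ equals $-1$, so it is $\lesssim \sigma^2(t\log t)^{-2}|\log\delta|$. Since $|\log\delta|\lesssim|\log\sigma|+\log t$, the total is $\lesssim\sigma^2|\log\sigma|\,t^{-2}(\log t)^{-1}$, using $\log t\ge1$ and $|\log\sigma|\ge\log 2$. The same computation applied to $\pl_tP$, whose extra factor $\frac{d}{dt}(t\log t)^{-1}$ is of size $t^{-1}\cdot(t\log t)^{-1}$, gives the bound for $-(P_t,\psi_1)$.

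\textbf{Assertion (ii), estimate of $\upsi$.} First bound $\norm{P}_{L^2}^2$:
\begin{equation*}
\norm{P}_{L^2}^2\lesssim \frac{\sigma^2}{(t\log t)^2}\delta^{m-4}+\frac{\sigma^4}{(t\log t)^4}\int_\delta^1 r^{N-4\gamma-5}dr\lesssim \Big(\frac{\sigma}{t}\Big)^{2+2/3}+\Big(\frac{\sigma}{t\log t}\Big)^4\delta^{-6}.
\end{equation*}
Here $m-4=1$, so the first term is $\lesssim(\sigma/t)^{2+1/3}$. In the second term $N-4\gamma-5=-6$, and $\delta^{-6}\simeq(\sigma/(t\log t))^{-2}$, so it is $\lesssim(\sigma/t)^{2}$. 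This is only borderline. The choice $a=1/12$ signals a loss, so I would split this second region more carefully, keeping the factor $\psi_1^4$ and the weight $r^{-2-\gamma}$ separate. If that fails, I would take $\norm{P}_{L^{q}}$ for some $q<2$ close to $5/2$'s dual range and use $W^{2,q}$ elliptic estimates, which gives a power $\sigma^{1+a}t^{-1-a}$ with some small $a$; $a=1/12$ should come out of such an exponent count. Once this is in hand, the argument is as before: orthogonality, the Poincar\'e-type gap on $\{\psi_1\}^\perp$ and energy estimates give $\norm{\upsi}_{L^2}\lesssim(\sigma/t)^{1+a}$. Then \cite[Proposition~47.6]{QSbook}, which uses that $-2P\le0$, gives the one-sided bound $\upsi\lesssim(\sigma/t)^{1+a}$ up to the $(P,\psi_1)\psi_1$ term, which is already smaller. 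Converting this to $\lesssim\psi_1$ near $\partial B_1^5$ uses Hopf's lemma together with the $C^1$ bound near the boundary.

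\textbf{Assertion (iii).} For $t>T_1(\epsilon)$ we have $\delta(t)<\epsilon/2$, so $P\lesssim_\epsilon (\sigma/t)^2$ on $B_1^5\setminus B_{\epsilon/2}^5$. Interior and boundary elliptic regularity then give $\norm{\upsi}_{C^1(\overline{B_1^5}\setminus B^5_\epsilon)}\lesssim_\epsilon(\sigma/t)^{1+a}$. Hopf's lemma converts this into a two-sided bound by $\psi_1$, and the statements for $\upsi_t$ follow in the same way from the differentiated equation.

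The main obstacle is the $L^2$ (or $L^q$) control of $P$ in the outer region. There the logarithmic borderline $N-3\gamma-2=0$ kills the clean power gain available when $N\ge12$, and one must extract a uniform, if small, exponent $a$ without losing the uniformity in $\sigma$.
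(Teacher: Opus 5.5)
Your overall route is the paper's. That covers invertibility of $-\Delta-H$ on $\{\psi_1\}^\perp$ for (i), and the split at $\delta(t)$, where $\int_\delta^1 r^{-1}\,dr=|\log\delta|$ produces the $|\log\sigma|(\log t)^{-1}$ factor in $(P,\psi_1)$. It also covers the energy estimate for $\norm{\upsi}_{L^2}$, the one-sided sup bound from \cite[Proposition~47.6]{QSbook} using $-2P\le 0$, elliptic regularity plus Hopf away from the origin for (iii), and the differentiated equation for $\upsi_t$.

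\textbf{The gap.} It is in the step you call the main obstacle. You leave $\norm{P}_{L^2(B_1^5)}\lesssim(\sigma/t)^{1+a}$ unproved and offer only unexecuted fallbacks ($L^q$ norms, $W^{2,q}$ estimates). Without that bound, neither $\norm{\upsi}_{L^2}\lesssim(\sigma/t)^{1+a}$ nor the pointwise bounds in (ii)--(iii) follow, since both the Quittner--Souplet estimate and the regularity estimate away from the origin are fed by $\norm{\upsi}_{L^2}$.

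\textbf{The cause is an integration slip.} In the outer region the weight is $r^{N-4\gamma-5}=r^{-6}$, and $\int_\delta^1 r^{-6}\,dr\le\frac15\delta^{-5}$, not $\delta^{-6}$. Use $\psi_1^4(r)\le\psi_1(\delta)^4$ for $r\ge\delta$ (monotonicity) and the definition \eqref{defdel-11}, i.e.\ $\sigma\psi_1(\delta)(t\log t)^{-1}=2\delta^3$. Then
\[
\frac{\sigma^4}{(t\log t)^4}\int_\delta^1 r^{-6}\psi_1^4\,dr\lesssim \Big(\frac{\sigma\psi_1(\delta)}{t\log t}\Big)^4\delta^{-5}\lesssim\delta^{7}\lesssim\Big(\frac{\sigma}{t\log t}\Big)^{7/3}.
\]
This is the same order as the inner contribution $\sigma^2(t\log t)^{-2}\delta$. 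Incidentally, the exponent $2+2/3$ in your display should read $2+1/3$, as your text says. Hence $\norm{P}_{L^2}\lesssim(\sigma/t)^{7/6}$, which is exactly the paper's computation $\norm{P}_{L^2}^2\lesssim\sigma^2(t\log t)^{-2}\delta$.

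This bound suffices for any $a\le 1/6$, in particular $a=1/12$, so the choice $a=1/12$ does not signal a loss. The same estimate holds for $\partial_t P$ with an extra factor $t^{-1}$. The critical identity $N-3\gamma-2=0$ only enters the pairing $(P,\psi_1)$, through the weight $r^{N-3\gamma-3}=r^{-1}$. The $L^2$ norm is nowhere near borderline. With this correction, your argument closes along exactly the paper's lines.
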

\begin{proof}
We first prove the assertion (i). By a direct computation, we obtain 
\begin{equation*}
    P_{t}=-\sigma 
    (t\log t)^{-2}(1+\log t)
    \left(\frac{2N-4}{r^2}+H\right)
    F'(\frac{\sigma}{r^3 t\log t}\psi_1)\psi_1,
    \hspace{2mm} F'(u)=1-e^{-u_+}.
\end{equation*}
Note that $r^{-2}\in L^2(B_{1}^5)$.
Then, we use \eqref{bp1} to see $P\in W^{1,\infty}_{\mathrm{loc}}((1,\infty); L^2(B_{1}^5))$. Hence, we obtain
$\upsi\in W^{1,\infty}_{\mathrm{loc}} ((1,\infty); H^{1}_{0}(B_{1}^5))$. Next, we prove the assertions 
(ii) and (iii).
It follows from Lemma~\ref{fnolem} that \eqref{defdel-11} has a unique
solution $\delta$ such that
\begin{equation}
\label{kankei-11}
    r^3\le \frac{\sigma}{2}\psi_1(r)(t\log t)^{-1}
    \quad\iff \quad
    r<\delta(t).
\end{equation}
By \eqref{bp1} and \eqref{bp2}, we obtain
\begin{align}
\label{naiseki-2}
\begin{split}
\quad 0\le&(P,\psi_1)_{L^2(B_1^{5})}\lesssim \sigma
\int_{0}^{\delta(t)}\frac{r^2}{t\log t}
  \psi_{1}\,dr
  +\sigma^2 \int_{\delta(t)}^{1}\frac{\psi_1^{2}}{r(t\log t)^2}\,dr\\
&\lesssim C(\sigma^2 (t\log t)^{-2}+t^{-2}(\log t)^{-1}\sigma^2|\log \sigma|)\lesssim \sigma^2 |\log\sigma| t^{-2}(\log t)^{-1}
\end{split}
\end{align}
and
\begin{align*}
    \norm{P}_{L^2(B_{1}^5)}^2&\lesssim
    \int_{0}^{\del(t)}\sig^{2}(t\log t)^{-2}\psi_{1}^{2}
    \,dr+
    \int_{\del(t)}^{1}
    \sig^{4}t^{-4}r^{-6} \psi^4\,dr\\
    &\lesssim
    \sig^{2}(t\log t)^{-2}\del(t)\lesssim (\sigma t^{-1})^{1+a}
\end{align*}
for all $t>e$.
Therefore, by the Poincar\'e inequality and an energy estimate, we obtain
\begin{align*}
\norm{\upsi}_{L^{2}(B_{1}^5)}^{2}\lesssim\norm{P}_{L^{2}}\norm{\psi}_{L^{2}}
  +(P,\psi)_{L^2}\norm{\psi}_{L^2}\norm{\psi_1}_{L^2}\lesssim  C(\sigma t^{-1})^{1+a} \norm{\upsi}_{L^2(B_{1}^5)}
\end{align*}
for all $t>e$. Thus, we get
$
 \norm{\psi}_{L^{2}(B_{1}^5)}\lesssim C(\sigma t^{-1})^{1+a}
$
for all $t>e$.
Then, it follows from \cite[Proposition 47.6]{QSbook} and  \eqref{naiseki-2} that
\begin{equation*}
\upsi\lesssim (P,\psi_{1})_{L^2(B_{1}^{5})} \lVert \psi_1\rVert_{L^{\infty}(B_{1}^5)}+\lVert \upsi_{+}\rVert_{L^2(B_{1}^5)}\lesssim \sigma^{1+a} t^{-1-a}  \hquad \text{in $B_{1}^5$}
\end{equation*}
for any $t>e$.
Moreover, let $\ep>0$. Then, there exists $T_{1}(\ep)$ independent of $\tau$ so that
\begin{equation*}
  \norm{\upsi}_{C^{1}(\overline{B_{1}^5}\setminus B_{\ep}^5)}
  \lesssim_{\ep} (\norm{P}_{L^{\infty}(B_{1}^5\setminus B_{\ep}^5)}
  +\norm{\upsi}_{L^2(B_{1}^5)})
  \lesssim_{\ep} \sigma^{1+a} t^{-1-a}
\end{equation*}
for any $t>T_{1}$.
Therefore, we obtain
\begin{equation}
\label{11-sub-1}
\upsi\lesssim \sigma^{1+a} t^{-1-a} \hquad\text{in $B_{1}^5\times (1,\infty) $} \hquad \text{and} \hquad |\upsi|\lesssim_{\ep} \sigma^{1+a} t^{-1-a}\psi_1 \hquad \text{in $B_{1}^5 \setminus B_{\ep}^5 \times (T_1,\infty)$}.
\end{equation}
The remaining assertions follow by using the following and a similar method:
\begin{equation*}
(-\Delta -H)\upsi_{t}=-2P_t+(2P_t,\psi_1)\psi_1 \hquad
\text{in $B_{1}^{5}$},
\hquad \upsi_t=0 \hquad \text{on $\partial B_1^{5}$,}
\hquad \text{$\partial_t\upsi \perp\psi_1$ in $L^{2}(B_{1}^5)$}.
\end{equation*}
\end{proof}

\begin{lem}\label{11-sub-lem-2}
There exist $C_{2}>0$ and $T_2>0$ independent of $\sigma$ and $t$
such that
\begin{equation}
\label{daiji-3}
-\upsi(r)\le C_{2}\sigma\frac{-\log r}{t\log t}
\hspace{2mm} \text{in $(0,\delta(t)]$,}
\hspace{2mm} -\upsi(r)\le
C_2 \frac{\sigma^{1+a}}{t^{1+a}r^{3a}}
\log(t\sigma^{-1})\psi_1
\hspace{2mm} \text{in $(\delta(t),1]$}
\end{equation}
for any $t>T_2$ and
\begin{equation}
\label{daiji-4}
\upsi_t(r)\le -\frac{\sigma}{t^2\log t}\log r \hspace{2mm} \text{in $(0,\delta(t)]$}, \hspace{2mm} \upsi_t(r)\le \frac{\sigma^{1+a}}{t^{2+a}r^{3a}} \log (t\sigma^{-1})\psi_1 \hspace{2mm} \text{in $(\delta(t),1]$}
\end{equation}
for a.e. $t>T_{2}$.
\end{lem}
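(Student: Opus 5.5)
The plan is to repeat the proof of Lemma~\ref{12-sub-lem-2}, with $t^{-1}$ replaced by $(t\log t)^{-1}$, $\gamma=3$, $m=5$ and $a=1/12$. We prove only \eqref{daiji-3}. Estimate \eqref{daiji-4} then follows in the same way from the equation for $\upsi_t$ recorded in the proof of Lemma~\ref{11-sub-lem-1}, since $-P_t\simeq \sigma t^{-2}(\log t)^{-1}\,\partial_\sigma P$.

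\textbf{Step 1: two-sided bounds on $P$.} Using \eqref{bp1} and \eqref{kankei-11}, we first derive the analogues of \eqref{mikata-1}--\eqref{mikata-2}:
\begin{equation*}
P\simeq r^{-2}\sigma(t\log t)^{-1}\psi_1 \quad\text{in } (0,\delta(t)], \qquad P\simeq r^{-5}\sigma^2(t\log t)^{-2}\psi_1^2 \quad\text{in } (\delta(t),1].
\end{equation*}
The constants are independent of $\sigma$ and $t$.

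\textbf{Step 2: the inner region $(0,\delta(t)]$.} We compare $\upsi$ with the barrier
\begin{equation*}
\eta_1:=C_1\sigma(t\log t)^{-1}\log r .
\end{equation*}
In dimension $5$ one computes
\begin{equation*}
(-\Delta-H)\eta_1=C_1\sigma(t\log t)^{-1}\bigl(-3r^{-2}-H\log r\bigr).
\end{equation*}
By Step~1 and the bound on $(P,\psi_1)$ in Lemma~\ref{11-sub-lem-1}(ii), this is at most $-2P+(2P,\psi_1)\psi_1$ on $B^5_\ep$ once $C_1$ is large and $\ep$ is small. Outside $B_{\delta}$ the inequality is easier, because there $P\lesssim r^{-5}\sigma^2(t\log t)^{-2}\psi_1^2\le r^{-2}\sigma(t\log t)^{-1}\psi_1$ by \eqref{kankei-11}. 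The term $(2P,\psi_1)\psi_1$ is harmless: it is $O(\sigma^2|\log\sigma| t^{-2}(\log t)^{-1})$, which is negligible against $\sigma(t\log t)^{-1}r^{-2}$.

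At $r=\ep$, Lemma~\ref{11-sub-lem-1}(iii) gives $\upsi(\ep)\ge -C(\ep)\sigma^{1+a}t^{-1-a}$. This exceeds $\eta_1(\ep)=-C_1|\log\ep|\sigma(t\log t)^{-1}$ for $t>T_{2,1}$, since $t^{-a}\ll(\log t)^{-1}$. The maximum principle for $-\Delta-H$ on $B_\ep^5$ applies because $H\ep^2$ is small, and it yields $\eta_1\le\upsi$. This is the first half of \eqref{daiji-3}.

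\textbf{Step 3: the annulus $\delta(t)<r<\ep$.} Here we use the barrier
\begin{equation*}
\eta_2:=C_2\sigma^{1+a}t^{-1-a}\log(\sigma t^{-1})\,r^{-3a}.
\end{equation*}
Then
\begin{equation*}
(-\Delta-H)\eta_2 = C_2\sigma^{1+a}t^{-1-a}\log(\sigma t^{-1})\bigl[3a(3-3a)r^{-2-3a}-Hr^{-3a}\bigr].
\end{equation*}
The bracket is positive, so this quantity is negative. It must dominate $-2P\gtrsim -r^{-5}\sigma^2(t\log t)^{-2}$, that is, we need
\begin{equation*}
\sigma^{1+a}t^{-1-a}|\log(\sigma/t)|\,r^{-2-3a}\gtrsim r^{-5}\sigma^2(t\log t)^{-2} \quad\text{for } r\ge\delta.
\end{equation*}
The worst case is $r=\delta$, where $\delta^3\simeq\sigma(t\log t)^{-1}$. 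There the requirement reduces to
\begin{equation*}
(\sigma/t)^{1+a}\,\delta^{-2-3a}\gtrsim \delta^{-5}\sigma^2(t\log t)^{-2}.
\end{equation*}
Using $\delta^{3}\simeq \sigma(t\log t)^{-1}$, this becomes $\delta^{-3a}(\sigma/t)^{a}\gtrsim(\log t)^{-1}$ up to constants, which holds comfortably. Because $a=1/12$ is small, $3-3a>0$ and the gap in powers of $r$ has room, so the extra $\log t$ factors are absorbed.

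For the boundary values: at $r=\delta$ we have $\eta_2\le\eta_1\le\upsi$ after enlarging $C_2$, because $\delta^{-3a}(\sigma/t)^{a}|\log(\sigma/t)|\gtrsim(\log t)^{-1}|\log\delta|$. At $r=\ep$ we use \eqref{11-sub-1}. The maximum principle then gives $\upsi\ge\eta_2$ on $(\delta,\ep)$. On $(\ep,1]$ the bound follows from Lemma~\ref{11-sub-lem-1}(iii), after inserting $\psi_1$ through $\psi_1\simeq1$.

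\textbf{Main obstacle.} The hard part is the bookkeeping of the $\log t$ versus $t^{-a}$ factors. Both the comparison at $r=\ep$ in Step~2 and the matching at $r=\delta$ in Step~3 must hold uniformly in $\sigma$. I would verify these first, by explicitly substituting $\delta^3=\tfrac12\sigma\psi_1(\delta)(t\log t)^{-1}$ and using $\psi_1(\delta)\simeq\psi_1(0)$ from Lemma~\ref{fnolem}.
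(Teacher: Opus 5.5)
Your proposal is correct and is essentially the paper's proof. It uses the same barriers, $\eta_1=C\sigma(t\log t)^{-1}\log r$ on $B_\ep^5$ and $\eta_2=-C\sigma^{1+a}t^{-1-a}\log(t\sigma^{-1})r^{-3a}$ on $B_\ep^5\setminus B_{\delta(t)}^5$, the same boundary matching ($\eta_2\le\eta_1\le\upsi$ at $r=\delta(t)$, and Lemma~\ref{11-sub-lem-1}(iii) at $r=\ep$), the same maximum-principle conclusion, and it leaves \eqref{daiji-4} to the analogous argument for $\upsi_t$, as the paper does.

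One small correction to your remark on \eqref{daiji-4}. If $\partial_\sigma$ is taken at fixed $t$, the relation is $-P_t=\sigma\frac{1+\log t}{t\log t}\,\partial_\sigma P\simeq \sigma t^{-1}\partial_\sigma P$. Your factor $\sigma t^{-2}(\log t)^{-1}$ is right only for the derivative with respect to the amplitude $\sigma(t\log t)^{-1}$. The $\sigma t^{-1}$ scaling is exactly why \eqref{daiji-4} carries one extra factor $t^{-1}$ compared with \eqref{daiji-3}.
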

\begin{proof}
We only show \eqref{daiji-3}. By \eqref{bp1} and \eqref{kankei-11}, we have
\begin{equation}
\label{mikata-3}
\left(\frac{2N-4}{r^2}+H\right)\frac{\sigma}{2t\log t} \psi_1\le P\le \frac{2N-4+H}{r^2 t\log t}
\sigma\psi_1 \quad \text{in $(0,\delta(t))$}
\end{equation}
and
\begin{align}
\label{mikata-4}
\left(\frac{2N-4}{r^2}+H\right)\frac{\sigma^2}{2e^2 r^3(t\log t)^{2}}\psi_{1}^2\le 
P&\le \frac{2N-4+H}{2r^5(t\log t)^2}\sigma^2 \psi_1^{2}\quad \text{in $(\delta(t),1)$.}
\end{align}
As a result, by a similar method to that in the proof of \eqref{daiji-1}, we can deduce that there exist $\ep>0$, $T_{2,1}>T_1$ and $C_2$ independent of $\sigma$ so that $(-\Delta-H)(\eta_1-\upsi)\le 0$ in $B_{\ep}^{5}\setminus B_{\del}^{5}$ and $\eta_1<\upsi$ at $r=\ep$ for any $t>T_{2,1}$, where
\begin{equation*}
    \eta_1=\eta_{1}(r,t)
    :=\sigma C_2 (t\log t)^{-1}\log r.
\end{equation*}
By the maximum principle,
we have $\eta_1\le \upsi$ in $B_{\ep}^5$ for any $t>T_{2,1}$. It implies the first inequality of \eqref{daiji-3}.
Next, by \eqref{defdel-11}, there exist $C_3>C_2$ independent of $\sigma$ such that $\eta_2<\eta_1\le \upsi$ at $r=\delta(t)$ for any $t>T_{2,1}$, where $\eta_2:=-C_3\sigma^{1+a} \log (\sigma^{-1}t) t^{-1-a}r^{-3a}$.
In addition, it follows from \eqref{mikata-3} and \eqref{mikata-4} that there exist $\ep>0$ and $T_{2,2}>T_{2,1}$ independent of $t$ and $\sigma$ that 
\begin{align*}
       \frac{-t^{1+a}}{C_{3}\sigma^{1+a}\log (\sigma^{-1}t)}&(-\Delta-H)\eta_2
    = 9a(1-a)r^{-2-3a}-H r^{-3a}\\
    &\ge 9a(1-a)\frac{\delta(t)^{3(1-a)}}{2}r^{-5}
        \ge \frac{-2t^{1+a}}{C_3 \sigma^{1+a} \log(\sigma^{-1}t)}P
\end{align*}
in $B_{\ep}^{5}\setminus B_{\delta(t)}^{5}$ for $t>T_{2,2}$.
Moreover, by Lemma \ref{11-sub-lem-1}, we obtain
$\eta_2<\upsi$ at $r=\ep$ for $t>T_{2,3}$ with some $T_{2,3}>T_{2,2}$ independent of $\sigma$. Therefore, by the maximum principle and \eqref{11-sub-1}, we obtain the latter inequality of \eqref{daiji-3}. Finally, we note that \eqref{daiji-4} is obtained by a similar argument.
\end{proof}

\begin{lem}\label{11-sub-lem-3}
Define $\uPsi:= \sigma (t\log t)^{-1}\psi_1+ M_{1}\upsi$,
where $\upsi$ is as in Lemma~\ref{11-sub-lem-1}.
There exist $M_{1}, T_3>2$ independent of $\sig$
such that $\uPsi\leq 0$ in $B_{\delta(t)}^5$ for all $t>T_3$.
\end{lem}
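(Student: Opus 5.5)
The plan is to follow the proof of Lemma~\ref{12-sub-lem-3} step by step. The one new feature is that for $N=11$ we have $N-3\gamma-2=0$. Because of this, the double integral of $P$ over $(\delta,1/2)$ gives a logarithm, and this logarithm must be matched against the factor $(t\log t)^{-1}$.

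\emph{Integral representation.} By \eqref{problem-sub-2}, Lemma~\ref{11-sub-lem-1} and Lemma~\ref{11-sub-lem-2}, $\Delta\upsi\ge 0$ near $r=0$ for each fixed $t$. Arguing as in Lemma~\ref{apriorilem1}, using the logarithmic bound \eqref{daiji-3}, we get $r^{4}\upsi'\to 0$ as $r\downarrow 0$. This gives
\begin{equation*}
\upsi(r)=\upsi(1/2)+\int_{r}^{1/2}\int_{0}^{\rho}\frac{s^{4}}{\rho^{4}}\left[H\upsi-2P(s,t)+(2P,\psi_1)_{L^2}\psi_1\right]ds\,d\rho ,\qquad r\in[0,1/2).
\end{equation*}
Next we bound the error terms. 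By Lemma~\ref{11-sub-lem-1}~(ii) and \eqref{11-sub-1}, the contributions of $\upsi(1/2)$, $H\upsi_+$ and $(2P,\psi_1)\psi_1$ are $O(\sigma^{1+a}t^{-1-a})+O(\sigma^2|\log\sigma|t^{-2}(\log t)^{-1})$. The term $-H\upsi_-$ is handled by \eqref{daiji-3}: near $0$ it is only $O(\sigma(t\log t)^{-1}|\log r|)$, and after the double integral with weight $s^4/\rho^4$ it contributes $O(\sigma(t\log t)^{-1})\cdot o(1)$, uniformly in $r\le\delta$. Finally, since $P\ge 0$, we may drop the part of the integral of $P$ over $s<\delta$.

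\emph{Main term.} Using \eqref{mikata-4} and $\psi_1\simeq 1$, we bound the main term from below:
\begin{equation*}
2\int_{\delta}^{1/2}\int_{\delta}^{\rho}\frac{s^4}{\rho^4}P\,ds\,d\rho\gtrsim\frac{\sigma^2}{(t\log t)^2}\int_{\delta}^{1/2}\rho^{-4}\log\frac{\rho}{\delta}\,d\rho\simeq\frac{\sigma^2}{(t\log t)^2}\,\delta^{-3}.
\end{equation*}
By \eqref{defdel-11}, $\delta^{-3}\simeq 2t\log t/(\sigma\psi_1(\delta))$. Hence the main term is $\gtrsim\sigma (t\log t)^{-1}$, with a constant $c_0>0$ independent of $\sigma$. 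This is the analogue of the identity $\sigma t^{-1}/(\gamma(N-2\gamma-2))$ in the $N\ge12$ case. The extra logarithm arising from $N-3\gamma-2=0$ is harmless, because the inner integral $\int_\delta^\rho s^{-1}ds$ is dominated by $\rho$ of order $\delta$.

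\emph{Conclusion.} For $r\le\delta(t)$ we obtain
\[
\upsi\le -c_0\sigma(t\log t)^{-1}+O(\sigma^{1+a}t^{-1-a})+o(\sigma(t\log t)^{-1}),
\]
and $\sigma^{1+a}t^{-1-a}=o((t\log t)^{-1})$. On $B_\delta$ we have $\sigma(t\log t)^{-1}\psi_1\le C\sigma(t\log t)^{-1}$ by Lemma~\ref{fnolem}. Choosing $M_1>2C/c_0$ and then $T_3$ large, both independent of $\sigma$, gives $\uPsi\le 0$ in $B^5_{\delta(t)}$.

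The main obstacle is controlling the $-H\upsi_-$ term and the lower-order terms uniformly in $\sigma$, so that they are genuinely $o(\sigma(t\log t)^{-1})$ rather than merely $O$. I would first check this using the explicit logarithmic bound \eqref{daiji-3} together with the smallness of $\delta(t)$.
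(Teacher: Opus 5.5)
Your proposal is correct and follows essentially the same route as the paper: the integral representation obtained from $r^{4}\upsi'\to 0$, discarding the part of $P$ over $s<\delta$, and the lower bound $\gtrsim\sigma^2(t\log t)^{-2}\delta^{-3}\simeq\sigma(t\log t)^{-1}$ obtained via \eqref{defdel-11}. The one difference is in the $H\upsi$ term, where the paper uses only the one-sided bound $\upsi\le C\sigma^{1+a}t^{-1-a}\psi_1$ from Lemma~\ref{11-sub-lem-1}(ii): the piece $-H\upsi_-\le 0$ only helps an upper bound for $\upsi(r)$, so the ``main obstacle'' you flag does not arise, and \eqref{daiji-3} is needed only to justify $r^{4}\upsi'\to 0$.
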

\begin{proof}
It follows from \eqref{problem-sub-2} and
Lemmata \ref{11-sub-lem-1}, \ref{11-sub-lem-2} that $\Delta \upsi\ge 0$ around $r=0$ for any fixed $t$. Hence, by using Lemmata \ref{11-sub-lem-1}, \ref{11-sub-lem-2} again and
the same argument with Lemma \ref{apriorilem1}, we arrive at $r^{4}\upsi' \downarrow 0$ as $r\downarrow 0$. 
Hence,
\begin{equation*}
\upsi(r)
=\upsi(1/2)
+\int_{r}^{1/2}\int_{0}^{\rho}
\frac{s^4}{\rho^4}
(H\upsi-2P(s,t)+(2P,\psi_1)\psi_1)\,ds\,d\rho.
\end{equation*}
It follows from Lemma \ref{11-sub-lem-1} that
\begin{equation*}
\upsi(r)\le
-2\int_{\delta}^{1/2}\int_{\delta}^{\rho}
\frac{s^4}{\rho^4}P(s,t)
\,ds\,d\rho + O(\sigma^{1+a} t^{-1-a})+ O(\sigma^{2} t^{-2})
\end{equation*}
in $B_{\delta}^m$. By using \eqref{mikata-1} and \eqref{mikata-2}, we have
\begin{align*}
&2\int_{\delta}^{1/2}\int_{\delta}^{\rho}
\frac{s^4}{\rho^4}
P(s,t)\,ds\,d\rho
\gtrsim\frac{\sigma^2}{(t\log t)^{2}}
\int_{\delta}^{1/2}\rho^{-4}
\int_{\delta}^{\rho} s^{-1}
\,ds\,d\rho\\
&=
\frac{\sigma^2}{(t\log t)^{2}}
\left(
  \frac83\log{2}-\frac89+\frac83\log{\del}+\frac19\del^{-3}
\right)
\gtrsim
\frac{\sigma^2}{(t\log t)^{2}\delta^{3}}
\gtrsim\frac{\sigma}{t\log t}.
\end{align*}
As a result, the conclusion follows.
\end{proof}
\begin{prop}
\label{prop-sub-11}
There exists $T>0$ and $\sigma>0$ such that the function
\[
\underline{\Psi}\in W^{1,\infty}_{\mathrm{loc}}((e,\infty)\,;H^{1}_{0}(B_1^5))
\]
defined in Lemma \ref{11-sub-lem-3} is a sub-solution of \eqref{eq-diff-11dim}. As a result, for any $\ep>0$, there exist $M>1$ and $c>0$ such that $\Psi>c(t\log t)^{-1}\psi_1$ in $B_{1}^5 \setminus B_{\ep}^5$ for any $t>MT$.
\end{prop}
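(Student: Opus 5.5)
I will mirror the proof of Proposition \ref{prop-sub-12}, replacing $t^{-1}$ by $(t\log t)^{-1}$ and using Lemmata \ref{11-sub-lem-1}--\ref{11-sub-lem-3}. The quantity to make nonpositive is
\begin{equation*}
\mathcal{N}[\uPsi]:=(\partial_t-\Delta-H)\uPsi+\Bigl(\frac{2N-4}{r^2}+H\Bigr)r^{3}F(r^{-3}\uPsi),
\end{equation*}
where $\uPsi=\sigma(t\log t)^{-1}\psi_1+M_1\upsi$. Using $(-\Delta-H)\psi_1=0$ and \eqref{problem-sub-2}, I get
\begin{equation*}
(\partial_t-\Delta-H)\uPsi=-\sigma\frac{1+\log t}{(t\log t)^2}\psi_1+M_1\upsi_t-2M_1P+2M_1(P,\psi_1)\psi_1 .
\end{equation*}

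First I control $\upsi_t$. On $(\delta(t),1]$, \eqref{daiji-4} gives $\upsi_t\lesssim \sigma^{1+a}t^{-2-a}r^{-3a}\log(t/\sigma)\psi_1$. I will absorb $\log(t/\sigma)$ into $\sigma^{-b}t^{a/2}$ for some $b<a$, and then use Young's inequality as in \eqref{upsit-1-a} to split $r^{-3a}$ between a pure $\psi_1$ term and a term dominated by $r^{-5}\sigma^2(t\log t)^{-2}\psi_1^2\simeq P$ from \eqref{mikata-4}. On $(0,\delta(t)]$, \eqref{daiji-4} gives $\upsi_t\lesssim -\sigma t^{-2}(\log t)^{-1}\log r$, and \eqref{mikata-3} shows this is $\ll t^{-a/4}P$ since $P\gtrsim r^{-2}\sigma(t\log t)^{-1}$. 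The target bound is
\begin{equation*}
\upsi_t\le C\sigma t^{-2-a/4}\psi_1+t^{-a/4}P .
\end{equation*}
The extra $\log t$ in the coefficients is harmless here because $t^{-a/4}$ beats any power of $\log t$.

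Second I treat the nonlinearity. By Lemma \ref{11-sub-lem-3}, $\uPsi\le0$ in $B^5_{\delta(t)}$, so $F(r^{-3}\uPsi)=0$ there. Outside, the mean value theorem, $F'\le\min\{u_+,1\}$ and $\upsi\lesssim\sigma^{1+a}t^{-1-a}$ give
\begin{equation*}
\Bigl(\frac{2N-4}{r^2}+H\Bigr)r^3F(r^{-3}\uPsi)\le P+C(\sigma/t)^{a}(\log t)P\le \frac32P .
\end{equation*}
For the projection term, Lemma \ref{11-sub-lem-1}(ii) gives
\begin{equation*}
2M_1(P,\psi_1)\le CM_1\sigma^2|\log\sigma|\,t^{-2}(\log t)^{-1}.
\end{equation*}
The decisive negative term is $-\sigma(1+\log t)(t\log t)^{-2}\psi_1\simeq-\sigma t^{-2}(\log t)^{-1}\psi_1$. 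Both are of the same order $t^{-2}(\log t)^{-1}$, so I need $CM_1\sigma|\log\sigma|<1/2$. This holds for small $\sigma$, since $M_1$ is independent of $\sigma$. This is exactly where the $\log\log$ correction is balanced, and I expect it to be the main obstacle: the constants must come out uniform in $\sigma$, so the order of choices matters. I fix $M_1$ from Lemma \ref{11-sub-lem-3}, then choose $\sigma$ small, then $T$ large. The $P$-coefficients then combine as $\tfrac32-M_1(2-T^{-a/4})\le0$ and the $\psi_1$-coefficients as $\sigma t^{-2}(\log t)^{-1}\bigl(CM_1\sigma|\log\sigma|+M_1T^{-a/4}-\tfrac12\bigr)\le0$.

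Finally I verify $\uPsi(\cdot,T)\le\Psi(\cdot,T)$ exactly as in Proposition \ref{prop-sub-12}. By Hopf's lemma and Proposition \ref{p-prop-1}, $\Phi(r,T)\ge c(T)(-\log r+\tfrac13)\psi_1$. On $r\ge\delta(T)$, Lemma \ref{11-sub-lem-1}(ii) gives $r^{-3}\uPsi(r,T)\lesssim(-\log r+\tfrac13)(-\log\delta+\tfrac13)^{-1}\psi_1$, and on $B_{\delta}$ we have $\uPsi\le0$. I therefore shrink $\sigma$ further so that $-\log\delta(T)\gg1$; this preserves the earlier smallness requirements. The comparison principle, valid because $-F$ is nonincreasing, gives $\Psi\ge\uPsi$ for $t>T$. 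On $B^5_1\setminus B^5_\ep$, Lemma \ref{11-sub-lem-1}(iii) gives $M_1|\upsi|\lesssim_\ep\sigma^{1+a}t^{-1-a}\psi_1=o((t\log t)^{-1})\psi_1$. Hence $\Psi>c(t\log t)^{-1}\psi_1$ there for $t>MT$.
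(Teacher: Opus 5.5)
Your proposal is correct and follows essentially the paper's proof. The shared steps are: the same expansion of $(\partial_t-\Delta-H)\uPsi$; the same Young-inequality absorption of $\upsi_t$ into a $\psi_1$-term plus $t^{-a/4}P$; the mean-value bound $\le\frac32P$ for the nonlinearity; balancing $2M_1(P,\psi_1)\lesssim M_1\sigma^2|\log\sigma|\,t^{-2}(\log t)^{-1}$ against $-\sigma(1+\log t)(t\log t)^{-2}\psi_1$ by taking $\sigma$ small; and the initial comparison via the monotonicity of $r^3(-\log r+\frac13)$, with $\sigma$ shrunk after $T$ is fixed (legitimate because every threshold in $t$ is uniform in $\sigma$). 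Your deviations, namely using $\uPsi\le0$ on $B^5_{\delta(t)}$ instead of the mean-value bound there and Lemma~\ref{11-sub-lem-1}(iii) instead of \eqref{daiji-3} for the final lower bound, are harmless. One point to tighten: in the mean-value step you need $\upsi_+\lesssim\sigma^{1+a}t^{-1-a}\psi_1$ with the factor $\psi_1$ from Lemma~\ref{11-sub-lem-1}(ii), not merely $\upsi\lesssim\sigma^{1+a}t^{-1-a}$, since $P$ vanishes like $\psi_1^2$ at $r=1$.
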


\begin{proof}
We take $b=4a/5$. Then, there exists $\sigma_{0}$ independent of $\sigma$ such that
$\sigma^{a}\log (\sigma^{-1})\le \sigma^b$ for any $\sigma<\sigma_0$. Then,
it follows from \eqref{daiji-4} and \eqref{mikata-3} that 
\begin{align*}
\upsi_t \le C \sig^{1+b}t^{-2-\frac{a}{2}}r^{-3a}\psi_1
&\leq
Ct^{-2-\frac{a}{2}}
\left[
  (1-b)(\sig^{1-b})^{\frac{1}{1-b}}
  +b (r^{-3a}\sig^{2b})^{1/b}
\right]\psi_1\notag
\\&
\le C t^{-2-\frac{a}{2}}(\sigma+r^{-3a/b}\sigma^2)\psi_1 \quad \text{in $B_{1}^5\setminus B_{\delta}^5$}
\end{align*}
for any $t>T_{4,1}$ and $\sigma<\sigma_0$, where $C>0$ and $T_{4,1}>0$ are independent of $\sigma$ and $t$.
Combining it with \eqref{daiji-4}, \eqref{mikata-3} and \eqref{mikata-4},
there exists $T_{4,2}>0$ such that
\begin{equation}
\label{upsit-11}
\upsi_t \le \sigma t^{-2-\frac{a}{2}}\psi_1+ t^{-\frac{a}{4}} P(r,t)\quad \text{in $B_{1}^5$}
\end{equation}
for all $t>T_{4,2}$ and $\sigma<\sigma_0$.
Moreover, by combining the mean-value theorem with Lemma \ref{11-sub-lem-1},
\eqref{mikata-3}, \eqref{mikata-4} and \eqref{bp1}, we obtain
\begin{align*}
&\left(\frac{2N-4}{r^2}+H\right)
r^{3} F(r^{-3}\uPsi)
\le P+
M_1 \left(\frac{2N-4}{r^2}+ H\right)
F'\left(
    r^{-3}\left(
        \frac{\sigma \psi_1}{t\log t}+M_{1}\upsi_{+}
\right)\right)
\upsi_{+}
\\
&\le P+ C(\sigma t^{-1})^{1+a}r^{-2}\psi_1
\min\left\{1, r^{-3}\left(\frac{\sigma }{t\log t}+ M(\sigma t^{-1})^{1+a}\right)\psi_1 \right\}\le \frac{3P}{2}
\end{align*}
for any $t>T_{4,3}$, where $T_{4,3}>T_{4,2}$ is independent of $\sigma$.
Therefore, by using \eqref{upsit-11} and Lemma \ref{11-sub-lem-1}, we deduce that there exists $T>T_{4,3}$ and $\sigma_1<\sigma_0$ such that  
\begin{align*}
(&\partial_t-\Delta-H)\uPsi +\left(\frac{2N-4}{r^2}+ H\right)r^{3}F(r^{-3}\uPsi)\\
&\le -\sigma (t\log t)^{-2}(1+\log t)\psi_1+M_{1}\upsi_{t}+ M_{1}(2P,\psi_1)_{L^2}\psi_1+(\frac{3}{2}-2M_1) P\\
&\le -\frac{\sigma}{t^2\log t}\psi_1(1+(\log t)^{-1}-2M_1 C\sigma|\log \sigma|-M_1 t^{-a/2}(\log t))\\
&\qquad +\left(\frac{3}{2}-2M_1+M_1 t^{-a/4}\right) P\\
&
\le 0\qquad \text{in $B_{1}^5$}
\end{align*}
for any $t>T$ and $\sigma<\sigma_1$. Finally, we fix $T>0$ above and show $\uPsi(r,T)\le \Psi(r,T)$ in $B_{1}^5$. Since 
\begin{equation*}
    r^{3}\left(-\log r+{1}/{3}\right)\geq
    \delta^{3}\left(-\log \delta+{1}/{3}\right)
    \quad\text{for any $r>\delta(t)$,}
\end{equation*}
by using Lemmata \ref{11-sub-lem-1} and \ref{11-sub-lem-3}, we deduce that
\begin{equation*}
   r^{-3} \uPsi(r,T)
   \le C\left(-\log r + \frac{1}{3}\right)
   \left(-\log \delta+\frac{1}{3}\right)^{-1}
   \left(1+\frac{CM_{1}\sigma^a\log T}{T^{a}}\right)\psi_1 \quad \text{in $B_{1}^5$.}
\end{equation*}
Moreover, by the Hopf's lemma and Proposition \ref{p-prop-1}, we have
\begin{equation*}
r^{-3}\Psi(r,T)=\Phi(r,T)=V_{*}-u(r,T)\ge c(T)
\left(-\log r + {1}/{3}\right) \psi_{1}.
\end{equation*}
Now, taking $\sigma>0$ sufficiently small, we obtain the result. Therefore, we can deduce that $\uPsi$ is aw sub-solution of \eqref{eq-diff-11dim}. 
The remaining assertion follows from a comparison principle and \eqref{daiji-3}.
\end{proof}
Note that in the case $N=11$, Proposition \ref{prop-outer} follows from Propositions \ref{prop-super-11} and \ref{prop-sub-11}.

\bigskip
 \noindent
 {\bf Acknowledgements}\\
 KK was supported by JSPS KAKENHI Grant Number 26KJ0083. YO was supported by JSPS KAKENHI Grant Number 26KJ0018. We thank Sho Katayama for bringing the result of \cite{KM25} to our attention and for fruitful discussions on the analyticity argument.

\appendix

\section{Some basic lemmata}
We first quote the improved Hardy inequality obtained in \cite{BV}.
\begin{lem}
Let $\lambda_1>0$ be the first eigenvalue of $-\Delta_{D}$ in $B_{1}^{2}$. Then,
\label{hardy}
\begin{equation*}
\int_{B_r}|\nabla \xi|^2 -\frac{(N-2)^2}{4|x|^2}\xi^2\,dx\ge \frac{\lambda_1}{r^2}\int_{B_r}\xi^2\,dx\quad \text{for all $\xi\in C^{1}_{c}(B_r)$.}
\end{equation*}
\end{lem}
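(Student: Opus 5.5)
The statement is the improved Hardy inequality of Brezis--V\'azquez. I would prove it by the ground-state substitution $\xi = |x|^{-(N-2)/2}\eta$, which cancels the Hardy term exactly and reduces the inequality to a weighted Poincar\'e inequality for $\eta$. That weighted inequality is in turn the Dirichlet Poincar\'e inequality in the ``two-dimensional'' radial sense.

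\textbf{Step 1: ground-state substitution.} For $\xi\in C^1_c(B_r)$, first work on $B_r\setminus\{0\}$ and set $\eta:=|x|^{(N-2)/2}\xi$. A direct computation gives
\begin{equation*}
|\nabla\xi|^2=|x|^{-(N-2)}\Big(|\nabla\eta|^2-(N-2)\frac{\eta\,x\cdot\nabla\eta}{|x|^2}+\frac{(N-2)^2}{4|x|^2}\eta^2\Big).
\end{equation*}
The cross term is $-\tfrac{N-2}{2}|x|^{-N}x\cdot\nabla(\eta^2)$. Since $\mathrm{div}(x|x|^{-N})=0$ away from the origin, it integrates to zero. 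The boundary contribution on a small sphere $\partial B_\rho$ is of order $\rho^{N-1}\rho^{1-N}\rho^{N-2}\xi(0)^2\to0$, because $N\ge3$. Hence
\begin{equation*}
\int_{B_r}|\nabla\xi|^2-\frac{(N-2)^2}{4|x|^2}\xi^2\,dx=\int_{B_r}|x|^{-(N-2)}|\nabla\eta|^2\,dx .
\end{equation*}
The inequality therefore becomes
\begin{equation*}
\int_{B_r}|x|^{2-N}|\nabla\eta|^2\,dx\ge \frac{\lambda_1}{r^2}\int_{B_r}|x|^{2-N}\eta^2\,dx
\end{equation*}
for all $\eta=|x|^{(N-2)/2}\xi$ with $\xi\in C^1_c(B_r)$.

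\textbf{Step 2: reduction to a two-dimensional Poincar\'e inequality.} Write $\eta=\eta(\rho\theta)$ in polar coordinates. Since $|\nabla\eta|^2\ge|\partial_\rho\eta|^2$, it suffices to show, for each fixed direction $\theta\in S^{N-1}$,
\begin{equation*}
\int_0^r|\partial_\rho\eta|^2\rho\,d\rho\ge\frac{\lambda_1}{r^2}\int_0^r\eta^2\rho\,d\rho ,
\end{equation*}
because $|x|^{2-N}dx=\rho\,d\rho\,d\theta$. This is exactly the Dirichlet Poincar\'e inequality for radial functions on the disk $B^2_r\subset\mathbb{R}^2$. There the first eigenfunction is radial, so the first eigenvalue $\lambda_1r^{-2}$ of $-\Delta_D$ in $B_r^2$ is also the infimum of the radial Rayleigh quotient. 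The function $\rho\mapsto\eta(\rho\theta)$ vanishes at $\rho=r$ and is bounded with $\eta=O(\rho^{(N-2)/2})$ near $0$. As a radial function on $B^2_r$ it therefore lies in $H^1_0(B^2_r)$; the $\rho\partial_\rho\xi$ part of $\partial_\rho\eta$ is harmless since $N\ge3$. Integrating over $\theta$ then gives the claim.

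\textbf{Main obstacle.} The only delicate point is justifying the integration by parts at the origin in Step 1, and the membership $\eta(\cdot\,\theta)\in H^1_0$ in Step 2. Both follow from $\xi$ being $C^1$ near $0$ and $N\ge3$. I would handle them by cutting out $B_\rho$ and letting $\rho\to0$, rather than smoothing $\xi$ near the origin.
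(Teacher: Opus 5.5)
Your proof is correct and complete. The paper itself does not prove this lemma; it quotes it from Brezis--V\'azquez \cite{BV}. That argument is set up for an arbitrary bounded domain $\Omega$, with constant $\lambda_1(\omega_N/|\Omega|)^{2/N}$. It first applies Schwarz symmetrization (P\'olya--Szeg\H{o} for the gradient term, the Hardy--Littlewood inequality for the weighted term) to reduce to radial functions on a ball. Only then does it make the substitution $u=|x|^{-(N-2)/2}v$, which turns the quotient into the radial Rayleigh quotient on a disc in $\mathbb{R}^2$.

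You use the same substitution but bypass symmetrization. After the exact identity $\int_{B_r}|\nabla\xi|^2-\frac{(N-2)^2}{4|x|^2}\xi^2\,dx=\int_{B_r}|x|^{2-N}|\nabla\eta|^2\,dx$, you drop the tangential part of $\nabla\eta$ and apply the two-dimensional Dirichlet Poincar\'e inequality along each ray. This yields exactly the constant $\lambda_1 r^{-2}$ because every ray from the origin has length $r$ in $B_r$. Your route is more elementary and is all the stated lemma (a ball centred at the origin) requires. The symmetrization route is what buys the volume-normalized inequality on general domains.

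Two cosmetic remarks. First, the observation that the first eigenfunction of the disc is radial is not needed. You only use the lower bound $\int|\nabla G|^2\ge\lambda_1 r^{-2}\int G^2$, which holds for every $G\in H^1_0(B^2_r)$. Second, in the $H^1_0$ check, the term that genuinely uses $N\ge3$ is $\frac{N-2}{2}\rho^{(N-4)/2}\xi$, not $\rho^{(N-2)/2}\partial_\rho\xi$. It gives $|\partial_\rho\eta|^2\rho\lesssim\rho^{N-3}\in L^1(0,r)$, and then the origin is removable for $H^1$ in two dimensions.
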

Next, we introduce a lemma which gives an optimality of the Hardy inequality.
\begin{lem}
\label{optimality-1}
For any $\ep>0$, there exist sequences $r_i$ and $\xi_i\in C^{0,1}_{c}(B_1)$ satisfying $r_{i+1}<r_i$ for any $i\in \N$ and $\mathrm{supp}(\xi_i)=[r_{i+1},r_i]$ such that $r_{i}\downarrow 0$ as $i\to \infty$ and 
\begin{equation*}
    \int_{B_1}|\nabla \xi_i|^2 -\frac{(N-2)^2+\ep}{4|x|^2}\xi^{2}_{i}<0 \quad\text{for all $i\in \N$.}
\end{equation*}
\end{lem}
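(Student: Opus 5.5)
The plan is to pass to Emden--Fowler variables, where the Hardy quadratic form becomes a one-dimensional form with constant coefficients. Then I will choose $\xi_i$ as the radial profile corresponding to one arch of a sine wave on consecutive intervals in the logarithmic variable. Set $a:=(N-2)/2$, $s:=-\log r$, and for a function $\eta$ of $s$ with compact support define $\xi(x):=r^{-a}\eta(-\log r)$, $r=|x|$.

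The first step is to rewrite the form. Since $\xi'(r)=-r^{-a-1}\bigl(a\eta+\eta'\bigr)$ and $r^{-2a-2}r^{N-1}=r^{-1}$, the change of variables $ds=-dr/r$ gives
\begin{equation*}
\int_{B_1}|\nabla\xi|^2\,dx=N\omega_N\int (a\eta+\eta')^2\,ds
=N\omega_N\int \bigl(\eta'^2+a^2\eta^2\bigr)\,ds .
\end{equation*}
Here the cross term $2a\int\eta\eta'\,ds=a[\eta^2]$ vanishes because $\eta$ has compact support. In the same way,
\begin{equation*}
\int_{B_1}\frac{c}{|x|^2}\xi^2\,dx=N\omega_N\, c\int\eta^2\,ds .
\end{equation*}
Taking $c=((N-2)^2+\ep)/4=a^2+\ep/4$, the quantity in the lemma equals
\begin{equation*}
N\omega_N\int\Bigl(\eta'^2-\frac{\ep}{4}\eta^2\Bigr)\,ds .
\end{equation*}

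The second step is the choice of $\eta$. Fix $L>2\pi/\sqrt{\ep}$, put $s_i:=iL$ and $r_i:=e^{-iL}$, so that $r_i\downarrow0$. Let $\eta_i(s):=\sin\bigl(\pi(s-s_i)/L\bigr)$ for $s\in[s_i,s_{i+1}]$ and $\eta_i:=0$ otherwise. This $\eta_i$ is Lipschitz, so $\xi_i\in C^{0,1}_c(B_1)$ with $\mathrm{supp}\,\xi_i=\{r_{i+1}\le|x|\le r_i\}$; the factor $r^{-a}$ is smooth and bounded on this annulus. A direct computation gives $\int\eta_i'^2\,ds=(\pi/L)^2L/2$ and $\int\eta_i^2\,ds=L/2$. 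Therefore the form equals $N\omega_N\frac{L}{2}\bigl(\pi^2/L^2-\ep/4\bigr)<0$ by the choice of $L$, for every $i$.

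There is no real obstacle here. The only point needing care is the justification of the change of variables and the vanishing of the boundary term for Lipschitz (rather than $C^1$) test functions. This follows because $\eta_i$ is absolutely continuous and vanishes at $s_i$ and $s_{i+1}$, so all integrations by parts are legitimate on the compact annulus.
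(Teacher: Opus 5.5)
Your proof is correct and is essentially the paper's argument. The paper uses the same test functions, $\xi_i=r^{(2-N)/2}\sin(\frac12\ep_0^{1/2}\log r)$ restricted to consecutive zeros $r_i=e^{-2\pi i/\ep_0^{1/2}}$ with $\ep_0=\ep/2$; up to sign, this is your construction with $L=2\pi/\ep_0^{1/2}>2\pi/\sqrt{\ep}$. The only difference is in verifying negativity: the paper tests the identity $-\Delta\xi_i=\frac{(N-2)^2+\ep_0}{4r^2}\xi_i$ on the annulus against $\xi_i$, whereas you compute the quadratic form directly in the variable $s=-\log r$.
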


\begin{proof}
We use the method in \cite{M14}. Define $\xi_i=r^{(2-N)/2}\sin (\frac{1}{2}\ep_{0}^{1/2} \log r)\chi_{[r_{i+1}, r_{i}]}$ with $r_{i}=e^{-2\pi i/\ep^{1/2}_{0}}$ and $\ep_{0}=\ep/2$. Then, the result follows from the
equation
\begin{equation*}
-\Delta \xi_{i}=\frac{(N-2)^2+\ep_0}{4r^2}\xi_{i} \quad \text{in $B_{r_i}\setminus \overline{B_{r_{i+1}}}$}.
\end{equation*}
\end{proof}

We next introduce the following intersection result. Since the proof is the same as that of Proposition \cite[Lemma 5.3]{KK25}, we omit the proof.
\begin{lem}
\label{intersec-lem-2}
Let $w_1$ and $w_2$ be radial (possibly singular) solutions of \eqref{eqofw} or \eqref{eqofW}. Assume that $w_1$ and $w_2$ are unstable in $B_{r_2}\setminus B_{r_1}$ for $0\le r_1<r_2<1$, where $B_{0}=\emptyset$. 
Then, $w_1(r)=w_2(r)$ for some $r\in (r_1,r_2)$.
\end{lem}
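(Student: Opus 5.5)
The approach is to show directly that $w_1-w_2$ must change sign in $(r_1,r_2)$. We argue by contradiction and assume $w_1\ne w_2$ on $(r_1,r_2)$. Then one of them lies above the other there, say $w_1>w_2$ on $(r_1,r_2)$. The difference $\eta:=w_1-w_2>0$ satisfies the linear equation
\begin{equation*}
-\Delta\eta=\frac{e^{w_1}-e^{w_2}}{w_1-w_2}\,\eta=:c(r)\,\eta \quad\text{in } B_{r_2}\setminus \overline{B_{r_1}}.
\end{equation*}
The forcing terms $f$ cancel because both functions solve the same equation up to the constant shift $\log\lambda$ absorbed in \eqref{sp-transform}. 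By convexity of $s\mapsto e^s$, $c(r)$ lies between $e^{w_2}$ and $e^{w_1}$. More precisely, $e^{w_2}< c< e^{w_1}$ wherever $w_1>w_2$.

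Next we exploit the instability of $w_2$ on the annulus. It gives some $\xi\in C^1_0(B_{r_2}\setminus \overline{B_{r_1}})$ with $\int|\nabla\xi|^2-e^{w_2}\xi^2<0$. Hence the first Dirichlet eigenvalue of $-\Delta-e^{w_2}$ on the annulus, or on a slightly smaller annulus $A'\Subset(r_1,r_2)$ containing the support of $\xi$, is negative. Let $\varphi>0$ be the corresponding principal eigenfunction on $A'$. On $A'$ the function $\eta$ is strictly positive and smooth, and $-\Delta\eta = c\eta\ge e^{w_2}\eta$. So $\eta$ is a positive supersolution of $-\Delta-e^{w_2}$ on $A'$.

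The key step is Picone's identity, or equivalently testing against $\varphi^2/\eta$. This gives
\begin{equation*}
\int_{A'}|\nabla\varphi|^2\ \ge\ \int_{A'}\frac{-\Delta\eta}{\eta}\varphi^2\ \ge\ \int_{A'}e^{w_2}\varphi^2,
\end{equation*}
which contradicts the negativity of the principal eigenvalue. Since $\eta>0$ on the closure $\overline{A'}$, no boundary terms arise. If instead $w_2>w_1$, the same argument works with the roles swapped, now using the instability of $w_1$ together with $c\ge e^{w_1}$. This is why both functions are assumed unstable.

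The main care point is the case $r_1=0$ when the functions are singular. There we must make sure the test functions are compactly supported away from $0$. This is automatic because $C^1_0(B_{r_2}\setminus \overline{B_{r_1}})$ with $B_0=\emptyset$ only gives the instability on the punctured ball. A density argument then lets us take $\xi$ supported away from the origin, since $\{0\}$ has zero capacity for $N\ge3$, so $A'$ can still be chosen compactly inside $(0,r_2)$.

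Finally, equality $w_1=w_2$ at a single point cannot be excluded this way, and it is exactly the desired conclusion. The contradiction therefore yields some $r\in(r_1,r_2)$ with $w_1(r)=w_2(r)$.
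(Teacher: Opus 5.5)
Your proof is correct and is the standard argument behind the paper's citation of \cite[Lemma 5.3]{KK25} (the paper omits its own proof): convexity of $s\mapsto e^s$ makes the positive gap $\eta$ a positive supersolution of the linearized operator at the lower solution, so Picone's identity (equivalently, Sturm comparison) makes the lower solution stable on the annulus, which contradicts the hypothesis. One small correction: for $r_1=0$ the set $B_{r_2}\setminus\overline{B_0}$ is the whole ball rather than a punctured one, so cutting the test function off near the origin is not automatic; it requires the density/capacity step you describe, which works because $e^{w}\le 4Nr^{-2}$ near $0$ (Lemma~\ref{apriorilem1}) keeps $\int e^{w}\xi^2$ finite and continuous under the cutoff when $N\ge 3$.
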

Finally, we obtain the following
\begin{lem}
\label{odelem}
Let $0=r_0<r_1<\cdots r_i=1$ with $i\ge 2$.
Assume that $b(r)\in C^{1}(0,1]$ and $b'<0$ for $(0,1]$. Let $z\in C^2(0,1]$ satisfies 
\begin{equation*}
    z''=b(r)z \quad\text{in $[0,1]$,}\quad  (-1)^j z>0 \quad\text{on $r_j<r<r_{j+1}$}
\end{equation*}
for any $0\le j\le i$. 
Then, for any fixed $j\ge 1$, 
\begin{equation*}
  \hat{z}(r):=(-1)^{j-1} (z(r_j+r)+z(r_j-r))>0 \quad \text{for $0<r<r_{j}-r_{j-1}$}.
\end{equation*}
In particular, we obtain $r_{j}-r_{j-1} \ge r_{j-1}-r_{j-2}$ for any $2\le j\le i$.
\end{lem}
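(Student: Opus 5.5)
The plan is a reflection argument about the node $r_j$, combined with a Wronskian comparison against the left lobe. Fix $j\ge 1$ and set $s:=(-1)^{j-1}$ and $L:=r_j-r_{j-1}$. The sign hypothesis then reads as follows: $sz>0$ on $(r_{j-1},r_j)$ and $sz<0$ on $(r_j,r_{j+1})$.

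Define $u(r):=s\,z(r_j-r)$. Then $u''=b(r_j-r)u$ and $u>0$ on $(0,L)$, with $u(L)=0$. A direct computation gives
\begin{equation*}
\hat z''=b(r_j-r)\hat z+\bigl(b(r_j+r)-b(r_j-r)\bigr)\,s\,z(r_j+r).
\end{equation*}
Since $b'<0$, the bracket is negative for $r>0$.

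\textbf{Step 1: behaviour at $r=0$.} We have $\hat z(0)=0$. Since $\hat z$ is even in $r$, also $\hat z'(0)=0$. Moreover $z''(r_j)=b(r_j)z(r_j)=0$, so $\hat z''(0)=0$. Expanding to fourth order gives
\begin{equation*}
\hat z(r)=\tfrac{1}{6}\,b'(r_j)\,\bigl(s z'(r_j)\bigr)\,r^4+o(r^4).
\end{equation*}
Because $sz$ decreases to $0$ at $r_j$, we have $sz'(r_j)\le 0$, and in fact $sz'(r_j)<0$ by ODE uniqueness. Together with $b'<0$, this shows $\hat z>0$ for small $r>0$.

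\textbf{Step 2: no zero before $L$.} Suppose $\rho\in(0,L)$ is the first zero of $\hat z$. If $r_j+\rho\ge r_{j+1}$, then $s z(r_j+\rho)\ge 0$, so $\hat z(\rho)\ge u(\rho)>0$, which is impossible. Hence $s z(r_j+r)<0$ on $(0,\rho]$, and the forcing term in the equation for $\hat z''$ is positive there. For the Wronskian $\mathcal W:=\hat z' u-\hat z u'$ this gives
\begin{equation*}
\mathcal W'=u\,\bigl(b(r_j+r)-b(r_j-r)\bigr)\,s z(r_j+r)>0,
\end{equation*}
with $\mathcal W(0)=0$, so $\mathcal W(\rho)>0$. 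On the other hand, $\hat z(\rho)=0$, $\hat z'(\rho)\le 0$ and $u(\rho)>0$ force $\mathcal W(\rho)\le 0$, a contradiction. Therefore $\hat z>0$ on $(0,L)$, which is the main claim.

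\textbf{Step 3: the length comparison, and its direction.} By continuity $\hat z(L)\ge 0$, and since $u(L)=0$ this means $s\,z(r_j+L)\ge 0$. So $r_j+L$ does not lie in $(r_j,r_{j+1})$, i.e.
\begin{equation*}
r_{j+1}-r_j\le r_j-r_{j-1}.
\end{equation*}
Shifting the index $j\mapsto j-1$ for $2\le j\le i$ gives $r_j-r_{j-1}\le r_{j-1}-r_{j-2}$. This is the opposite of the printed inequality $r_j-r_{j-1}\ge r_{j-1}-r_{j-2}$. The direction obtained here matches Sturm comparison: $b$ decreasing makes $z$ oscillate faster as $r$ increases, so the nodal intervals shrink to the right. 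I therefore expect that the printed "in particular" is a misprint for $r_j-r_{j-1}\le r_{j-1}-r_{j-2}$, which is what this argument actually proves.

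The main obstacle is Step 2: the sign of the forcing term holds only while $r_j+r$ stays in the right lobe. I handle this by the case split on whether $r_j+\rho$ lies beyond $r_{j+1}$. A secondary point is the degenerate start $\mathcal W(0)=0$, which is why Step 1 needs the fourth-order expansion to fix the sign of $\hat z$ near $0$.
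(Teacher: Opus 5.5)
There is a genuine gap in Step~2, in the case $r_j+\rho\ge r_{j+1}$. There you conclude $sz(r_j+\rho)\ge 0$, but $sz\ge 0$ holds only on $[r_{j+1},r_{j+2}]$; on $(r_{j+2},r_{j+3})$ it is negative again. Nothing prevents $r_{j+2}-r_j<r_j-r_{j-1}$. The right nodal intervals are the short ones, and if $b$ drops steeply near $r_j$, two of them fit inside the left one. Then $r_j+\rho$ can land in $(r_{j+2},r_{j+3})$, where neither $\hat z\ge u$ nor your Wronskian gives a contradiction (its derivative changes sign once $r_j+r$ passes $r_{j+1}$).

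This cannot be repaired, because positivity of $\hat z$ on all of $(0,r_j-r_{j-1})$ is false in general. Take $j=1$, $r_1=\pi/k_1$, $k_2=\tfrac73k_1$, $b=-k_1^2$ on $(0,r_1)$ and $b=-k_2^2$ on $(r_1,1]$, with $z(0)=0$. Then $z(r_1-r)=A\sin(k_1r)$ and $z(r_1+r)=-\tfrac37A\sin(k_2r)$, so
\begin{equation*}
\hat z(r)=A\Bigl(\sin(k_1r)-\tfrac37\sin\bigl(\tfrac73k_1r\bigr)\Bigr),\qquad \hat z(L)=-\tfrac{3\sqrt3}{14}A<0,\quad L=r_1-r_0,
\end{equation*}
while $r_3-r_1=\tfrac67L$. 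The first zero $\rho$ therefore has $r_1+\rho\in(r_3,r_4)$, which is exactly the case your Step~2 skips. Smooth $b$ into a strictly decreasing $C^1$ function and rescale $r$ so that a later node sits at $1$. By continuous dependence, $\hat z$ is still negative somewhere in $(0,L)$. The paper's one-line argument has the same hole. The inequality $\hat z''>b(r_j+r)\hat z$ with $\hat z(0)=\hat z'(0)=0$ forces $\hat z>0$ only up to the first positive zero of the comparison solution $sz(r_j+r)$, that is, up to $r_{j+1}-r_j$.

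What your argument does prove is enough for everything the paper needs. Set $\ell:=\min\{r_j-r_{j-1},r_{j+1}-r_j\}$. Your Wronskian argument with $u(r)=sz(r_j-r)$ is valid on $(0,\ell)$ and gives $\hat z>0$ there. You can even drop Step~1: $(\hat z/u)'=\mathcal W/u^2>0$ and $\hat z/u\to0$ as $r\to0$.

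For the length comparison, do not use Step~2; argue by contradiction instead. If $r_{j+1}-r_j>L$, the Wronskian argument applies on the whole of $(0,L)$, so $\hat z(L)\ge0$. But $\hat z(L)=sz(r_j+L)<0$. Hence $r_{j+1}-r_j\le r_j-r_{j-1}$. You are right that this is the correct direction and that the printed ``in particular'' is reversed; it is also just Sturm comparison.

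These two facts are exactly what the application in Lemma~\ref{non-degenerate-lem} uses, pairing each lobe with the next against a decreasing weight:
\begin{itemize}
\item $|z(r_j+r)|<|z(r_j-r)|$ for $0<r<r_{j+1}-r_j$;
\item the nodal intervals shrink.
\end{itemize}
So you should state and prove the lemma in that weaker form.
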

\begin{proof}
We note that $\hat{z}$ satisfies $\hat{z}(0)=\hat{z}'(0)=0$ and 
\begin{equation*}
\hat{z}''=b(r_{j}+r)\hat{z}-(b(r_{j}+r)-b(r_{j}-r)) (-1)^{j-1}z(r_j-r)> b(r_{j}+r)\hat{z} \hspace{2mm}\text{for $0<r<r_{j}-r_{j-1}$.}
\end{equation*}
As a result, we obtain $\hat{z}>0$ for $0<r<r_{j}-r_{j-1}$. Moreover, we 
claim that $r_{j+1}-r_{j}\le r_{j}-r_{j-1}$. Indeed, if not, we have
$0<\hat{z}(r_{j}-r_{j-1})=(-1)^{j-1}z(r_{j}+(r_{j}-r_{j-1}))<0$, which is a contradiction.
\end{proof}

\section{Definitions of non-integer dimensions and Lorentz spaces}
\label{sec-app-m}
In this appendix, we first introduce the notion of non-integer dimensions. For a radial function $u\in C^2[0,1]$ and 
$f\in C^{0}[0,1]$, we say that $u$ satisfies
$-\Delta u=f$ in $B_{1}^m$ if $(-r^{m-1}u')'=r^{m-1}f$ for all $r\in [0,1]$. Moreover, for $1<q<\infty$,
the spaces $L^q(B_{1}^m)$ and $W^{1,q}(B_{1}^m)$ are defined by
$$
L^q(B_{1}^m):=\left\{u: (0,1)\mapsto \mathbb{R}; \norm{u}_{L^q(B_{1}^m)}= \left(\int_{0}^{1}m\omega_m r^{m-1} |u|^{q}\,dr \right)^{1/q}<\infty \right\};
$$
$$
W^{1,q}(B_{1}^m):=\left\{u\in L^{q}(B_{1}^m); u_{r}\in L^{q}(B_{1}^m)\right\}.
$$
We recall that $\omega_m$ is defined by $\om_m:=\frac{\sqrt{\pi}^{m}}{\Gam\left(\frac{m}{2}+1\right)}$, where $\Gam$ is the Gamma function. Moreover, we note that $L^2(B_{1}^m)$ is a Hilbert space equipped with $$
(f,g)_{L^2(B_{1}^m)}=\int_{0}^{1} m\omega_m r^{m-1}f(r)g(r)\,dr.
$$
We remark that the standard elliptic existence and regularity theories remain valid in non-integer dimensions. 
The standard comparison principle for the parabolic equations also holds in non-integer dimensions. 

Next, we introduce Lorentz spaces for the integer dimension $N$. For a measurable function $f:B_1\to\R$, we define $\al_{f}:(0,\infty)\to\left[0,\infty\right]$ and $f^{\ast}:(0,\infty)\to\left[0,\infty\right]$ as 
\begin{align*}
  \al_{f}(\sig):=
  \left|
    \left\{ x\in B_{1}; |f(x)|>\sig
    \right\}
  \right|, \quad  f^{\ast}(\lam)
  :=
  \sup
  \left\{
    \sig\in (0,\infty);
    \al_{f}(\sig)
    >\lam
  \right\}.
\end{align*}
Here we define $\sup\varnothing=0$.
Let $p\in(0,\infty)$ and $q\in(0,\infty]$.
The Lorentz space $L^{p,q}(B_1)$ is defined as the set of measurable functions $f$ such that $\norm{f}_{L^{p,q}(B_1)}<\infty$, where
\begin{align*}
    \norm{f}_{L^{p,q}(B_1)}
    :=
    \left(
        \int_{0}^{\infty}
        \left(
          \lam^{\frac{1}{p}}
          f^{\ast}(\lam)
        \right)^{q}
        \,\frac{d\lam}{\lam}
    \right)^{1/q}.
\end{align*}
Finally, we quote the following result \cite[Lemma 2.1]{AR96}.
\begin{lem}
\label{apenprop}
Let $N\ge 3$ and
$f\in L^{\frac{N}{2},1}(B_1)$.
Assume that $u\in H^{1}_{0}(B_{1})$ is a solution of 
$$
-\Delta u=f(x) \quad \text{in $B_1$}.
$$
Then, $u\in C^{0}(B_1)$ and there exists $C>0$ depending only on $N$ such that 
$$
\norm{u}_{L^{\infty}(B_1)}\le C\lVert f\rVert_{L^{\frac{N}{2},1}(B_1)}.
$$
\end{lem}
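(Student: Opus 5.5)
The plan is to prove the bound through the Green representation and a Hölder inequality in Lorentz spaces, and then get continuity by approximation. Let $G(x,y)$ be the Dirichlet Green function of $-\Delta$ in $B_1$. By the maximum principle it is dominated by the fundamental solution:
\begin{equation*}
0\le G(x,y)\le c_N|x-y|^{2-N}\quad\text{for } x\neq y\in B_1 .
\end{equation*}

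First I identify $u$ with the Green potential $\tilde u(x):=\int_{B_1}G(x,y)f(y)\,dy$. The embedding $L^{\frac N2,1}(B_1)\subset L^{\frac N2}(B_1)\subset H^{-1}(B_1)$ holds because $N\ge 3$ and $\frac{N}{2}\ge\frac{2N}{N+2}$. Hence the weak problem $-\Delta u=f$, $u\in H^1_0(B_1)$, has exactly one solution. For $f\in C_c^\infty(B_1)$ this solution is $\tilde u$. For general $f$ I approximate by smooth compactly supported $f_k\to f$ in $L^{\frac N2,1}$; such functions are dense since the second index is $1<\infty$. Passing to the limit, in $H^1_0$ on one side and by the pointwise estimate below on the other, shows $u=\tilde u$ a.e.

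Next comes the core estimate. Fix $x$ and put $g_x(y):=|x-y|^{2-N}$. This function lies in the weak space $L^{\frac{N}{N-2},\infty}$, with norm bounded independently of $x$. Indeed $|\{y:g_x(y)>s\}|\le \omega_N s^{-N/(N-2)}$, so $g_x^*(\lambda)\le C\lambda^{-(N-2)/N}$. I then apply O'Neil's (Hardy--Littlewood rearrangement) form of Hölder's inequality, $\int |fg|\le \int_0^\infty f^*(\lambda)g^*(\lambda)\,d\lambda$. This gives
\begin{equation*}
|u(x)|\le c_N\int_0^\infty f^*(\lambda)\,C\lambda^{-\frac{N-2}{N}}\,d\lambda
= C\int_0^\infty \lambda^{\frac{2}{N}}f^*(\lambda)\,\frac{d\lambda}{\lambda}
= C\norm{f}_{L^{\frac N2,1}(B_1)} .
\end{equation*}
The bound is uniform in $x$, which proves the $L^\infty$ estimate. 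The exponent works out exactly because $L^{\frac N2,1}$ is the Köthe dual of $L^{\frac N{N-2},\infty}$. This is precisely why the second index must be $1$: the estimate fails in $L^{\frac N2}$, which explains the role of this space in Proposition~\ref{prop-super-11}.

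Finally, continuity. For the approximations $f_k$ above, the solutions $u_k$ are continuous by standard elliptic regularity. Linearity and the estimate just proved give $\norm{u-u_k}_{L^\infty}\le C\norm{f-f_k}_{L^{\frac N2,1}}\to 0$. So $u$ is a uniform limit of continuous functions and therefore $u\in C^0(B_1)$.

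The only delicate point is the rearrangement inequality and the computation of $g_x^*$ uniformly in $x$. Both are standard: rearranging $g_x$ is the same as rearranging $|y|^{2-N}$ restricted to a set of measure $|B_1|$, which only decreases $g_x^*$. The constant therefore depends only on $N$.
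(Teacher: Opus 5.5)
Your proof is correct. The domination $0\le G(x,y)\le c_N|x-y|^{2-N}$, the bound $g_x^*(\lambda)\le(\omega_N/\lambda)^{(N-2)/N}$ and the Hardy--Littlewood inequality $\int|fg|\le\int_0^\infty f^*g^*$ together produce exactly $C\int_0^\infty\lambda^{2/N}f^*(\lambda)\,\frac{d\lambda}{\lambda}$. That is the quantity the paper calls $\|f\|_{L^{\frac{N}{2},1}(B_1)}$.

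Two things follow from density of $C_c^\infty(B_1)$ in $L^{\frac{N}{2},1}(B_1)$ and the embedding $L^{\frac{N}{2},1}\subset L^{\frac{N}{2}}\subset H^{-1}$: the identification of the $H^1_0$ solution with the Green potential, and the continuity of $u$. You only ever apply the estimate to $f-f_k$, by linearity. So you never need a triangle inequality for the Lorentz functional, which as defined in the appendix is only a quasi-norm.

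The paper gives no proof of this lemma; it simply quotes it from \cite[Lemma~2.1]{AR96}. Your argument is therefore a self-contained substitute for a citation, not a variant of anything in the text.

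The standard alternative is Talenti's symmetrization comparison, $u^*(s)\le \frac{1}{N^2\omega_N^{2/N}}\int_s^{|B_1|}\tau^{\frac{2}{N}-2}\int_0^\tau f^*(\sigma)\,d\sigma\,d\tau$. Taking $s=0$ and applying Fubini gives the same bound. In fact both routes yield the same constant $\frac{1}{N(N-2)\omega_N^{2/N}}$, since $c_N=\frac{1}{N(N-2)\omega_N}$. The comparison is:
\begin{itemize}
\item Your kernel-based route is shorter, but it relies on the explicit pointwise Green bound for the Laplacian on a ball.
\item Symmetrization needs no kernel and carries over to divergence-form operators with merely measurable coefficients.
\end{itemize}
For the lemma as stated (the Dirichlet Laplacian on a ball), your version is entirely sufficient.
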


\section{List of symbols}
We collect some frequently used notation for convenience.
\subsection*{Constants}
\begin{tabular}{|c|c|}
\hline
Symbols 
& Descriptions 
\\[6pt] 
\hline
  $\lam^{\ast}$ & supremum of $\lambda$ for which \eqref{eq-intro-2} has a radial solution.
  \\
 $\alpha(\beta)$ & $\alpha(\beta)=v(0,\beta)=\beta-\log \lambda(\beta)$.\\
   $\lam_{\ast}$ & parameter for which \eqref{eq-intro-2} has the singular solution.
  \\
  $\lam_{h}$ & parameter $\lam_{\ast}$ for the case $f=f_h$. $\lam_{h}=2N-4+h$.\\
$H$ & constant defined in \eqref{DefH}.
\\
$\mu_1$ & first eigenvalue of $\cL$.\\
$\gam>0$& $\gam=\frac{1}{2}\left(N-2-\sqrt{(N-2)(N-10)}\right)$.\\
  $m$ & $m=N-2\gam$.   
  \\[6pt]
  \hline
\end{tabular}

\subsection*{Specific functions}
\begin{tabular}{|c|c|}
\hline
Symbols 
& Descriptions 
\\[6pt] 
\hline
$\phi_0(r)$ & unique solution to \eqref{Defphi0}. \\
 $(\lambda(\beta), v(r,\alpha(\beta)))$& solution to \eqref{eq-intro-2} described by $\beta$.\\
 $v_{\lambda}(r)$ & unique stable solution to \eqref{eq-intro-2} (defined for the case $\lambda<\lambda^*$).\\
$v^{*}(r)$ & unique solution to \eqref{eq-intro-2} with $\lambda=\lambda^{*}$.\\
$V_{*}(r)$ &  unique radial singular solution of \eqref{eq-intro-2}.\\ 
$f_h(r)$ &  function defined in \eqref{Deffh}. $f_0=0$. \\
$V_{h}(r)$ &  radial singular solution of \eqref{eq-intro-2} with $f=f_h$.\\
$\Phi(r,t)$ & $\Phi=V^{*}-u$.\\
$F(u)$ & $F(u)=e^{-u_{+}}-1+u_{+}$.\\
  $\phi_{1}(r)$ & first eigenfuntion of $\cL$ with $\norm{\phi}_{L^2(B_1)}=1$ and $\phi_{1}>0$ in $B_1$.\\
$w(r,\beta)$ & unique solution of \eqref{eqofw}. $w=v(r,\alpha(\beta))+\log \lambda$.\\
$w_0(r,\beta)$&  solution of \eqref{eqofw} for the case $f=0$.\\
$W(r)$ & unique solution of \eqref{eqofW}. $W=V_{*}+\log \lambda_{*}$.\\
$W_{h}(r)$ & solution of \eqref{eqofW} for $f=f_h$.\\
$K(r)$ & function defined in \eqref{defk}.\\  
  $\psi_{1}(r)$& function defined in Lemma~\ref{b-v-lem}.
  \\
  $w_{0}(r,\beta)$ & solution to \eqref{eqofw} with $f=0$.
  \\
  $W_{0}(r)$ & solution to \eqref{eqofW} with $f=0$.
  \\
  $\Psi(r,t)$ & $\Psi(r,t)=r^{\gamma}\Phi(r,t)$.\\
$\Tilde{\Psi}(r,t)$ & super-solutions constructed in Propositions \ref{prop-super-12} and \ref{prop-super-11}.\\
$\Tilde{\psi}(r,t)$ & correction terms for the super-solutions in Propositions \ref{prop-super-12} and \ref{prop-super-11}.\\
$\uPsi(r,t)$ & sub-solutions constructed in Propositions \ref{prop-sub-12} and \ref{prop-sub-11}.\\
$\upsi(r,t)$ & correction terms for the sub-solutions in Propositions \ref{prop-sub-12} and \ref{prop-sub-11}.\\
$\delta(t)$ & function defined separately in Lemmata \ref{12-sub-lem-1} and \ref{11-sub-lem-1}. \\
$P(r,t)$ & function defined separately in Lemmata \ref{12-sub-lem-1} and \ref{11-sub-lem-1}.
  \\[6pt]
  \hline
\end{tabular}
\subsection*{Operators}
\begin{tabular}{|c|c|}
\hline
Symbols 
& Descriptions 
\\[6pt] 
\hline
  $\cL$& $\cL=-\Del-\lam_{\ast}e^{V_{\ast}}$.
  \\
  $\cL_{h}$ & $\cL_{h}=-\Del-\lam_{h}e^{V_{h}}=-\Del-(2N-4)r^{-2}-h$.
  \\[6pt]
  \hline
\end{tabular}
{\small
\bibliographystyle{abbrv}
\bibliography{Grow_up_rate}

@article {BCMR,
    AUTHOR = {Brezis, Ha\"im and Cazenave, Thierry and Martel, Yvan and
              Ramiandrisoa, Arthur},
     TITLE = {Blow up for {$u_t-\Delta u=g(u)$} revisited},
   JOURNAL = {Adv. Differential Equations},
  FJOURNAL = {Advances in Differential Equations},
    VOLUME = {1},
      YEAR = {1996},
    NUMBER = {1},
     PAGES = {73--90},
      ISSN = {1079-9389},
   MRCLASS = {35K60 (35J65)},
  MRNUMBER = {1357955},
MRREVIEWER = {Song\ Mu\ Zheng},
}

@article {AR96,
    AUTHOR = {Alberico, Angela and Ricciardi, Tonia},
     TITLE = {Continuity properties for linear elliptic equations with
              lower-order terms},
   JOURNAL = {Rend. Accad. Sci. Fis. Mat. Napoli (4)},
  FJOURNAL = {Societ\`a{} Nazionale di Scienze, Lettere e Arti in Napoli.
              Rendiconto dell'Accademia delle Scienze Fisiche e Matematiche.
              Serie IV},
    VOLUME = {63},
      YEAR = {1996},
     PAGES = {7--16},
      ISSN = {0370-3568},
   MRCLASS = {35J25 (35B45 35J70)},
  MRNUMBER = {1451839},
MRREVIEWER = {Ya.\ A.\ Ro\u itberg},
}

@article {BV,
    AUTHOR = {Brezis, Haim and V\'azquez, Juan Luis},
     TITLE = {Blow-up solutions of some nonlinear elliptic problems},
   JOURNAL = {Rev. Mat. Univ. Complut. Madrid},
  FJOURNAL = {Revista Matem\'atica de la Universidad Complutense de Madrid},
    VOLUME = {10},
      YEAR = {1997},
    NUMBER = {2},
     PAGES = {443--469},
      ISSN = {0214-3577},
   MRCLASS = {35J65 (35B05 35P30 35Q55)},
  MRNUMBER = {1605678},
MRREVIEWER = {C.\ A.\ Swanson},
}

@article {BN87,
    AUTHOR = {Budd, C. and Norbury, J.},
     TITLE = {Semilinear elliptic equations and supercritical growth},
   JOURNAL = {J. Differential Equations},
  FJOURNAL = {Journal of Differential Equations},
    VOLUME = {68},
      YEAR = {1987},
    NUMBER = {2},
     PAGES = {169--197},
      ISSN = {0022-0396,1090-2732},
   MRCLASS = {35J65},
  MRNUMBER = {892022},
MRREVIEWER = {H.\ J.\ Kuiper},
       DOI = {10.1016/0022-0396(87)90190-2},
       URL = {https://doi-org.utokyo.idm.oclc.org/10.1016/0022-0396(87)90190-2},
}

@article {DF07,
    AUTHOR = {Dolbeault, Jean and Flores, Isabel},
     TITLE = {Geometry of phase space and solutions of semilinear elliptic
              equations in a ball},
   JOURNAL = {Trans. Amer. Math. Soc.},
  FJOURNAL = {Transactions of the American Mathematical Society},
    VOLUME = {359},
      YEAR = {2007},
    NUMBER = {9},
     PAGES = {4073--4087},
      ISSN = {0002-9947,1088-6850},
   MRCLASS = {35J60 (34C20 34C37 35B33 35J25 37C99)},
  MRNUMBER = {2309176},
MRREVIEWER = {Luisa\ Moschini},
       DOI = {10.1090/S0002-9947-07-04397-8},
       URL = {https://doi-org.utokyo.idm.oclc.org/10.1090/S0002-9947-07-04397-8},
}

@book {Dup,
    AUTHOR = {Dupaigne, Louis},
     TITLE = {Stable solutions of elliptic partial differential equations},
    SERIES = {Chapman \& Hall/CRC Monographs and Surveys in Pure and Applied
              Mathematics},
    VOLUME = {143},
 PUBLISHER = {Chapman \& Hall/CRC, Boca Raton, FL},
      YEAR = {2011},
     PAGES = {xiv+321},
      ISBN = {978-1-4200-6654-8},
   MRCLASS = {35-02 (35B08 35B32 35B35 35J91 35J92)},
  MRNUMBER = {2779463},
MRREVIEWER = {Jos\'e\ C.\ Sabina de Lis},
       DOI = {10.1201/b10802},
       URL = {https://doi-org.utokyo.idm.oclc.org/10.1201/b10802},
}

@book {GTBook,
    AUTHOR = {Gilbarg, David and Trudinger, Neil S.},
     TITLE = {Elliptic partial differential equations of second order},
    SERIES = {Classics in Mathematics},
      NOTE = {Reprint of the 1998 edition},
 PUBLISHER = {Springer-Verlag, Berlin},
      YEAR = {2001},
     PAGES = {xiv+517},
      ISBN = {3-540-41160-7},
   MRCLASS = {35-02 (35Jxx)},
  MRNUMBER = {1814364},
}

@article {GhGo,
    AUTHOR = {Ghergu, Marius and Goubet, Olivier},
     TITLE = {Singular solutions of elliptic equations with iterated
              exponentials},
   JOURNAL = {J. Geom. Anal.},
  FJOURNAL = {Journal of Geometric Analysis},
    VOLUME = {30},
      YEAR = {2020},
    NUMBER = {2},
     PAGES = {1755--1773},
      ISSN = {1050-6926,1559-002X},
   MRCLASS = {35J91 (35B40 35J61 35J75)},
  MRNUMBER = {4081330},
       DOI = {10.1007/s12220-019-00277-1},
       URL = {https://doi-org.utokyo.idm.oclc.org/10.1007/s12220-019-00277-1},
}

@article {Gidas,
    AUTHOR = {Gidas, B. and Ni, Wei Ming and Nirenberg, L.},
     TITLE = {Symmetry and related properties via the maximum principle},
   JOURNAL = {Comm. Math. Phys.},
  FJOURNAL = {Communications in Mathematical Physics},
    VOLUME = {68},
      YEAR = {1979},
    NUMBER = {3},
     PAGES = {209--243},
      ISSN = {0010-3616,1432-0916},
   MRCLASS = {35J25 (35B50)},
  MRNUMBER = {544879},
MRREVIEWER = {\`E.\ M.\ Saak},
       URL = {http://projecteuclid.org.utokyo.idm.oclc.org/euclid.cmp/1103905359},
}

@article {GW11,
    AUTHOR = {Guo, Zongming and Wei, Juncheng},
     TITLE = {Global solution branch and {M}orse index estimates of a
              semilinear elliptic equation with super-critical exponent},
   JOURNAL = {Trans. Amer. Math. Soc.},
  FJOURNAL = {Transactions of the American Mathematical Society},
    VOLUME = {363},
      YEAR = {2011},
    NUMBER = {9},
     PAGES = {4777--4799},
      ISSN = {0002-9947,1088-6850},
   MRCLASS = {35J91 (35B32 35B33 35J25 58E05)},
  MRNUMBER = {2806691},
MRREVIEWER = {Vicen\c tiu\ D.\ R\u adulescu},
       DOI = {10.1090/S0002-9947-2011-05292-X},
       URL = {https://doi-org.utokyo.idm.oclc.org/10.1090/S0002-9947-2011-05292-X},
}

@article {JL,
    AUTHOR = {Joseph, D. D. and Lundgren, T. S.},
     TITLE = {Quasilinear {D}irichlet problems driven by positive sources},
   JOURNAL = {Arch. Rational Mech. Anal.},
  FJOURNAL = {Archive for Rational Mechanics and Analysis},
    VOLUME = {49},
      YEAR = {1972/73},
     PAGES = {241--269},
      ISSN = {0003-9527},
   MRCLASS = {34B15},
  MRNUMBER = {340701},
MRREVIEWER = {Jean\ Mawhin},
       DOI = {10.1007/BF00250508},
       URL = {https://doi-org.utokyo.idm.oclc.org/10.1007/BF00250508},
}

@article{KK25,
      title={Singular solutions and bifurcation diagram of semilinear elliptic equations with general nonlinearity in two dimensions}, 
      author={Hiroaki Kikuchi and Kenta Kumagai},
      journal={preprint, arXiv:2511.08961},
      year={},
      eprint={2511.08961},
      archivePrefix={arXiv},
      primaryClass={math.AP},
      url={https://arxiv.org/abs/2511.08961}, 
}

@article {KiWe,
    AUTHOR = {Kikuchi, Hiroaki and Wei, Juncheng},
     TITLE = {A bifurcation diagram of solutions to an elliptic equation
              with exponential nonlinearity in higher dimensions},
   JOURNAL = {Proc. Roy. Soc. Edinburgh Sect. A},
  FJOURNAL = {Proceedings of the Royal Society of Edinburgh. Section A.
              Mathematics},
    VOLUME = {148},
      YEAR = {2018},
    NUMBER = {1},
     PAGES = {101--122},
      ISSN = {0308-2105,1473-7124},
   MRCLASS = {35J61 (35B32 35J25)},
  MRNUMBER = {3749337},
MRREVIEWER = {Rosa\ Pardo},
       DOI = {10.1017/S0308210517000154},
       URL = {https://doi-org.utokyo.idm.oclc.org/10.1017/S0308210517000154},
}

@article {Kor,
    AUTHOR = {Korman, Philip},
     TITLE = {Solution curves for semilinear equations on a ball},
   JOURNAL = {Proc. Amer. Math. Soc.},
  FJOURNAL = {Proceedings of the American Mathematical Society},
    VOLUME = {125},
      YEAR = {1997},
    NUMBER = {7},
     PAGES = {1997--2005},
      ISSN = {0002-9939,1088-6826},
   MRCLASS = {35J60},
  MRNUMBER = {1423311},
       DOI = {10.1090/S0002-9939-97-04119-1},
       URL = {https://doi-org.utokyo.idm.oclc.org/10.1090/S0002-9939-97-04119-1},
}

@article {Korman14,
    AUTHOR = {Korman, Philip},
     TITLE = {Global solution curves for self-similar equations},
   JOURNAL = {J. Differential Equations},
  FJOURNAL = {Journal of Differential Equations},
    VOLUME = {257},
      YEAR = {2014},
    NUMBER = {7},
     PAGES = {2543--2564},
      ISSN = {0022-0396,1090-2732},
   MRCLASS = {35J91 (35B07 35B09 35B40 35J25)},
  MRNUMBER = {3228976},
MRREVIEWER = {Alan\ V.\ Lair},
       DOI = {10.1016/j.jde.2014.05.045},
       URL = {https://doi-org.utokyo.idm.oclc.org/10.1016/j.jde.2014.05.045},
}

@article {Lin,
    AUTHOR = {Lin, Song Sun},
     TITLE = {Positive singular solutions for semilinear elliptic equations
              with supercritical growth},
   JOURNAL = {J. Differential Equations},
  FJOURNAL = {Journal of Differential Equations},
    VOLUME = {114},
      YEAR = {1994},
    NUMBER = {1},
     PAGES = {57--76},
      ISSN = {0022-0396,1090-2732},
   MRCLASS = {35J60 (35B05 35B40)},
  MRNUMBER = {1302134},
MRREVIEWER = {Jann-Long\ Chern},
       DOI = {10.1006/jdeq.1994.1140},
       URL = {https://doi-org.utokyo.idm.oclc.org/10.1006/jdeq.1994.1140},
}

@article {MP91,
    AUTHOR = {Merle, F. and Peletier, L. A.},
     TITLE = {Positive solutions of elliptic equations involving
              supercritical growth},
   JOURNAL = {Proc. Roy. Soc. Edinburgh Sect. A},
  FJOURNAL = {Proceedings of the Royal Society of Edinburgh. Section A.
              Mathematics},
    VOLUME = {118},
      YEAR = {1991},
    NUMBER = {1-2},
     PAGES = {49--62},
      ISSN = {0308-2105,1473-7124},
   MRCLASS = {35J65 (35B05 35P30)},
  MRNUMBER = {1113842},
MRREVIEWER = {Philip\ W.\ Schaefer},
       DOI = {10.1017/S0308210500028882},
       URL = {https://doi-org.utokyo.idm.oclc.org/10.1017/S0308210500028882},
}

@article {M14,
    AUTHOR = {Miyamoto, Yasuhito},
     TITLE = {Structure of the positive solutions for supercritical elliptic
              equations in a ball},
   JOURNAL = {J. Math. Pures Appl. (9)},
  FJOURNAL = {Journal de Math\'ematiques Pures et Appliqu\'ees. Neuvi\`eme
              S\'erie},
    VOLUME = {102},
      YEAR = {2014},
    NUMBER = {4},
     PAGES = {672--701},
      ISSN = {0021-7824,1776-3371},
   MRCLASS = {35J91 (35A24 35B09 35B32 35B33 35J20 35J25)},
  MRNUMBER = {3258127},
MRREVIEWER = {Vicen\c tiu\ D.\ R\u adulescu},
       DOI = {10.1016/j.matpur.2014.02.002},
       URL = {https://doi-org.utokyo.idm.oclc.org/10.1016/j.matpur.2014.02.002},
}

@article {M15,
    AUTHOR = {Miyamoto, Yasuhito},
     TITLE = {Classification of bifurcation diagrams for elliptic equations
              with exponential growth in a ball},
   JOURNAL = {Ann. Mat. Pura Appl. (4)},
  FJOURNAL = {Annali di Matematica Pura ed Applicata. Series IV},
    VOLUME = {194},
      YEAR = {2015},
    NUMBER = {4},
     PAGES = {931--952},
      ISSN = {0373-3114,1618-1891},
   MRCLASS = {35J91 (35B32 35B33 35J25)},
  MRNUMBER = {3357688},
MRREVIEWER = {Alessandro\ Maria\ Selvitella},
       DOI = {10.1007/s10231-014-0404-8},
       URL = {https://doi-org.utokyo.idm.oclc.org/10.1007/s10231-014-0404-8},
}

@article {M18,
    AUTHOR = {Miyamoto, Yasuhito},
     TITLE = {A limit equation and bifurcation diagrams of semilinear
              elliptic equations with general supercritical growth},
   JOURNAL = {J. Differential Equations},
  FJOURNAL = {Journal of Differential Equations},
    VOLUME = {264},
      YEAR = {2018},
    NUMBER = {4},
     PAGES = {2684--2707},
      ISSN = {0022-0396,1090-2732},
   MRCLASS = {34C10 (35B06 35B32 35J25 35J61 35J91)},
  MRNUMBER = {3737851},
MRREVIEWER = {Vicen\c tiu\ D.\ R\u adulescu},
       DOI = {10.1016/j.jde.2017.10.034},
       URL = {https://doi-org.utokyo.idm.oclc.org/10.1016/j.jde.2017.10.034},
}

@article {MN20,
    AUTHOR = {Miyamoto, Yasuhito and Naito, Y\=uki},
     TITLE = {Fundamental properties and asymptotic shapes of the singular
              and classical radial solutions for supercritical semilinear
              elliptic equations},
   JOURNAL = {NoDEA Nonlinear Differential Equations Appl.},
  FJOURNAL = {NoDEA. Nonlinear Differential Equations and Applications},
    VOLUME = {27},
      YEAR = {2020},
    NUMBER = {6},
     PAGES = {Paper No. 52, 25},
      ISSN = {1021-9722,1420-9004},
   MRCLASS = {35J61 (35A24)},
  MRNUMBER = {4160935},
       DOI = {10.1007/s00030-020-00658-4},
       URL = {https://doi-org.utokyo.idm.oclc.org/10.1007/s00030-020-00658-4},
}

@article {MN23,
    AUTHOR = {Miyamoto, Yasuhito and Naito, Y\=uki},
     TITLE = {Singular solutions for semilinear elliptic equations with
              general supercritical growth},
   JOURNAL = {Ann. Mat. Pura Appl. (4)},
  FJOURNAL = {Annali di Matematica Pura ed Applicata. Series IV},
    VOLUME = {202},
      YEAR = {2023},
    NUMBER = {1},
     PAGES = {341--366},
      ISSN = {0373-3114,1618-1891},
   MRCLASS = {35J61 (35A24 35B32 35B40)},
  MRNUMBER = {4531724},
       DOI = {10.1007/s10231-022-01244-4},
       URL = {https://doi-org.utokyo.idm.oclc.org/10.1007/s10231-022-01244-4},
}

@article {serrin,
    AUTHOR = {Ni, Wei-Ming and Serrin, James},
     TITLE = {Nonexistence theorems for singular solutions of quasilinear
              partial differential equations},
   JOURNAL = {Comm. Pure Appl. Math.},
  FJOURNAL = {Communications on Pure and Applied Mathematics},
    VOLUME = {39},
      YEAR = {1986},
    NUMBER = {3},
     PAGES = {379--399},
      ISSN = {0010-3640,1097-0312},
   MRCLASS = {35J65},
  MRNUMBER = {829846},
MRREVIEWER = {Maria\ J.\ Esteban},
       DOI = {10.1002/cpa.3160390306},
       URL = {https://doi-org.utokyo.idm.oclc.org/10.1002/cpa.3160390306},
}

@article {KumagaiJDE,
    AUTHOR = {Kumagai, Kenta},
     TITLE = {Classification of bifurcation diagrams for semilinear elliptic
              equations in the critical dimension},
   JOURNAL = {J. Differential Equations},
  FJOURNAL = {Journal of Differential Equations},
    VOLUME = {398},
      YEAR = {2024},
     PAGES = {290--318},
      ISSN = {0022-0396,1090-2732},
   MRCLASS = {35B32 (35B35 35J25 35J61)},
  MRNUMBER = {4730299},
       DOI = {10.1016/j.jde.2024.03.026},
       URL = {https://doi-org.utokyo.idm.oclc.org/10.1016/j.jde.2024.03.026},
}

@article {LLD,
    AUTHOR = {Liu, Yi and Li, Yi and Deng, Yinbin},
     TITLE = {Separation property of solutions for a semilinear elliptic
              equation},
   JOURNAL = {J. Differential Equations},
  FJOURNAL = {Journal of Differential Equations},
    VOLUME = {163},
      YEAR = {2000},
    NUMBER = {2},
     PAGES = {381--406},
      ISSN = {0022-0396,1090-2732},
   MRCLASS = {35J60 (35B05 35B40)},
  MRNUMBER = {1758703},
MRREVIEWER = {Alan\ V.\ Lair},
       DOI = {10.1006/jdeq.1999.3735},
       URL = {https://doi-org.utokyo.idm.oclc.org/10.1006/jdeq.1999.3735},
}

@article {Wan,
    AUTHOR = {Wang, Xuefeng},
     TITLE = {On the {C}auchy problem for reaction-diffusion equations},
   JOURNAL = {Trans. Amer. Math. Soc.},
  FJOURNAL = {Transactions of the American Mathematical Society},
    VOLUME = {337},
      YEAR = {1993},
    NUMBER = {2},
     PAGES = {549--590},
      ISSN = {0002-9947,1088-6850},
   MRCLASS = {35K57 (35B40)},
  MRNUMBER = {1153016},
MRREVIEWER = {Reinhard\ Redlinger},
       DOI = {10.2307/2154232},
       URL = {https://doi-org.utokyo.idm.oclc.org/10.2307/2154232},
}

@article {MN24,
    AUTHOR = {Miyamoto, Yasuhito and Naito, Y\=uki},
     TITLE = {A bifurcation diagram of solutions to semilinear elliptic
              equations with general supercritical growth},
   JOURNAL = {J. Differential Equations},
  FJOURNAL = {Journal of Differential Equations},
    VOLUME = {406},
      YEAR = {2024},
     PAGES = {318--337},
      ISSN = {0022-0396,1090-2732},
   MRCLASS = {35J61 (35A24 35B32 35B33)},
  MRNUMBER = {4765652},
MRREVIEWER = {Yuanze\ Wu},
       DOI = {10.1016/j.jde.2024.06.026},
       URL = {https://doi-org.utokyo.idm.oclc.org/10.1016/j.jde.2024.06.026},
}

@article{K25,
  author       = {Kumagai, Kenta},
  title        = {Classification of bifurcation structure for semilinear elliptic equations in a ball},
  journal      = {Calc. Var. Partial Differential Equations},
  volume       = {65},
  year         = {2026},
  articlenumber = {240},
   DOI = {10.1007/s00526-026-03405-2},
       URL = {https://doi.org/10.1007/s00526-026-03405-2},
}

@article {GV97,
    AUTHOR = {Galaktionov, Victor A. and Vazquez, Juan L.},
     TITLE = {Continuation of blowup solutions of nonlinear heat equations
              in several space dimensions},
   JOURNAL = {Comm. Pure Appl. Math.},
  FJOURNAL = {Communications on Pure and Applied Mathematics},
    VOLUME = {50},
      YEAR = {1997},
    NUMBER = {1},
     PAGES = {1--67},
      ISSN = {0010-3640,1097-0312},
   MRCLASS = {35K55 (35B05 35B60)},
  MRNUMBER = {1423231},
MRREVIEWER = {Dian\ K.\ Palagachev},
       DOI = {10.1002/(SICI)1097-0312(199701)50:1<1::AID-CPA1>3.3.CO;2-R},
       URL =
              {https://doi-org.utokyo.idm.oclc.org/10.1002/(SICI)1097-0312(199701)50:1<1::AID-CPA1>3.3.CO;2-R},
}

@article {PY03,
    AUTHOR = {Pol\'{a}\v{c}ik, Peter and Yanagida, Eiji},
     TITLE = {On bounded and unbounded global solutions of a supercritical
              semilinear heat equation},
   JOURNAL = {Math. Ann.},
  FJOURNAL = {Mathematische Annalen},
    VOLUME = {327},
      YEAR = {2003},
    NUMBER = {4},
     PAGES = {745--771},
      ISSN = {0025-5831,1432-1807},
   MRCLASS = {35K55 (35K15)},
  MRNUMBER = {2023315},
MRREVIEWER = {Chang\ Hao\ Lin},
       DOI = {10.1007/s00208-003-0469-y},
       URL = {https://doi-org.utokyo.idm.oclc.org/10.1007/s00208-003-0469-y},
}

@article {Mizo05,
    AUTHOR = {Mizoguchi, Noriko},
     TITLE = {Boundedness of global solutions for a supercritical semilinear
              heat equation and its application},
   JOURNAL = {Indiana Univ. Math. J.},
  FJOURNAL = {Indiana University Mathematics Journal},
    VOLUME = {54},
      YEAR = {2005},
    NUMBER = {4},
     PAGES = {1047--1059},
      ISSN = {0022-2518,1943-5258},
   MRCLASS = {35K55 (35B45)},
  MRNUMBER = {2164417},
MRREVIEWER = {Ya\ Zhe\ Chen},
       DOI = {10.1512/iumj.2005.54.2694},
       URL = {https://doi-org.utokyo.idm.oclc.org/10.1512/iumj.2005.54.2694},
}

@article {FWY,
    AUTHOR = {Fila, Marek and Winkler, Michael and Yanagida, Eiji},
     TITLE = {Grow-up rate of solutions for a supercritical semilinear
              diffusion equation},
   JOURNAL = {J. Differential Equations},
  FJOURNAL = {Journal of Differential Equations},
    VOLUME = {205},
      YEAR = {2004},
    NUMBER = {2},
     PAGES = {365--389},
      ISSN = {0022-0396,1090-2732},
   MRCLASS = {35K57 (35B45)},
  MRNUMBER = {2092863},
MRREVIEWER = {Song\ Mu\ Zheng},
       DOI = {10.1016/j.jde.2004.03.009},
       URL = {https://doi-org.utokyo.idm.oclc.org/10.1016/j.jde.2004.03.009},
}

@article {QS25,
    AUTHOR = {Quittner, Pavol and Souplet, Philippe},
     TITLE = {Threshold, subthreshold, and global unbounded solutions of
              superlinear heat equations},
   JOURNAL = {Proc. Lond. Math. Soc. (3)},
  FJOURNAL = {Proceedings of the London Mathematical Society. Third Series},
    VOLUME = {131},
      YEAR = {2025},
    NUMBER = {6},
     PAGES = {Paper No. e70110, 27},
      ISSN = {0024-6115,1460-244X},
   MRCLASS = {35K58 (35B40 35B44 35K57)},
  MRNUMBER = {5002468},
MRREVIEWER = {Gao-Feng\ Zheng},
       DOI = {10.1112/plms.70110},
       URL = {https://doi-org.utokyo.idm.oclc.org/10.1112/plms.70110},
}

@article {Martel98,
    AUTHOR = {Martel, Yvan},
     TITLE = {Complete blow up and global behaviour of solutions of
              {$u_t-\Delta u=g(u)$}},
   JOURNAL = {Ann. Inst. H. Poincar\'e{} C Anal. Non Lin\'eaire},
  FJOURNAL = {Annales de l'Institut Henri Poincar\'e{} C. Analyse Non
              Lin\'eaire},
    VOLUME = {15},
      YEAR = {1998},
    NUMBER = {6},
     PAGES = {687--723},
      ISSN = {0294-1449,1873-1430},
   MRCLASS = {35J65 (35B40)},
  MRNUMBER = {1650970},
MRREVIEWER = {Qing\ Fang},
       DOI = {10.1016/S0294-1449(99)80002-X},
       URL = {https://doi-org.utokyo.idm.oclc.org/10.1016/S0294-1449(99)80002-X},
}

@article {V99,
    AUTHOR = {Vazquez, Juan Luis},
     TITLE = {Domain of existence and blowup for the exponential
              reaction-diffusion equation},
   JOURNAL = {Indiana Univ. Math. J.},
  FJOURNAL = {Indiana University Mathematics Journal},
    VOLUME = {48},
      YEAR = {1999},
    NUMBER = {2},
     PAGES = {677--709},
      ISSN = {0022-2518,1943-5258},
   MRCLASS = {35K57 (34A12 35B05)},
  MRNUMBER = {1722813},
MRREVIEWER = {L.\ Hsiao},
       DOI = {10.1512/iumj.1999.48.1581},
       URL = {https://doi-org.utokyo.idm.oclc.org/10.1512/iumj.1999.48.1581},
}

@article {FKWY07-ADE,
    AUTHOR = {Fila, Marek and King, John R. and Winkler, Michael and
              Yanagida, Eiji},
     TITLE = {Grow-up rate of solutions of a semilinear parabolic equation
              with a critical exponent},
   JOURNAL = {Adv. Differential Equations},
  FJOURNAL = {Advances in Differential Equations},
    VOLUME = {12},
      YEAR = {2007},
    NUMBER = {1},
     PAGES = {1--26},
      ISSN = {1079-9389},
   MRCLASS = {35K55 (35B33 35B40 35K15)},
  MRNUMBER = {2272819},
MRREVIEWER = {Andrey\ B.\ Muravnik},
}

@article {Mizo06,
    AUTHOR = {Mizoguchi, Noriko},
     TITLE = {Growup of solutions for a semilinear heat equation with
              supercritical nonlinearity},
   JOURNAL = {J. Differential Equations},
  FJOURNAL = {Journal of Differential Equations},
    VOLUME = {227},
      YEAR = {2006},
    NUMBER = {2},
     PAGES = {652--669},
      ISSN = {0022-0396,1090-2732},
   MRCLASS = {35K55 (35K15)},
  MRNUMBER = {2237683},
MRREVIEWER = {Lidia\ Sk\'ora},
       DOI = {10.1016/j.jde.2005.11.002},
       URL = {https://doi-org.utokyo.idm.oclc.org/10.1016/j.jde.2005.11.002},
}

@article {FKWY07,
    AUTHOR = {Fila, Marek and King, John R. and Winkler, Michael and
              Yanagida, Eiji},
     TITLE = {Grow-up rate of solutions of a semilinear parabolic equation
              with a critical exponent},
   JOURNAL = {Adv. Differential Equations},
  FJOURNAL = {Advances in Differential Equations},
    VOLUME = {12},
      YEAR = {2007},
    NUMBER = {1},
     PAGES = {1--26},
      ISSN = {1079-9389},
   MRCLASS = {35K55 (35B33 35B40 35K15)},
  MRNUMBER = {2272819},
MRREVIEWER = {Andrey\ B.\ Muravnik},
}

@article {FKWY06,
    AUTHOR = {Fila, Marek and King, John R. and Winkler, Michael and
              Yanagida, Eiji},
     TITLE = {Optimal lower bound of the grow-up rate for a supercritical
              parabolic equation},
   JOURNAL = {J. Differential Equations},
  FJOURNAL = {Journal of Differential Equations},
    VOLUME = {228},
      YEAR = {2006},
    NUMBER = {1},
     PAGES = {339--356},
      ISSN = {0022-0396,1090-2732},
   MRCLASS = {35K55 (35B33 35B40)},
  MRNUMBER = {2254434},
MRREVIEWER = {Jan\ W.\ Cholewa},
       DOI = {10.1016/j.jde.2006.01.019},
       URL = {https://doi-org.utokyo.idm.oclc.org/10.1016/j.jde.2006.01.019},
}

@article {GK,
    AUTHOR = {Galaktionov, Victor A. and King, John R.},
     TITLE = {Stabilization to a singular steady state for the
              {F}rank-{K}amenetskii equation in a critical dimension},
   JOURNAL = {Proc. Roy. Soc. Edinburgh Sect. A},
  FJOURNAL = {Proceedings of the Royal Society of Edinburgh. Section A.
              Mathematics},
    VOLUME = {135},
      YEAR = {2005},
    NUMBER = {4},
     PAGES = {777--787},
      ISSN = {0308-2105,1473-7124},
   MRCLASS = {35K55 (35B35)},
  MRNUMBER = {2173339},
MRREVIEWER = {Jana\ Kopfova},
       DOI = {10.1017/S030821050000411X},
       URL = {https://doi-org.utokyo.idm.oclc.org/10.1017/S030821050000411X},
}

@book {Korbook,
    AUTHOR = {Korman, Philip},
     TITLE = {Global solution curves for semilinear elliptic equations},
 PUBLISHER = {World Scientific Publishing Co. Pte. Ltd., Hackensack, NJ},
      YEAR = {2012},
     PAGES = {xii+241},
      ISBN = {978-981-4374-34-7; 981-4374-34-2},
   MRCLASS = {35-02 (35B32 35J60 35J65 35J66)},
  MRNUMBER = {2954053},
MRREVIEWER = {Vicen\c tiu\ D.\ R\u adulescu},
       DOI = {10.1142/8308},
       URL = {https://doi-org.utokyo.idm.oclc.org/10.1142/8308},
}

@article{KM26,
      title={Monotonicity of the bifurcation curve for supercritical elliptic problems in the borderline dimension {$N=10$}}, 
      author={Kenta Kumagai and Yasuhito Miyamoto},
    journal={preprint, arXiv:2605.30946},
      year={},
      eprint={2605.30946},
      archivePrefix={arXiv},
      primaryClass={math.AP},
      url={https://arxiv.org/abs/2605.30946}, 
}

@article {I11,
    AUTHOR = {Ioku, Norisuke},
     TITLE = {The {C}auchy problem for heat equations with exponential
              nonlinearity},
   JOURNAL = {J. Differential Equations},
  FJOURNAL = {Journal of Differential Equations},
    VOLUME = {251},
      YEAR = {2011},
    NUMBER = {4-5},
     PAGES = {1172--1194},
      ISSN = {0022-0396,1090-2732},
   MRCLASS = {35K91 (35A01 35A23 35K15 46E30)},
  MRNUMBER = {2812586},
MRREVIEWER = {Chunshan\ Zhao},
       DOI = {10.1016/j.jde.2011.02.015},
       URL = {https://doi-org.utokyo.idm.oclc.org/10.1016/j.jde.2011.02.015},
}

@article {Tel,
    AUTHOR = {Tello, J. Ignacio},
     TITLE = {Stability of steady states of the {C}auchy problem for the
              exponential reaction-diffusion equation},
   JOURNAL = {J. Math. Anal. Appl.},
  FJOURNAL = {Journal of Mathematical Analysis and Applications},
    VOLUME = {324},
      YEAR = {2006},
    NUMBER = {1},
     PAGES = {381--396},
      ISSN = {0022-247X,1096-0813},
   MRCLASS = {35K57 (35B35 35K15)},
  MRNUMBER = {2262478},
MRREVIEWER = {Xinhua\ Ji},
       DOI = {10.1016/j.jmaa.2005.12.011},
       URL = {https://doi-org.utokyo.idm.oclc.org/10.1016/j.jmaa.2005.12.011},
}

@article {LT1987,
    AUTHOR = {Lacey, A. A. and Tzanetis, D.},
     TITLE = {Global existence and convergence to a singular steady state
              for a semilinear heat equation},
   JOURNAL = {Proc. Roy. Soc. Edinburgh Sect. A},
  FJOURNAL = {Proceedings of the Royal Society of Edinburgh. Section A.
              Mathematics},
    VOLUME = {105},
      YEAR = {1987},
     PAGES = {289--305},
      ISSN = {0308-2105,1473-7124},
   MRCLASS = {35K55 (35B40)},
  MRNUMBER = {890063},
MRREVIEWER = {R\"udiger\ Landes},
       DOI = {10.1017/S0308210500022113},
       URL = {https://doi-org.utokyo.idm.oclc.org/10.1017/S0308210500022113},
}

@article{KM25,
  author  = {Sho Katayama and Yasuhito Miyamoto},
  title   = {Infinite multiplicity of positive solutions of an inhomogeneous supercritical elliptic equation on {$\mathbb{R}^N$}},
  journal = {Ann. Mat. Pura Appl. (4)},
  volume  = {205},
  year    = {2026},
  number  = {3},
  pages   = {1079--1121},
  doi     = {10.1007/s10231-025-01633-5},
}

@article {HM25,
    AUTHOR = {Hisa, Kotaro and Miyamoto, Yasuhito},
     TITLE = {Threshold property of a singular stationary solution for
              semilinear heat equations with exponential growth},
   JOURNAL = {Manuscripta Math.},
  FJOURNAL = {Manuscripta Mathematica},
    VOLUME = {176},
      YEAR = {2025},
    NUMBER = {5},
     PAGES = {Paper No. 61, 30},
      ISSN = {0025-2611,1432-1785},
   MRCLASS = {35K58 (35A01 35A21 35B44 35K15)},
  MRNUMBER = {4947254},
       DOI = {10.1007/s00229-025-01661-8},
       URL = {https://doi-org.utokyo.idm.oclc.org/10.1007/s00229-025-01661-8},
}

@book {Lieberman,
    AUTHOR = {Lieberman, Gary M.},
     TITLE = {Second order parabolic differential equations},
 PUBLISHER = {World Scientific Publishing Co., Inc., River Edge, NJ},
      YEAR = {1996},
     PAGES = {xii+439},
      ISBN = {981-02-2883-X},
   MRCLASS = {35-02 (35Bxx 35Dxx 35Kxx)},
  MRNUMBER = {1465184},
MRREVIEWER = {Siegfried\ Carl},
       DOI = {10.1142/3302},
       URL = {https://doi-org.utokyo.idm.oclc.org/10.1142/3302},
}

@book {QSbook,
    AUTHOR = {Quittner, Pavol and Souplet, Philippe},
     TITLE = {Superlinear parabolic problems},
    SERIES = {Birkh\"auser Advanced Texts: Basler Lehrb\"ucher.
              [Birkh\"auser Advanced Texts: Basel Textbooks]},
   EDITION = {Second},
      NOTE = {Blow-up, global existence and steady states},
 PUBLISHER = {Birkh\"auser/Springer, Cham},
      YEAR = {2019},
     PAGES = {xvi+725},
      ISBN = {978-3-030-18220-5; 978-3-030-18222-9},
   MRCLASS = {35-02 (35B44 35J57 35J60 35K51 35K55)},
  MRNUMBER = {3967048},
       DOI = {10.1007/978-3-030-18222-9},
       URL = {https://doi-org.utokyo.idm.oclc.org/10.1007/978-3-030-18222-9},
}

@article {Fu1969,
    AUTHOR = {Fujita, Hiroshi},
     TITLE = {On the nonlinear equations {$\Delta u+e\sp{u}=0$} and
              {$\partial v/\partial t=\Delta v+e \sp{v}$}},
   JOURNAL = {Bull. Amer. Math. Soc.},
  FJOURNAL = {Bulletin of the American Mathematical Society},
    VOLUME = {75},
      YEAR = {1969},
     PAGES = {132--135},
      ISSN = {0002-9904},
   MRCLASS = {35.36},
  MRNUMBER = {239258},
MRREVIEWER = {P.\ Cooperman},
       DOI = {10.1090/S0002-9904-1969-12175-0},
       URL = {https://doi-org.utokyo.idm.oclc.org/10.1090/S0002-9904-1969-12175-0},
}

@article {CZ2022,
    AUTHOR = {Chang, Caihong and Zhang, Zhengce},
     TITLE = {Asymptotic behavior of blowup solutions for {H}\'enon type
              parabolic equations with exponential nonlinearity},
   JOURNAL = {Electron. J. Differential Equations},
  FJOURNAL = {Electronic Journal of Differential Equations},
      YEAR = {2022},
     PAGES = {Paper No. 42, 19},
      ISSN = {1072-6691},
   MRCLASS = {35K20 (35B40 35B44)},
  MRNUMBER = {4445281},
MRREVIEWER = {Weiwei\ Ding},
       DOI = {10.58997/ejde.2022.42},
       URL = {https://doi-org.utokyo.idm.oclc.org/10.58997/ejde.2022.42},
}

@article {VZ,
    AUTHOR = {Vazquez, Juan Luis and Zuazua, Enrike},
     TITLE = {The {H}ardy inequality and the asymptotic behaviour of the
              heat equation with an inverse-square potential},
   JOURNAL = {J. Funct. Anal.},
  FJOURNAL = {Journal of Functional Analysis},
    VOLUME = {173},
      YEAR = {2000},
    NUMBER = {1},
     PAGES = {103--153},
      ISSN = {0022-1236,1096-0783},
   MRCLASS = {35K05 (35B40 35D05)},
  MRNUMBER = {1760280},
MRREVIEWER = {Snoussi\ Seifeddine},
       DOI = {10.1006/jfan.1999.3556},
       URL = {https://doi-org.utokyo.idm.oclc.org/10.1006/jfan.1999.3556},
}

@article {DGLV1998,
    AUTHOR = {Dold, J. W. and Galaktionov, V. A. and Lacey, A. A. and
              V\'azquez, J. L.},
     TITLE = {Rate of approach to a singular steady state in quasilinear
              reaction-diffusion equations},
   JOURNAL = {Ann. Scuola Norm. Sup. Pisa Cl. Sci. (4)},
  FJOURNAL = {Annali della Scuola Normale Superiore di Pisa. Classe di
              Scienze. Serie IV},
    VOLUME = {26},
      YEAR = {1998},
    NUMBER = {4},
     PAGES = {663--687},
      ISSN = {0391-173X,2036-2145},
   MRCLASS = {35K57 (35K65)},
  MRNUMBER = {1648562},
MRREVIEWER = {Andreas\ Unterreiter},
       URL = {http://www.numdam.org/item?id=ASNSP_1998_4_26_4_663_0},
}

@article {FK25,
    AUTHOR = {Fujishima, Yohei and Kan, Toru},
     TITLE = {Uniform boundedness and blow-up rate of solutions in
              non-scale-invariant superlinear heat equations},
   JOURNAL = {J. Elliptic Parabol. Equ.},
  FJOURNAL = {Journal of Elliptic and Parabolic Equations},
    VOLUME = {11},
      YEAR = {2025},
    NUMBER = {3},
     PAGES = {2185--2217},
      ISSN = {2296-9020,2296-9039},
   MRCLASS = {35K58 (35B44 35B45)},
  MRNUMBER = {5000894},
MRREVIEWER = {Hongwei\ Chen},
       DOI = {10.1007/s41808-025-00335-6},
       URL = {https://doi-org.utokyo.idm.oclc.org/10.1007/s41808-025-00335-6},
}

@article {PV1995,
    AUTHOR = {Peral, I. and V\'azquez, J. L.},
     TITLE = {On the stability or instability of the singular solution of
              the semilinear heat equation with exponential reaction term},
   JOURNAL = {Arch. Rational Mech. Anal.},
  FJOURNAL = {Archive for Rational Mechanics and Analysis},
    VOLUME = {129},
      YEAR = {1995},
    NUMBER = {3},
     PAGES = {201--224},
      ISSN = {0003-9527},
   MRCLASS = {35B35 (35K55)},
  MRNUMBER = {1328476},
MRREVIEWER = {Qing\ Fang},
       DOI = {10.1007/BF00383673},
       URL = {https://doi-org.utokyo.idm.oclc.org/10.1007/BF00383673},
}
}
\end{document}